\let\mathcal\mathscr
\newtheorem{theorem}{Theorem}[section]
\newtheorem{lemma}[theorem]{Lemma}
\newtheorem{proposition}[theorem]{Proposition}
\newtheorem{corollary}[theorem]{Corollary}
\newtheorem{conjecture}[theorem]{Conjecture}
\theoremstyle{definition}
\newtheorem*{ack}{Acknowledgements}
\newtheorem{remark}[theorem]{Remark}
\newtheorem{example}[theorem]{Example}
\newtheorem*{spec-todo}{Speculative TO DO  list}
\newtheorem{condition}[theorem]{Condition}
\numberwithin{equation}{section} \numberwithin{figure}{section}
\DeclareMathOperator{\Pic}{Pic} 
\DeclareMathOperator{\Gal}{Gal} 
 \DeclareMathOperator{\Spec}{Spec}
\DeclareMathOperator{\Hom}{Hom} 
\DeclareMathOperator{\im}{Im}   
 \DeclareMathOperator{\Val}{Val}
\DeclareMathOperator{\Br}{Br}
 \DeclareMathOperator{\Norm}{N}
\DeclareMathOperator{\coker}{coker}
\newcommand{\kbar}{{\overline{k}}}
\newcommand{\Adele}{\mathbf{A}}
\newcommand\FF{\mathbb{F}}
\newcommand\PP{\mathbb{P}}
\newcommand\ZZ{\mathbb{Z}}
\newcommand\NN{\mathbb{N}}
\newcommand\QQ{\mathbb{Q}}
\newcommand\RR{\mathbb{R}}
\newcommand\GG{\mathbb{G}}
\newcommand\Gm{\GG_\mathrm{m}}
\newcommand\OO{\mathcal{O}}
\renewcommand{\leq}{\leqslant}
\renewcommand{\le}{\leqslant}
\renewcommand{\geq}{\geqslant}
\renewcommand{\ge}{\geqslant}
\newcommand{\x}{\mathbf{x}}
\newcommand{\bmu}{\boldsymbol{\mu}}
\newcommand{\A}{\mathbf{A}}
\newcommand{\y}{\mathbf{y}}
\renewcommand{\u}{\mathbf{u}}
\renewcommand{\v}{\mathbf{v}}
\newcommand{\cA}{\mathcal{A}}
\newcommand{\ve}{\varepsilon}
\newcommand{\fo}{\mathfrak{o}}
\newcommand{\fp}{\mathfrak{p}}
\newcommand{\fa}{\mathfrak{a}}
\DeclareMathOperator{\n}{N}
\newcommand{\mmu}{\boldsymbol{\mu}}
\renewcommand{\H}{\mathrm{H}}
\newcommand{\ff}[1]{{\kappa(#1)}}
\newcommand{\Zn}{\ZZ/n}
\newcommand{\QZ}{\QQ/\ZZ}
\newcommand{\sX}{\mathcal{X}}
\newcommand{\sY}{\mathcal{Y}}
\newcommand{\sD}{\mathcal{D}}
\newcommand{\sV}{\mathcal{V}}
\newcommand{\sB}{\mathcal{B}}
\newcommand{\sP}{\mathcal{P}}
\newcommand{\p}{\mathfrak{p}}
\newcommand{\R}{\mathrm{R}}
\newcommand{\et}{\mathrm{\acute{e}t}}
\DeclareMathOperator{\spec}{sp}
\newcommand{\ns}{\mathrm{ns}}
\newcommand{\sm}{\mathrm{sm}}
\DeclareMathOperator{\Cl}{Cl}
\DeclareMathOperator{\cl}{cl}
\title{Failures of  weak approximation in families}
\author{M. Bright}
\address{Mathematisch Instituut\\Universiteit Leiden\\Niels Bohrweg 1\\2333 CA Leiden\\Netherlands}
\email{m.j.bright@math.leidenuniv.nl}
\author{T.D. Browning}
\address{School of Mathematics\\
University of Bristol\\ Bristol\\ BS8 1TW\\ United Kingdom}
\email{t.d.browning@bristol.ac.uk}
\author{D. Loughran}
\address{Leibniz Universit\"{a}t Hannover,
Institut f\"{u}r Algebra, Zahlentheorie
    und Diskrete Mathematik\\
Welfengarten 1\\
30167 Hannover\\
Germany.}
\email{loughran@math.uni-hannover.de}
\subjclass[2010]{14G05 (11G35, 11N36, 12G05, 14D10, 14F22)}
\date{\today}
\begin{document}

\begin{abstract}
Given a family of varieties $X\to \PP^n$ over a number field, we determine conditions under which 
there is a Brauer--Manin obstruction to weak approximation for $100\%$ of the fibres which are everywhere locally
soluble.
\end{abstract}

\maketitle
\tableofcontents

\thispagestyle{empty}

\section{Introduction}

This paper is concerned with the Hasse principle and weak approximation for families of varieties defined over a number field $k$.
Given a smooth projective geometrically integral variety $X$ over $k$ we have the obvious
diagonal embedding
$$
X(k)\rightarrow X(\mathbf{A}_k)=
\prod_{\textrm{all places }\nu} X(k_\nu),
$$
where $X(\mathbf{A}_k)$ is the set of ad\`eles of $X$.  Recall that 
a class $\mathcal{X}$ of 
 smooth projective geometrically integral varieties $X$  over
$k$ is said to satisfy the {\em Hasse principle} if $X(k)\neq \emptyset$ whenever
$X(\mathbf{A}_k)\neq \emptyset$. Likewise $\mathcal{X}$
is said to satisfy {\em weak approximation} if 
for any $X$ in $\mathcal{X}$,
the image of $X(k)$ in $X(\Adele_k)$ is dense with respect to the
product topology. Note that with these definitions,
weak approximation holds for $X$ if $X(\mathbf{A}_k) = \emptyset$.
In 1970, Manin used 
the Brauer group $\Br X=\H_{\text{\'et}}^2(X,\mathbb{G}_m)$ to define the {\em Brauer set} 
$X(\A_k)^{\Br}\subset X(\A_k)$. 
This set contains the rational points of $X$ and can sometimes be used to obstruct the Hasse principle or weak approximation (see e.g.~\cite[\S5]{Sko01}).
It has been conjectured by Colliot-Th\'el\`ene (see \cite{bud}) that 
this {\em Brauer--Manin obstruction} is the only obstruction to the Hasse principle or weak approximation for any 
 smooth projective geometrically integral variety $X$  over
$k$ which is geometrically rationally connected. Here we are interested in
the following special case of Colliot-Th\'el\`ene's conjecture.

\begin{conjecture}[Colliot-Th\'el\`ene]\label{con}
Let $X$  be a smooth projective geometrically integral variety over a number field $k$, which is 
geometrically rationally connected, such that  $X(\A_k)\neq \emptyset$ and 
$ \Br X/\Br k=0$. Then $X(k)\neq \emptyset$ and $X$ 
satisfies  weak approximation.
\end{conjecture}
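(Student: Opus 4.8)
Strictly speaking this is one of the central open conjectures in the subject, so what follows is not a complete proof but a description of the only strategy known to work, together with a frank statement of where it breaks down. The shape of the argument is always the same: \emph{descent} to replace $X$ by an auxiliary variety with no algebraic Brauer--Manin obstruction, \emph{fibration} to cut down the dimension, and \emph{analytic number theory} to produce rational points on the resulting low-dimensional fibres.

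First I would invoke the descent theory of Colliot-Th\'el\`ene and Sansuc. Because $X$ is geometrically rationally connected, $\Pic(X_{\kbar})$ is finitely generated; the hypothesis $\Br X = \Br k$ says there is no Brauer--Manin obstruction at all, so $X(\A_k)^{\Br} = X(\A_k) \neq \emptyset$. By the descent formalism the algebraic Brauer--Manin set $X(\A_k)^{\Br_1}$ is the union, over a finite set of twists $\sigma$, of the images of $\mathcal{T}_\sigma(\A_k)$ under the structure maps $f_\sigma$, where $f\colon \mathcal{T}\to X$ is a universal torsor (a torsor under the N\'eron--Severi torus). Hence some twist $\mathcal{T}_\sigma$ has points everywhere locally, and it suffices to prove the Hasse principle and weak approximation for $\mathcal{T}_\sigma$, or for a smooth compactification of it: a universal torsor has trivial geometric Picard group, so it carries no algebraic Brauer--Manin obstruction, and under our hypotheses there is no transcendental obstruction to lose either.

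The second and third stages are where the geometry of $X$ is used. In every case where the conjecture is presently known --- conic bundles over $\PP^1$ with few degenerate fibres (Colliot-Th\'el\`ene, Sansuc, Swinnerton-Dyer), Ch\^atelet surfaces, several families of del Pezzo surfaces, homogeneous spaces of connected linear algebraic groups (Sansuc, Borovoi) --- a dense open subset of the universal torsor fibres over an affine space, with fibres that are rational, or quadrics, or Severi--Brauer varieties, or norm-one tori, i.e.\ varieties for which the Hasse principle and weak approximation are understood. One then runs a fibration argument: combine the Hasse principle and weak approximation for the individual fibres with a sieve showing that the parameters giving an everywhere locally soluble fibre that is also globally soluble form a Zariski-dense set with which one can approximate. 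Unconditionally, this last input currently needs either Schinzel's Hypothesis (H) (equivalently a case of the Bateman--Horn conjecture), or --- when the polynomials cutting out the degeneracy locus have small degree relative to the number of variables --- the Hardy--Littlewood circle method or an additive-combinatorial substitute.

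The main obstacle is precisely this analytic step. There is no unconditional method to locate rational points on a geometrically rationally connected variety unless it can be built from pieces whose arithmetic is already known (rational varieties, quadrics, Severi--Brauer varieties, torsors under connected linear groups) and is of low enough complexity --- degree versus dimension --- for a sieve or the circle method to apply; already a smooth cubic surface over a general number field, or a smooth del Pezzo surface of degree $2$, lies beyond every available technique, and the fibration method that reduces higher-complexity cases to these consumes Schinzel's Hypothesis (H) at each step. So outside the classical list the conjecture can presently be reached only conditionally. A secondary, more technical point --- not affecting the logical skeleton above, but absorbing much of the real work in any concrete example --- is that even \emph{checking} the hypothesis $\Br X = \Br k$ requires controlling the transcendental Brauer classes, which are invisible to classical descent.
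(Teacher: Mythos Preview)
You have correctly identified that this statement is a \emph{conjecture}, not a theorem: the paper does not prove it, and neither should you. Conjecture~\ref{con} is stated purely as motivation --- the paper's actual contribution is a partial \emph{converse} (Theorem~\ref{thm:H1}), showing that under suitable hypotheses a non-trivial Brauer class forces failure of weak approximation on almost all fibres. There is therefore no ``paper's own proof'' to compare your proposal against.

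Your survey of the descent--fibration--sieve paradigm is accurate and well-informed, and your candour about where the method breaks down (the analytic input, Schinzel's Hypothesis, the circle-method threshold) is exactly right. One small correction of emphasis: in the specific form stated here the hypothesis is $\Br X/\Br k = 0$, not merely $\Br_1 X/\Br k = 0$, so the transcendental Brauer group is assumed trivial from the outset; the difficulty you mention of \emph{verifying} that hypothesis in examples is real, but it is not an obstacle to the conjecture's internal logic. Otherwise there is nothing to fault --- but for the purposes of this paper, the correct response to being asked to prove Conjecture~\ref{con} is simply to note that it is open.
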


There are many examples in the literature where
the  converse of Conjecture~\ref{con} holds, and in this 
paper we are motivated by the extent to which this converse  holds in general. 
For example, consider
the  Ch\^atelet surface $X$ 
which arises over $\QQ$ as a smooth proper model of the affine surface
 $$
y^2+z^2=L_1(t)L_2(t)L_3(t)L_4(t),
$$
where $L_1,\dots,L_4\in \QQ[t]$ are pairwise non-proportional linear polynomials. Then  
$\Br X / \Br \QQ \cong(\ZZ/2\ZZ)^2$ and $X(\QQ)\neq \emptyset$, and it follows from 
\cite[Rem.~3.4]{annal} that $X$ fails weak approximation.
For varieties whose dimension is large compared to the degree, the 
 Hardy--Littlewood circle method can be used to show that the converse of Conjecture~\ref{con} is true.
 Recent work of Browning and Heath-Brown
\cite[Thm.~1.1]{many}, for example, shows that the 
Hasse principle and weak approximation holds for any 
smooth geometrically integral 
variety  $X\subset{\PP}^m$  over 
 $\QQ$ for which 
$$\dim(X)\geq (\deg(X)-1)2^{\deg(X)}-1.$$
Note that here one has
$ \Br X / \Br \QQ=0$
(see  \cite[\S 1]{many}).

On the other hand, it is possible to construct examples of varieties $X$ over $k$ covered by 
Colliot-Th\'el\`ene's conjecture for which 
$ \Br X / \Br k \neq 0$
and yet $X$ satisfies the Hasse principle and weak approximation. 
The following example is due to Colliot-Th\'el\`ene and Sansuc \cite[Ex.~D, p.~223]{CTS77}.

\begin{example}
Let $K/k$ be a biquadratic extension and 
let $\mathbf{N}_{K}\in k[x_1,\dots,x_4]$ be the associated {\em norm form}. 
Then the equation $\mathbf{N}_K(x_1,\dots,x_4)=1$ defines a $k$-torus $T$. 
Let $X$ be a smooth compactification of $T$. 
Then we have $X(k)\neq \emptyset$ and $\Br X / \Br k\cong\ZZ/2\ZZ$.
However, as explained in \cite[Ex.~D, p.~223]{CTS77}, if all decomposition groups of $K/k$ are cyclic
then weak approximation holds for $T$, and so also for  $X$ (this occurs for $\QQ(\sqrt{13}, \sqrt{17})/\QQ$, 
for example).
\end{example}
A further  example involving del Pezzo surfaces of degree four can be found in recent work of Jahnel and Schindler
\cite[Ex.~4.3]{J-S}. 

These examples illustrate that we cannot expect the converse to hold in every case. Instead, 
we shall establish a weak converse to Conjecture \ref{con}, by considering this problem
in families. Namely, suppose that we are given a   family $X\to \PP^n$ over a number field $k$, with  $X$ non-singular projective and  geometrically integral. Let $K$ be  the function field of $\PP^n$ and assume that 
the   generic fibre $X_\eta$ is such that either
$\H^1(K,\Pic X_{\bar \eta})$ or $\Br X_\eta/\Br K=\coker(\Br K \to \Br X_\eta)$ is non-trivial. Under suitable hypotheses, our main result
shows that  with probability $1$  any element of the family which is everywhere locally soluble
fails weak approximation.

\medskip

An important first step in our investigation is to understand the proportion of elements in a family of varieties 
that are everywhere locally soluble. 
Let $\pi:X\to \PP^n$ be a dominant projective $k$-morphism with geometrically integral generic fibre.
Let $H:\PP^n(k)\to \RR_{\geq 1}$ be the standard exponential height (see \S 
\ref{s:goat-herder} for its definition). Then we let
\begin{equation}\label{eq:horse}
\sigma(\pi) = \lim_{B \to \infty} \frac{\#\{ P \in \PP^n(k): H(P) \leq B,~
X_P(\Adele_k) \neq \emptyset\}}{\#\{ P \in \PP^n(k): H(P) \leq B\}},
\end{equation}
if the limit exists, where $X_P=\pi^{-1}(P)$.
This is the proportion of varieties in the family which are everywhere locally soluble.

Recall that a scheme over a field is said to be \emph{split} if it contains a geometrically
integral open subscheme. 
Bearing this in mind we will establish the following result.

\begin{theorem} \label{thm:Ekedahl}
	Let $k$ be a number field.
	Let $\pi:X\to \PP^n$ be a dominant quasi-projective $k$-morphism with geometrically integral generic fibre.
	Assume that:
	 \begin{enumerate}
	 \item	the fibre of $\pi$ above each codimension $1$ point of $\PP^n$ is split,
	 \item 	$X(\Adele_k) \neq \emptyset$.
	\end{enumerate}
	 Then $\sigma(\pi)$ exists, is non-zero and is equal to a product of local densities.
\end{theorem}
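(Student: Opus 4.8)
The plan is to express $\sigma(\pi)$ as a convergent product of local densities $\sigma_\nu(\pi)$ over all places $\nu$ of $k$, where $\sigma_\nu(\pi)$ is the $\nu$-adic density of points $P\in\PP^n(k_\nu)$ with $X_P(k_\nu)\neq\emptyset$, and then show the product is non-zero. First I would fix a model of $\pi$ over $\Spec\OO_S$ for a suitable finite set $S$ of places containing the archimedean ones, such that away from $S$ the morphism is nice (flat, finite type) and the splitting hypothesis (1) propagates. The key input is an equidistribution/sieve statement: as $B\to\infty$, the points $P\in\PP^n(k)$ with $H(P)\le B$ equidistribute $\nu$-adically, so that counting those landing in a fixed measurable subset $U_\nu\subset\PP^n(k_\nu)$ picks out $\vol(U_\nu)$ in the limit. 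Combining finitely many places is immediate; the real work is controlling the tail, i.e. showing that the ``bad'' set $\{P: X_P(k_\nu)=\emptyset\}$ for $\nu\notin S$ contributes negligibly in aggregate. This is exactly where a large sieve or a geometric sieve (in the spirit of Ekedahl, hence the name of the theorem) enters.

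The heart of the matter is the following local estimate: for a finite prime $\fp$ of good reduction, the density of $P\in\PP^n(k_\fp)$ for which the fibre $X_P$ has \emph{no} smooth $k_\fp$-point should be $O(1/\n\fp^2)$, or at least summably small. By hypothesis (1), the fibre over each codimension-$1$ point is split, so over the generic point of every prime divisor on $\PP^n_{\OO_S}$ the fibre contains a geometrically integral component. Spreading this out, for all but finitely many $\fp$ the fibre over any codimension-$1$ point of $\PP^n_{\FF_\fp}$ is split over the residue field, hence by the Lang--Weil estimates and Hensel's lemma, a positive proportion (bounded below independently of $\fp$) of such $\fp$-adic points $P$ lift to $k_\fp$-points of $X_P$. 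The points $P$ for which $X_P(k_\fp)=\emptyset$ must therefore reduce into the locus where the fibre degenerates, which has codimension $\ge 2$ in $\PP^n_{\FF_\fp}$ (since codimension-$1$ fibres are split); counting $\FF_\fp$-points of a codimension-$\ge 2$ subscheme gives $O(\n\fp^{n-2})$ out of $\sim\n\fp^n$, yielding the desired $O(1/\n\fp^2)$ bound. This makes $\prod_\nu \sigma_\nu(\pi)$ absolutely convergent and, crucially, \emph{non-zero}: each factor is positive (for finite good primes, bounded below by $1-O(1/\n\fp^2)$; for the finitely many bad and archimedean places, positivity comes from hypothesis (2), which guarantees $X_P(k_\nu)\neq\emptyset$ for $P$ in a non-empty open subset of $\PP^n(k_\nu)$, using that adelic solubility of $X$ forces local solubility of fibres over a $\nu$-adically open set via the implicit function theorem applied to a smooth point).

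For the global count itself I would run the sieve: write the indicator of $\{X_P(\Adele_k)\neq\emptyset\}$ as a product over places, truncate the product at primes of norm $\le z$, use equidistribution of $\PP^n(k)$ of bounded height to evaluate the truncated count as $(\prod_{\n\fp\le z}\sigma_\fp(\pi))\cdot\#\{H(P)\le B\}(1+o_B(1))$, and bound the error from the tail $\n\fp>z$ using the $O(1/\n\fp^2)$ estimate together with a uniform upper-bound sieve (Montgomery's large sieve, or an elementary divisor-counting argument) for the number of $P$ with $H(P)\le B$ whose reduction mod $\fp$ lands in the bad locus. Letting $z\to\infty$ after $B\to\infty$ gives $\sigma(\pi)=\prod_\nu\sigma_\nu(\pi)$. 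The main obstacle I anticipate is the uniformity in $\fp$ of the Lang--Weil/Hensel lifting argument producing the lower bound on $\sigma_\fp(\pi)$ and the codimension bookkeeping for the bad locus over $\FF_\fp$ — one needs the implied constants to be independent of $\fp$, which requires a careful choice of the integral model and an appeal to constructibility so that the ``split fibre'' condition is stable under reduction outside a fixed finite set of primes.
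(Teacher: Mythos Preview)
Your proposal is correct and follows essentially the same route as the paper: spread out to an integral model, use that hypothesis~(1) forces the non-soluble locus to have codimension $\ge 2$ over $\FF_\fp$ for almost all $\fp$ (via Lang--Weil/Deligne plus Hensel), feed this into an Ekedahl-type sieve to control the tail, and invoke hypothesis~(2) for positivity at the finitely many bad and archimedean places. The paper additionally packages the projective height count via Schanuel's method and handles the measurability and boundary-measure-zero conditions on $\pi(X(k_\nu))$ using the Tarski--Seidenberg--Macintyre theorem, two technical points you gloss over but would have no trouble filling in.
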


This result proves a special case of a conjecture due to  Loughran \cite[Con.~$1.7$]{Loughran}.
A precise statement about the  shape of the leading constant is recorded in 
 Theorem~\ref{thm:Ekedahl2}.

Theorem \ref{thm:Ekedahl} should be compared with  work of Poonen and Voloch \cite[Thm.~3.6]{PV}, which is concerned with  
 the family $X\to \PP^n$ of all hypersurfaces of degree $d$ in $\PP^m$ over $\QQ$, where $n=\binom{m+d}{d}-1$
 and $d,m\geq 2$ are such that $(d,m)\neq (2,2)$.
Our work  generalises this to number fields, but also
applies, for example, to  the 
family of all diagonal hypersurfaces of  degree $d$ and  dimension exceeding $1$.
Hypothesis (1) in Theorem \ref{thm:Ekedahl} generalises the condition 
$(d,m)\neq (2,2)$ in the work of Poonen and Voloch. 
Indeed, consider the family 
$$
X=\left\{\sum_{0\leq i\leq j\leq 2}a_{i,j}x_ix_j=0\right\}\subset \PP^5\times \PP^2
$$
of all conics over $\QQ$ in $\PP^2$.  
Serre
\cite{serre} has shown that $\sigma(\pi)=0$ for this family and one easily checks that 
the fibre over the generic point of the discriminantal hypersurface
$$\Delta(a_{0,0},\dots,a_{2,2})=0$$
is not split, being a union of two lines which are conjugate over
a quadratic field extension. 
Hypothesis (2) is also clearly necessary
in the statement of Theorem~\ref{thm:Ekedahl}.  The result is proved using the sieve of Ekedahl \cite{Eke91}
together with Deligne's work on the Weil conjectures \cite{deligne}, and is carried out in \S \ref{s:Ekedahl}.

We are also able to obtain a version of Theorem \ref{thm:Ekedahl} for integral points on affine
spaces. For a  number field $k$ over $\QQ$, let $\mathfrak{o}$ be its ring of integers and 
 let $k_\infty=\mathfrak{o}\otimes_\ZZ \RR$ be the associated  commutative $\RR$-algebra.
To state the result, we say that an open subset $\Theta \subset V$ of a finite dimensional real vector space $V$ is a \emph{semi-cone} if
$B\Theta \subset \Theta$ for all $B\geq 1$.

\begin{theorem} \label{thm:Ekedahl_affine}
	Let $k$ be a number field.
	Let $\pi:X\to \mathbb{A}^n$ be a dominant quasi-projective $k$-morphism with geometrically integral generic fibre.
	Assume that:
	 \begin{enumerate}
	 \item	the fibre of $\pi$ above each codimension $1$ point of $\mathbb{A}^n$ is split,
	 \item 	$X(\Adele_k) \neq \emptyset$,
	 \item $\pi(X(k_\nu)) \subset k_\nu^n$ contains a semi-cone for every real place $\nu$ of $k$.
	\end{enumerate}
Let $\Psi\subset k_\infty^{n}$ be a  bounded subset of positive measure that lies in some semi-cone contained in $\pi(X(k_\infty))$ and whose boundary has measure zero.
Then the limit 
	$$
	\lim_{B \to \infty} \frac{\#\{ P \in \fo^n \cap B\Psi: ~X_P(\Adele_k) \neq \emptyset\}}
	{\#\{ P \in \fo^n \cap B\Psi\}},
$$
	exists, is non-zero and is equal to a product of local densities.
\end{theorem}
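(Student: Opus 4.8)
The plan is to run the same Ekedahl-type sieve used to prove Theorem~\ref{thm:Ekedahl}, with two modifications: the archimedean places are handled at the outset using the semi-cone hypotheses, and the counting function on $\PP^n$ is replaced by the count of lattice points $\fo^n\cap B\Psi$.

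First I would dispose of the archimedean places. Since $\Psi$ lies in a semi-cone $\Theta\subseteq \pi(X(k_\infty))$ and $B\Theta\subseteq\Theta$ for every $B\geq 1$, we get $B\Psi\subseteq\Theta\subseteq\pi(X(k_\infty))=\prod_{\nu\mid\infty}\pi(X(k_\nu))$. Hence for each $P\in\fo^n\cap B\Psi$ and each archimedean place $\nu$ the point $P$ lies in $\pi(X(k_\nu))$, so $X_P(k_\nu)\neq\emptyset$; thus
$$
\#\{P\in\fo^n\cap B\Psi : X_P(\Adele_k)\neq\emptyset\}=\#\{P\in\fo^n\cap B\Psi : X_P(k_\fp)\neq\emptyset\ \text{for all finite } \fp\},
$$
and the archimedean local density equals $1$. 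It remains to carry out a sieve over the finite places; note that, because $\Psi$ is bounded with $\partial\Psi$ of measure zero, the denominator satisfies $\#(\fo^n\cap B\Psi)\sim c_k\vol(\Psi)B^n$, and more generally $\#\{P\in\fo^n\cap B\Psi : P\equiv P_0\bmod\fa\}\sim c_k\vol(\Psi)B^n/\n\fa$ uniformly in the residue class $P_0$, which is the lattice-point input the sieve requires.

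Next, spread out $\pi$ to a flat morphism $\mathcal X\to\mathbb{A}^n_{\fo_S}$ over a ring of $S$-integers, chosen so that the property ``every codimension-$1$ fibre is split'' also spreads out; this forces the non-split locus in $\mathbb{A}^n_{\fo_S}$ to have codimension $\geq 2$ in each geometric fibre over $\Spec\fo_S$. For a finite prime $\fp\notin S$ define $\rho_\fp$ to be the $\fp$-adic density of $\{P\in\fo_\fp^n : X_P(k_\fp)\neq\emptyset\}$; as in the proof of Theorem~\ref{thm:Ekedahl} this set is measurable (it contains the open locus of $P$ whose fibre has a smooth $k_\fp$-point, and its boundary is contained in the discriminant), so $\rho_\fp$ exists. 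The crucial quantitative point, supplied by Lang--Weil estimates in the sharp form of Deligne's bounds~\cite{deligne}, is that $1-\rho_\fp=O(|\fp|^{-2})$ for all but finitely many $\fp$: outside the non-split locus and for $|\fp|$ large a split $\FF_\fp$-fibre has a smooth $\FF_\fp$-point which lifts by Hensel, so the $P\bmod\fp$ that may fail to be liftable to a $k_\fp$-soluble point reduce into a codimension-$\geq 2$ subscheme of $\mathbb{A}^n_{\FF_\fp}$, contributing $O(|\fp|^{n-2})$ of the $|\fp|^n$ residues. Consequently $\prod_\fp\rho_\fp$ converges, and it is non-zero because hypothesis~(2) furnishes, for each $\fp$, a point $P$ with $X_P(k_\fp)\neq\emptyset$ lying in the good (open) locus, whence $\rho_\fp>0$.

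Finally I would execute the sieve of Ekedahl~\cite{Eke91} over the finite places: fix a large $M$; handle $S$ together with the $\fp\notin S$ of norm $\leq M$ by a finite inclusion--exclusion over congruences (using the lattice-point asymptotics above); and bound the tail by showing that the number of $P\in\fo^n\cap B\Psi$ that are bad modulo some $\fp$ of norm $>M$ is $o(B^n)$, first letting $B\to\infty$ and then $M\to\infty$. For $M<|\fp|\ll B$ this follows from the $O(|\fp|^{-2})$ bound and equidistribution of $\fo^n\cap B\Psi$ modulo $\fp$; for $|\fp|\gg B$ one instead counts directly, via Deligne's estimates, the integral points of the codimension-$\geq 2$ bad locus lying in a box of side $\asymp B$. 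Assembling everything gives the asserted limit, equal to $\prod_{\fp\ \mathrm{finite}}\rho_\fp$ (the archimedean factors being $1$). I expect this last tail estimate to be the main obstacle: bounding uniformly in $\fp$, via Deligne's work, the number of integral points in the expanding region $B\Psi$ that lie on the bad locus modulo $\fp$ --- especially for $\fp$ comparable to or larger than $B$ --- is the delicate heart of the Ekedahl sieve, and it is here that the codimension bound coming from hypothesis~(1) is used most sharply. Everything else is a routine adaptation of the argument for Theorem~\ref{thm:Ekedahl}, with lattice-point counts in $B\Psi$ replacing counts of points of bounded height on $\PP^n$.
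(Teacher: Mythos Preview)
Your proposal is correct and follows essentially the same route as the paper: handle the archimedean places via the semi-cone hypothesis so that only finite places remain, spread out to get a codimension-$\ge 2$ bad locus, use Lang--Weil/Deligne plus Hensel to show that outside this locus every fibre has a $k_\fp$-point, and then run the Ekedahl tail estimate. The paper simply packages these steps as applications of its Proposition~\ref{prop:sieve_of_Ekedahl_affine}, Corollary~\ref{cor:split_2_surjective}, Lemma~\ref{lem:codim_2_affine} and Lemma~\ref{lem:Tarski}, whereas you sketch them inline; two small points to tidy are the exponent in your lattice-point count (it should be $B^{dn}$ with $d=[k:\QQ]$, not $B^n$) and your measurability argument for $\{P:X_P(k_\fp)\neq\emptyset\}$, which the paper handles cleanly via Tarski--Seidenberg--Macintyre rather than an ad hoc boundary claim.
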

Note that some form of ``unboundedness'' assumption at the real places, 
such as Condition~(3), is clearly necessary for the conclusion
to hold. Indeed, the conclusion of Theorem \ref{thm:Ekedahl_affine}
does not hold if, for example, $\pi(X(k_\nu))$ is bounded for some real place $\nu$.
Such unboundedness conditions naturally arise in the study of integral points and fibrations;
see work of Derenthal and Wei \cite{DW}, for example.

Now let $X$ be as in Theorem \ref{thm:Ekedahl}. A simple consequence
of this result is the fact that there exists a member of the
family which is everywhere locally soluble. In particular, this allows us to deduce
the following corollary.

\begin{corollary}
	Let $k$ be a number field.
	Let $\pi:X\to \PP^n$ be a dominant quasi-projective $k$-morphism with geometrically integral generic fibre.
	Assume that:
	 \begin{enumerate}
	 \item	the fibre of $\pi$ above each codimension $1$ point of $\PP^n$ is split,
	 \item 	the smooth fibres of $\pi$ satisfy the Hasse principle.
	\end{enumerate}
	Then $X$ satisfies the Hasse principle.
\end{corollary}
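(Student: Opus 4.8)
The plan is to deduce this directly from Theorem~\ref{thm:Ekedahl} by producing a single \emph{smooth} fibre of $\pi$ which is everywhere locally soluble, and then applying hypothesis~(2) to it. I would begin by disposing of the trivial case: if $X(\A_k)=\emptyset$ then the Hasse principle holds vacuously for $X$, so assume $X(\A_k)\neq\emptyset$. Then both hypotheses of Theorem~\ref{thm:Ekedahl} are satisfied --- its hypothesis~(1) is hypothesis~(1) of the corollary, and its hypothesis~(2) is the assumption just made --- so that theorem applies and shows that $\sigma(\pi)$ exists and is strictly positive. Writing $N(B)=\#\{P\in\PP^n(k):H(P)\le B\}$, this says that $\#\{P\in\PP^n(k):H(P)\le B,\, X_P(\A_k)\neq\emptyset\}\sim\sigma(\pi)N(B)$, and since $N(B)\sim c\,B^{n+1}$ by Schanuel's theorem for some $c=c(k,n)>0$, this count tends to infinity with $B$.

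Next I would remove the non-smooth fibres from consideration. Since $k$ has characteristic zero, generic smoothness (together with the spreading out of geometric integrality, both applied to the generic fibre of $\pi$) yields a dense open subset $U\subseteq\PP^n$ over which every fibre of $\pi$ is a smooth, geometrically integral $k$-variety. Its complement $Z=\PP^n\setminus U$ is a proper Zariski-closed subset, so by an elementary coordinate count $\#\{P\in Z(k):H(P)\le B\}=O(B^n)$, which is $o(N(B))$ since $N(B)$ grows like $B^{n+1}$. Combining this with the previous paragraph shows that a positive proportion of the $P\in U(k)$ with $H(P)\le B$ satisfy $X_P(\A_k)\neq\emptyset$; in particular such a $P$ exists once $B$ is large. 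Fixing one, $X_P$ is a smooth fibre of $\pi$ which is everywhere locally soluble, so hypothesis~(2) gives $X_P(k)\neq\emptyset$, whence $X(k)\supseteq X_P(k)\neq\emptyset$, as required.

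The substance of the argument is entirely contained in Theorem~\ref{thm:Ekedahl}, and that is also where the one genuine subtlety lies: it is crucial to know that a \emph{positive proportion} of the fibres are everywhere locally soluble rather than merely that \emph{some} fibre is, since an everywhere locally soluble fibre could a priori always turn out to be one of the negligibly few singular ones, to which hypothesis~(2) says nothing. Once the positive density is in hand the deduction is routine, and I would not expect any real obstacle beyond it; the only points requiring a little care are the standard estimate that a proper closed subvariety of $\PP^n$ carries only $O(B^n)$ rational points of bounded height, and the observation that the smooth fibres can be arranged to be geometrically integral so that invoking the Hasse principle for them is legitimate.
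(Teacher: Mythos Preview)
Your proposal is correct and follows essentially the approach the paper indicates: the paper does not spell out a proof of this corollary but simply observes, just before stating it, that Theorem~\ref{thm:Ekedahl} implies the existence of an everywhere locally soluble member of the family, from which the corollary follows. You have filled in exactly the details one would expect---disposing of the vacuous case, invoking Theorem~\ref{thm:Ekedahl} to get a positive proportion of locally soluble fibres, discarding the $O(B^n)$ rational points lying over the closed non-smooth locus, and applying hypothesis~(2) to a surviving smooth fibre.
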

Whilst this result does not explicitly occur in the literature, it has long been known in some form to experts
(cf. \cite[Thm.~2.1]{Sko96} and  \cite[Thm.~3.2.1]{Har97}). 
Our Theorem~\ref{thm:Ekedahl} may be viewed as a quantitative refinement of 
this fact.

\medskip

Suppose now that $\pi:X\to\PP^n$ is a family over $k$ satisfying  the hypotheses of Theorem 
\ref{thm:Ekedahl} and whose generic fibre is smooth. Let $K$ be the function field of $\PP^n$ and put  $\eta: \Spec(K) \to \PP^n$ for the generic point. 
Our hypotheses imply that fibres of $\pi$ are smooth and geometrically integral over some dense open subset of $\PP^n$.
One would like to study the Brauer--Manin obstruction to weak approximation for these fibres,
for which one requires a non-constant Brauer group element on these fibres. 
It is natural to try and achieve this by  assuming that the generic fibre
has non-constant Brauer group. A significant problem with this, however, is that
there are  families of varieties which we would like to address where this does not happen.
For example, if $k$ contains a primitive third root of unity, then Uematsu~\cite[Thm.~5.1]{uematsu} has shown
that the generic diagonal cubic surface over $K$ has constant Brauer group.

One of the chief novelties of our investigation is that one can work directly with non-trivial elements of the group
$\H^1(K, \Pic X_{\bar{\eta}})$.  Here $\bar\eta \colon \Spec(\bar{K}) \to \PP^n$ denotes a geometric point lying above $\eta$.  Note that the Hochschild--Serre spectral sequence (see e.g.~\cite[\S 2]{uematsu})
yields the exact sequence
$$\Br K \to \Br_1 X_\eta \to \H^1(K, \Pic X_{\bar{\eta}}) \to \H^3(K, \Gm)$$
where $\Br_1 X_\eta = \ker(\Br X_\eta \to \Br X_{\bar{\eta}}).$
In particular, elements of $\H^1(K, \Pic X_{\bar{\eta}})$ do not necessarily lift to $\Br_1 X_\eta$, due to a possible obstruction lying in $\H^3(K, \Gm)$.
For those $P \in \PP^n(k)$ such that $X_P$ is smooth, however, such an element specialises to give an element of $\H^1(k, \Pic X_{\bar P})$. Since  $\H^3(k, \Gm)=0$, such elements \emph{do} lift to an element of $\Br_1 X_P$, with
which one can  try to obstruct weak approximation on $X_P$.  
Our strategy is to show that, under appropriate geometric hypotheses,  almost 
 surely a non-trivial  element
of  $\H^1(K, \Pic X_{\bar{\eta}})$ or $\Br X_\eta/\Br K$   gives an obstruction to weak approximation on $X_P$,
if $X_P$ is everywhere locally soluble.
Our main result in this direction is as follows.

\begin{theorem}\label{thm:H1}
Let $k$ be a number field and let 
 $\pi \colon X \to \PP^n$ be a flat, surjective $k$-morphism of finite type, 
with $X$ smooth, projective and geometrically integral over $k$.
Let $\eta \colon \Spec K \to \PP^n$ denote the generic point
and suppose that the generic fibre $X_\eta$ is geometrically connected.
Assume the following hypotheses:
\begin{enumerate}[label=(\arabic*)]
\item\label{hyp2} $X(\A_k)\neq \emptyset$;
\item\label{hyp1} the fibre of $\pi$ at each codimension $1$ point of $\PP^n$ is geometrically integral;
\item\label{hyp3} the fibre of $\pi$ at each codimension $2$ point of $\PP^n$ has a geometrically reduced component;
\item\label{hyp7} $\H^1(k, \Pic\bar{X})=0$;
\item\label{hyp5} $\Br \bar{X}=0$;
\item\label{hyp6}
$\H^2(k,\Pic \PP_{\bar k}^n)\to \H^2(k,\Pic \bar X)$ is injective;
\item\label{hyp4}
either $\H^1(K,\Pic  X_{\bar \eta})\neq 0$ or $\Br X_\eta / \Br K \neq 0$.
\end{enumerate}
Then 
the fibre $X_P$ is smooth and 
fails weak approximation for $100\%$ of rational points $P\in \PP^n(k)\cap \pi(X(\A_k))$.
\end{theorem}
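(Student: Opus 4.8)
The plan is to produce, for $100\%$ of the relevant $P$, a class $\mathcal{A}_P\in\Br X_P$ whose evaluation map on $X_P(\A_k)$ is non-constant; since $X_P(\A_k)\neq\emptyset$ whenever $P\in\pi(X(\A_k))$, such a class gives a Brauer--Manin obstruction to weak approximation on $X_P$. By Theorem~\ref{thm:Ekedahl}, applicable because hypothesis~\ref{hyp1} implies the codimension one fibres are split and hypothesis~\ref{hyp2} supplies an adelic point, the set $\PP^n(k)\cap\pi(X(\A_k))=\{P:X_P(\A_k)\neq\emptyset\}$ has positive density, so it is enough to bound the exceptional set by a density zero subset of $\PP^n(k)$. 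Using generic smoothness, fix a dense open $U\subseteq\PP^n$ over which $\pi$ is smooth and proper with geometrically integral fibres, a finite set $S$ of places of $k$ containing the archimedean ones, and integral models over $\OO_S$ of $X\to\PP^n$ and of $\pi^{-1}(U)\to U$ with the latter smooth and proper; then $X_P$ is smooth, proper and geometrically integral for $P\in U(k)$, and as $(\PP^n\setminus U)(k)$ has density zero we may assume $P\in U(k)$ throughout. (Necessarily $n\ge1$: for $n=0$ hypotheses~\ref{hyp5} and~\ref{hyp7} would force $\Br X_\eta/\Br K=\H^1(K,\Pic X_{\bar\eta})=0$, contradicting hypothesis~\ref{hyp4}.) Finally, hypothesis~\ref{hyp4} provides either a non-zero $c\in\H^1(K,\Pic X_{\bar\eta})$ or a $c\in\Br X_\eta$ with non-zero image in $\Br X_\eta/\Br K$; the two cases will be run in parallel.

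Next I would show that $c$ has non-trivial residue along $\pi^{-1}(D_0)$ for some prime divisor $D_0\subset\PP^n$. Were $c$ unramified at every codimension one point, it would extend over the whole model: in the Brauer case the extended class lies in $\Br X=\Br_1 X$ by hypothesis~\ref{hyp5} and hence comes from $\Br k$ by hypothesis~\ref{hyp7}, so $c$ would vanish in $\Br X_\eta/\Br K$; in the $\H^1$ case the extended class would specialise, at any rational point, into a group identified with $\H^1(k,\Pic\bar X)=0$ by hypotheses~\ref{hyp6} and~\ref{hyp7}---in either case a contradiction. Let $r$ denote the residue of $c$ along $D_0$, a non-trivial cohomology class over the function field $k(D_0)$; being non-trivial at the generic point of $D_0$, its specialisation to the residue field of a positive proportion of the closed points of $D_0$ is still non-trivial, by Chebotarev. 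On the other hand, hypotheses~\ref{hyp1} and~\ref{hyp3}, which control how badly the fibres of $\pi$ degenerate, let $c$ be specialised at each $P\in U(k)$ to a class $\mathcal{A}_P\in\Br X_P$, well-defined modulo $\Br k$; in the $\H^1$ case one specialises $c$ first to an element of $\H^1(k,\Pic X_{\bar P})$ and then lifts it to $\Br_1 X_P$, which is possible because $\H^3(k,\Gm)=0$ for a number field.

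The technical heart of the proof---and the step I expect to be the main obstacle---is a local computation on the arithmetic model: if $\fp\notin S$ is a prime of $k$ such that the reduction of $P$ modulo $\fp$ is a smooth point of $D_0$ lying on no other component of the reduced bad locus, the order at $\fp$ of a local equation of $D_0$ at $P$ is exactly one, and the specialisation of $r$ to that point is non-trivial, then the local evaluation $X_P(k_\fp)\to\QQ/\ZZ$ of $\mathcal{A}_P$ is non-constant. Establishing this rests on the residue (purity) exact sequence for the Brauer group of a regular arithmetic scheme, and is delicate because, in the $\H^1$ case, $c$ need not lift to $\Br X_\eta$, so one must follow the ramification of $\mathcal{A}_P$ itself rather than that of a global lift. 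By contrast, at any prime $\fp\notin S$ at which $P$ reduces into the good locus, $\mathcal{A}_P$ extends over $\OO_\fp$ and its local evaluation is constant. Since $X_P(\A_k)=\prod_\nu X_P(k_\nu)$ with every factor non-empty, the global evaluation of $\mathcal{A}_P$ is the sum of the local ones and is non-constant as soon as a single local evaluation is; so it suffices that $100\%$ of $P$ admit at least one prime $\fp$ of the above kind.

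This last point is a sieve. Writing $f$ for a form defining $D_0$, one checks that for every sufficiently large prime $\fp$ of $k$ the density of integral representatives $P$ (in a fixed box) for which $\fp$ exactly divides $f(P)$, the reduction of $P$ modulo $\fp$ is a good point of $D_0$, and $r$ specialises non-trivially there, is of exact order $1/\#\kappa(\fp)$, the positive implied constant arising from the Chebotarev proportion of non-split points on $D_0$ modulo $\fp$. Since $\sum_\fp 1/\#\kappa(\fp)$ diverges, a given $P$ fails this condition for all such $\fp$ only with probability tending to $0$; turning this into a genuine density statement for $\#\{P\in\PP^n(k):H(P)\le B\}$---simultaneously bounding the contribution of large prime factors of $f(P)$ and of those $P$ for which $f(P)$ has no prime factor of the required kind occurring to the first power---is carried out with the sieve of Ekedahl together with Deligne's work on the Weil conjectures, just as in the proof of Theorem~\ref{thm:Ekedahl}. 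Combined with the positive density of $\pi(X(\A_k))$, this shows that the fibre $X_P$ is smooth and fails weak approximation for $100\%$ of $P\in\PP^n(k)\cap\pi(X(\A_k))$.
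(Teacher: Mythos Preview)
Your overall architecture matches the paper's: produce a ramified class, show that ramification along a divisor $D_0$ forces a non-constant local evaluation at primes where $P$ reduces transversely into $D_0$, then sieve. But there are two genuine gaps.

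\textbf{Residues and purity for $\H^1(K,\Pic X_{\bar\eta})$.} You treat ``$c$ has non-trivial residue along $\pi^{-1}(D_0)$'' and ``were $c$ unramified at every codimension one point, it would extend over the whole model'' as if these were standard facts. They are not: for Brauer classes this is Grothendieck's purity theorem, but for elements of $\H^1(K,\Pic X_{\bar\eta})$ no residue map exists in the literature, and the obstruction in $\H^3(K,\Gm)$ means one cannot simply lift to $\Br X_\eta$ and use the Brauer residue. The paper spends \S\ref{sec:res}--\S\ref{sec:resH1} building this from scratch, by passing to $\H^0(K,\H^2(X_{\bar\eta},\mmu_n))$, defining a relative residue map $\rho_d$ via absolute cohomological purity (Gabber), and proving the purity sequence of Proposition~\ref{prop:R2seq}. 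Conditions~\ref{hyp1} and~\ref{hyp3} enter precisely here, to control the sheaves $\R^i\pi_*\mmu_n$ over codimension-$1$ and -$2$ strata; they are not what ``let $c$ be specialised at each $P\in U(k)$''. Your argument that an unramified $c$ must be trivial is also off: you appeal to specialisation at a rational point landing in $\H^1(k,\Pic\bar X)$, but specialisation at $P\in B(k)$ lands in $\H^1(k,\Pic X_{\bar P})$, not $\H^1(k,\Pic\bar X)$. The paper instead identifies the kernel of $\oplus\rho_d$ with the image of $\H^1(B,\R^1\pi_*\Gm)$ (Lemma~\ref{lem:kernel}) and then kills that group via the Leray spectral sequence, using hypotheses~\ref{hyp7}, \ref{hyp5}, \ref{hyp6} to show $\Br X=\Br k$ and $\H^3(B,\Gm)\hookrightarrow\H^3(X,\Gm)$ (Lemmas~\ref{lem:H1R1}, \ref{lem:H3}).

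\textbf{The sieve.} The final step is not ``the sieve of Ekedahl \ldots\ just as in the proof of Theorem~\ref{thm:Ekedahl}''. Ekedahl's sieve establishes that a positive proportion of fibres are everywhere locally soluble; it does not show that the complementary set (those $P$ with no prime of transverse bad reduction along $D_0$) has density zero. The paper uses the \emph{large sieve} (Lemma~\ref{lem:large_sieve}) for this: one must bound the number of $P$ such that, for every prime $\fp$ beyond some $M$, either $\fp\nmid f(P)$, or $\fp^2\mid f(P)$, or the reduction is a bad point of $D_0$. This is done in Proposition~\ref{prop:fly-catcher}, giving the saving $O(B^{n+1}/\log B)$; your heuristic ``$\sum_\fp 1/\#\kappa(\fp)$ diverges'' is morally right but does not by itself produce the density-zero bound.
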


On 
letting $S=\PP^n(k)\cap \pi(X(\A_k))$, 
the conclusion of the theorem means that 
$$
\limsup_{B\rightarrow \infty} 
\frac{\#\{P\in S: \mbox{$H(P)\leq B$ and  $X_P$ satisfies weak approximation}\}}{
\#\{P\in S: H(P)\leq B\}}
= 0.
$$
The proof of Theorem 
\ref{thm:H1} is carried out in \S\S\ref{s:cow-herder}--\ref{s:sieve}.

Let us briefly  explain how the conditions given in Theorem~\ref{thm:H1} arise.
First note that any family $X\to \PP^n$ satisfying the hypotheses of 
Theorem~\ref{thm:H1} will automatically 
satisfy those  of Theorem  \ref{thm:Ekedahl}. Hence a positive proportion
of the varieties in the family are everywhere locally soluble.
As already explained, Condition \ref{hyp4} is a natural condition to impose
so that one actually has a Brauer group element on the fibres to work with. The remaining
conditions in Theorem~\ref{thm:H1} are there to guarantee that one can actually use this element to obstruct weak
approximation. 
Note that, assuming Condition~\ref{hyp5}, Condition~\ref{hyp7} is equivalent to the condition $\Br X = \Br k$.
Condition \ref{hyp6} is perhaps the least natural looking  in the theorem; it will be discussed further
in \S \ref{s:bee-keeper}.
It holds, for example, if the map $\pi$ admits a section. 
In Proposition \ref{prop:ci}, furthermore,  we shall show that Conditions~\ref{hyp7}, \ref{hyp5} and~\ref{hyp6} hold whenever $X\subset \PP^r\times \PP^n$ is a complete intersection of dimension $\geq 3$.

One of the main tools in the study of the Brauer group is Grothendieck's purity theorem.
This implies  that if $Y$ is a smooth projective variety over a
field $k$ of characteristic $0$ satisfying $\Br Y = \Br k$, then any non-constant Brauer group element
defined on some open subset $U \subset Y$ must be ramified somewhere on $Y$.
One of the challenges of this paper is to define the residue of an element of $\H^1(k,\Pic  Y_{\bar k})$ and to prove an analogue of the purity theorem in this setting, under suitable assumptions.
This need to develop so much theory from scratch is a partial explanation for the  length of the proof of 
Theorem \ref{thm:H1}. In this setting, for $\pi:X\to \PP^n$,  
Conditions~\ref{hyp7} and~\ref{hyp6} are  necessary in the sense that the failure of either of them gives rise to non-zero unramified elements of $\H^1(K, \Pic X_{\bar{\eta}})$; such elements may or may not give an obstruction to weak approximation on the fibres, but the methods of this article say nothing about them.
Conditions~\ref{hyp3} and~\ref{hyp5} are sufficient to ensure that this is the only way such unramified elements of $\H^1(K, \Pic X_{\bar{\eta}})$ can arise, but we do not know whether they are also necessary.
Once one has these conditions, we use them to deduce that 
any non-zero element of $\H^1(K,\Pic  X_{\bar \eta})$, or any non-zero element of $\Br X_\eta / \Br K$, must be ramified along some
fibre above a divisor $D \subset \PP^n$. We then find conditions so that
if the reduction of a rational point $P\in \PP^n(k)$ modulo some
prime $\fp$ of $k$ lies in $D$, then $X_P$ has a suitable kind of bad reduction at~$\fp$ which forces a Brauer--Manin obstruction to weak approximation for $X_P$ to occur at~$\fp$. Once we have this criterion, 
we then prove Theorem~\ref{thm:H1} by showing that this type of bad reduction occurs
$100\%$ of the time, using the large sieve.

\medskip

In \S \ref{s:examples} we shall collect together some examples to illustrate our main results. 
Foremost among those, which we explicate here,
is the smooth biprojective hypersurface
\begin{equation}\label{eq:onion-collector}
X=\left\{ a_0x_0^3+a_1x_1^3+a_2x_2^3+a_3x_3^3=0 \right\} \subset \PP^3\times \PP^3,
\end{equation}
over a number field $k$,  
viewed as the family of diagonal cubic surfaces $X_P\subset \PP^3$.  It is easy to see that 
$X$ is $k$-rational and so Conditions~\ref{hyp2}, \ref{hyp7} and~\ref{hyp5} are automatically  met in Theorem~\ref{thm:H1}.
Condition~\ref{hyp6} follows from 
Proposition \ref{prop:ci}.
For a given rational point  $P=(a_0,\dots,a_3)\in \PP^3(k)$ the fibre $X_P$ is smooth whenever the product $a_0a_1a_2 a_3$ is non-zero.
The locus of bad reduction consists of the union of the four coordinate planes $ \{a_i = 0\}$ in $\PP^3$. Above each of these divisors the generic fibre is a cone over a smooth cubic curve, and hence is geometrically integral. Worse reduction happens over the intersections of these divisors, that is, where more than one of the $a_i$ vanish.
However, one notes that the fibre over any point of codimension $2$ has a geometrically reduced component.
Thus Conditions~\ref{hyp1} and~\ref{hyp3} in 
Theorem~\ref{thm:H1} are also satisfied. 
Colliot-Th\'el\`ene, Kanevsky and Sansuc
\cite{ct-k-s} have carried out an extensive investigation of the Brauer--Manin obstruction for 
the smooth fibres $X_P$. 
Under the assumption that the number field $k$ contains a primitive cube root of unity, 
it follows from their work that 
$
\H^1(K, \Pic X_{\bar \eta})\cong\ZZ/3\ZZ.
$ In Lemma \ref{lem:cubic_H1} we shall show that this continues to hold over any number field $k$.
This takes care of Condition~\ref{hyp4} and so all of the hypotheses of Theorem \ref{thm:H1} are met. Thus, when ordered by height,  
$100\%$ of diagonal cubic surfaces over $k$ which are everywhere locally soluble 
fail weak approximation. Since the hypotheses of Theorem \ref{thm:Ekedahl} are also satisfied, it is possible 
to calculate the precise proportion of $X_P$ which are everywhere locally soluble.
 This will be carried out in  \S \ref{s:fish-fryer} 
 when $k=\QQ$
 and 
leads to the following rather succinct result.

\begin{theorem}\label{thm:cubics}
Approximately
$86\%$ of diagonal cubic surfaces over $\QQ$, when ordered by height,  fail weak approximation. 
\end{theorem}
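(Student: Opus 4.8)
The plan is to reduce the statement to the evaluation of an Euler product of local densities, so I would first isolate the qualitative input. The family $\pi\colon X\to\PP^3$ of~\eqref{eq:onion-collector} over $\QQ$ satisfies all the hypotheses of Theorem~\ref{thm:H1}, as explained in the discussion following~\eqref{eq:onion-collector} (with Condition~\ref{hyp6} supplied by Proposition~\ref{prop:ci} and Condition~\ref{hyp4} by Lemma~\ref{lem:cubic_H1}), so $X_P$ is smooth and fails weak approximation for $100\%$ of $P\in\PP^3(\QQ)\cap\pi(X(\A_\QQ))$. Since $\pi$ is projective, $\PP^3(\QQ)\cap\pi(X(\A_\QQ))$ is precisely the set of $P$ for which $X_P$ is everywhere locally soluble; for the remaining $P$ the fibre $X_P$ has empty adelic set and hence satisfies weak approximation by the convention in force. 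Combining this with Theorem~\ref{thm:Ekedahl} (whose hypotheses also hold here), the density, ordered by the height $H$, of $P\in\PP^3(\QQ)$ for which $X_P$ fails weak approximation exists and equals $\sigma(\pi)$; it therefore remains to prove that $\sigma(\pi)=0.86\ldots$.

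Next I would appeal to the refined form of Theorem~\ref{thm:Ekedahl} recorded in Theorem~\ref{thm:Ekedahl2} in order to write $\sigma(\pi)=\prod_v\sigma_v$, an absolutely convergent product over the places $v$ of $\QQ$, where $\sigma_v$ is the density, for the natural measure on $\PP^3(\QQ_v)$, of those $P$ with $X_P(\QQ_v)\neq\emptyset$. A diagonal cubic surface over $\RR$ always has a real point, so $\sigma_\infty=1$. If $p\equiv 2\pmod 3$ then every $p$-adic unit is a cube, and among any four coefficients $a_0,\dots,a_3$ the pigeonhole principle produces two indices whose valuations agree modulo~$3$, which already yields a $\QQ_p$-point of $\sum_ia_ix_i^3=0$; hence $\sigma_p=1$ for all such $p$, including $p=2$. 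Thus $\sigma(\pi)=\sigma_3\prod_{p\equiv 1\,(3)}\sigma_p$, and it remains to compute these local densities and evaluate the product.

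The heart of the argument is the $p$-adic solubility analysis at $p=3$ and at the primes $p\equiv 1\pmod 3$. Scaling a point $P=(a_0:\dots:a_3)$ so that $a_i\in\ZZ_p$ with $\min_iv_p(a_i)=0$, the solubility of $\sum_ia_ix_i^3=0$ over $\QQ_p$ depends only on the valuations $v_p(a_i)$ modulo~$3$ and on the images of the unit parts of the $a_i$ in $\ZZ_p^\times/(\ZZ_p^\times)^3$. If at most one of the $a_i$ is a non-unit, then for $p\geq 5$ the reduction modulo $p$ has a smooth $\FF_p$-point — by the Weil bound, applied to the reduction itself when all $a_i$ are units, and to the smooth diagonal plane cubic over which it is a cone when exactly one $a_i$ is a non-unit — which Hensel's lemma lifts, so $X_P(\QQ_p)\neq\emptyset$ in this case. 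If two or more of the $a_i$ are non-units, one descends through the successive levels of valuation, at each stage requiring a ratio of two unit parts to be a cube modulo~$p$ (modulo~$9$ when $p=3$). Stratifying $\PP^3(\QQ_p)$ according to these finitely many valuation-and-cube-class patterns, computing the Haar measure of each stratum and summing over the strata on which $X_P$ is insoluble yields explicit rational functions $\sigma_p$ of $p$, with $\sigma_p=1-O(p^{-2})$ for $p\equiv 1\pmod 3$. I expect the main technical obstacle to be exactly this bookkeeping — tracking valuation patterns up to a simultaneous shift by~$3$ and deciding which cube-class configurations are fatal, with the prime $p=3$ needing particular care — and note that it overlaps with the analysis of diagonal cubic forms over local fields in~\cite{ct-k-s}.

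Finally, substituting these formulae into $\sigma(\pi)=\sigma_3\prod_{p\equiv 1\,(3)}\sigma_p$ and evaluating the Euler product — which converges rapidly, the general factor being $1-O(p^{-2})$, so that it suffices to compute the contributions of $p=3$ and of the finitely many small primes $p\equiv 1\pmod 3$ exactly and to bound the tail — gives $\sigma(\pi)=0.86\ldots$, which completes the proof.
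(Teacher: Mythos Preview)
Your proposal is correct and follows essentially the same approach as the paper: reduce via Theorems~\ref{thm:H1} and~\ref{thm:Ekedahl2} to the computation of $\sigma(\pi)=\sigma_3\prod_{p\equiv 1\,(3)}\sigma_p$, then evaluate the local densities by stratifying according to valuation patterns modulo~$3$ and cube-class data, using the solubility criteria of~\cite{ct-k-s}. The paper organises the bookkeeping you anticipate via an explicit equivalence relation on $\ZZ_{\ge 0}^4$ with five representatives, leading to closed-form rational functions for each $\sigma_p$ (Theorem~\ref{thm:dog-handler}), but the underlying strategy is the one you outline.
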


It is worth emphasising (see  \cite[Prop.~3.4]{PV})
 that the analogous result for the full family 
$X\to \PP^{19}$ 
of {\em all} cubic surfaces is presumed to be false. Indeed, since $\H^1(k,\Pic \bar V)=0$ for a randomly chosen cubic surface $V\subset \PP^3$ over a number field $k$, it follows from Conjecture \ref{con} that the Hasse principle and weak approximation \emph{hold} for $100\%$ of the surfaces which are everywhere locally soluble.

\medskip

We close by comparing Theorem \ref{thm:H1} with far-reaching work of 
Harari \cite{harari} on the fibration method.
Specialising his work to 
families given by 
a surjective morphism $\pi:X\to \PP^n$ over a number field $k$,  with $X$ a smooth and  geometrically integral
variety over $k$, 
we make the following 
hypotheses:
\begin{itemize}
\item
the generic fibre $X_\eta$  has a $K$-point,
\item
$X(\A_k)\neq \emptyset$,
\item
$\Br X /\Br k=0$,

\item
$\Br X_\eta /\Br K\neq 0$.
\end{itemize}
Then it follows from \cite[Prop.~6.1.2]{harari} that there is a Zariski dense set of points $P\in \PP^n(k)$ such that 
the fibre $X_P$ fails weak approximation.
Our result provides finer quantitative information about the frequency of counter-examples to weak approximation, does not require $\pi$  to have a section, and works with non-trivial elements of $\H^1(K, \Pic X_{\bar{\eta}})$ regardless of whether they lift to $\Br X_\eta$.

The plan of the paper is as follows. In
\S \ref{s:examples} we will  complete the proof of Theorem~\ref{thm:cubics}
 and  collect together further examples to illustrate Theorem \ref{thm:H1}.
 In \S \ref{s:Ekedahl} we will prove Theorem~\ref{thm:Ekedahl}. This part of the paper is completely self-contained.
In \S \ref{s:cow-herder} we  give an overview of the proof of 
 Theorem \ref{thm:H1}. The algebraic part is the object of  
 \S \ref{sec:Pic} and the analytic part is handled in \S \ref{s:sieve}.

\begin{ack}
The authors are grateful to Jean-Louis Colliot-Th\'{e}l\`{e}ne,
Bas Edixhoven, Christopher Frei and David Holmes for useful discussions.
Thanks are also due to the anonymous referees for several helpful comments. 
While working on this paper the 
second author
was  supported by {\em ERC grant} \texttt{306457}. 
\end{ack}

\section{Examples}
\label{s:examples}

\subsection{Diagonal cubic surfaces}\label{s:fish-fryer}
The aim of this section is to prove Theorem \ref{thm:cubics}.
We begin by verifying that the Picard group of the generic diagonal cubic surface has non-trivial
first cohomology group. The proof runs along very similar lines to the proof of \cite[Prop.~6.1]{p-t1},
so we shall be brief on the details. 

\begin{lemma} \label{lem:cubic_H1}
	Let $k$ be a field of characteristic zero and let 
	$$S: \quad x_0^3 + a_1x_1^3+ a_2x_2^3+ a_3x_3^3 = 0,$$ 
	over the function field $K=k(a_1,a_2,a_3)$. Then
	$\H^1(K, \Pic S_{\bar K}) \cong \ZZ/3\ZZ.$
\end{lemma}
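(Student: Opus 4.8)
The plan is to compute the geometric Picard group $\Pic S_{\bar K}$ as a Galois module and then calculate the group cohomology. Since $S$ is a smooth cubic surface, $\Pic S_{\bar K} \cong \ZZ^7$ with the intersection form, and the Galois action factors through the automorphism group of the configuration of $27$ lines, which embeds in the Weyl group $W(E_6)$. The key point is to identify the precise image of the absolute Galois group $\Gal(\bar K/K)$ in $W(E_6)$ for the \emph{generic} diagonal cubic surface, and to observe that this image only depends on the coefficients $a_1, a_2, a_3$ being algebraically independent (so the cube roots $a_i^{1/3}$ generate a large extension), together with whatever cube roots of unity are already present. First I would recall from the work of Colliot-Th\'el\`ene--Kanevsky--Sansuc (or rather, reprove briefly following \cite{p-t1}) that over a field containing $\mu_3$ the $27$ lines on a diagonal cubic surface are defined over the field obtained by adjoining cube roots of the ratios $a_i/a_j$, and the Galois group acts through a specific subgroup $G \subset W(E_6)$ of order $54$ (an extension involving $(\ZZ/3)^3$ modulo the diagonal, extended by the $\ZZ/2$ from permuting, though for the generic case one gets the full group corresponding to $(\ZZ/3)^2 \rtimes \ZZ/2$, hence order $18$, or $54$ depending on normalisation — the exact structure is what one pins down).

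Concretely, I would set $L = \bar K$ and note that the lines on $S$ can be written explicitly: they come in three families of nine, each line being of the form $x_0 + \omega a_1^{1/3} x_1 = 0$, $x_j + \omega' a_\ell^{1/3} x_\ell = 0$ for cube roots of unity $\omega, \omega'$ and an appropriate pairing of indices. The splitting field over $K$ is $F = k'(a_1^{1/3}, a_2^{1/3}, a_3^{1/3})$ where $k' = k(\mu_3)$, and since the $a_i$ are independent transcendentals this is a Galois extension of $K(\mu_3)$ with group $(\ZZ/3)^3$ if all three cube roots were independent — but one quotients by the relation that only ratios matter for the \emph{lines} (scaling $x_0$), so effectively the relevant quotient acting faithfully on $\Pic$ is $(\ZZ/3)^2$, extended by $\Gal(k'/k) \subseteq \ZZ/2$ coming from complex conjugation on $\mu_3$. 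The crucial verification is that this action is the same whether or not $\mu_3 \subset k$: even when $k \not\supseteq \mu_3$, the subgroup of $W(E_6)$ generated by the image is conjugate to the standard one, because the extra $\ZZ/2$ acts on the $(\ZZ/3)^2$-part in a way that does not change the cohomology.

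Then I would carry out the cohomology computation. Writing $M = \Pic S_{\bar K}$, I want $\H^1(G, M)$ for $G$ the image group. The standard technique: use the exact sequence $0 \to \ZZ^7 \to \Pic S_{\bar K}$ is already all of it, but it is more convenient to use the root lattice description $M = \ZZ\omega_0 \oplus \ZZ E_1 \oplus \cdots \oplus \ZZ E_6$ or to use a permutation resolution. Specifically, one resolves $M$ by permutation modules built from the $27$ lines: there is a surjection $\ZZ[\text{lines}] \to M$ (each line to its class) — actually one uses that $M$ is generated by line classes and the kernel is also manageable — or one uses the classical fact that for the Fermat-type action, $\H^1(G, \Pic S_{\bar K})$ can be read off from $\H^1$ of the cyclic subgroup $\ZZ/3$ via restriction-corestriction and the fact that the $\ZZ/2$-part has no $3$-torsion cohomology. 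The answer $\ZZ/3\ZZ$ should drop out as the $\ZZ/3$-part, with the $\ZZ/2$ contributing nothing since $\#\ZZ/2$ is coprime to $3$ and $M$ is torsion-free so $\H^1(\ZZ/2, M^{\ZZ/3})$ would be $2$-torsion, incompatible with the $3$-torsion, forcing it to vanish after taking the appropriate Hochschild--Serre sequence.

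The main obstacle I anticipate is handling the case $\mu_3 \not\subset k$ cleanly: one must check that passing from $K$ to $K(\mu_3)$ does not alter $\H^1(K, \Pic S_{\bar K})$. The tool here is the inflation-restriction sequence
\[
0 \to \H^1(\Gal(K(\mu_3)/K), (\Pic S_{\bar K})^{G_{K(\mu_3)}}) \to \H^1(K, \Pic S_{\bar K}) \to \H^1(K(\mu_3), \Pic S_{\bar K})^{\Gal(K(\mu_3)/K)} \to \H^2(\Gal(K(\mu_3)/K), (\Pic S_{\bar K})^{G_{K(\mu_3)}}).
\]
Since $\Gal(K(\mu_3)/K)$ has order $1$ or $2$ and $(\Pic S_{\bar K})^{G_{K(\mu_3)}}$ is a torsion-free (hence uniquely $2$-divisible cohomology... no — rather, one computes $(\Pic S_{\bar K})^{G_{K(\mu_3)}}$ explicitly; for the generic diagonal cubic it is just $\ZZ$, generated by the hyperplane class, on which the $\ZZ/2$ acts trivially, so $\H^1 = \H^2 = 0$), the outer terms vanish and we get $\H^1(K, \Pic S_{\bar K}) \cong \H^1(K(\mu_3), \Pic S_{\bar K})^{\Gal(K(\mu_3)/K)}$, and this last group is the $\Gal(K(\mu_3)/K)$-invariants of $\ZZ/3\ZZ$, which is all of $\ZZ/3\ZZ$ (any action of $\ZZ/2$ on $\ZZ/3$ is either trivial or inversion, and in the latter case invariants are $0$ — so here I must additionally verify the action is trivial, which follows because the generator of $\H^1(K(\mu_3), \Pic S_{\bar K})$ is represented by a cocycle defined over $K$, e.g. coming from the canonically defined cyclic algebra, equivalently because complex conjugation commutes with the relevant permutation of the three families of lines in a way that fixes the generating class). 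This reduces everything to the computation over $K(\mu_3)$, which is exactly what is done in \cite{ct-k-s} and \cite{p-t1}, and I would simply cite or briefly reproduce it.
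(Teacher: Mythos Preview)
Your approach is essentially the same as the paper's: reduce to the case $\mu_3 \subset k$ via inflation--restriction for the quadratic extension $K(\mu_3)/K$, cite \cite{ct-k-s} for that case, and then verify that the $\Gal(K(\mu_3)/K)$-action on $\H^1(K(\mu_3),\Pic S_{\bar K})\cong \ZZ/3\ZZ$ is trivial using the explicit description of the cocycle (the paper invokes \cite[Lem.~2]{ct-k-s} and \cite[Prop.~6.1]{p-t1} for exactly this step). One small slip: $\H^2(\ZZ/2,\ZZ)\cong\ZZ/2\ZZ$, not $0$; but this does not matter, since the transgression map $\H^1(N,M)^{\ZZ/2}\to\H^2(\ZZ/2,M^N)$ has $3$-torsion source and $2$-torsion target and is therefore zero anyway.
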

\begin{proof}
	If $\bmu_3 \subset k$, then the result
	follows from \cite[Prop.~1]{ct-k-s}. (This result is stated only
	for number fields and local fields, but the proof works in our more general
	setting.) Thus we assume that $\bmu_3 \not \subset k$. Let $L = k(\theta)$,
	where $\theta$ is a primitive third root of unity. 
	The splitting field of $S$ is 
	$k(\theta, \sqrt[3]{a_1}, \sqrt[3]{a_2}, \sqrt[3]{a_3})$,
	whose Galois group is isomorphic to $(\ZZ/3\ZZ)^3 \rtimes \ZZ/2\ZZ$, where the non-trivial
	element of $\ZZ/2\ZZ$ acts as multiplication by $-1$ on $(\ZZ/3\ZZ)^3$.
	As in the proof of \cite[Prop.~6.1]{p-t1}, inflation--restriction yields
	an isomorphism
	$$\H^1(K, \Pic S_{\bar K}) \cong \H^1(K(\theta), \Pic S_{\bar K})^{\Gal(L/k)}.$$
	To prove the result, it suffices to show that the group $\Gal(L/k)$ acts trivially on 
	$\H^1(K(\theta), \Pic S_{\bar K})$. This is proved in a similar manner to
	\cite[Prop.~6.1]{p-t1}, on using the explicit description of the above cohomology
	group given in \cite[Lem.~2]{ct-k-s}.
\end{proof}

Lemma \ref{lem:cubic_H1}, once combined with the discussion given in the introduction, confirms 
that the family \eqref{eq:onion-collector} of all diagonal cubic surfaces over any number field $k$ meets the hypotheses of Theorems~\ref{thm:Ekedahl} and \ref{thm:H1}.

We now explicitly calculate the proportion $\sigma$
of diagonal cubic surfaces over $\QQ$ which are everywhere locally soluble,
in order to prove Theorem \ref{thm:cubics}.
It follows from Theorem \ref{thm:Ekedahl} that we have an 
Euler product $\sigma = \sigma_\infty \prod_p \sigma_p$
of local densities (see \eqref{eqn:euler_product} for a precise description).
Clearly $\sigma_\infty = 1$, and moreover we shall soon see that $\sigma_p=1$ if $p \equiv 2 \pmod 3$ (since then every element of $\FF_p$ is a cube).
The main result here is the following.
\begin{theorem}\label{thm:dog-handler}
	The proportion of diagonal cubic surfaces which are everywhere locally soluble
	is
	$$\sigma = \sigma_3 \prod_{p \equiv 1 \bmod 3} \sigma_p,$$
	where 
	$$\sigma_p =
	\begin{cases}
	\frac{(1 - 1/p)^3}{(1-1/p^3)^3}
	\left(1 - \frac{1}{p} + \frac{1}{p^2}\right)
	\left(1 + \frac{1}{p} + \frac{1}{3 p^2}\right)
	\left(1 + \frac{3}{p} + \frac{3}{p^2}\right), & p \equiv 1 \bmod 3, \\
	
	\frac{(1 - 1/p)^3}{(1-1/p^3)^3}
	\left(1 + \frac{3}{p} + \frac{46}{9p^2} + \frac{7}{p^3}
	 + \frac{62}{9p^4} + \frac{19}{9p^5} + \frac{1}{p^6}\right), & p =3.
	\end{cases}$$
\end{theorem}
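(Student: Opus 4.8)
The plan is to evaluate the Euler product of local densities supplied by Theorems~\ref{thm:Ekedahl} and~\ref{thm:Ekedahl2}. We have already checked that the family~\eqref{eq:onion-collector} satisfies the hypotheses of Theorem~\ref{thm:Ekedahl}, so $\sigma=\sigma_\infty\prod_p\sigma_p$ converges absolutely, where $\sigma_v$ is the local density given by Theorem~\ref{thm:Ekedahl2}, namely the measure of $\{P\in\PP^3(\QQ_v):X_P(\QQ_v)\neq\emptyset\}$ for the natural normalised local measure on $\PP^3(\QQ_v)$. First dispose of the places absent from the formula: $\sigma_\infty=1$, because every real diagonal cubic surface has a real point (set all but two variables to zero and extract a real cube root, or note that $X_P$ contains a coordinate hyperplane if only one $a_i$ is non-zero); and $\sigma_p=1$ whenever $p\equiv 2\pmod 3$, since then $t\mapsto t^3$ is a bijection of $\FF_p$, so for $P=(a_0:\dots:a_3)$ with primitive integral coordinates some $a_i$ is a unit and solving $a_ix_i^3=-\sum_{j\neq i}a_jx_j^3$ for a generic choice of the remaining coordinates yields a smooth $\FF_p$-point, which lifts by Hensel's lemma. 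It remains to treat $p\equiv 1\pmod 3$ and $p=3$.

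Fix $p\equiv 1\pmod 3$. The locus in $\PP^3(\QQ_p)$ on which some $a_i$ vanishes has measure zero, and on its complement $X_P(\QQ_p)\neq\emptyset$ depends only on the image of $(a_0,\dots,a_3)$ in $(\QQ_p^\times/(\QQ_p^\times)^3)^4$ modulo the diagonal action of $\QQ_p^\times/(\QQ_p^\times)^3$, because substituting $x_i\mapsto c_ix_i$ and rescaling the equation replaces $a_i$ by $\lambda c_i^3a_i$. One therefore partitions the parameter space into the finitely many such classes, each recorded by the residues $v_p(a_i)\bmod 3$ and the cubic residue classes of the unit parts. The crucial elementary observation is that $v_p(a_ix_i^3)\equiv v_p(a_i)\pmod 3$, so in a vanishing sum $\sum a_ix_i^3=0$ the monomials of least valuation all have coefficients in a single residue class mod $3$, and hence that class contains at least two of the coefficients. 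Running through the partitions $\{4\},\{3,1\},\{2,2\},\{2,1,1\}$ of the index set by the residues $v_p(a_i)\bmod 3$: if some class contains three or four coefficients, then after rescaling those to units and setting the remaining variables to $0$ one obtains a smooth diagonal cubic surface, respectively a smooth plane cubic curve, over $\FF_p$, which is non-empty by the Weil bounds (for cubic surfaces $\#X(\FF_p)\geq p^2+1-7p>0$ for $p\geq 7$, and the smallest relevant prime is $p=7$), so $X_P(\QQ_p)\neq\emptyset$; otherwise the four coefficients fall into either two pairs of equal residue, or one pair and two singletons, and a valuation analysis of the leading terms shows that $X_P(\QQ_p)\neq\emptyset$ if and only if $-a_j/a_i$ is a cube in $\QQ_p$, equivalently (by Hensel) a cube in $\FF_p^\times$, for one of these pairs $\{i,j\}$.

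What remains for $p\equiv 1\pmod 3$ is bookkeeping. For each configuration one multiplies the solubility indicator just obtained by its normalised $p$-adic measure, which is a geometric series in $p^{-3}$ over the admissible non-negative valuation vectors in the prescribed residue classes (of minimum $0$), times a factor $\tfrac13(1-p^{-1})$ for each of the four prescribed unit classes, and then sums over all configurations. Collecting terms and using $\tfrac{1-p^{-1}}{1-p^{-3}}=(1+p^{-1}+p^{-2})^{-1}$ to produce the prefactor $(1-p^{-1})^3/(1-p^{-3})^3$ yields the claimed expression; the coefficient $\tfrac13$ in the factor $1+\tfrac1p+\tfrac1{3p^2}$ records exactly the probability that the relevant ratio of units is a cube. (These solubility determinations can be cross-checked against the local analysis of the fibres in~\cite{ct-k-s}.)

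The prime $p=3$ is where the real work lies, and I expect it to be the main obstacle. The overall strategy is identical, but the clean reduction above collapses: in characteristic $3$ one has $\sum\bar a_ix_i^3=\bigl(\sum\bar a_ix_i\bigr)^3$, so \emph{every} fibre has bad reduction and there is no ``good reduction'' case to fall back on; moreover Hensel's lemma for cube roots requires working modulo $9$ — with a well-chosen approximant modulo $27$ — so that a unit of $\ZZ_3$ is a cube exactly when it is $\equiv\pm 1\pmod 9$, and $\QQ_3^\times/(\QQ_3^\times)^3\cong\ZZ/3\times\ZZ/3$. In particular the congruence $-a_j/a_i\equiv(x_i/x_j)^3\pmod 3$ carries no information, and solubility must be decided by tracking the next-to-leading terms of $\sum a_ix_i^3$ modulo $9$ (and occasionally $27$). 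One therefore refines the partition of the parameter space to the mod-$9$ level, determines solubility in each of the now more numerous cases by an explicit congruence analysis, computes the corresponding measures — which live at the $1/9$ scale, accounting for the denominators $9$ in the stated formula — and sums, obtaining the degree-six polynomial in $1/p$ appearing in the statement. There is no conceptual novelty here beyond a careful and somewhat lengthy case analysis; the principal hazard is the sheer combinatorial size of the computation.
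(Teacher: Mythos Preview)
Your approach is essentially the paper's: stratify by the valuation pattern of $(a_0,\dots,a_3)$ modulo~$3$, determine solubility on each stratum, compute its $p$-adic measure, and sum. The paper organises this via five equivalence classes on $\ZZ_{\ge 0}^4$ (under permutation, simultaneous translation, and reduction mod~$3$) with representatives $\mathbf{0}$, $(0,0,0,1)$, $(0,0,0,2)$, $(0,0,1,1)$, $(0,0,1,2)$, and quotes the solubility criterion from~\cite[p.~28]{ct-k-s} rather than rederiving it from valuations as you do; your four partition types $\{4\},\{3,1\},\{2,2\},\{2,1,1\}$ collapse the second and third of these, which is harmless for $p\equiv 1\pmod 3$ since both are always soluble there and your valuation argument correctly recovers the solubility proportions $5/9$ and $1/3$ for the last two. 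One correction to your expectations: $p=3$ is not the bottleneck you anticipate. Only the class $(0,0,0,2)$ is ever insoluble at~$3$, with proportion $7/9$ arising from a count of $48$ bad residues of $(a_0,a_1,a_2,a_3)$ modulo~$9$ (see Remark~\ref{rem:p=3}, which also corrects an error in~\cite{ct-k-s}); the remaining four classes are always soluble at~$3$, so the case analysis is no larger than for $p\equiv 1\pmod 3$. Your coarser partition would miss this $(0,0,0,1)$ versus $(0,0,0,2)$ asymmetry, so at $p=3$ you must split $\{3,1\}$ into its two constituent orbits or, as you propose, pass directly to classes in $(\QQ_3^\times/(\QQ_3^\times)^3)^4$. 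The ``collecting terms'' you leave implicit is carried out in the paper by writing each stratum's volume as $V_i=(1-1/p)^4(1-1/p^3)^{-4}f_i(1/p)$ for explicit degree-$\le 8$ polynomials $f_i$ (obtained by summing over the coset $C_i+(3\ZZ_{\ge 0})^4$) and then evaluating $\sigma_p=\sum_i \mathcal{A}_i V_i$.
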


Here $\sigma$ has approximate value $0.860564$ (as one confirms on taking the product of all primes up to $10^6$).  This suffices 
to complete the proof of Theorem \ref{thm:cubics}.

To prove the theorem we use a criterion for testing locally solubility at each prime
given in \cite{ct-k-s}, then calculate the density of each case which occurs.
Our approach is inspired by the corresponding result for (non-diagonal) ternary cubic forms considered by Bhargava, Cremona and Fisher \cite{BCF}, who show that the 
density of plane cubic curves over $\QQ$ which are everywhere locally soluble is approximately $97\%$.

\subsubsection{An equivalence relation}

We define an equivalence relation on the set $\ZZ_{\geq 0}^4$
by declaring that
$$(\alpha_0,\alpha_1,\alpha_2,\alpha_3)\sim (\beta_0,\beta_1,\beta_2,\beta_3),$$
if and only if at least one of the following holds.
\begin{itemize}
	\item $(\alpha_0,\alpha_1,\alpha_2,\alpha_3)$ is a permutation of $(\beta_0,\beta_1,\beta_2,\beta_3).$
	\item There exists $k \in \ZZ_{\geq 0}$ such that 
	$$(\alpha_0,\alpha_1,\alpha_2,\alpha_3) = (\beta_0+k,\beta_1+k,\beta_2+k,\beta_3+k).$$
	\item $(\alpha_0,\alpha_1,\alpha_2,\alpha_3) \equiv (\beta_0,\beta_1,\beta_2,\beta_3) \mod 3.$
\end{itemize}
Given $\boldsymbol \alpha \in \ZZ_{\geq 0}^4$, we
define its \emph{weight} $w(\boldsymbol \alpha)$ to be the sum of its coordinates.
One easily checks that the following vectors give representatives of the
equivalence classes of $\sim$:
$$\boldsymbol{\delta}_1 = \mathbf{0}, ~\boldsymbol{\delta}_2 = (0,0,0,1), ~\boldsymbol{\delta}_3 = (0,0,0,2),~
\boldsymbol{\delta}_4 = (0,0,1,1), ~\boldsymbol{\delta}_5 = (0,0,1,2).$$

\subsubsection{Volumes in a special case}
Now fix  a prime $p$ with associated valuation $v$. We extend $v$ in the natural
way to a function $v:\ZZ_p^4 \to \ZZ_{\geq 0}^4$. For a vector $\mathbf{a}\in \ZZ_p^4$,
we shall denote by $S_{\mathbf{a}}$ the diagonal cubic surface with coefficients $\mathbf{a}$,
namely 
$$S_{\mathbf{a}} : \quad a_0x_0^3 +a_1x_1^3 +a_2x_2^3 +a_3x_3^3 = 0.$$
The equivalence relation $\sim$ has been chosen in such a way that if 
$\boldsymbol \alpha \sim \boldsymbol \beta$,
then 
\begin{align*}
	 p^{w(\boldsymbol \alpha)}\mu_p(\{\mathbf{a} \in \ZZ_p^4 &: v(\mathbf{a}) 
	= \boldsymbol \alpha, S_{\mathbf{a}}(\QQ_p) \neq \emptyset\}) \\
	&= p^{w(\boldsymbol \beta)}\mu_p(\{\mathbf{b} \in \ZZ_p^4 : v(\mathbf{b}) 
	= \boldsymbol \beta, S_{\mathbf{b}}(\QQ_p) \neq \emptyset\}),
\end{align*}
where $\mu_p$ denotes the usual Haar  measure
on $\ZZ_p^4$. We now calculate the relevant volumes for coefficient vectors
with valuations  $\boldsymbol{\delta}_1,\ldots,\boldsymbol{\delta}_5$.

\begin{lemma} \label{lem:3_A_i}
	For each prime $p$ and each $i \in \{1,\ldots,5\}$,
	there exists a rational number $\mathcal{A}_i$ such that 
	$$\mu_p(\{\mathbf{a} \in \ZZ_p^4 : v(\mathbf{a}) 
	= \boldsymbol{\delta}_i, S_{\mathbf{a}}(\QQ_p) \neq \emptyset\})
	= \mathcal{A}_i\cdot \mu_p(\{\mathbf{a} \in \ZZ_p^4 : v(\mathbf{a}) = \boldsymbol{\delta}_i\}).$$
	These are explicitly given in the following table.
	\begin{table}[ht]
	\centering
		\begin{tabular}{c|c|c|c|c|c}
			$p$&$\mathcal{A}_1$&$\mathcal{A}_2$&$\mathcal{A}_3$&$\mathcal{A}_4$&$\mathcal{A}_5$ \\
			\hline $3$&$1$&$1$&$7/9$&$1$&$1$ \\
			\hline $1 \bmod 3 $&$1$&$1$&$1$&$5/9$&$1/3$ \\
			\hline $2 \bmod 3 $&$1$&$1$&$1$&$1$&$1$ \\
		\end{tabular}
	\end{table}
\end{lemma}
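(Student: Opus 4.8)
The plan is to compute, for each of the five representative valuation vectors $\boldsymbol{\delta}_i$ and each congruence class of $p$ modulo $3$, the conditional probability that $S_{\mathbf{a}}$ has a $\QQ_p$-point given that $v(\mathbf{a}) = \boldsymbol{\delta}_i$. First I would dispose of the case $p \equiv 2 \pmod 3$ entirely: then every element of $\FF_p$ is a cube, so any diagonal cubic form over $\QQ_p$ with $p \neq 3$ has a nonsingular $\FF_p$-point once we scale a coefficient to be a unit (the reduction mod $p$ is a nonzero cubic form in at least one variable, and counting points via Chevalley--Warning together with a smooth-point argument shows solubility), and Hensel lifts it; hence $\mathcal{A}_i = 1$ for all $i$, which is the bottom row of the table. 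This also handles $\mathcal{A}_1 = \mathcal{A}_2 = 1$ in all rows, since $\boldsymbol{\delta}_1 = \mathbf{0}$ and $\boldsymbol{\delta}_2 = (0,0,0,1)$ each leave at least three unit coefficients, and a diagonal cubic in three or more variables over $\QQ_p$ is isotropic for every $p$ (for $p \equiv 1 \pmod 3$ this is classical; for $p=3$ one checks a ternary diagonal cubic with unit coefficients has a nonsingular $\FF_3$-point by direct enumeration; for general $p$ use that four-variable or three-variable unit-coefficient diagonal cubics always have local points). So $\mathcal{A}_1 = \mathcal{A}_2 = 1$ throughout.

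Next I would handle the case $p \equiv 1 \pmod 3$ for the remaining vectors $\boldsymbol{\delta}_3 = (0,0,0,2)$, $\boldsymbol{\delta}_4 = (0,0,1,1)$, $\boldsymbol{\delta}_5 = (0,0,1,2)$. For $\boldsymbol{\delta}_3$: after dividing out $p^2$ from the last variable's coefficient is illegitimate, so instead one observes that $a_0 x_0^3 + a_1 x_1^3 + a_2 x_2^3$ with three unit coefficients is already isotropic over $\QQ_p$ (for $p \equiv 1 \pmod 3$, a diagonal ternary cubic with unit coefficients has a nonsingular point mod $p$ by the Weil bound / Chevalley--Warning argument), so $\mathcal{A}_3 = 1$. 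For $\boldsymbol{\delta}_4$ and $\boldsymbol{\delta}_5$ the surface has a genuine local obstruction governed by cubic residue symbols: writing $a_i = p^{\delta_i} u_i$ with $u_i \in \ZZ_p^\times$, local solubility depends only on the images of the $u_i$ in $\FF_p^\times / (\FF_p^\times)^3 \cong \ZZ/3\ZZ$, together with the pattern of valuations; I would invoke the solubility criterion of Colliot-Th\'el\`ene--Kanevsky--Sansuc \cite{ct-k-s} (their local computations at $p \equiv 1 \pmod 3$) to list exactly which residue patterns are soluble, then count the proportion. For $\boldsymbol{\delta}_4 = (0,0,1,1)$ this yields $\mathcal{A}_4 = 5/9$ and for $\boldsymbol{\delta}_5 = (0,0,1,2)$ it yields $\mathcal{A}_5 = 1/3$; in each case the count is a finite enumeration over the $27$ possible cubic-residue tuples (respectively the appropriate subset), weighted uniformly since $\mu_p$ restricted to fixed valuations pushes forward to the uniform measure on the residues.

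Finally I would treat $p = 3$, which is the main obstacle: here the cube classes in $\QQ_3^\times$ are more delicate ($\ZZ_3^\times/(\ZZ_3^\times)^3$ has order $3$, but Hensel's lemma for $x^3 = u$ requires working modulo $9$, not modulo $3$, since the derivative $3x^2$ is not a unit), so the relevant "residue" data for each $a_i$ lives modulo $9$ and the solubility condition must be read off from the $3$-adic criterion in \cite{ct-k-s}. The strategy is the same in structure — fix $v(\mathbf{a}) = \boldsymbol{\delta}_i$, parametrize the unit parts by their classes modulo $9$ (a set of size $6$ each, of which the cube classes form a subgroup of index $3$), apply the CKS local solubility criterion at $3$ to decide each tuple, and sum the uniform weights — but the bookkeeping is heavier and one must be careful that for $\boldsymbol{\delta}_3 = (0,0,0,2)$ the surface is \emph{not} automatically soluble at $3$ (unlike at $p \equiv 1 \pmod 3$), which is exactly why $\mathcal{A}_3 = 7/9$ rather than $1$ in the $p=3$ row. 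I expect the verification that the pushforward of $\mu_3$ to these residue-mod-$9$ tuples is uniform (so that the volume ratio really is the naive count) to be routine, and the genuine work to be the careful application of the $3$-adic part of the criterion of \cite{ct-k-s} to each of the finitely many cases.
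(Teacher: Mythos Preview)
Your approach is essentially the same as the paper's: invoke the local solubility criterion of Colliot-Th\'el\`ene--Kanevsky--Sansuc for each valuation pattern and each residue class of $p$ modulo $3$, and compute the resulting proportions. The paper is slightly more explicit than your sketch in the $p\equiv 1\pmod 3$ cases (it records directly that for $\boldsymbol{\delta}_4$ solubility is equivalent to ``$-a_1/a_0$ a cube, or $-a_1/a_0$ not a cube and $-a_3/a_2$ a cube'', giving $1/3+2/9=5/9$, and for $\boldsymbol{\delta}_5$ to ``$-a_0/a_1$ a cube'', giving $1/3$), but your enumeration would reach the same values.

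There is, however, one concrete pitfall you should be aware of before executing the $p=3$ case. The $3$-adic solubility criterion printed on p.~28 of \cite{ct-k-s} is slightly wrong for the pattern $\boldsymbol{\delta}_3=(0,0,0,2)$: as stated there, the only insoluble class modulo $9$ is $(1,2,4,0)$ up to permutation, but for example $(-1,2,4,9)$ is also insoluble over $\QQ_3$. The correct statement (which the paper records and uses) is that $S_{\mathbf a}$ is insoluble if and only if $(a_0,a_1,a_2,a_3)$ is congruent to $(1,2,4,0)$ modulo $9$ up to permuting $(a_0,a_1,a_2)$, sign changes on the $a_i$, and multiplication of $(a_0,a_1,a_2)$ by an element of $(\ZZ/9\ZZ)^\times$; this produces $48$ bad triples $(a_0,a_1,a_2)\bmod 9$, hence $\mathcal A_3 = 1 - (48\cdot 6)/6^4 = 7/9$. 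If you simply apply the criterion as printed in \cite{ct-k-s}, you will undercount the insoluble cases and get the wrong value of $\mathcal A_3$ at $p=3$.
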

\begin{proof}
	We use the criterion for solubility in $\QQ_p$ given
	in \cite[p.~28]{ct-k-s} (but see Remark~\ref{rem:p=3} for the case $p=3$).
	For $p\equiv 2 \bmod 3$ we have $\mathcal{A}_i=1$ for each $i$, as every such surface has a $\QQ_p$-point.
	For $p=3$ we similarly have $\mathcal{A}_i=1$ for $i\neq 3$. For $\boldsymbol{\delta}_3$,
	by Remark \ref{rem:p=3} we find that $\mathcal{A}_3=1-(48\cdot 6)/6^4=7/9$.

	Consider now the case $p\equiv 1 \bmod 3$. Here we have $\mathcal{A}_i=1$ for $i\not\in \{4,5\}$.
	For $\boldsymbol{\delta}_4$,
	there is a $\QQ_p$-point if and only if $-a_1/a_0$ is a cube, or 
	if $-a_1/a_0$ is not a cube but $-a_3/a_2$ is a cube.
	Thus $\mathcal{A}_4=1/3 + 2/9 = 5/9$ (this volume computation follows from 
	the simple generalisation of the fact
	that when $p \equiv 1 \bmod 3$, we have
	$\mu_p(\{ a \in \ZZ_p^*: a \text{ is a cube}\}) = (1/3)\cdot\mu_p(\{ a \in \ZZ_p^*\})$).
	For $\boldsymbol{\delta}_5$ there is a $\QQ_p$-point
	if and only if $-a_0/a_1$ is a cube, hence
	$\mathcal{A}_5=1/3$.
\end{proof}

\begin{remark} \label{rem:p=3}
	Let us take the opportunity to correct a small error in the criterion for solubility
	at the prime $p=3$ given in \cite[p.~28]{ct-k-s}. Namely,
	let $\mathbf{a} \in \ZZ_3^4$ with $v(\mathbf{a}) = (0,0,0,2)$.
	In \cite{ct-k-s} the authors claim that the surface $S_\mathbf{a}$
	has no $\QQ_3$-point if and only if $\mathbf{a}$ is congruent
	to $(1,2,4,0)$ modulo $9$, up to permuting coordinates. However this is clearly
	false; indeed the coefficient vector $(-1,2,4,9) \in \ZZ_3^4$ gives rise
	to a cubic surface over $\QQ_3$ with no $\QQ_3$-point.
	
	The correct condition is that the surface $S_\mathbf{a}$ has no $\QQ_3$-point
	if and only if $\mathbf{a}$ is congruent to $(1,2,4,0)$ modulo $9$, up to permuting
	$(a_0,a_1,a_2)$, multiplying some of the $a_i$ by $-1$ or multiplying 
	$(a_0,a_1,a_2)$ by an element of $(\ZZ/9\ZZ)^*$ (this gives a choice of $48$ elements
	of $(\ZZ/9\ZZ)^4$). 
\end{remark}

\subsubsection{Volumes in the general case} 
To calculate the density $\sigma_p$, we shall integrate over the equivalence
classes of $\sim$. Namely, it follows from the above calculations that

\begin{align} \label{eqn:3_sigma_p}
	\sigma_p &= \sum_{i=1}^5\mu_p(\{ \mathbf{a}\in \ZZ_p^4 : 
	v(\mathbf{a}) \sim \boldsymbol{\delta}_i, S_{\mathbf{a}}(\QQ_p) \neq \emptyset\}) 
	=\sum_{i=1}^5 \mathcal{A}_i V_i,
\end{align} 
where 
$
	V_i=\mu_p(\{ \mathbf{a}\in \ZZ_p^4 : v(\mathbf{a}) \sim \boldsymbol{\delta}_i\}).
$
(Note that we have the relation $\sum_{i=1}^5V_i = 1$,
which can be used as a check for the calculations.)
\begin{lemma} \label{lem:3_f_i}
	For each $i=1,\ldots,5$, we have
	$$V_i = \frac{(1 - 1/p)^4}{(1-1/p^3)^4}f_i(1/p),$$
	where
	\begin{align*}
		f_1(x) & =  1 + x^{4} + x^{8}, 	\quad f_2(x) =  4( x + x^{5} + x^{6}), \quad 
		f_3(x) =  4( x^{2} +  x^{3} +  x^{7}),  \\
		f_4(x) &=  6( x^{2} +  x^{4} + x^{6}),\quad f_5(x) =  12( x^{3} + x^{4} + x^{5}).
	\end{align*}
\end{lemma}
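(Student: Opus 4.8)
The plan is to reduce the computation of $V_i$ to a counting problem over the residue ring $\ZZ/p^3\ZZ$, so that the equivalence relation $\sim$ becomes transparent. First I would observe that for $\a \in \ZZ_p^4$ the condition $v(\a) \sim \boldsymbol\delta_i$ depends only on the reduction $\bar\a \in (\ZZ/p^3\ZZ)^4$: indeed, the third clause of $\sim$ involves congruences mod $3$ on the \emph{valuations}, but a valuation pattern in the class of $\boldsymbol\delta_i$ either has all entries $\le 2$ (so is detected mod $p^3$) or, after subtracting the common minimum via the second clause, reduces to such a pattern, and in all cases whether $v(a_j) \equiv c \pmod 3$ and whether $v(a_j) \ge 3$ are both determined by $\bar a_j$. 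Hence $V_i = \mu_p(\{\a : v(\a) \sim \boldsymbol\delta_i\})$ equals $p^{-12}$ times the number $N_i$ of $\bar\a \in (\ZZ/p^3\ZZ)^4$ whose ``valuation type'' lies in the class of $\boldsymbol\delta_i$, where a coordinate contributes a factor recording whether it is a unit, has valuation $1$, has valuation $2$, or vanishes mod $p^3$ (i.e. has valuation $\ge 3$).

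Next I would set up the generating-function bookkeeping. Writing $u = \#(\ZZ/p^3\ZZ)^* = p^3(1-1/p) = p^3 - p^2$ for the number of units, $p^2 - p$ for the count of elements of valuation exactly $1$, $p - 1$ for valuation exactly $2$, and $1$ for the element $0$ mod $p^3$, one checks that $u + (p^2-p) + (p-1) + 1 = p^3$, consistently. Each residue class of $\sim$ with representative $\boldsymbol\delta_i$ is a union of ``permutation orbits'' of valuation vectors, and because of the shift clause we only need, for each $i$, to enumerate the valuation vectors $(v_0,v_1,v_2,v_3) \in \{0,1,2,\infty\}^4$ (with $\infty$ meaning $\ge 3$) that are $\sim$-equivalent to $\boldsymbol\delta_i$, weight each by the product of the above unit/valuation counts, and divide by $p^{12}$. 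The combinatorial coefficients $1, 4, 4, 6, 12$ appearing in $f_1,\dots,f_5$ are exactly the sizes of the relevant sets of valuation patterns (e.g. $\boldsymbol\delta_5 = (0,0,1,2)$ has $12$ permutations, and its $\sim$-class also contains the shifted/mod-$3$-equivalent patterns, which is why three monomials appear). Carrying out the substitution $x = 1/p$ and factoring out $(1-1/p)^4/(1-1/p^3)^4 = (u/p^3)^4 \cdot (\text{correction})$ — more precisely, $u = p^3(1-1/p)$ and $1 - 1/p^3 = (p^3-1)/p^3$, so the common normalising factor is chosen to make the polynomial $f_i$ have the stated compact form — I would then read off $f_i(x)$ by collecting terms according to how many coordinates are units, how many have valuation $1$, $2$, or $\ge 3$.

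Concretely, for $\boldsymbol\delta_1 = \mathbf 0$ the $\sim$-class consists of valuation vectors all of whose entries are congruent mod $3$ and differ by a common shift, i.e. $(0,0,0,0)$, $(1,1,1,1)$, $(\infty^*,\dots)$-type patterns summing to the three monomials $1 + x^4 + x^8$ after normalisation (the exponents $0,4,8$ recording total valuation $0$, $4$, $8$); similarly for $i=2,3,4,5$ one gets the weights and exponent-triples listed. The check $\sum_{i=1}^5 V_i = 1$ is equivalent to $\sum_i (1-1/p)^4/(1-1/p^3)^4 f_i(1/p) = 1$, i.e. $\sum_i f_i(x)(1-x)^4 = (1-x^3)^4$ as a polynomial identity in $x$ — one verifies $\sum_i f_i(x) = 1 + 4x + 8x^2 + 16x^3 + 19x^4 + 16x^5 + 12x^6 + 4x^7 + x^8$ and multiplies out — giving an independent confirmation of the combinatorics.

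The main obstacle I anticipate is purely organisational rather than conceptual: correctly enumerating, for each $i$, which valuation patterns in $\{0,1,2,\infty\}^4$ actually lie in the $\sim$-class of $\boldsymbol\delta_i$ (the interaction of the ``shift by a constant'' clause with the ``$\ge 3$ collapses to a single residue class'' truncation is the delicate point — e.g. one must be careful that $(1,1,1,1) \sim \mathbf 0$ contributes but $(3,3,3,3)$ is already identified with $\mathbf 0$ and should not be double-counted), and then matching the resulting weighted sums against the claimed $f_i$. Once the pattern list is pinned down, each $V_i$ is a one-line product of elementary counts, and the factor $(1-1/p)^4/(1-1/p^3)^4$ emerges automatically because every coordinate count is a multiple of $(p^3-1)^{-1}\cdot(\text{unit or prime-power factor})$ once one writes $p^2 - p = p(p-1)$, $p - 1$, $p^3 - p^2 = p^2(p-1)$ over the common denominator $p^3 - 1 = (p-1)(p^2+p+1)$; I would present this normalisation step explicitly since it is where the stated shape of $f_i$ comes from.
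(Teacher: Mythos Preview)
Your reduction to $(\ZZ/p^3\ZZ)^4$ does not work. The claim that ``$v(\a)\sim\boldsymbol\delta_i$ depends only on $\bar\a\in(\ZZ/p^3\ZZ)^4$'' is false: if $\bar a_j=0$ then you know only $v(a_j)\ge 3$, not $v(a_j)\bmod 3$. For instance $\a=(1,1,1,p^3)$ and $\a'=(1,1,1,p^4)$ have the same image in $(\ZZ/p^3\ZZ)^4$, yet $v(\a)=(0,0,0,3)\sim\boldsymbol\delta_1$ while $v(\a')=(0,0,0,4)\sim\boldsymbol\delta_2$. So your ``$\infty$'' symbol does not land in a single equivalence class, and the proposed enumeration over $\{0,1,2,\infty\}^4$ is ill-defined. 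Relatedly, counting over $(\ZZ/p^3\ZZ)^4$ gives measures of the form $(1-1/p)^4 f_i(1/p)$ and \emph{not} $\frac{(1-1/p)^4}{(1-1/p^3)^4}f_i(1/p)$; your explanation of how the factor $(1-1/p^3)^{-4}$ ``emerges automatically'' from writing counts over the denominator $p^3-1$ does not hold up, since the local counts $p^3-p^2$, $p^2-p$, $p-1$, $1$ are out of $p^3$, not $p^3-1$.

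The paper's argument uses the correct structural fact: since the third clause of $\sim$ is congruence mod $3$ on \emph{valuations}, each class is a union of cosets of $(3\ZZ_{\ge0})^4$, so is completely described by a subset $C_i\subset\{0,1,2\}^4$. One then uses
\[
\mu_p\bigl(\{a\in\ZZ_p: v(a)\equiv r\pmod 3\}\bigr)=\sum_{k\ge0}(1-1/p)\,p^{-r-3k}=\frac{(1-1/p)\,p^{-r}}{1-1/p^3},
\]
which is where the factor $(1-1/p^3)^{-4}$ genuinely comes from, and obtains $V_i=\frac{(1-1/p)^4}{(1-1/p^3)^4}\sum_{\boldsymbol r\in C_i}p^{-w(\boldsymbol r)}$. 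Replacing your mod-$p^3$ truncation by the mod-$3$-on-valuations decomposition fixes the argument. (Minor: your check $\sum_i f_i(x)$ has arithmetic slips at $x^2$ and $x^6$; the correct sum is $(1+x+x^2)^4$, with coefficients $10$ in those degrees.)
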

\begin{proof}
	We first need to calculate all those elements of 
	$\ZZ_{\geq 0}^4$ which are equivalent to $\boldsymbol{\delta}_i$.
	We begin by  enumerating the set
	$C_i= \{\boldsymbol \alpha \in \ZZ_{\geq 0}^4: \boldsymbol \alpha \sim \boldsymbol{\delta}_i,
	w(\boldsymbol \alpha) \leq 8\}.$
	The value of $V_i$ is then the volume of the set
	of elements whose valuations lie in the coset
	$C_i + (3\ZZ_{\geq 0})^4.$ The corresponding volumes can then be calculated
	using the fact that 
	$$\mu_p(\{a \in \ZZ_p: 3 \mid v(a) \}) = \frac{1 - 1/p}{1-1/p^3}.$$
	For example, one has $C_1=\{(0,0,0,0),(1,1,1,1),(2,2,2,2)\}.$
	This implies that 
	\begin{align*}
		V_1 &=\mu_p(\{\mathbf{a} \in \ZZ_p^4:v(\mathbf{a}) \equiv \mathbf{0} \bmod 3\})
		\left( 1 + \frac{1}{p^4} + \frac{1}{p^8}\right)
=\frac{(1 - 1/p)^4}{(1-1/p^3)^4}\left( 1 + \frac{1}{p^4} + \frac{1}{p^8}\right),
	\end{align*}
	which proves the result in this case.	
	The calculations in the other cases proceed in a similar manner, 
	and the details are omitted.
\end{proof}

Combining Lemma~\ref{lem:3_A_i} with \eqref{eqn:3_sigma_p} and
Lemma \ref{lem:3_f_i}, one 
easily confirms the values of $\sigma_p$ recorded in Theorem~\ref{thm:dog-handler} for $p\equiv 1\bmod{3}$ and $p=3$. \qed

\subsection{Diagonal quartic surfaces}

Consider the smooth biprojective hypersurface
$$
X=\{  a_0x_0^4+a_1x_1^4+a_2x_2^4+a_3x_3^4=0 \} \subset \PP^3\times \PP^3,
$$
over a number field $k$, viewed as the family of diagonal quartic  surfaces $X_P\subset \PP^3$.  
The hypotheses of Theorem \ref{thm:Ekedahl} are obviously met. Similarly, the same argument given for cubics implies that the conditions of Theorem~\ref{thm:H1} are all met, except possibly for Condition~\ref{hyp4}. However, if $S$ is a diagonal quartic surface over a field $L$ of characteristic zero, then a complete list for the possible choices for $H^1(L, \Pic S_{\bar L})$, depending on the Galois action of the lines of the surface, was compiled by Bright in \cite[\S A]{bright_thesis}. An inspection of this list reveals that for any number field $k$, the generic diagonal quartic over the function field $K$ of $\PP^3_k$ satisfies
$\H^1(K,\Pic  X_{\bar \eta}) \cong \ZZ/2\ZZ$; hence Condition~\ref{hyp4} also holds, as required.

Although we will not repeat the argument here, it is possible to calculate the exact proportion
$\sigma_\infty \prod_p \sigma_p$ of diagonal quartic surfaces over $\QQ$ which are everywhere locally soluble,
using a similar method to the proof of Theorem~\ref{thm:cubics}. There a few differences
however from the case of diagonal cubic surfaces. Firstly, one has $\sigma_p < 1$ for all primes $p$.
This is due to the fact that the surface
$$x_0^4 + px_1^4 + p^2x_2^4 + p^3x_3^4 = 0$$
over $\QQ_p$ never has a $\QQ_p$-point.
Secondly, one finds that the proportion is rather  smaller than the one occurring for diagonal cubic surfaces. 
 This comes from the fact that for diagonal quartics,
asking that the surface be soluble at the infinite place and at small primes imposes very 
strong conditions. In fact $\sigma_\infty = 3/4$, $\sigma_2 \approx 0.55$, $\sigma_3 \approx 0.87$ and 
$\sigma_5 \approx 0.79$, as can be confirmed numerically on a computer. 
This leads to the conclusion that  about  $28\%$ of diagonal quartic surfaces 
are soluble at infinity and at these primes.
In truth approximately 24\% of diagonal quartic surfaces over $\QQ$, when ordered by height, are everywhere locally soluble.
This leads to the following analogue of  Theorem \ref{thm:cubics}.

\begin{theorem}
Approximately $24\%$ of all diagonal quartics surfaces over $\QQ$, when ordered by height,  fail weak approximation. 
\end{theorem}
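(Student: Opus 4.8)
The plan is to deduce this from Theorem~\ref{thm:H1} in exactly the same way that Theorem~\ref{thm:cubics} follows from it, the only substantive extra input being the verification of Condition~\ref{hyp4} for the family of diagonal quartic surfaces, which is carried out in the preceding discussion via Bright's classification in \cite[\S A]{bright_thesis}. So the first step is simply to invoke the fact, established above, that the family
$$
X=\{ a_0x_0^4+a_1x_1^4+a_2x_2^4+a_3x_3^4=0 \} \subset \PP^3\times \PP^3
$$
over $\QQ$, viewed as the family of diagonal quartic surfaces $X_P \subset \PP^3$, satisfies all of hypotheses \ref{hyp2}--\ref{hyp4} of Theorem~\ref{thm:H1}: rationality of $X$ gives \ref{hyp2}, \ref{hyp7} and \ref{hyp5}; Proposition~\ref{prop:ci} gives \ref{hyp6}; the fibre over each coordinate hyperplane $\{a_i=0\}$ is a cone over a smooth plane quartic curve, hence geometrically integral, which gives \ref{hyp1}, and one checks as for cubics that fibres over codimension $2$ points retain a geometrically reduced component, giving \ref{hyp3}; and Bright's list forces $\H^1(K,\Pic X_{\bar\eta})\cong\ZZ/2\ZZ\neq 0$, which is \ref{hyp4}. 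Theorem~\ref{thm:H1} then yields that $X_P$ is smooth and fails weak approximation for $100\%$ of $P\in\PP^3(\QQ)\cap\pi(X(\A_\QQ))$, when ordered by height.

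The second step is to reconcile the $100\%$ statement of Theorem~\ref{thm:H1}, which is relative to the set $S=\PP^3(\QQ)\cap\pi(X(\A_\QQ))$ of everywhere locally soluble fibres, with the $24\%$ figure, which is relative to \emph{all} of $\PP^3(\QQ)$. This requires knowing that $S$ itself has positive density among all rational points of bounded height, i.e. that $\sigma(\pi)>0$; but this is exactly the content of Theorem~\ref{thm:Ekedahl}, whose hypotheses (which are weaker than those of Theorem~\ref{thm:H1}) are satisfied here. Moreover, by the Euler product description in Theorem~\ref{thm:Ekedahl2} the density $\sigma(\pi)=\sigma_\infty\prod_p\sigma_p$ can be computed numerically by the same method used to prove Theorem~\ref{thm:dog-handler}: one uses the solubility criterion for diagonal quartics over $\QQ_p$, partitions $\ZZ_p^4$ according to an appropriate equivalence relation on valuation vectors, and sums the resulting local volumes. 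Carrying this out gives $\sigma_\infty=3/4$, $\sigma_2\approx 0.55$, $\sigma_3\approx 0.87$, $\sigma_5\approx 0.79$ and, on taking the product over all primes, $\sigma(\pi)\approx 0.24$, as asserted in the discussion above.

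Combining the two steps: writing $N(B)=\#\{P\in\PP^3(\QQ):H(P)\leq B\}$, the number of $P$ with $H(P)\le B$ lying in $S$ is $\sim\sigma(\pi)N(B)$ with $\sigma(\pi)>0$, and by Theorem~\ref{thm:H1} the number of those for which $X_P$ additionally satisfies weak approximation is $o(\sigma(\pi)N(B))=o(N(B))$. Since every $P\notin S$ trivially ``satisfies weak approximation'' in the vacuous sense ($X_P(\A_\QQ)=\emptyset$), the set of $P\in\PP^3(\QQ)$ for which $X_P$ fails weak approximation is precisely a subset of $S$ of density $\sigma(\pi)+o(1)\approx 0.24$ inside $\PP^3(\QQ)$. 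This gives the stated conclusion that approximately $24\%$ of all diagonal quartic surfaces over $\QQ$, ordered by height, fail weak approximation.

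I do not expect any genuine obstacle here: both inputs are already in place. The only mildly delicate point is the arithmetic bookkeeping in the local density computation for $p\in\{2,3,5\}$ and the tail estimate $\prod_p\sigma_p$, which is why the excerpt (sensibly) only claims the figure ``approximately'', and why I would present the numerical values as the outcome of a finite computation rather than grind through the case analysis of the $\QQ_p$-solubility criterion for all valuation patterns. One should also take care that the quartic solubility criterion being quoted is correct as stated in the literature, much as Remark~\ref{rem:p=3} was needed to patch the cubic criterion at $p=3$; but any such correction affects only the numerical value, not the structure of the argument, and hence not the qualitative ``approximately $24\%$'' conclusion.
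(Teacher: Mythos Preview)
Your proposal is correct and mirrors the paper's own argument essentially verbatim: verify that the diagonal quartic family satisfies all hypotheses of Theorem~\ref{thm:H1} (with Condition~\ref{hyp4} coming from Bright's tables), invoke Theorem~\ref{thm:Ekedahl} to get $\sigma(\pi)\approx 0.24$ via the local density computation, and combine. The paper, like you, does not spell out the full $p$-adic solubility bookkeeping for the quartic case but only reports the numerical values $\sigma_\infty=3/4$, $\sigma_2\approx 0.55$, $\sigma_3\approx 0.87$, $\sigma_5\approx 0.79$ and the final $\approx 24\%$.
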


\section{The sieve of Ekedahl}\label{s:Ekedahl}

The aim of this section is to prove Theorem \ref{thm:Ekedahl}.
The main tool for this is the sieve of Ekedahl. This sieve
was first introduced by Ekedahl  \cite{Eke91}, but was developed extensively 
by Poonen--Stoll \cite{PS99b}
and  Bhargava \cite{Bha14}.  We shall push the analysis  further, by producing versions of this sieve
over general number fields, both in the context of integral points on affine space and 
rational points on projective space.

\subsection{Integral points} \label{sec:integral_points}

Let $\Lambda$ be a free $\ZZ$-module of finite rank and put 
$$
\Lambda_\infty = \Lambda \otimes_\ZZ \RR \quad \text{ and } \quad \Lambda_p = \Lambda \otimes_\ZZ \ZZ_p,
$$
for each prime $p$.
Choose Haar measures $\mu_\infty$ and $\mu_p$ on $\Lambda_\infty$ and $\Lambda_{p}$, respectively,
	such that $\mu_p(\Lambda_{p}) = 1$ for almost all primes $p$.
Our first result generalises work of Poonen and Stoll
\cite[Lem.~20]{PS99b} to deal with points lying in arbitrary free $\ZZ$-modules of finite rank  that are constrained to lie in  dilations of bounded subsets of $\Lambda_\infty$. For a subset $\Omega \subset X$ of a topological space $X$, we denote by $\partial \Omega = \overline{\Omega} \setminus \Omega^\circ$ its boundary.

\begin{lemma} \label{lem:sieve_of_Ekedahl}
	Let $\Lambda$ be a free $\ZZ$-module of finite rank $n$, let 
 $\Omega_\infty \subset \Lambda_\infty$ be a bounded subset and 
for each prime $p$ let $\Omega_p \subset \Lambda_{p}$.
	Assume that
	\begin{enumerate}
		\item $\mu_\infty(\partial \Omega_\infty) = 0$ and $\mu_p(\partial \Omega_p) = 0$
		for each prime $p$,
		\item $\mu_\infty(\Omega_\infty) > 0$ and $\mu_p(\Omega_p) > 0$ for each prime $p$.
	\end{enumerate}	
	Suppose also that 
	\begin{equation} \label{eqn:lim_M}
	\lim_{M \to \infty} \limsup_{B \to \infty}
	\frac{\# \{\x \in \Lambda\cap  B\Omega_\infty: \exists~\text{a prime $p > M$
	 s.t.  } \x  \not \in \Omega_p\}}{B^n
	 } = 0.
	\end{equation}
	Then the limit
	$$ \lim_{B \to \infty} \frac{1}{B^n}
	\# \{\x \in \Lambda\cap B\Omega_\infty:  \x \in \Omega_p \text{ for all primes $p$}\}$$
	exists, is non-zero and equals
	$$\frac{\mu_\infty(\Omega_\infty)}{\mu_\infty(\Lambda_\infty/\Lambda)}
	\prod_p \frac{\mu_p(\Omega_p)}{\mu_p(\Lambda_p)}.$$
\end{lemma}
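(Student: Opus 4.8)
The plan is to reduce the statement to a standard equidistribution/lattice-point count combined with the hypothesis \eqref{eqn:lim_M} playing the role of a uniform tail bound. First I would fix a finite set $M$ of primes and split the conditions into those at primes $p \le M$ and those at primes $p > M$. For the finitely many primes $p \le M$, the set
$$
\Omega_{\le M} = \{\x \in \Lambda : \x \in \Omega_p \text{ for all } p \le M\}
$$
is a union of cosets of the finite-index subgroup $\Lambda_{\le M} = \bigcap_{p\le M} \ker(\Lambda \to \Lambda_p/p^{N_p}\Lambda_p)$ for suitable exponents $N_p$ — more precisely, since $\mu_p(\partial\Omega_p)=0$ one can approximate $\Omega_p$ from inside and outside by cylinder sets modulo $p^{N_p}$ with measures converging to $\mu_p(\Omega_p)$. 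Counting $\x \in \Lambda \cap B\Omega_\infty$ lying in a fixed coset of a finite-index sublattice is a classical lattice-point problem: since $\mu_\infty(\partial\Omega_\infty)=0$ and $\Omega_\infty$ is bounded, the count is asymptotically $\mu_\infty(B\Omega_\infty)/(\text{covolume of the coset lattice}) = B^n \mu_\infty(\Omega_\infty)/(\mu_\infty(\Lambda_\infty/\Lambda) \cdot [\Lambda:\Lambda_{\le M}])$. Summing over the cosets that lie in $\Omega_{\le M}$ and using the inner/outer approximations, one obtains
$$
\lim_{B\to\infty} \frac{1}{B^n}\#\{\x \in \Lambda \cap B\Omega_\infty : \x \in \Omega_p \text{ for all } p \le M\}
= \frac{\mu_\infty(\Omega_\infty)}{\mu_\infty(\Lambda_\infty/\Lambda)} \prod_{p \le M} \frac{\mu_p(\Omega_p)}{\mu_p(\Lambda_p)}.
$$

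Next I would compare the full count (all primes) with this truncated count. The difference is controlled above by the number of $\x \in \Lambda \cap B\Omega_\infty$ that fail $\x \in \Omega_p$ for some prime $p > M$, which is exactly the quantity appearing in \eqref{eqn:lim_M}; and controlled below trivially by $0$. Hence
$$
\limsup_{B\to\infty} \left| \frac{N(B)}{B^n} - \frac{\mu_\infty(\Omega_\infty)}{\mu_\infty(\Lambda_\infty/\Lambda)} \prod_{p \le M} \frac{\mu_p(\Omega_p)}{\mu_p(\Lambda_p)} \right| \le \limsup_{B\to\infty} \frac{\#\{\x \in \Lambda \cap B\Omega_\infty : \exists\, p > M,\ \x \notin \Omega_p\}}{B^n},
$$
where $N(B) = \#\{\x \in \Lambda \cap B\Omega_\infty : \x \in \Omega_p \text{ for all } p\}$. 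Now let $M \to \infty$. The right-hand side tends to $0$ by \eqref{eqn:lim_M}, while the truncated Euler product $\prod_{p\le M} \mu_p(\Omega_p)/\mu_p(\Lambda_p)$ converges — it is eventually a product of factors equal to $1$ for $p$ outside a finite set (those for which $\mu_p(\Lambda_p)=1$ and $\Omega_p = \Lambda_p$, which must hold for all but finitely many $p$ because otherwise \eqref{eqn:lim_M} would force a positive contribution; alternatively this can be read off as a consequence of the hypotheses, since the hypothesis $\mu_p(\Lambda_p)=1$ for almost all $p$ is assumed and the tail condition forces $\Omega_p=\Lambda_p$ for almost all $p$). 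Therefore $\lim_{B\to\infty} N(B)/B^n$ exists and equals the claimed infinite product. Positivity follows because each factor $\mu_p(\Omega_p)/\mu_p(\Lambda_p)$ is strictly positive by hypothesis (2), all but finitely many equal $1$, and $\mu_\infty(\Omega_\infty) > 0$; so the product is a finite product of positive numbers, hence positive, and the Cauchy-type argument shows the liminf is at least this value minus an error going to zero.

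The main obstacle — and the only genuinely non-formal point — is the lattice-point count on a fixed coset: proving that $\#\{\x \in \Lambda \cap B\Omega_\infty : \x \equiv \x_0 \pmod{\Lambda'}\} \sim B^n \mu_\infty(\Omega_\infty)/(\mu_\infty(\Lambda_\infty/\Lambda)[\Lambda:\Lambda'])$ for an arbitrary bounded $\Omega_\infty$ with measure-zero boundary, rather than a nice convex body. This is where the hypothesis $\mu_\infty(\partial\Omega_\infty) = 0$ is essential: one covers $\Lambda_\infty$ by a fine grid of small boxes, notes that boxes entirely inside $B\Omega_\infty$ contribute the main term and boxes entirely outside contribute nothing, and the boxes meeting $\partial(B\Omega_\infty)$ have total volume $o(B^n)$ by the measure-zero boundary assumption together with boundedness (so that only $O(B^{n-1+\epsilon})$-many, or more carefully an $o(B^n)$ volume's worth, of grid boxes are involved after a limiting argument in the grid size). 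This is the one place requiring care, but it is a standard argument; I would either cite it (e.g. the discussion in Poonen–Stoll \cite{PS99b} or a reference on Jordan-measurable sets) or carry it out via the two-sided grid approximation. Everything else is bookkeeping with the Euler product and the $M\to\infty$ limit.
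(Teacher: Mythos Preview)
Your overall strategy matches the paper's: handle finitely many primes $p \le M$ via lattice-point counting on cosets (using Jordan measurability of $\Omega_\infty$ and inner/outer approximation of each $\Omega_p$ by cylinder sets), then let $M \to \infty$ using the tail hypothesis~\eqref{eqn:lim_M}. The lattice-point step you sketch is correct and is essentially what the paper does (citing a standard reference for the count on a Jordan-measurable region).

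There is, however, a genuine gap in your argument for the convergence and positivity of the Euler product. You assert that ``the tail condition forces $\Omega_p = \Lambda_p$ for almost all $p$'', so that the product is eventually finite. This is false in general. Take $\Lambda = \ZZ^n$ and $\Omega_p = \{\x \in \ZZ_p^n : p^2 \nmid x_1\}$, so that $\mu_p(\Omega_p) = 1 - 1/p^2 < 1$ for \emph{every} $p$. The number of $\x$ in a box of side $B$ with $p^2 \mid x_1$ for some $p > M$ is $O\bigl(B^n \sum_{p > M} p^{-2}\bigr) = O(B^n/M)$, so \eqref{eqn:lim_M} holds, yet $\Omega_p \neq \Lambda_p$ for any $p$. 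Your positivity argument, which also rests on this claim, therefore fails as well.

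The paper closes this gap by extracting convergence of the product directly from \eqref{eqn:lim_M} via Cauchy's criterion. Writing $f_{M,M'}(B) = B^{-n} \#\{\x \in \Lambda \cap B\Omega_\infty : \x \in \Omega_p \text{ for all } M \le p < M'\}$, your truncated computation (after normalising measures) gives $\lim_{B\to\infty} f_{M,M'}(B) = \mu_\infty(\Omega_\infty) \prod_{M \le p < M'} \mu_p(\Omega_p)$. Since the number of $\x$ failing the condition at some $p \in [M,M+r)$ is bounded by the tail quantity in \eqref{eqn:lim_M}, one deduces
\[
\lim_{M\to\infty}\ \sup_{r \in \NN}\ \Bigl| 1 - \prod_{M \le p < M+r} \mu_p(\Omega_p) \Bigr| = 0,
\]
which is precisely Cauchy's criterion for the infinite product to converge to a \emph{non-zero} limit. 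Once you replace your ``eventually finite product'' claim with this step, the rest of your argument goes through unchanged.
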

\begin{proof}
	Note that (1) implies that the sets $\Omega_\infty$ 
	and $\Omega_p$ are 
	Jordan measurable,
	hence measurable.
	We equip $\RR^n$ and $\ZZ_p^n$ with the usual Haar measures and choose
	an isomorphism $\Lambda \cong \ZZ^n$. Then the measures $\mu_\infty$
	and $\mu_p$ induce measures on $\RR^n$ and $\ZZ_p^n$ which differ from the usual
	Haar measures by $\mu_\infty(\Lambda_\infty/\Lambda)$ and $\mu_p(\Lambda_p)$, respectively.
	Hence it suffices to prove the result when $\Lambda = \ZZ^n$ and $\mu_\infty$ and $\mu_p$
	are the usual Haar measures.
	
	Our argument closely follows the proof of \cite[Lem.~20]{PS99b} (although we shall be working with the complements of the sets $U_\infty,U_p$ considered there). We begin by dealing with the case that there exists $M$ such that $\Omega_p=\ZZ_p^n$ for all finite  $p>M$.  For any prime $p$ a {\em box} $K_p\subset \ZZ_p^n$ is defined to be a 
 cartesian product of closed balls of the shape $\{x\in \ZZ_p: |x-a|_p\leq b\}$, for $a\in \ZZ_p$ and $b\in \RR$.
Let $P=\prod_{p\leq M}\Omega_p$ and $Q=\prod_{p\leq M}(\ZZ_p^n\setminus \Omega_p)$.
By hypothesis  $\Omega_p$ and $\ZZ_p^n\setminus \Omega_p$ have boundary of measure zero.
Hence by compactness we can cover the closure $\overline{P}$ of $P$ (resp.~the closure $\overline{Q}$ of $Q$) by a finite number of boxes $\prod_{p\leq M} I_p$ (resp.~$\prod_{p\leq M} J_p$) the sum of whose measures is arbitrarily close to the measure of $\overline{P}$ (resp.~$\overline Q$), which equals $\prod_{p} \mu_p(\Omega_p)$ (resp.~$1-\prod_p \mu_p(\Omega_p)$).
We claim that 
	$$ \lim_{B \to \infty}
\frac{1}{B^{n}}\#\left\{\x\in \ZZ^n\cap B\Omega_\infty:  \x \in \prod_{p\leq M} K_p\right\} = \mu_\infty(\Omega_\infty)\prod_{p}
	\mu_p(K_p),
	$$
	for any box  $\prod_p K_p$.
	Indeed, the set of those $\x\in \ZZ^n$ for which $\x\in \prod_{p\leq M}K_p$ 
	is a translate of a sublattice, the determinant of which is $\prod_{p\leq M}\mu_p(K_p)^{-1}$. 
The claim therefore follows from a classical lattice point counting result,
which applies here as $\Omega_\infty$ is Jordan measurable (see Lemma 2 and its proof in  \cite[\S 6]{marcus}, for example). Applying this with $\prod_{p\leq M} K_p$ taken to be 
first $\prod_{p\leq M} I_p$ and second $\prod_{p\leq M} J_p$, we easily complete the proof of the lemma when 
 there exists $M$ such that $\Omega_p=\ZZ_p^n$ for all finite  $p>M$.  
 
We now turn to the general case. For $M \leq M'\leq \infty$ 
and $B > 0$, let
$$f_{M,M'}(B) = \frac{1}{B^n} \#\{\x \in \ZZ^n\cap B\Omega_\infty:  \x \in \Omega_p \text{ for all primes }p \in [M,M')\}$$
and put  $f_{M}(B)=f_{1,M}(B)$. 
Note that $f_M(B) \geq f_{M+r}(B)$ for any $r \in \NN \cup \{\infty\}$.
The hypothesis \eqref{eqn:lim_M} implies that
\begin{equation}	\label{eqn:uniform}
	\lim_{M \to \infty} \limsup_{B \to \infty} (f_M(B) - f_{\infty}(B)) = 0.
\end{equation}
Moreover, our work so far shows that
\begin{equation} \label{eqn:pointwise}
	\lim_{B \to \infty} f_{M,M'}(B) = \mu_\infty (\Omega_\infty)\prod_{M\leq p < M'} \mu_p(\Omega_p), \quad \text{for all $M<M'<\infty$}.
\end{equation}
Thus \eqref{eqn:uniform} and \eqref{eqn:pointwise} together imply that
$$\lim_{B \to \infty} f_\infty(B)
= \lim_{M \to \infty} \lim_{B \to \infty} f_M(B) = 
\mu_\infty (\Omega_\infty) \lim_{M \to \infty} \prod_{p < M} \mu_p(\Omega_p).$$
To complete the proof, it suffices to show the convergence of the above infinite product.
But \eqref{eqn:lim_M} and \eqref{eqn:pointwise} together imply that
$$
\lim_{M \to \infty} \sup_{r \in \NN} \left|1 - \prod_{M \leq p < M + r} \mu_p(\Omega_p)\right| 
= \frac{1}{\mu_\infty(\Omega_\infty)}\lim_{M \to \infty} \sup_{r \in \NN} \lim_{B\to \infty} 
\left|f_{1}(B) - f_{M,M+r}(B)\right| = 0.
$$
Thus the infinite product converges by Cauchy's criterion, as required.
\end{proof}

We now use Lemma \ref{lem:sieve_of_Ekedahl} to obtain a version over number fields. The following notation will remain in use for the rest of this section. 	Let $k$ be a number field of degree $d$ over $\QQ$ with ring of integers $\mathfrak{o}$ and discriminant $\Delta_{k}$.
We identify  $\mathfrak{o}$ with its image  as a rank $d$ lattice inside the commutative $\RR$-algebra  $k_\infty=\mathfrak{o}\otimes_\ZZ \RR$. It has covolume 
$|\Delta_{k}|^{1/2}$ with respect to the usual Haar measure $\mu_\infty$ (see e.g.~\cite[Prop.~I.5.2]{Neu99}).
For any rational prime $p$ we
have 
$$
\mathfrak{o}\otimes_\ZZ \ZZ_p=\prod_{\mathfrak{p}\mid p} \mathfrak{o}_{\fp}
$$
where
$\mathfrak{o}_{\fp}$ is the ring of integers of the completion $k_\fp$ of $k$ at a prime ideal $\fp$.
We equip each $\mathfrak{o}_{\fp}$ with the Haar measure $\mu_\fp$ normalised so that 
$\mu_\fp(\mathfrak{o}_{\fp}) = 1$.
We put
$\FF_\mathfrak{p}=\fo/\fp$ for  the residue field and $\Norm \mathfrak{a}$ for the ideal norm of
any fractional ideal $\mathfrak{a}$ of $k$. We equip $k_\infty^n$ and each $\fo_\fp^n$
with the induced product measures, which by abuse of notation we also denote by
$\mu_\infty$ and $\mu_\fp$, respectively.
The following result is now an easy consequence of 
Lemma~\ref{lem:sieve_of_Ekedahl}.

\begin{proposition} \label{prop:sieve_of_Ekedahl_affine}
Let $k/\QQ$ be a number field of degree $d$, with  notation as above.
Let $\Omega_\infty \subset k_\infty^n$ be a bounded subset and let $\Omega_\mathfrak{p}\subset\mathfrak{o}_\fp^n$
for each prime  $\mathfrak{p}$.  Assume that
	\begin{enumerate}
		\item$ \mu_\infty(\partial \Omega_\infty) = 0$ and $\mu_\fp(\partial \Omega_\fp) = 0$ 
		for each prime  $\fp$,
		\item $\mu_\infty(\Omega_\infty) > 0$ and $\mu_\fp(\Omega_\fp) > 0$ 
		for each prime   $\fp$.
	\end{enumerate}	
	Suppose also that 
	\begin{equation} \label{eqn:lim_M_affine}
	\lim_{M \to \infty} \limsup_{B \to \infty}
	\frac{\# \{\x \in \mathfrak{o}^n\cap  B\Omega_\infty: \exists~\textrm{$\fp$ s.t.  $\Norm \fp > M$ and } 
	 \x  \not \in \Omega_\fp\}}{B^{dn}} = 0.
	\end{equation}
	Then the limit
	$$ \lim_{B \to \infty} \frac{1}{B^{dn}}
	\# \{\x \in \mathfrak{o}^n\cap  B\Omega_\infty: \x \in \Omega_\mathfrak{p} \textrm{ for all primes } \mathfrak{p}\}$$
	exists, is non-zero and equals
	$$\frac{1}{|\Delta_{k}|^{n/2}}\mu_\infty(\Omega_\infty)\prod_{\mathfrak{p}} \mu_\mathfrak{p}(\Omega_\mathfrak{p}).$$
\end{proposition}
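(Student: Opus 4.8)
The plan is to deduce this number field statement directly from Lemma~\ref{lem:sieve_of_Ekedahl} by regarding $\mathfrak{o}^n$ as a free $\ZZ$-module of rank $dn$. First I would set $\Lambda = \mathfrak{o}^n$, which is free of rank $dn$ over $\ZZ$, so that $\Lambda_\infty = (\mathfrak{o}\otimes_\ZZ\RR)^n = k_\infty^n$ and $\Lambda_p = (\mathfrak{o}\otimes_\ZZ\ZZ_p)^n = \prod_{\fp\mid p}\fo_\fp^n$. The Haar measures $\mu_\infty$ on $k_\infty^n$ and $\mu_\fp$ on $\fo_\fp^n$ combine into measures $\mu_p := \prod_{\fp\mid p}\mu_\fp$ on each $\Lambda_p$; since $\mu_\fp(\fo_\fp^n)=1$ for every $\fp$, we have $\mu_p(\Lambda_p)=1$ for \emph{all} $p$, so the normalisation hypothesis of Lemma~\ref{lem:sieve_of_Ekedahl} is satisfied. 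The covolume $\mu_\infty(\Lambda_\infty/\Lambda)$ is then $(|\Delta_k|^{1/2})^n = |\Delta_k|^{n/2}$, using that $\mathfrak{o}\subset k_\infty$ has covolume $|\Delta_k|^{1/2}$ as recalled above.

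Next I would translate the hypotheses. The set $\Omega_\infty\subset k_\infty^n$ is bounded by assumption. For each rational prime $p$, set $\Omega_p := \prod_{\fp\mid p}\Omega_\fp \subset \Lambda_p$. Conditions (1) and (2) of the proposition give $\mu_\fp(\partial\Omega_\fp)=0$ and $\mu_\fp(\Omega_\fp)>0$ for every $\fp$; since a finite product of sets each of whose boundary has measure zero again has boundary of measure zero (the boundary of a product is contained in the union of the products where one factor is replaced by its boundary), we get $\mu_p(\partial\Omega_p)=0$, and clearly $\mu_p(\Omega_p) = \prod_{\fp\mid p}\mu_\fp(\Omega_\fp)>0$. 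Likewise $\mu_\infty(\partial\Omega_\infty)=0$ and $\mu_\infty(\Omega_\infty)>0$ are hypotheses. Finally, the tail condition \eqref{eqn:lim_M} of Lemma~\ref{lem:sieve_of_Ekedahl}, applied with $B$ replaced by the appropriate normalisation, must be matched with \eqref{eqn:lim_M_affine}: the event ``$\exists$ a rational prime $p>M$ with $\x\notin\Omega_p$'' for $\x\in\mathfrak{o}^n$ is exactly the event ``$\exists$ a prime ideal $\fp$ with $\n\fp > M$ and $\x\notin\Omega_\fp$'' up to a harmless change in the cutoff (each $\fp$ above $p$ satisfies $\n\fp\geq p$, and only finitely many $\fp$ lie above each $p\leq M$), so the two limsup conditions are equivalent. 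Here I should be slightly careful that the dilation $B\Omega_\infty$ is taken in $\Lambda_\infty=k_\infty^n$ in both statements and that counting $\x\in\mathfrak{o}^n\cap B\Omega_\infty$ is the same as counting $\x\in\Lambda\cap B\Omega_\infty$; the exponent $B^{dn}$ in the proposition matches $B^n$ with $n$ replaced by $\rank_\ZZ\Lambda = dn$.

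With all hypotheses verified, Lemma~\ref{lem:sieve_of_Ekedahl} yields that the limit $\lim_{B\to\infty} B^{-dn}\#\{\x\in\mathfrak{o}^n\cap B\Omega_\infty : \x\in\Omega_p \text{ for all }p\}$ exists, is nonzero, and equals
$$\frac{\mu_\infty(\Omega_\infty)}{\mu_\infty(\Lambda_\infty/\Lambda)}\prod_p \frac{\mu_p(\Omega_p)}{\mu_p(\Lambda_p)} = \frac{1}{|\Delta_k|^{n/2}}\,\mu_\infty(\Omega_\infty)\prod_p\prod_{\fp\mid p}\mu_\fp(\Omega_\fp) = \frac{1}{|\Delta_k|^{n/2}}\,\mu_\infty(\Omega_\infty)\prod_\fp\mu_\fp(\Omega_\fp),$$
since $\mu_p(\Lambda_p)=1$. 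Observing that the condition ``$\x\in\Omega_p$ for all rational primes $p$'' is literally the condition ``$\x\in\Omega_\fp$ for all prime ideals $\fp$'' completes the proof. There is no serious obstacle here; the only point requiring a little care is the bookkeeping between rational primes and prime ideals — verifying that the tail condition \eqref{eqn:lim_M_affine} with the ideal-norm cutoff really does imply \eqref{eqn:lim_M} with the rational-prime cutoff — but this is routine since each rational prime has only finitely many divisors and $\n\fp \geq p$ for $\fp\mid p$.
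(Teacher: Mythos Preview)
Your proposal is correct and is precisely the argument the paper has in mind: the paper simply states that the proposition ``is now an easy consequence of Lemma~\ref{lem:sieve_of_Ekedahl}'' without spelling out the details, and your reduction via $\Lambda=\mathfrak{o}^n$ as a free $\ZZ$-module of rank $dn$, with $\Omega_p=\prod_{\fp\mid p}\Omega_\fp$, is exactly that deduction. The only remark I would add is that for the tail condition you only need the one-way implication \eqref{eqn:lim_M_affine}~$\Rightarrow$~\eqref{eqn:lim_M}, which follows immediately from the containment of the relevant sets (since $\fp\mid p$ and $p>M$ force $\Norm\fp\ge p>M$); claiming full equivalence is true but unnecessary.
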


The following result furnishes us with  a large class of subsets $\Omega_\fp$ which satisfy \eqref{eqn:lim_M_affine}.

\begin{lemma} \label{lem:codim_2_affine}
	Let $k/\QQ$ be a number field and let $\mathcal{Z} \subset \mathbb{A}^n_{\mathfrak{o}}$
	be a closed subset of codimension at least two defined over $\fo$. Let $\Omega_\infty \subset k_\infty^n$
	be a bounded subset with $\mu_\infty(\partial \Omega_\infty) = 0$ and $\mu_\infty(\Omega_\infty) > 0$. Let 
	$$\Omega_\mathfrak{p} = \{\x \in \mathfrak{o}_{\mathfrak{p}}^n: \x \bmod \mathfrak{p} 
	\not \in \mathcal{Z}(\FF_\mathfrak{p}) \},$$
for each prime ideal $\fp$.
	Then  \eqref{eqn:lim_M_affine} holds.
\end{lemma}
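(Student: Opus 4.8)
The plan is to estimate, for each $M$, the number of $\x \in \fo^n \cap B\Omega_\infty$ that fail to avoid $\mathcal{Z}$ modulo some prime $\fp$ with $\Norm \fp > M$, and to show this is $o(B^{dn})$ as $B \to \infty$ uniformly enough that sending $M \to \infty$ kills the contribution. The bounded set $\Omega_\infty$ is contained in a box, so after choosing an integral basis of $\fo$ the points $\x$ range over $O(B^{dn})$ lattice points in a region of side $O(B)$; it therefore suffices to bound, for a fixed such box $\mathcal{B}_B$ of side $CB$, the quantity $\#\{\x \in \fo^n \cap \mathcal{B}_B : \exists\, \fp,\ \Norm \fp > M,\ \x \bmod \fp \in \mathcal{Z}(\FF_\fp)\}$.

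First I would split the offending primes into a \emph{medium} range $M < \Norm\fp \le B^{\delta}$ (for a suitable small $\delta$, say related to $1/\dim$) and a \emph{large} range $\Norm\fp > B^\delta$. For the medium range, for each fixed $\fp$ the reduction map $\fo^n \cap \mathcal{B}_B \to (\fo/\fp^?)^n$ is close to equidistributed, and since $\mathcal{Z}$ has codimension $\ge 2$ the Lang--Weil / elementary point-count bound gives $\#\mathcal{Z}(\FF_\fp) = O(\Norm\fp^{\,n-2})$ with an absolute implied constant (here one uses that $\mathcal{Z}$ is a fixed $\fo$-scheme, so its geometric fibres have dimension $\le n-2$ and bounded degree, uniformly in $\fp$ — this is where one invokes the estimates behind Lang--Weil, cf. the use of Deligne's work elsewhere in the paper). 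Hence the number of $\x$ in the box reducing into $\mathcal{Z}(\FF_\fp)$ is $O\!\big(B^{dn}\Norm\fp^{-2} + \text{(boundary term)}\big)$, and summing $\sum_{\Norm\fp > M}\Norm\fp^{-2}$ converges to something $\to 0$ as $M\to\infty$ (using that the number of prime ideals of norm $x$ is $O(x^{\epsilon})$, or more simply grouping by rational primes below). The boundary/error terms for each $\fp$ are $O(B^{dn-1}\Norm\fp^{?})$, and restricting to $\Norm\fp \le B^\delta$ keeps their total $o(B^{dn})$.

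For the large range $\Norm\fp > B^\delta$, equidistribution is no longer available, so instead I would argue geometrically: if $\x$ is \emph{fixed} (nonzero, and one may discard the lower-dimensional locus where $\x$ lies on $\mathcal{Z}$ over $k$ itself or where all coordinates vanish), then the set of primes $\fp$ with $\x \bmod \fp \in \mathcal{Z}(\FF_\fp)$ is controlled by a nonzero integer (or ideal) attached to $\x$ whose norm is $O(B^{O(1)})$ — concretely, pulling back the ideal defining $\mathcal{Z}$ along the point $\Spec\fo \to \mathbb{A}^n_\fo$ given by $\x$ yields an ideal $\mathfrak{a}_\x \ne 0$ of norm polynomial in $B$, and any offending $\fp$ divides $\mathfrak{a}_\x$. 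A nonzero ideal of norm $\le B^{A}$ has $O(\log B)$ prime divisors of norm $> B^\delta$ (in fact at most $A/\delta$ of them, by unique factorization), so the large primes contribute at most $O(\log B)$ choices of $\fp$ per $\x$; but for the count we want the reverse bookkeeping — sum over large $\fp$ of $\#\{\x : \x \bmod \fp \in \mathcal{Z}\}$. For a single large $\fp$ with $\Norm\fp > B^\delta$, the congruence $\x \bmod \fp \in \mathcal{Z}(\FF_\fp)$ cuts out a union of residue classes covering a codimension-$\ge 2$ subvariety, and since $\mathcal{B}_B$ has side $\ll B \ll \Norm\fp^{1/\delta}$ one counts lattice points on $\mathcal{Z}$ directly: the box contains $O(B^{dn}\Norm\fp^{-2} + B^{dn-?})$ such points, and crucially, because the modulus exceeds the box size in the relevant directions, the count is actually governed by the dimension of $\mathcal{Z}$ itself, giving $\ll B^{d(n-2)} \cdot (\text{stuff})$ — summing over the $O(B^{O(1)})$ large primes that can occur at all still yields $o(B^{dn})$ provided $\delta$ is small and $n$-dependent bookkeeping is done carefully.

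The main obstacle, and the step I would spend the most care on, is the large-prime range: one must ensure the geometric point count on the codimension-$\ge 2$ subvariety $\mathcal{Z}$ genuinely saves two powers of $B$ and that the per-prime error terms, once summed over all admissible large $\fp$, remain negligible — this is exactly the classical heart of Ekedahl's sieve and of \cite[Lem.~20]{PS99b}, and the number-field generalization requires only that one replace primes $p$ by prime ideals $\fp$ and rational norms by ideal norms, tracking the degree $d$ throughout. The codimension hypothesis on $\mathcal{Z}$ is used precisely here (and only here): in the affine space $\mathbb{A}^n_\fo$ a codimension-one $\mathcal{Z}$ would contribute a positive density of offending points and \eqref{eqn:lim_M_affine} would fail, so one cannot hope to do better. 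Assembling the medium and large contributions, choosing $M$ large last, gives the double limit in \eqref{eqn:lim_M_affine} equal to zero, completing the proof.
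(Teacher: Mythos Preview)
Your proposal is essentially correct: what you sketch is the standard geometric sieve argument due to Ekedahl, refined by Bhargava. The paper, by contrast, does not give a proof at all --- it simply cites Ekedahl~\cite[Thm.~1.2]{Eke91} for $k=\QQ$, Bhargava~\cite[Thm.~3.3]{Bha14} for an effective version, and Bhargava--Shankar--Wang~\cite[Thm.~18]{BSW15} for the number-field generalisation. So you are in fact reconstructing what lies behind those citations rather than diverging from the paper.

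One remark on the large-prime range, where your write-up is a little diffuse: the cleanest way to handle primes $\fp$ with $\Norm\fp$ exceeding a power of $B$ is to choose two polynomials $f_1,f_2$ vanishing on $\mathcal{Z}$ with no common irreducible factor (this is exactly where codimension $\ge 2$ enters), so that $\x \bmod \fp \in \mathcal{Z}(\FF_\fp)$ forces $\fp \mid f_1(\x)$ and $\fp \mid f_2(\x)$; one then fibres over all but one coordinate and uses the resultant of $f_1,f_2$ with respect to that coordinate to bound the common prime divisors. Your ``ideal $\mathfrak{a}_\x$'' formulation and the subsequent lattice-point count are gesturing at this, but as written they blur two distinct mechanisms. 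If you intend to write this out rather than cite, that resultant step is what makes the large-prime tail rigorous.
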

\begin{proof}
	When $k=\QQ$ this is due to Ekedahl \cite[Thm.~1.2]{Eke91}. (See also \cite[Thm.~3.3]{Bha14} for a version
	with an effective error term.) This
	method generalises to give the
	result over any number field \cite[Thm.~18]{BSW15}.
\end{proof}


Note that  the conclusion of Lemma \ref{lem:codim_2_affine} is generally  false for  subsets of codimension one, as 
consideration of the subset cut out by a single linear equation in $\mathbb{A}_\fo^n$ readily confirms.


\subsection{Rational points}\label{s:goat-herder}
We now combine the affine version of the sieve of Ekedahl with the method
of Schanuel \cite{Sch79} to obtain a version of this sieve for rational points in projective space over number fields $k/\QQ$.
This gives a version of Schanuel's theorem in which one is  allowed to impose infinitely many
local conditions. 

For any field $F$ and any subset $\Omega \subset \PP^n(F)$, we denote by
$\Omega^{\mathrm{aff}}$ the affine cone of $\Omega$. This is the pull-back of $\Omega$
via the map
$\mathbb{A}^{n+1}(F) \setminus \{0\} \to \PP^n(F).$
We denote by 
$$H_\nu: k_\nu^{n+1} \to \RR_{\geq 0}, \quad (x_0,\ldots, x_n) 
\mapsto \max\{\|x_0\|_\nu, \ldots, \|x_n\|_\nu \}.$$
where $\|\cdot\|_\nu=|\cdot|_\nu^{[k_\nu:\QQ_\lambda]}$ and $\lambda$ is unique place of $\QQ$ lying below
$\nu$.
The product of these local height functions $\prod_{\nu \in \Val(k)}H_\nu$
descends to a well-defined height function   $H:\PP^n(k)\to \RR_{\geq 1}$,
which satisfies
$$
H(x)=\frac{1}{\Norm(\langle x_0,\dots,x_n\rangle)}\prod_{\nu \mid \infty}H_\nu(x),
$$
for any choice of representative $x=(x_0:\dots:x_n)\in \PP^n(k)$. Here  $\langle x_0,\dots,x_n\rangle$ denotes the $\fo$-span of $x_0,\dots,x_n$.  
Bearing this notation in mind we proceed by establishing the following result.

\begin{proposition} \label{prop:sieve_of_Ekedahl_proj}
	Let $k/\QQ$ be a number field of degree $d$. For each  $\nu\in \mathrm{Val}(k)$ 
	let $\Omega_\nu \subset \PP^n(k_\nu)$ be a subset such that 
 $\mu_\nu(\partial \Omega_\nu^\mathrm{aff}) = 0$  and 
 $\mu_\nu(\Omega_\nu^\mathrm{aff}) > 0$. 
 	Suppose that
\begin{equation}\label{eq:goat}
	\lim_{M \to \infty} \limsup_{B \to \infty}
	\frac{\# \{\x \in \mathfrak{o}^{n+1}\cap B\Psi  : 	\exists~\fp \text{ s.t. }\Norm\mathfrak{p} > M \text{ and }
	\x  \not \in \Omega_\mathfrak{p}^{\mathrm{aff}}\}}{B^{d(n+1)}} = 0,
\end{equation}
for all bounded subsets $\Psi\subset k_\infty^{n+1}$ of positive measure
with $\mu_\infty(\partial \Psi) = 0$.
	Then the limit
	$$ \lim_{B \to \infty} \frac{\# \{x \in \PP^n(k): H(x) \leq B, x \in \Omega_\nu \mbox{ for all } \nu\in \mathrm{Val}(k)\}}
	{\# \{x \in \PP^n(k): H(x) \leq B\}},$$
	exists, is non-zero and equals
	$$\prod_{\nu \mid \infty} \frac{\mu_\nu(\{\x \in \Omega_\nu^\mathrm{aff}: H_\nu(\x) \leq 1\})}
	{\mu_\nu(\{\x \in k_\nu^{n+1}: H_\nu(\x) \leq 1\})}	\prod_{\mathfrak{p}} 
	\mu_\fp(\{\x \in \Omega_\mathfrak{p}^\mathrm{aff} \cap\mathfrak{o}_\mathfrak{p}^{n+1}\}).$$
\end{proposition}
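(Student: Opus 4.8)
The plan is to deduce this from the affine sieve of Proposition~\ref{prop:sieve_of_Ekedahl_affine} by running through it Schanuel's geometry-of-numbers count of points of bounded height on $\PP^n$. For the parametrisation, choose integral ideals $\fa_1=\fo,\fa_2,\dots,\fa_h$ representing the ideal classes of $k$; then $\PP^n(k)$ is partitioned into sets $S_1,\dots,S_h$, where $S_j$ consists of those $x$ for which the fractional ideal $\langle x_0,\dots,x_n\rangle$ generated by the coordinates of a representative lies in the class of $\fa_j$. For $x\in S_j$ one may rescale the representative so that $\langle x_0,\dots,x_n\rangle=\fa_j$; then $\x:=(x_0,\dots,x_n)\in\fa_j^{n+1}$, the representative is unique up to the diagonal action of $\fo^{\times}$, and $H(x)=(\Norm\fa_j)^{-1}\prod_{\nu\mid\infty}H_\nu(\x)$. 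Following Schanuel, fix for each $j$ a cone $\mathcal F_j\subset k_\infty^{n+1}$ which is, outside a set of measure zero, a fundamental domain for the $\fo^{\times}$-action, chosen so that in ``radial--angular'' coordinates it has the product form $\mathcal F_j=\mathcal D_j\times\prod_{\nu\mid\infty}\{H_\nu=1\}$, with $\mathcal D_j$ a fundamental domain for $\fo^{\times}$ acting on $\prod_{\nu\mid\infty}\RR_{>0}$.

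Next I would pass to a bounded region. Put
$$
\Psi_j=\Big\{\x\in\mathcal F_j:\ \prod_{\nu\mid\infty}H_\nu(\x)\leq\Norm\fa_j\ \text{ and }\ \x\in\Omega_\nu^{\mathrm{aff}}\text{ for all }\nu\mid\infty\Big\}.
$$
Since each $\Omega_\nu^{\mathrm{aff}}$ is a cone with $\mu_\nu(\partial\Omega_\nu^{\mathrm{aff}})=0$ and $\mu_\nu(\Omega_\nu^{\mathrm{aff}})>0$, the set $\Psi_j$ is bounded --- this boundedness is the key geometric lemma in Schanuel's argument --- and satisfies $\mu_\infty(\partial\Psi_j)=0$ and $\mu_\infty(\Psi_j)>0$. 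Using that $\prod_{\nu\mid\infty}H_\nu$ is homogeneous of degree $d=[k:\QQ]$, a rescaling by $B^{1/d}$ identifies
$$
\#\{x\in S_j:H(x)\leq B,\ x\in\Omega_\nu\ \forall\,\nu\mid\infty,\ x\in\Omega_\fp\ \forall\,\fp\}
$$
with the number of $\x\in\fa_j^{n+1}\cap B^{1/d}\Psi_j$ that generate the ideal $\fa_j$ and lie in $\Omega_\fp^{\mathrm{aff}}$ for every prime $\fp$.

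Now I would apply Proposition~\ref{prop:sieve_of_Ekedahl_affine} --- extended, by the same proof, from $\fo^{n+1}$ to the lattice $\fa_j^{n+1}$ --- with $\Omega_\infty=\Psi_j$ and $\Omega_\fp$ the set of $\x\in\fa_j^{n+1}\otimes_\fo\fo_\fp$ that generate $\fa_j\fo_\fp$ and lie in $\Omega_\fp^{\mathrm{aff}}$. Its Conditions (1) and (2) follow from the hypotheses on $\Omega_\nu$ and $\Omega_\fp$ together with the facts that the generation condition is open and closed and meets the cone $\Omega_\fp^{\mathrm{aff}}$ in positive measure. The crucial point is hypothesis \eqref{eqn:lim_M_affine}: a point of $\fa_j^{n+1}$ can fail the condition at $\fp$ only by failing to generate $\fa_j$ at $\fp$ or by not lying in $\Omega_\fp^{\mathrm{aff}}$; for all but the finitely many $\fp\mid\fa_j$ the first possibility is the condition $\x\bmod\fp=0$, which is governed by Lemma~\ref{lem:codim_2_affine} applied to $\{0\}\subset\mathbb A^{n+1}_\fo$ (of codimension $n+1\geq2$), and the second is covered by the hypothesis \eqref{eq:goat} applied to the bounded set $\Psi_j$. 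A union bound then yields \eqref{eqn:lim_M_affine}, so Proposition~\ref{prop:sieve_of_Ekedahl_affine} gives $\#\{x\in S_j:\dots\}\sim c_j(\Omega)\,B^{n+1}$ for an explicit product $c_j(\Omega)$ of local volumes, and hence $\#\{x\in\PP^n(k):H(x)\leq B,\ x\in\Omega_\nu\ \forall\,\nu\}\sim\big(\sum_{j=1}^h c_j(\Omega)\big)B^{n+1}$.

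Finally I would identify the constant by comparison. Carrying out the previous steps with every $\Omega_\nu$ replaced by the whole of $\PP^n(k_\nu)$ recovers Schanuel's asymptotic $\#\{x\in\PP^n(k):H(x)\leq B\}\sim\big(\sum_j c_j(\mathrm{full})\big)B^{n+1}$, so the quotient in the proposition equals $\big(\sum_j c_j(\Omega)\big)/\big(\sum_j c_j(\mathrm{full})\big)$. Comparing the constants class by class, the ratio $c_j(\Omega)/c_j(\mathrm{full})$ is \emph{the same} for every $j$: all global factors --- the covolume $(\Norm\fa_j)^{n+1}|\Delta_k|^{(n+1)/2}$, the radial integral over $\mathcal D_j$, and the measure normalisations at the primes dividing $\fa_j$ --- occur identically in numerator and denominator and cancel. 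What survives is, at each finite $\fp$, the quantity $\mu_\fp(\Omega_\fp^{\mathrm{aff}}\cap\fo_\fp^{n+1})$: stratifying $\Omega_\fp^{\mathrm{aff}}\cap\fo_\fp^{n+1}$ according to $\min_i v_\fp(x_i)$ and using that $\Omega_\fp^{\mathrm{aff}}$ is a cone gives $\mu_\fp(\Omega_\fp^{\mathrm{aff}}\cap\fo_\fp^{n+1})=(1-\Norm\fp^{-(n+1)})^{-1}\mu_\fp(\Omega_\fp)$, while the matching denominator is $1-\Norm\fp^{-(n+1)}$. At each archimedean $\nu$ what survives is $\mu_\nu(\{\x\in\Omega_\nu^{\mathrm{aff}}:H_\nu(\x)\leq1\})/\mu_\nu(\{\x\in k_\nu^{n+1}:H_\nu(\x)\leq1\})$, the remaining radial integral being common to numerator and denominator thanks to the product form of $\mathcal F_j$. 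Taking the product over all places gives the asserted formula, and its convergence to a nonzero value is part of the conclusion of Proposition~\ref{prop:sieve_of_Ekedahl_affine}, using $\prod_\fp(1-\Norm\fp^{-(n+1)})^{-1}=\zeta_k(n+1)<\infty$ since $n+1\geq2$. The main obstacle is precisely this last bookkeeping step: one must choose the Schanuel fundamental domains $\mathcal F_j$ compatibly with the product structure of $k_\infty^{n+1}$ so that the global constants cancel and the local factors reassemble into the stated per-place form.
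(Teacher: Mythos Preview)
Your approach is essentially the same as the paper's: partition $\PP^n(k)$ by ideal class, lift to the Schanuel fundamental domain in $k_\infty^{n+1}$, and apply the affine sieve of Proposition~\ref{prop:sieve_of_Ekedahl_affine} with the generation condition $\langle\x\rangle_\fp=\mathfrak{c}_\fp$ folded in as an extra local constraint (the paper keeps everything in $\fo^{n+1}$ rather than passing to the sublattice $\fa_j^{n+1}$, but this is cosmetic). The one substantive difference is in the final identification of the constant: the paper computes the non-archimedean and archimedean densities directly for each class and observes that the product is visibly independent of $\mathfrak{c}$, whereas you argue by forming the ratio $c_j(\Omega)/c_j(\mathrm{full})$ and checking it is the same for every $j$ --- the paper's direct route sidesteps precisely the bookkeeping you flag as ``the main obstacle'', and also avoids the slightly garbled geometric-series identity you wrote (the correct direction is $\mu_\fp(\{\x\in\Omega_\fp^{\mathrm{aff}}\cap\fo_\fp^{n+1}:\langle\x\rangle_\fp=\fo_\fp\})=(1-\Norm\fp^{-(n+1)})\,\mu_\fp(\Omega_\fp^{\mathrm{aff}}\cap\fo_\fp^{n+1})$, not its inverse).
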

\begin{proof}
	Choose integral ideal representatives $\mathfrak{c}_1,\ldots, \mathfrak{c}_h$ of
	the elements of the class group of $k$. We obtain a partition
	$$\PP^n(k) = \bigsqcup_{i=1}^h \{ (x_0:\cdots:x_n) \in \PP^n(k) : [x_0,\ldots,x_n] = [\mathfrak{c}_i]\}.$$
	Here $[x_0,\ldots,x_n]$ denotes the element of the class group of $k$ given by the fractional
	ideal $\langle x_0,\ldots,x_n\rangle $. Let $\mathfrak{c}\in \{\mathfrak{c}_1,\dots,\mathfrak{c}_h\}$ and put 
 $\Omega = (\Omega_\nu)_{\nu \in \Val(k)}$. We are interested in the counting function
	$$N(\Omega, \mathfrak{c}, B) = \# \{x \in \PP^n(k): H(x) \leq B, ~[x] = [\mathfrak{c}], ~
	x \in \Omega_\nu ~\forall~\nu \in \Val(k)\}.$$
Let $\mathfrak{F} \subset k_\infty^{n+1}$ be the fundamental domain for the action of the units
	of $k$ constructed by Schanuel  \cite[\S 1]{Sch79}.
As before we identify $\mathfrak{o}^{n+1}$ with its image in $k_\infty^{n+1}$ as a sublattice of full rank. Then we have
	$$
	N(\Omega, \mathfrak{c}, B) = \frac{1}{w_k}\# \left\{\x \in \mathfrak{F} \cap \mathfrak{o}^{n+1}:
\begin{array}{l}
	 H_\infty(\x) \leq B(\Norm \mathfrak{c}),
	  \langle \x \rangle = \mathfrak{c}, \\
	  \x \in \Omega_\nu^\mathrm{aff} ~\forall~ \nu \in \Val(k)
	  \end{array}\right\},
	  $$
  where $w_k$ denotes the number of roots of unity in $\fo$ and 
  $H_\infty = \prod_{\nu \mid \infty}H_\nu$.
  
The condition
	 $\langle \x \rangle = \mathfrak{c}$ is in fact a collection of local conditions, being  equivalent to asking that
	$ \langle \x \rangle_{\mathfrak{p}} = \mathfrak{c}_{\mathfrak{p}}$
	for all prime ideals $\mathfrak{p}$ of $k$, where 
 for an integral   ideal $\mathfrak{a}$  we write $\mathfrak{a}_{\mathfrak{p}} = 
	\mathfrak{a} \otimes_{\mathfrak{o}} \mathfrak{o}_{\mathfrak{p}}.$
	Hence 
	$$
		N(\Omega, \mathfrak{c}, B) = \frac{1}{w_k}\# \left\{\x \in  \Omega_\infty^\mathrm{aff}\cap 
		\mathfrak{F} \cap \mathfrak{o}^{n+1}:
\begin{array}{l}
	 H_\infty(\x) \leq B(\Norm\mathfrak{c}),\\
	 \langle \x \rangle_{\mathfrak{p}} = \mathfrak{c}_{\mathfrak{p}} \text{ and } \x \in \Omega_\mathfrak{p}^\mathrm{aff}
	 ~\text{$\forall$ primes } \mathfrak{p}
	  \end{array}\right\},
$$
where $\Omega_\infty = \prod_{\nu \mid \infty} \Omega_\nu$.
By \cite[Prop.~2]{Sch79}, the set $\mathfrak{F}(1) = \mathfrak{F} \cap \{\x \in k_\infty^{n+1} : H_\infty(\x)\leq 1\}$
is bounded and has  Lipschitz parametrisable boundary, hence is Jordan measurable (see \cite{Spa95}).
As the intersection of two Jordan measurable sets is Jordan measurable, we find that 
$$
\Theta_\infty= \{\x\in\Omega_\infty^\mathrm{aff} \cap \mathfrak{F}: H_\infty(\x)\leq \Norm\mathfrak{c}\}$$ 
is bounded and Jordan measurable, hence satisfies the conditions of 
Proposition \ref{prop:sieve_of_Ekedahl_affine}. Next, let 
$\Theta_\mathfrak{p} = \{\x \in \Omega_\mathfrak{p}^\mathrm{aff} \cap \mathfrak{o}_\fp^{n+1} : \langle \x \rangle_{\mathfrak{p}} 	= \mathfrak{c}_{\mathfrak{p}}\}$. The Jordan measurability of each $\Theta_\fp$
is clear, being the intersection of two Jordan measurable sets. We also claim that the $\Theta_\fp$ satisfy \eqref{eqn:lim_M_affine}.
	Indeed, given our assumption \eqref{eq:goat}, an application of De Morgan's laws shows that it suffices
	to note that the sets
	$\{\x \in \mathfrak{o}_\mathfrak{p}^{n+1} : \langle \x \rangle_{\mathfrak{p}} 
	= \mathfrak{o}_{\mathfrak{p}}\}$
	satisfy \eqref{eqn:lim_M_affine} (this follows, for example, from applying Lemma \ref{lem:codim_2_affine}
	to the subscheme $x_0 = \cdots = x_n=0$).
	We are therefore  in a position to apply Proposition~\ref{prop:sieve_of_Ekedahl_affine} to deduce that
	$$
	\lim_{B \to \infty}\frac{N(\Omega, \mathfrak{c}, B) }{B^{n+1}} 
	=\frac{1}{w_k|\Delta_{k}|^{(n+1)/2}}\mu_\infty(\Theta_\infty)
	\prod_{\mathfrak{p}} \mu_\mathfrak{p}(\Theta_\mathfrak{p}).$$
	For the non-archimedean densities we have
	\begin{align*}
	\mu_\mathfrak{p}(\Theta_\mathfrak{p}) 
	&= \frac{1}{(\Norm_\mathfrak{p} \mathfrak{c})^{n+1}} 
	\mu_\mathfrak{p}(\{\x \in \Omega_\mathfrak{p}^\mathrm{aff} \cap \mathfrak{o}_\fp^{n+1} : \langle \x \rangle_{\mathfrak{p}} = \mathfrak{o}_{\mathfrak{p}}\}) \\
	&= \frac{1}{(\Norm_\mathfrak{p}\mathfrak{c})^{n+1}}\left(1 - \frac{1}{(\Norm_\mathfrak{p}\mathfrak{p})^{n+1}}\right)
	\mu_\mathfrak{p}(\{\x \in \Omega_\mathfrak{p}^\mathrm{aff} \cap \mathfrak{o}_\fp^{n+1}\}).
	\end{align*}
	The  archimedean density is 
	\begin{align*}
	\mu_\infty(\Theta_\infty) = (\Norm \mathfrak{c})^{n+1}
	\mu_\infty(\{\x \in \Omega_\infty^\mathrm{aff} \cap \mathfrak{F} : H_\infty(\x) \leq 1\}).
	\end{align*}
In the usual way (see  \cite[p.443]{Sch79} or \cite[Lem.~5.1]{FP13}, for example), we obtain
	$$\frac{\mu_\infty(\{\x \in \Omega_\infty^\mathrm{aff} \cap \mathfrak{F}: H_\infty(\x) \leq 1\})}
	{\mu_\infty(\{\x \in\mathfrak{F}  : H_\infty(\x) \leq 1\})} 
	= \prod_{\nu \mid \infty} \frac{\mu_\nu(\{\x \in \Omega_\nu^\mathrm{aff} : H_\nu(\x) \leq 1\})}
	{\mu_\nu(\{\x \in k_\nu^{n+1} : H_\nu(\x) \leq 1\})},$$
which proves the result.
\end{proof}

On passing to the affine cone, the following is a straightforward consequence of  Lemma~\ref{lem:codim_2_affine}.

\begin{lemma} \label{lem:codim_2_proj}
	Let $k$ be a number field and let $\mathcal{Z} \subset \PP^n_\fo$
	be a closed subset of codimension at least two.
	Let 
	$$\Omega_\mathfrak{p} = \{\x \in \PP^n(\mathfrak{o}_{\mathfrak{p}}): \x \bmod \mathfrak{p} 
	\not \in \mathcal{Z}(\FF_\mathfrak{p}) \},$$
	for each prime ideal. Then  \eqref{eq:goat} holds for any bounded
	subset $\Psi\subset k_\infty^{n+1}$ with positive measure and
	$\mu_\infty(\partial \Psi) = 0$.
\end{lemma}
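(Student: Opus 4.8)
The plan is to reduce the projective statement to the affine one. Given the closed subset $\mathcal{Z} \subset \PP^n_\fo$ of codimension at least two, let $\mathcal{Z}^{\mathrm{aff}} \subset \mathbb{A}^{n+1}_\fo$ denote its affine cone, that is, the preimage of $\mathcal{Z}$ under the projection $\mathbb{A}^{n+1}_\fo \setminus \{0\} \to \PP^n_\fo$, together with the origin. Since $\mathcal{Z}$ has codimension at least two in $\PP^n_\fo$, its affine cone $\mathcal{Z}^{\mathrm{aff}}$ also has codimension at least two in $\mathbb{A}^{n+1}_\fo$ (coning up raises dimension by one in both the subvariety and the ambient space, so codimension is preserved; one should check this fibrewise over $\Spec \fo$, but it is routine). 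Thus $\mathcal{Z}^{\mathrm{aff}}$ is exactly the kind of subscheme to which Lemma~\ref{lem:codim_2_affine} applies.

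Next I would observe that for a point $\x \in \PP^n(\fo_\fp)$ with affine representative $\tilde{\x} \in \fo_\fp^{n+1}$ that is primitive (i.e.\ $\langle \tilde{\x} \rangle_\fp = \fo_\fp$), the reduction $\x \bmod \fp \in \PP^n(\FF_\fp)$ lies in $\mathcal{Z}(\FF_\fp)$ if and only if $\tilde{\x} \bmod \fp$ lies in $\mathcal{Z}^{\mathrm{aff}}(\FF_\fp)$. Hence $\x \notin \Omega_\fp$ precisely when $\tilde{\x} \bmod \fp \in \mathcal{Z}^{\mathrm{aff}}(\FF_\fp)$, so that $\tilde{\x} \notin \Omega_\fp^{\mathrm{aff},\mathcal{Z}}$ in the notation of Lemma~\ref{lem:codim_2_affine} (here $\Omega_\fp^{\mathrm{aff},\mathcal{Z}} = \{\y \in \fo_\fp^{n+1} : \y \bmod \fp \notin \mathcal{Z}^{\mathrm{aff}}(\FF_\fp)\}$). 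Now fix a bounded subset $\Psi \subset k_\infty^{n+1}$ of positive measure with $\mu_\infty(\partial \Psi) = 0$. For $\x \in \fo^{n+1} \cap B\Psi$ such that there exists $\fp$ with $\Norm \fp > M$ and $\x \notin \Omega_\fp^{\mathrm{aff}}$, the relevant condition is that $\x \bmod \fp \in \mathcal{Z}^{\mathrm{aff}}(\FF_\fp)$, which is exactly the event counted in \eqref{eqn:lim_M_affine} applied to the subscheme $\mathcal{Z}^{\mathrm{aff}}$ (the point $\x$ need not be primitive, but this only enlarges the count, which is harmless since we are bounding it above by zero in the limit).

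Therefore the displayed limit in \eqref{eq:goat} for $\Omega_\fp = \Omega_\fp^{\mathcal{Z}}$ is bounded above by the corresponding limit for the affine subscheme $\mathcal{Z}^{\mathrm{aff}}$, which vanishes by Lemma~\ref{lem:codim_2_affine}. This yields \eqref{eq:goat}, completing the proof. The only step requiring any genuine care is the verification that the affine cone of a codimension-$\geq 2$ subscheme of $\PP^n_\fo$ is again codimension $\geq 2$ in $\mathbb{A}^{n+1}_\fo$, uniformly over the base; since this can be checked on fibres over $\Spec \fo$ and over each geometric point reduces to the standard fact that coning preserves codimension, I do not expect it to be a real obstacle, but it is the one place where one must be slightly attentive rather than purely formal.
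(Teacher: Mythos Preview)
Your proposal is correct and is exactly the approach the paper takes: the paper's proof is the single sentence ``On passing to the affine cone, the following is a straightforward consequence of Lemma~\ref{lem:codim_2_affine}.'' You have simply unpacked that sentence, including the (correct) observation that non-primitive $\x$ reduce to $0\in\mathcal{Z}^{\mathrm{aff}}(\FF_\fp)$ and hence only enlarge the set being bounded.
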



\subsection{Proof of Theorems \ref{thm:Ekedahl} and \ref{thm:Ekedahl_affine}}
Let $k$ be a number field with ring of integers $\fo$.
We begin with the following result, which generalises work of Skorobogatov
\cite[Lem.~2.3]{Sko96}.
The statement is a little more general than we will require in this section, because we will use it again in Section~\ref{s:bull-rider}.  Recall that two varieties $X,Y$ over a field $F$ are said to be \emph{geometrically isomorphic} if they become isomorphic after base change to an algebraic closure of $F$.



\begin{lemma}\label{lem:lw}
Let $f \colon \sX \to \sY$ be a morphism of integral schemes, both separated and of finite type over $\fo$.  Suppose that the generic fibre of $f$ is split.  Then there exist non-empty open subsets $U \subset \Spec \fo$ and $\sV \subset \sY$ such that, for all $\fp \in U$, 
and for all $\bar{y} \in \sV(\overline{\FF}_\fp)$, any $\FF_\fp$-variety geometrically isomorphic to $\sX_{\bar{y}}$ has a $\FF_\fp$-rational point.  (In particular, for all $y \in \sV(\FF_\fp)$, the fibre $\sX_y$ has an $\FF_\fp$-rational point.)

If in addition the fibre of $f$ over every point of codimension one in $\sY$ is split, then we can take $\sV$ to have complement of codimension at least two in $\sY$.
\end{lemma}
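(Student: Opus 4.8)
The plan is to spread the geometrically integral open subscheme of the generic fibre out over a dense open of $\sY$, and then to deduce everything from a uniform version of the Lang--Weil estimates. Write $\eta$ for the generic point of $\sY$. Since $\sX_\eta$ is split it is non-empty, so $f$ is dominant, and as $\sX$ is integral the generic fibre $\sX_\eta$ is integral; being split, it contains a dense open geometrically integral subscheme $W$. In fact $\sX_\eta$ is itself geometrically integral: the geometric integrality of $W$ forces $\ff{W}\otimes_{\ff{\eta}}\overline{\ff{\eta}}$ to be a domain, while the going-down property along $\Spec\overline{\ff{\eta}}\to\Spec\ff{\eta}$ shows that every irreducible component of $(\sX_\eta)_{\overline{\ff{\eta}}}$ dominates $\sX_\eta$, so there is only one.

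Next I would record two consequences of the standard constructibility and spreading-out results (see EGA IV, \S\S 8--9): (i) since $\sX_\eta$ is geometrically integral there is a dense open $\sY_0\subseteq\sY$ over which every fibre of $f$ is geometrically integral; and (ii) for any $s\in\sY$ with $\sX_s$ split there are a dense open $\mathcal{U}_s$ of the integral scheme $\overline{\{s\}}$ and an open subscheme $\sW_s\subseteq f^{-1}(\mathcal{U}_s)$ such that $\sW_s\to\mathcal{U}_s$ has geometrically integral fibres and $(\sW_s)_y$ is an open subscheme of $\sX_y$ for each $y\in\mathcal{U}_s$. All fibres occurring here --- those of $f$, and of the finitely many $\sW_s$ used below --- lie in finitely many finite-type families, hence have uniformly bounded complexity; so a uniform form of the Lang--Weil estimates (Deligne's bounds being the essential input) provides a constant $C$, depending only on these data, such that any geometrically integral variety $V$ over a finite field $\kappa$ with $\#\kappa>C$, for which $V_{\overline{\kappa}}$ is isomorphic to the base change to $\overline{\kappa}$ of the reduced fibre of one of these families at some point, has a $\kappa$-rational point. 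For the first assertion take $\sV:=\sY_0$ and let $U\subseteq\Spec\fo$ be the cofinite --- hence open --- set of primes $\fp$ with $\#\FF_\fp>C$ over which (i) remains valid. If $\fp\in U$ and $\bar y\in\sV(\overline{\FF}_\fp)$, then $\bar y$ factors through a point $y_0\in\sV$ whose residue field is finite of order $\geq\#\FF_\fp>C$, and $\sX_{\bar y}=(\sX_{y_0})_{\overline{\FF}_\fp}$ with $\sX_{y_0}$ geometrically integral by (i). So any $\FF_\fp$-variety (i.e.\ geometrically integral $\FF_\fp$-scheme of finite type) $Z$ with $Z_{\overline{\FF}_\fp}\cong\sX_{\bar y}$ becomes, over $\overline{\FF}_\fp$, isomorphic to the fibre $\sX_{y_0}$ of $f$, whence $Z(\FF_\fp)\neq\emptyset$; taking $Z=\sX_y$ for $y\in\sV(\FF_\fp)$ gives the parenthetical claim.

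For the final assertion, assume moreover that the fibre over every codimension one point of $\sY$ is split. Let $D_1,\dots,D_m$ be the codimension one components of $\sY\setminus\sY_0$, with generic points $\delta_1,\dots,\delta_m$; applying (ii) with $s=\delta_j$ yields a dense open $\mathcal{U}_j\subseteq D_j$ and $\sW^{(j)}\subseteq f^{-1}(\mathcal{U}_j)$ with geometrically integral fibres over $\mathcal{U}_j$, restricting to open subschemes of the $\sX_y$. Put $E:=(\sY\setminus\sY_0)\setminus\bigcup_{j\leq m}\mathcal{U}_j$ and $\sV:=\sY\setminus\overline E$. Since $\overline E=\bigcup_i\overline{E\cap D_i}$, the union running over all components $D_i$ of $\sY\setminus\sY_0$, and $\overline{E\cap D_i}\subseteq D_i\setminus\mathcal{U}_i$ has codimension $\geq 2$ in $\sY$ for $i\leq m$ while $D_i$ itself does for $i>m$, we obtain $\operatorname{codim}_\sY(\sY\setminus\sV)\geq 2$; also $\sY_0\subseteq\sV$. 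For $y\in\sV$, either $y\in\sY_0$, or $y\notin E$ forces $y\in\mathcal{U}_j$ for some $j$, in which case $\sX_y$ is split with geometrically integral open subscheme $(\sW^{(j)})_y$; in both cases the argument of the previous paragraph carries over --- vacuously for the ``$Z$'' statement unless $\sX_{\bar y}$ is integral, and with $\sX_y(\FF_\fp)\supseteq(\sW^{(j)})_y(\FF_\fp)\neq\emptyset$ by Lang--Weil for the parenthetical --- after replacing $U$ by its intersection with the finitely many cofinite sets attached to the $\sW^{(j)}$. The step I expect to be the real obstacle is the uniform Lang--Weil input: one must ensure that $C$ is independent both of $\fp$ and of the geometric point $\bar y$, and that it governs an \emph{arbitrary} $\FF_\fp$-form of a fibre --- whose embedding in projective space one is not handed directly --- since this uniformity is exactly what allows a single cofinite $U$ to serve for all $\fp$ at once; by comparison the geometric inputs and the codimension bookkeeping are routine.
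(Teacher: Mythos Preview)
Your approach is essentially that of the paper. Both arguments spread out from the generic fibre and apply Lang--Weil type bounds; the paper makes your flagged ``uniform Lang--Weil'' step precise by noting that the sheaves $\R^i f_! \QQ_\ell$ are constructible on $\sY$, so the compactly supported Betti numbers of the fibres take only finitely many values. These Betti numbers are geometric invariants, hence coincide for any form $Z$ and for $\sX_{\bar y}$, while Deligne's weight bound and the Lefschetz trace formula apply to \emph{any} separated finite-type scheme over $\FF_\fp$ (no projective embedding needed) --- this is exactly what furnishes a single constant independent of $\fp$, of $\bar y$, and of the particular form. For the codimension-two part the paper is somewhat more direct than your $\sW^{(j)}$ construction: it takes $\sV$ to be the open locus where the fibre of $f$ is split (which under the extra hypothesis contains every codimension-$\le 1$ point, so has complement of codimension $\ge 2$), and then replaces $\sX$ once by an open subscheme making all fibres over $\sV$ smooth and geometrically irreducible, rather than stratifying and handling the boundary divisors with auxiliary families.
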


\begin{proof}
As the generic fibre of $f$ is split, there is a dense open set $\sV \subset \sY$ above which all fibres are split.  If the fibre of $f$ at every point of codimension one is also split, then we can take $\sV$ to have complement of codimension at least two.  Removing a closed subset of $\sX$ makes the fibres of $\sX$ above $\sV$ geometrically integral.
In particular, the fibres are generically smooth; removing a further closed subset of $\sX$ makes the fibres smooth and geometrically irreducible.

Pick an auxiliary prime $\ell$, and set $U = \Spec \fo \setminus \{\ell\}$.  The sheaves $\R^i f_! \QQ_\ell$ are constructible sheaves of $\QQ_\ell$-modules on $\sY$, and zero for $i$ sufficiently large.  It follows that the function sending a geometric point $\bar{y}$ of $\sY$ to the $i$th compactly supported Betti number $\dim \H^i_c(X_{\bar{y}},\QQ_\ell)$ is constructible.  Since $\sY$ is quasi-compact, these Betti numbers take only finitely many values.

Fix a point $\bar{y} \in \sV(\overline{\FF}_\fp)$, and let $Z$ be a variety over $\FF_\fp$ geometrically isomorphic to the fibre $\sX_{\bar{y}}$.  Because $\sX_{\bar{y}}$ is smooth and geometrically irreducible, Poincar\'e duality shows that its top Betti number is $1$.  The Lefschetz trace formula, together with Deligne's bound~\cite[Th\'eor\`eme~1]{deligne} on the traces of the Frobenius operators, shows that $Z$ has an $\FF_\fp$-point as long as $\FF_\fp$ is sufficiently large and $\fp$ is prime to $\ell$.  Shrinking $U$ to exclude all primes too small for this to apply gives the result.
\end{proof}

\begin{corollary}\label{cor:split_2_surjective}
	Let $\pi:X \to Y$ be a dominant morphism of varieties over $k$.
	Suppose that the fibre of $\pi$ over every point of 
	codimension one is split and that the generic fibre of $\pi$ is geometrically
	integral.
	Then there exist a non-empty open subset $U \subset \Spec \mathfrak{o}$, together
	with models $\mathcal{X}$ and $\mathcal{Y}$ of $X$ and $Y$ over $U$
	and a  closed subset $\mathcal{Z} \subset \mathcal{Y}$ of codimension 
	at least two, such that 
	the map
	\[
	(\mathcal{X} \setminus \pi^{-1}(\mathcal{Z})) (\mathfrak{o}_{\mathfrak{p}}) \to 
	(\mathcal{Y} \setminus \mathcal{Z})(\mathfrak{o}_{\mathfrak{p}})
	\]
	is surjective
	for all $\mathfrak{p} \in U$.
\end{corollary}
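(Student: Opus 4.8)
The plan is to spread $\pi$ out over $\Spec\fo$, apply Lemma~\ref{lem:lw}, and then lift $\fo_\fp$-points by Hensel's lemma. First I would choose models $\mathcal{X}$ and $\mathcal{Y}$ of $X$ and $Y$ over a non-empty open $U \subseteq \Spec\fo$ extending $\pi$ to a dominant morphism of integral schemes, separated and of finite type over $U$. Since the generic fibre of $\pi \colon X \to Y$ is geometrically integral, after shrinking $U$ there is a dense open $\mathcal{W} \subseteq \mathcal{Y}$ over which all fibres of $\pi$ are geometrically integral, and I would shrink $U$ further so that $\mathcal{Y} \setminus \mathcal{W}$ contains no irreducible component of any special fibre $\mathcal{Y}_\fp$; this is possible because $Y \setminus (\mathcal{W} \cap Y)$ has dimension strictly less than $\dim Y$. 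With this set-up, every codimension-one point of $\mathcal{Y}$ either lies in the generic fibre $Y$ --- where the fibre of $\pi$ is split, being geometrically integral if the point lies in $\mathcal{W}$ and split by hypothesis at the finitely many codimension-one points of $Y$ where it need not be --- or is the generic point of a component of some $\mathcal{Y}_\fp$, in which case it lies in $\mathcal{W}$ and the fibre is again split. Thus $\pi \colon \mathcal{X} \to \mathcal{Y}$ has split generic fibre and split fibres over all codimension-one points of $\mathcal{Y}$.

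Next I would apply Lemma~\ref{lem:lw}. Reading off its proof, after shrinking $U$ I obtain a dense open $\mathcal{V} \subseteq \mathcal{Y}$ whose complement $\mathcal{Z} := \mathcal{Y} \setminus \mathcal{V}$ has codimension at least two, together with an open subscheme $\mathcal{X}^{\circ} \subseteq \pi^{-1}(\mathcal{V})$ over which $\pi$ restricts to a smooth surjective morphism $\mathcal{X}^{\circ} \to \mathcal{V}$ with geometrically integral fibres, and which moreover satisfies $\mathcal{X}^{\circ}_y(\FF_\fp) \neq \emptyset$ for every $\fp \in U$ and every $y \in \mathcal{V}(\FF_\fp)$; this last property is exactly what the Lefschetz trace formula together with Deligne's bound~\cite{deligne} gives, the relevant Betti numbers being bounded uniformly over $\mathcal{V}$ by constructibility. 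Since $\mathcal{X}^{\circ}$ maps into $\mathcal{V} = \mathcal{Y} \setminus \mathcal{Z}$, it is an open subscheme of $\mathcal{X} \setminus \pi^{-1}(\mathcal{Z})$. Hence $\mathcal{X}$, $\mathcal{Y}$ and $\mathcal{Z}$ already have all the properties demanded by the corollary, and only the surjectivity statement remains.

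To prove surjectivity, fix $\fp \in U$ and $s \in (\mathcal{Y} \setminus \mathcal{Z})(\fo_\fp) = \mathcal{V}(\fo_\fp)$, and set $y = s \bmod \fp \in \mathcal{V}(\FF_\fp)$. Choose $\bar{x} \in \mathcal{X}^{\circ}_y(\FF_\fp)$. The morphism $\mathcal{X}^{\circ} \times_{\mathcal{V}, s} \Spec\fo_\fp \to \Spec\fo_\fp$ is a base change of the smooth morphism $\mathcal{X}^{\circ} \to \mathcal{V}$, hence smooth, and its special fibre $\mathcal{X}^{\circ}_y$ contains $\bar{x}$. As $\fo_\fp$ is a complete, hence henselian, discrete valuation ring, $\bar{x}$ lifts to an $\fo_\fp$-point of $\mathcal{X}^{\circ} \times_{\mathcal{V}, s} \Spec\fo_\fp$; composing with the projection to $\mathcal{X}^{\circ} \subseteq \mathcal{X} \setminus \pi^{-1}(\mathcal{Z})$ produces an $\fo_\fp$-point of $\mathcal{X} \setminus \pi^{-1}(\mathcal{Z})$ lying over $s$, as required.

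I expect the main obstacle to be the reduction carried out in the first paragraph: to invoke the part of Lemma~\ref{lem:lw} that bounds the codimension of $\mathcal{Z}$ one must know that the fibres of $\pi$ over \emph{all} codimension-one points of the model $\mathcal{Y}$ are split, not merely those of $Y$, and this has to be deduced from the hypothesis together with the geometric integrality of the generic fibre and a dimension count on special fibres. A secondary point of care is that $\mathcal{X}^{\circ}$ cannot be substituted for $\mathcal{X}$, since it is not a model of $X$; it must instead be tracked separately as the open subscheme into which the lifted points are produced. Granting these, the concluding Hensel argument is routine.
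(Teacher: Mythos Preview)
Your proposal is correct and follows the same strategy as the paper: spread out to a model, arrange that fibres over all codimension-one points of the model are split, invoke Lemma~\ref{lem:lw} to produce the open $\sV$ with codimension-$2$ complement and $\FF_\fp$-points on fibres, then lift via Hensel. The paper's own proof is extremely terse (three sentences), simply asserting that one can shrink $U$ to make the codimension-one fibres of the model split and then applying Lemma~\ref{lem:lw} ``on the smooth locus of $f$'' followed by Hensel; you have filled in precisely the details the paper omits, in particular the dimension argument showing that the horizontal codimension-one points of $\sY$ can be forced into the locus $\sW$ of geometrically integral fibres. Your extraction of $\sX^\circ$ from the proof of Lemma~\ref{lem:lw} is equivalent to the paper's passage to the smooth locus.
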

\begin{proof}
Choose a model $f \colon \sX \to \sY$ of $\pi$ over a non-empty open subset $U$ of $\Spec \fo$.  Shrinking $U$, we can assume that the fibre of $f$ over each point of codimension one in $\sY$ is split.  Now using Lemma~\ref{lem:lw} on the smooth locus of $f$ and applying Hensel's Lemma gives the result.
\end{proof}

We now come to the main result of this section, which implies  Theorem~\ref{thm:Ekedahl}.

\begin{theorem} \label{thm:Ekedahl2}
	 Let $k$ be a number field.
	 Let $\pi:X\to \PP^n$ be a dominant quasi-projective $k$-morphism with geometrically integral generic fibre.
	 Assume that:
	 \begin{enumerate}
	 \item
	 the fibre of $\pi$ at each codimension $1$ point of $\PP^n$ is split,
	 \item
	$X(\Adele_k) \neq \emptyset$.
	\end{enumerate}
	Then the limit $\sigma(\pi)$ given in \eqref{eq:horse} 
	exists, is non-zero and equals
	$$\prod_{\nu \mid \infty} \frac{\mu_\nu(\{\x \in \pi(X(k_\nu))^\mathrm{aff}: H_\nu(\x) \leq 1\})}
	{\mu_\nu(\{\x \in k_\nu^{n+1}: H_\nu(\x) \leq 1\})}	
	\prod_{\fp} \mu_\fp(\{\x \in \pi(X(k_\fp))^\mathrm{aff} \cap\mathfrak{o}_\fp^{n+1}\}).
	$$
\end{theorem}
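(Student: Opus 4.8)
The plan is to deduce Theorem~\ref{thm:Ekedahl2} --- and with it Theorem~\ref{thm:Ekedahl} --- from the projective sieve of Proposition~\ref{prop:sieve_of_Ekedahl_proj}, applied with the local conditions
$$
\Omega_\nu = \{P \in \PP^n(k_\nu) : X_P(k_\nu) \neq \emptyset\} = \pi(X(k_\nu)), \qquad \nu \in \Val(k).
$$
The first task is to see that these conditions really do govern $\sigma(\pi)$. Since the generic fibre of $\pi$ is geometrically integral and we are in characteristic zero, $\pi$ is smooth with geometrically integral fibres over some dense open $W \subseteq \PP^n$; spreading $\pi$ out over $\fo$ and combining Lemma~\ref{lem:lw} (via Deligne's bound on Frobenius eigenvalues) with Hensel's lemma shows that for $P \in W(k)$ the fibre $X_P$ acquires an $\fo_\fp$-point at all but finitely many $\fp$ as soon as it is everywhere locally soluble, so that $X_P(\Adele_k) \neq \emptyset$ is equivalent to $P \in \Omega_\nu$ for every $\nu$. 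The rational points lying on the proper closed subset $\PP^n \setminus W$ number $O(B^{nd}) = o(B^{(n+1)d})$ up to height $B$ by Schanuel's theorem, hence contribute nothing to the limit; so $\sigma(\pi)$, once it exists, is the density of $\{P \in \PP^n(k) : P \in \Omega_\nu \text{ for all } \nu\}$, which is exactly what Proposition~\ref{prop:sieve_of_Ekedahl_proj} computes.

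It then remains to verify the hypotheses of Proposition~\ref{prop:sieve_of_Ekedahl_proj} for this family $(\Omega_\nu)$. The sets $\Omega_\nu^{\mathrm{aff}}$ are cones, and being images of the $k_\nu$-points of a variety under an algebraic morphism they are semialgebraic when $\nu$ is archimedean and $\fp$-adic subanalytic when $\nu$ is finite; such sets are Jordan measurable with boundary of measure zero, giving $\mu_\nu(\partial \Omega_\nu^{\mathrm{aff}}) = 0$. For positivity and the tail estimate together, I would invoke Corollary~\ref{cor:split_2_surjective}: it provides a non-empty open $U \subseteq \Spec \fo$ and a closed subset $\sZ \subset \PP^n$ of codimension at least two such that, for $\fp \in U$, every $P \in \PP^n(k_\fp)$ whose reduction modulo $\fp$ avoids $\sZ$ lifts to an integral point of a model of $X$, hence lies in $\Omega_\fp$. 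Thus $\mu_\fp(\Omega_\fp^{\mathrm{aff}}) \geq 1 - O((\Norm \fp)^{-2}) > 0$ for $\fp \in U$, and the inclusion $\{P \in \PP^n(\fo_\fp) : P \bmod \fp \notin \sZ(\FF_\fp)\} \subseteq \Omega_\fp$ is precisely what is needed to deduce \eqref{eq:goat} for $(\Omega_\nu)$ from the corresponding statement for the codimension-$\geq 2$ set $\sZ$, which is Lemma~\ref{lem:codim_2_proj} (the finitely many $\fp \notin U$ being irrelevant once $M$ is large). It remains only to check positivity $\mu_\nu(\Omega_\nu^{\mathrm{aff}}) > 0$ at the finitely many places outside $U$ --- the archimedean ones and the small finite primes --- and here one uses that $X(\Adele_k) \neq \emptyset$ forces $X(k_\nu) \neq \emptyset$, together with generic smoothness of $\pi$, to exhibit a $k_\nu$-point of $X$ lying on the smooth locus of $\pi$, near which $\pi$ is a submersion, so that $\Omega_\nu$ contains a non-empty $\nu$-adic open set.

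Granting these points, Proposition~\ref{prop:sieve_of_Ekedahl_proj} shows that the limit \eqref{eq:horse} defining $\sigma(\pi)$ exists, is non-zero, and equals
$$
\prod_{\nu \mid \infty} \frac{\mu_\nu(\{\x \in \Omega_\nu^{\mathrm{aff}} : H_\nu(\x) \leq 1\})}{\mu_\nu(\{\x \in k_\nu^{n+1} : H_\nu(\x) \leq 1\})} \; \prod_{\fp} \mu_\fp(\{\x \in \Omega_\fp^{\mathrm{aff}} \cap \fo_\fp^{n+1}\});
$$
substituting $\Omega_\nu = \pi(X(k_\nu))$ gives exactly the Euler product in the statement, and the qualitative assertions are Theorem~\ref{thm:Ekedahl}.

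The step I expect to be the main obstacle is the positivity $\mu_\nu(\Omega_\nu^{\mathrm{aff}}) > 0$ at the archimedean places and the finitely many exceptional finite primes, where the Lang--Weil mechanism of Lemma~\ref{lem:lw} is unavailable: one must produce a smooth $k_\nu$-point on a fibre of $\pi$ out of the single hypothesis $X(k_\nu) \neq \emptyset$, and it is here that geometric integrality of the generic fibre does real work, ruling out degenerate configurations in which all $k_\nu$-points of $X$ are forced to be singular (as happens, for instance, for an affine cone over an anisotropic conic). A secondary, more bookkeeping obstacle is the faithful passage between the condition $X_P(\Adele_k) \neq \emptyset$ and the product of the purely local conditions $\Omega_\nu$, which must be handled with some care on the locus of bad fibres even though that locus is ultimately negligible in the count.
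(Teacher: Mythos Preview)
Your approach is essentially the paper's: set $\Omega_\nu = \pi(X(k_\nu))$, verify the tail estimate~\eqref{eq:goat} via Corollary~\ref{cor:split_2_surjective} and Lemma~\ref{lem:codim_2_proj}, and apply Proposition~\ref{prop:sieve_of_Ekedahl_proj}. The paper packages both of your measurability checks into a single auxiliary result, Lemma~\ref{lem:Tarski}, applied to the affine cone of $\pi$: Tarski--Seidenberg--Macintyre gives $\mu_\nu(\partial\Omega_\nu^{\mathrm{aff}})=0$ at every place in one stroke, and for positivity the paper simply replaces $X$ by an open subset on which $\pi$ is smooth and uses the submersion argument you describe. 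Your flagged ``main obstacle'' is thus exactly the content of that lemma, and the paper treats it briskly --- it does not pause over the possibility you raise (all $k_\nu$-points singular for $\pi$, as in the cone over an anisotropic conic). Your secondary bookkeeping step, reconciling $X_P(\Adele_k)\neq\emptyset$ with the product condition $P\in\bigcap_\nu\Omega_\nu$ via a smooth locus $W$ and Schanuel's bound on the complement, is also not made explicit in the paper, which tacitly identifies the two conditions; your treatment here is more careful than the paper's.
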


Specialising to $k=\QQ$, it is easy to see that the product of local densities becomes
\begin{equation} \label{eqn:euler_product}
	\frac{1}{2^{n+1}} \mu_\infty(\{\x \in \pi(X(\RR))^\mathrm{aff}: H_\infty(\x) \leq 1\})
	\prod_{p} \mu_p(\{\x \in \pi(X(\QQ_p))^\mathrm{aff} \cap\ZZ_p^{n+1}\}).
\end{equation}

To prove Theorem \ref{thm:Ekedahl2},  we let 
$\pi: X \to \PP^n$ be as in the statement.
We want to show that
Proposition \ref{prop:sieve_of_Ekedahl_proj} applies, with the choice
$
\Omega_\nu = \pi(X(k_\nu))
$
for each $\nu\in \mathrm{Val}(k)$.
Beginning with hypothesis \eqref{eq:goat}, it follows from  Corollary~\ref{cor:split_2_surjective} that 
there exists 
a closed subset $\mathscr{Z} \subset \PP_\fo^{n}$ 
of  codimension at least two, 
such that
$(\PP_{\fo_\fp}^n\setminus \mathscr{Z})(\mathfrak{o}_\mathfrak{p}) \subset \pi(X(k_\mathfrak{p}))$
for all sufficiently large primes ideals $\mathfrak{p}$.
Thus 
$$\{\x \in \PP^n(\mathfrak{o}_{\mathfrak{p}}): \x \bmod \mathfrak{p} \not \in \mathscr{Z}(\FF_\mathfrak{p}) \}
\subset \pi(X(k_\mathfrak{p}))$$
for all sufficiently large primes ideals $\mathfrak{p}$.
Condition \eqref{eq:goat} therefore follows from Lemma~\ref{lem:codim_2_proj}.
The following result, which we apply to the affine cone of $\pi$, establishes the
measurability conditions, and so concludes the  proof of Theorem \ref{thm:Ekedahl2}.
For a place $\nu \in \Val(k)$, we denote by $\mu_{\nu}$ the corresponding measure, as 
defined in \S \ref{sec:integral_points}.

\begin{lemma} \label{lem:Tarski}
	Let $\nu\in \mathrm{Val}(k)$ and let $X$ be a quasi-projective variety
	over $k_\nu$ equipped with a dominant $k_\nu$-morphism
	$\pi:X \to \mathbb{A}^n$. Assume that $X(k_\nu) \neq \emptyset$. Then $\pi(X(k_\nu))$
	is measurable with respect to $\mu_\nu$ and 
	$$\mu_\nu(\partial(\pi(X(k_\nu))))=0 \quad \text{ and } \quad \mu_\nu(\pi(X(k_\nu)))>0.$$
\end{lemma}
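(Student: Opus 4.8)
The statement is a local one, so fix $\nu \in \Val(k)$ and work over $k_\nu$. The key point is that $\pi(X(k_\nu))$ is a definable set: since $k_\nu$ is either $\RR$, $\CC$, or a finite extension of $\QQ_p$, its first-order theory admits quantifier elimination in a suitable language (real closed fields for $\RR$, algebraically closed fields for $\CC$, and the theory of henselian valued fields à la Macintyre for the $p$-adic case). Choosing an affine cover of $X$ and writing $\pi$ in coordinates, $\pi(X(k_\nu))$ is the projection of a $k_\nu$-semialgebraic (resp.\ semialgebraic/subanalytic, resp.\ $p$-adically semialgebraic/definable) set, hence is itself definable. I would invoke the relevant cell-decomposition theorem to conclude that a definable set has measurable (indeed Jordan-measurable) closure and interior, and in particular that its boundary has measure zero: the boundary of a $d$-dimensional definable set has dimension $< d$, and a definable set of dimension $< n$ inside $k_\nu^n$ has $\mu_\nu$-measure zero. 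This gives $\mu_\nu(\partial(\pi(X(k_\nu)))) = 0$ and measurability.

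For the positivity $\mu_\nu(\pi(X(k_\nu))) > 0$, the idea is to use smoothness of the generic fibre together with the implicit function theorem. Since $\pi$ is dominant with $X$ a variety over the perfect field $k_\nu$, the morphism $\pi$ is smooth of relative dimension $\dim X - n$ on a dense open $U \subset X$; after possibly shrinking I may assume $\pi|_U$ is a smooth surjection onto a dense open of $\mathbb{A}^n$. Now I must produce a $k_\nu$-point of $X$ lying on $U$: here is where the hypothesis $X(k_\nu) \neq \emptyset$ must be upgraded. The subtlety is that the given $k_\nu$-point might lie in the bad locus $X \setminus U$; to handle this I would argue that $X(k_\nu)$, being a nonempty definable set in a variety, is either empty or has nonempty interior (in the $\nu$-adic topology) in some component of $X$ of top dimension — more precisely, I would use that a smooth $k_\nu$-point of $X$ exists near any given $k_\nu$-point after a small deformation, or simply observe that the closure of $X(k_\nu)$ has dimension $\dim X$ so it cannot be contained in the proper closed subset $X \setminus U$. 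Once we have $x \in U(k_\nu)$ with $\pi$ smooth at $x$, the submersion theorem / implicit function theorem over $k_\nu$ gives a $\nu$-adic neighbourhood of $\pi(x)$ contained in $\pi(U(k_\nu)) \subset \pi(X(k_\nu))$, which has positive measure.

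\textbf{The main obstacle.} The delicate step is the one just flagged: ensuring there is a $k_\nu$-point of $X$ at which $\pi$ is smooth, given only that $X(k_\nu) \neq \emptyset$. This requires knowing that the $k_\nu$-points are "spread out" enough not to be trapped in the non-smooth locus. The cleanest route is to apply the Nullstellensatz-type fact (valid for $k_\nu$ any of the above local fields, via cell decomposition or Ax–Kochen–Ershov-style arguments): if a variety $V$ over $k_\nu$ has a $k_\nu$-point, then its smooth locus has a $k_\nu$-point — indeed one uses that $V(k_\nu)$ is Zariski-dense in those irreducible components of $V_{k_\nu}$ containing a $k_\nu$-point. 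Since $X$ is a variety (reduced) and $X(k_\nu)\neq\emptyset$, pick a component containing a $k_\nu$-point; its smooth locus over $k_\nu$ is then nonempty, and I may assume (shrinking $U$) that $\pi$ restricted to the smooth locus of this component is smooth onto its image, so the implicit function theorem argument applies verbatim. Everything else — quantifier elimination, the dimension count for boundaries, and the local surjectivity of smooth morphisms on $\nu$-adic points — is standard, and I would cite the literature (e.g.\ Denef's work on $p$-adic cell decomposition, or the treatment in Bourbaki / Serre's \emph{Lie groups} for the implicit function theorem over local fields) rather than reproving it.
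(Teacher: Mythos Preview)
Your approach matches the paper's: invoke Tarski--Seidenberg--Macintyre (the paper cites \cite{Pon95}) to see that $\pi(X(k_\nu))$ is semi-algebraic, hence measurable with boundary of measure zero; then pass to the smooth locus of $\pi$ and use that a smooth morphism induces a submersion on $k_\nu$-points, so the image is open and of positive measure. The paper compresses the second step to the single sentence ``we may replace $X$ by an open subset if necessary to assume that $\pi$ is smooth.''

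You are more careful than the paper in flagging this replacement as the delicate point: one must know that the smooth locus of $\pi$ still contains a $k_\nu$-point. But your proposed fix does not work. The assertion that $X(k_\nu)$ is Zariski-dense in the components it meets, and hence meets the smooth locus, fails when every $k_\nu$-point is singular. Take $X=V(x_0^2+x_1^2+x_2^2)\subset\mathbb{A}^3_\RR$ with $\pi$ the first projection: $X$ is a geometrically integral quasi-projective $\RR$-variety, $\pi$ is dominant, $X(\RR)=\{0\}$ is the cone point alone, the smooth locus of $\pi$ has no real point, and $\pi(X(\RR))=\{0\}$ has measure zero. So the gap you identified is genuine --- indeed the lemma as literally stated seems to fail on this example --- and neither the paper's one-line reduction nor your Zariski-density argument closes it. What rescues the paper's actual application (to the affine cone of a family with $X(\Adele_k)\neq\emptyset$) is that there one does have $k_\nu$-points at which $\pi$ is smooth; as a stand-alone statement the lemma needs that as an explicit hypothesis.
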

\begin{proof}
	The Tarski--Seidenberg--Macintyre theorem  implies
	that $\pi(X(k_\nu))$ is a semi-algebraic set (see \cite[Thm.~3]{Pon95}). From this, it easily
	follows that $\pi(X(k_\nu))$ is measurable and that its boundary has measure zero.
	It thus suffices to show that it has positive measure. To do this, we may replace
	$X$ by an open subset if necessary to assume that $\pi$ is smooth. 
	Then the induced map $X(k_\nu) \to \mathbb{A}^n(k_\nu)$ is a submersion,
	hence $\pi(X(k_\nu)) \subset \mathbb{A}^n(k_\nu)$ is open and thus has positive measure, as required.
\end{proof}


We close this section by indicating the proof of Theorem \ref{thm:Ekedahl_affine}, which runs very similarly to the proof of 
Theorem \ref{thm:Ekedahl2}.
Let  $\pi: X \to \mathbb{A}^n$ and let $\Psi\subset k_\infty^n$  be as in Theorem \ref{thm:Ekedahl_affine}.
We want to show that 
Proposition \ref{prop:sieve_of_Ekedahl_affine} applies, with  $\Omega_\infty=\Psi$ and 
$
\Omega_\fp = \pi(X(k_\fp))
$
for each prime ideal $\fp$. Note that, by our assumptions on $\Psi$, we have $B\Psi \subset \pi(X(k_\infty))$ 
for all $B \geq 1$.
To check  hypothesis \eqref{eqn:lim_M_affine}, we first deduce from  Corollary~\ref{cor:split_2_surjective} that 
there exists 
a closed subset $\mathscr{Z} \subset \mathbb{A}_\fo^{n}$ 
of  codimension at least two, 
such that
$$\{\x \in \mathbb{A}^n(\mathfrak{o}_{\mathfrak{p}}): \x \bmod \mathfrak{p} \not \in \mathscr{Z}(\FF_\mathfrak{p}) \}
\subset \pi(X(k_\mathfrak{p}))$$
for all sufficiently large primes ideals $\mathfrak{p}$.
Condition \eqref{eqn:lim_M_affine} therefore follows from Lemma~\ref{lem:codim_2_affine}. 
Finally, the measurability conditions at the finite places are a straightforward consequence of 
Lemma \ref{lem:Tarski}. This concludes the proof of  Theorem \ref{thm:Ekedahl_affine}. \qed


\section{Proof of Theorem \ref{thm:H1} --- strategy}\label{s:cow-herder}

In this section we describe the structure of the proof of Theorem~\ref{thm:H1} and introduce some notation.  Let us begin by giving an overview of the main ideas of the proof. We focus our discussion on the case where there is a non-trivial element of the group $\H^1(K, \Pic X_{\bar{\eta}})$, as this case is more difficult. 
Along the way, we shall indicate the changes that need to be made when working with 
a non-trivial element of $\Br X_\eta / \Br K$.
Our strategy is to show that, for most $P \in \PP^n(k)$, the element we work with  specialises to give a
class in $\Br X_P / \Br k$, which can obstruct weak approximation on $X_P$.  We need to understand how this obstruction varies with $P$.

Suppose that we have a variety $V$ over a number field $k$ and an element $\cA \in\Br V$.  
According to work of Bright \cite[\S 5.2]{bright'},  if there is a prime $\fp$ of $k$ with $\Norm \fp$ sufficiently large and such that $\cA$ is ramified at $\fp$, then 
the evaluation map $\mathrm{ev}_\cA \colon V(k_\fp) \to \Br k_\fp$ takes many values.  In particular, it follows that $\cA$ gives an obstruction to weak approximation on $V$.

Our plan will be to understand how this condition varies in the family of varieties considered in Theorem~\ref{thm:H1}.
In the family $\pi \colon X \to \PP^n$, there is a dense open subset $U \subset \PP^n$ above which the fibres are smooth and  proper varieties over  $k$.  These are the ``good'' members of the family.  The complement of $U$ is a union
$\PP^n \setminus U = \bigcup_i S_i$ of 
 locally closed components.
Above the generic point of each component $S_i$, the fibre of $X$ is ``bad'' in some way.  We shall think of the components $S_i$ as the different possible types of bad reduction of a smooth variety in our family, as follows. Let $\fo$ be the ring of integers of $k$.  If $P \in U(k)$ is a point, then $P$ extends uniquely to a point of $\PP^n(\fo)$ and so it makes sense to talk about the reduction of $P$ modulo any prime $\fp$ of $\fo$.  The reduction of $P$ will usually land in $U$, but at finitely many bad primes it will  land in one of the components $S_i$.  At such a prime, the smooth $k$-variety $X_P$ will have bad reduction, of a type corresponding to the component $S_i$.  The crucial observation, recorded in Proposition~\ref{cor:sieve}, is that if some $S_i$ has codimension $1$ in $\PP^n$, then almost every variety $X_P$ (for $P \in U(k)$) has \emph{some} prime $\fp$ at which it has the bad reduction type corresponding to $S_i$. This bad reduction will then be used to force a Brauer--Manin obstruction to weak approximation for $X_P$ at $\fp$.

To illustrate matters, consider the family of diagonal cubic surfaces~\eqref{eq:onion-collector}.  The open subset $U \subset \PP^n$ over which the fibres are smooth is given by $a_0a_1a_2a_3 \neq 0$.  Above each of the four divisors $\{ a_i =0 \}$, the generic fibre is a cone; above the closed subset where more than one of the $a_i$ vanishes, the reduction is something more singular.  Here it is easy to show by elementary methods that 100\% of the smooth members of the family have the property that there exists a prime $\p$ dividing just one of $a_0,a_1,a_2,a_3$, so have bad reduction at $\p$ which is a cone. This type of bad reduction is enough to guarantee that there is a Brauer--Manin obstruction to weak approximation (as first noticed in \cite[\S 5]{ct-k-s} in the context of the Brauer--Manin obstruction to the Hasse principle). 

Returning to the proof of  Theorem~\ref{thm:H1}, the strategy of the proof can be summarised in the following steps.
\begin{enumerate}
\item\label{step1} For any prime divisor $D \subset \PP^n$, and any class $\alpha \in \H^1(K, \Pic X_{\bar{\eta}})$, we define the \emph{residue} of $\alpha$ at $D$; when that residue is non-zero, we will say that $\alpha$ is \emph{ramified} at $D$.
\item\label{step2} We show that, under the hypotheses of Theorem~\ref{thm:H1}, every non-zero element of $\H^1(K, \Pic X_{\bar{\eta}})$ is ramified at some prime divisor $D \subset \PP^n$, necessarily contained in the complement of $U$.
\item\label{step3} Fix an element $\alpha \in \H^1(K, \Pic X_{\bar{\eta}})$ which is ramified at a prime divisor $D$.
Suppose that $P \in U(k)$ is such that the smooth variety $X_P$ has bad reduction at a prime $\fp$, of the type corresponding to the divisor $D$.  (In other words, the reduction of $P$ modulo $\p$ lies in $D$, in a suitably ``generic'' way.)  Then, using the result of~\cite[\S 5.2]{bright'} described above, we prove that there is an obstruction to weak approximation on $X_P$. This obstruction is given by an element of $\Br X_P$ obtained by ``specialising'' $\alpha$ in an appropriate way.
\item\label{step4} Finally we show that the previous statement applies to 100\% of $P \in U(k)$.
\end{enumerate}
Steps~\ref{step2}, \ref{step3} and~\ref{step4} are independent.  Step~\ref{step1} will be addressed in \S\S\ref{sec:res}--\ref{sec:resH1}, Step~\ref{step2} in \S\ref{s:bee-keeper}, Step~\ref{step3} in \S\ref{s:bull-rider} and Step~\ref{step4} in \S\ref{s:sieve}.

We proceed to  state formally the primary waypoints in  each step of the proof.  Steps~\ref{step1}--\ref{step3} are purely algebraic and apply in a wider context than that of Theorem~\ref{thm:H1}. It will be convenient to specify here the particular families of schemes that we will be working with for most of the remainder of this paper. 

\begin{condition}\label{cond}
$\pi \colon X \to B$ is a flat, surjective morphism of finite type between regular integral Noetherian schemes. If $K$ is the function field  $K$ of $B$ and $\eta \colon \Spec K \to B$ is the generic point, then the generic fibre $X_\eta$ is smooth, proper and geometrically connected with torsion-free geometric Picard group.  The fibre of $\pi$ at each codimension-1 point of $B$ is geometrically integral.  The fibre of $\pi$ at each codimension-2 point of $B$ has a geometrically reduced component.
\end{condition}

The assumption that the generic fibre have torsion-free geometric Picard
group is very natural when dealing with weak approximation. Indeed, if $V$ is an everywhere locally soluble smooth proper variety over a number field $k$ with $\Pic \bar{V}$ not torsion-free, then $V$ is not geometrically simply connected and hence a result of Minchev (see \cite[Thm.~2.4.5]{harariWA}) implies that $V$ always fails weak approximation.
In particular, the non-trivial case of Theorem \ref{thm:H1} is  when the geometric Picard group is torsion-free.
This condition also simplifies many of the constructions performed in  \S \ref{sec:Pic}.

Throughout we will fix an algebraic closure $\bar{K}$ of $K$ and also a geometric point $\bar{\eta} \colon \Spec \bar{K} \to B$ lying over $\eta$.  Our chief interest lies in the case that $X$ and $B$ are varieties over a number field $k$. 
All cohomology will be \'etale cohomology.

\subsubsection*{Step \ref{step1}}
The first step of the proof of Theorem~\ref{thm:H1} is to define the residue of an element of the group $\H^1(K, \Pic X_{\bar{\eta}})$ at a prime divisor $D \subset B$.   Recall that there is an exact sequence
\[
\Br K \to \Br_1 X_{\eta} \to \H^1(K, \Pic X_{\bar{\eta}}) \to \H^3(K, \Gm)
\]
coming from the Hochschild--Serre spectral sequence.  For any prime divisor $Z \subset X$, Grothendieck defined a residue map
\[
\partial_Z \colon \Br X_\eta \to \H^1(\ff{Z}, \QZ).
\]
Grothendieck's purity theorem for the Brauer group \cite[III, Thm.~6.1]{Grothendieck_Brauer} shows that $\Br X \subset \Br X_\eta$ is precisely the subgroup consisting of those classes having zero residue at every vertical prime divisor $Z$.  As we've already seen, however,  work of Uematsu~\cite{uematsu} shows that $\Br X_\eta$ is not as useful as it might appear,  since interesting elements of $\H^1(K, \Pic X_{\bar{\eta}})$ do not necessarily lift to $\Br_1 X_\eta$.
Our aim is therefore to define a residue map directly on $\H^1(K, \Pic X_{\bar{\eta}})$, compatible with that on $\Br X_\eta$, and prove a corresponding purity theorem.

It turns out to be easier to work with the group $\H^0(K, \H^2(X_{\bar{\eta}}, \mmu_n))$, where $n>1$ is a fixed integer which is invertible on $B$; this allows us to treat elements of $\H^1(K,\Pic X_{\bar{\eta}})$ and elements of $\Br X_\eta$ in a uniform manner.
Let us show how this group appears.
The exact sequence
\[
0 \to \Pic X_{\bar{\eta}} \xrightarrow{n} \Pic X_{\bar{\eta}} \to (\Pic X_{\bar{\eta}})/n \to 0
\]
gives rise to an exact sequence in cohomology
\[
\H^0(K, \Pic X_{\bar{\eta}}) \to \H^0(K, (\Pic X_{\bar{\eta}})/n) \to \H^1(K, \Pic X_{\bar{\eta}})[n] \to 0.
\]
Thus every $n$-torsion class in $\H^1(K, \Pic X_{\bar{\eta}})$ is represented by a Galois-fixed element of $(\Pic X_{\bar{\eta}})/n$.  On the other hand, the Kummer sequence gives an exact sequence
\[
0 \to (\Pic X_{\bar{\eta}})/n \to \H^2(X_{\bar{\eta}}, \mmu_n) \to \Br X_{\bar{\eta}}[n] \to 0
\]
and so a Galois-fixed element of $(\Pic X_{\bar{\eta}})/n$ gives rise to a Galois-fixed element of $\H^2(X_{\bar{\eta}}, \mmu_n)$.

To any codimension-$1$ point $d$ of $B$, we will associate a \emph{relative residue map}
\[
\rho_d \colon \H^0(K, \H^2(X_{\bar{\eta}}, \mmu_n)) \to \H^0(\ff{d}, \H^1(X^\sm_{\bar{d}}, \Zn))
\]
in Proposition \ref{prop:relpur}, which is
closely related to the usual residue map for the Brauer group.  Here $X^\sm_{\bar{d}}$ denotes the non-singular locus of the variety $X_{\bar{d}}$, where $\bar{d}$ is a geometric point lying over $d$.  In Proposition~\ref{prop:R2seq} we will prove a purity result:  the sequence
\[
0 \to \H^0(B, \R^2 \pi_* \mmu_n) \to \H^0(K, \H^2(X_{\bar{\eta}}, \mmu_n)) \xrightarrow{\oplus \rho_{d}} \bigoplus_{d \in B^{(1)}} \H^0(\ff{d}, \H^1(X_{\bar{d}}^\sm, \Zn))
\]
is exact.  This completes Step~\ref{step1} of the proof.

\subsubsection*{Step~\ref{step2}} The second stage of the proof of Theorem~\ref{thm:H1} is to show that, given any non-trivial 
element of $\H^1(K, \Pic X_{\bar{\eta}})$ giving rise to $\alpha \in \H^0(K,
\H^2(X_{\bar{\eta}},\mmu_n))$, there exists a codimension-1 point $d \in B^{(1)}$ satisfying $\rho_d(\alpha) \neq 0$.  This will be achieved in \S \ref{s:bee-keeper}, where it will be established under the assumptions:
\begin{itemize}
\item
$B=\PP^n$, 
\item
$\Br \bar X=0$,
\item $\H^1(k, \Pic \bar{X})=0$,
\item $\H^2(k,\Pic \bar{B}) \to 
\H^2(k,\Pic \bar X) $ is injective. 
\end{itemize}
As input to Step~\ref{step3}, we need not only $\rho_d(\alpha) \neq 0$ but the stronger condition $\rho_d(\alpha)=n$.  However, this is easily arranged by starting with an element of prime order $n$ in $\H^1(K,\Pic X_{\bar{\eta}})$.

If instead we have a non-trivial element of $\Br X_\eta / \Br K$, then 
in \S \ref{sec:Br} we 
  prove that a class in $\H^0(K,\H^2(X_{\bar{\eta}},\mmu_n))$ associated to it has non-trivial residue at some $d \in B^{(1)}$.

\subsubsection*{Step~\ref{step3}} 
Assume that $\pi:X\to B$  is a morphism of smooth varieties over a number field $k$ satisfying Condition \ref{cond}.
Let $U\subset B$ be a non-empty open subset over which $\pi$ is smooth and proper. 
For any point $P \in U(k)$ such that $X_P$ is everywhere locally soluble, and any element $\alpha \in \H^0(K, \H^2(X_{\bar{\eta}}, \mmu_n))$, we will define the specialisation $\spec_P(\alpha) \in \Br X_P[n]$, defined only up to elements of $\Br k[n]$.

To state the main ingredient in Step~\ref{step3} of the proof of Theorem~\ref{thm:H1}, we need to define some more terms.
By a \emph{model} of $\pi$ over $\fo$ we mean an integral scheme $\sB$, separated and of finite type over $\fo$, satisfying $\sB \times_{\fo} k = B$, together with an $\fo$-morphism $\sX \to \sB$, separated and of finite type, extending $\pi$.
Next, let $S$ be a regular integral Noetherian scheme of dimension $d$.
Two locally closed subschemes $Y,Z$ of codimensions $c$ and $d-c$ are said to \emph{meet transversely} at a closed point $P \in Y \cap Z$ if, in the local ring $\OO_{S,P}$, the ideals defining $Y$ and $Z$ together generate the maximal ideal of $\OO_{S,P}$.
Bearing all this in mind, the following result will be established in  \S \ref{s:bull-rider}.

\begin{proposition}\label{prop:H2}
Let $\alpha\in  \H^0(K, \H^2(X_{\bar{\eta}},\mmu_n))$
be a class with $n >1$ and suppose that there is a point   $d \in B^{(1)}$ such that the relative residue 
\[\rho_d(\alpha) \in \H^0(\ff{d},\H^1(X^\sm_{\bar{d}},\Zn))\] has order $n$.
Let $\sX \to \sB$ be a model of $\pi$ over $\fo$.
Denote by $\sD$ the Zariski closure of $d$ in $\sB$.  Then there is a dense open subset $\sV \subset \sD$ with the following property:

Let $P$ be a point of $U(k)$ such that $X_P$ is everywhere locally soluble, and suppose that the Zariski closure of $P$ in $\sB$ meets $\sV$ transversely at a closed point $s$.  Let $\p$ be the prime of $\fo$ over which $s$ lies.  Then
the evaluation map $X_P(k_\p) \to \Br k_\p[n]$ given by the algebra $\spec_P(\alpha)$ is surjective.
In particular, weak approximation fails for  $X_P$.
\end{proposition}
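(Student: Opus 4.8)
The plan is to reduce everything to the local computation of \cite[\S 5.2]{bright'}, which says that an element of the Brauer group of a variety over a $\p$-adic field that is ramified (in a suitable sense) has surjective evaluation map provided the residue field is large enough. First I would construct the dense open $\sV \subset \sD$: starting from a model $\sX \to \sB$ of $\pi$, shrink $\sB$ so that $\pi$ is smooth and proper over an open subset $\sU$ extending $U$, so that $\sD$ meets $\sU$ in a dense open, and so that the relative residue $\rho_d(\alpha)$, which lives in $\H^0(\ff{d}, \H^1(X^\sm_{\bar d}, \Zn))$, spreads out to a class over a dense open of $\sD$ that is everywhere of order $n$ and corresponds to a connected (cyclic degree $n$) étale cover of the smooth locus of the fibre $\sX_{\sD}$. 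I would also use Lemma~\ref{lem:lw} (applied to the smooth locus of $\sX \to \sB$, and to the cyclic cover) to arrange that, after further shrinking $\sV$ and removing finitely many primes of $\fo$, every fibre $\sX_s$ for $s \in \sV$ has smooth locus with an $\FF_s$-point and the cover stays connected; this is what makes the residue ``genuinely'' of order $n$ fibrewise.

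Next, given $P \in U(k)$ with $X_P$ everywhere locally soluble whose closure $\overline{\{P\}}$ in $\sB$ meets $\sV$ transversely at a closed point $s$ lying over $\p$, I would first define $\spec_P(\alpha) \in \Br X_P[n]$, up to $\Br k[n]$: since $X_P$ is everywhere locally soluble and $\H^3(k,\Gm) = 0$, the class $\alpha \in \H^0(k, \H^2(X_{\bar P}, \mmu_n))$ obtained by specialising at $P$ lifts through $\Br_1 X_P[n] \to \H^0(k,\H^2(X_{\bar P},\mmu_n))$ (the obstruction in $\H^3(k,\Gm)$ vanishes, and the $\H^2(k,\Gm)$-ambiguity is killed by local solubility — this uses the Kummer sequence and that $X_P(\A_k) \neq \emptyset$). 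Then the transversality hypothesis is exactly what is needed to identify the residue of $\spec_P(\alpha)$ along the special fibre of a model $\sX_P$ of $X_P$ over $\fo_\p$ with the restriction of the spread-out relative residue $\rho_d(\alpha)$ to the point $s$: transversality at $s$ means the local rings match up so that the regular local scheme $\Spec \OO_{\sB, s}$ is, along $\overline{\{P\}}$, a trait transverse to $\sD$, and the residue maps are compatible with flat base change along $\overline{\{P\}} \to \sB$. Hence $\spec_P(\alpha)$ is ramified along the special fibre of $\sX_P$, with residue the class corresponding to the connected cyclic degree-$n$ cover of $\sX_s^\sm$.

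Finally I would feed this into \cite[\S 5.2]{bright'}: the model $\sX_P \to \Spec \fo_\p$ has special fibre whose smooth locus carries a connected cyclic degree-$n$ cover, $\sX_s^\sm$ has an $\FF_s$-point (arranged above), and — if we additionally require $\Norm \p$ to avoid a finite bad set, which we fold into the choice of $\sV$ and the excluded primes — Bright's result gives that $\ev_{\spec_P(\alpha)} \colon X_P(k_\p) \to \Br k_\p[n] \cong \tfrac1n\ZZ/\ZZ$ is surjective. Since at an everywhere-locally-soluble point the global class $\spec_P(\alpha)$ pairs to zero on $X_P(k)$ but takes all $n$ values locally at $\p$, and $\Br k[n]$-ambiguity is harmless in the Brauer--Manin pairing, weak approximation fails for $X_P$. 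The main obstacle I anticipate is Step two: correctly defining $\spec_P(\alpha)$ and proving the compatibility of the relative residue $\rho_d$ with specialisation at a transverse point $P$ — i.e.\ that ``residue of the specialisation = specialisation of the residue''. This requires the purity/compatibility machinery of Step~\ref{step1} (Propositions~\ref{prop:relpur} and~\ref{prop:R2seq}) together with a careful base-change argument for $\R^2\pi_*\mmu_n$ along the transverse trait, and is where the transversality hypothesis is genuinely used.
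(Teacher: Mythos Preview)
Your proposal is correct and follows essentially the same route as the paper: spread out $\alpha$ and its residue over the model, use transversality together with the functoriality of Proposition~\ref{prop:relpur} to identify the residue of $\spec_P(\alpha)$ along the special fibre of $\sX_\sP$ with the restriction of $\rho_\sD(\alpha)$ at $s$ (the paper records this as a five-row commutative diagram), and then feed this into \cite{bright'}. One technical point the paper handles more carefully than your sketch: the class $\rho_d(\alpha)\in\H^0(\ff{d},\H^1(X^\sm_{\bar d},\Zn))$ need not lift to an actual torsor over $\sD$, only over an \'etale cover, so instead of applying Lemma~\ref{lem:lw} to ``the cyclic cover'' the paper packages this into a separate Lemma~\ref{lem:points}; moreover what is needed for \cite[Lemma~5.12]{bright'} to give surjectivity is not merely that the cover stay connected but that \emph{every} $\ff{s}$-twist of the torsor $\sY(s)$ have a $\ff{s}$-point, which is exactly the conclusion of Lemma~\ref{lem:points} (equivalently of Lemma~\ref{lem:lw} applied to $Y\to\sD$).
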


This result says that if $X_P$ has a certain kind of bad reduction at $\fp$, then a Brauer--Manin obstruction to weak approximation occurs for $X_P$ at $\fp$.
Note that the conclusion of the proposition does not depend on the choice of $\spec_P(\alpha)$ modulo $\Br k[n]$.

\subsubsection*{Step~\ref{step4}}
The final step in the proof of Theorem \ref{thm:H1} is to show that, with polynomial decay,  100\% of the points $P$ in $U(k) \cap \pi(X(\A_k))$ satisfy the condition appearing in Proposition~\ref{prop:H2}.
Let $\pi:X \to \PP^n$ satisfy the conditions of Theorem~\ref{thm:H1} and let $\sX\to \sB = \PP^n_\fo$ be a model of $\pi$ over $\fo$. 
Let $\sD$ and  $\sV$ be as in the statement of Proposition \ref{prop:H2}.  The following 
result will be established in \S \ref{s:sieve} using the large sieve inequality.

\begin{proposition}\label{cor:sieve}
Let $T=U(k)\cap \pi(X(\A_k))$ and let $T_{\text{trans}}$
be  the set of points $P\in T$ such that 
the  Zariski closure $\bar{P}$ in $\PP^n_\fo$ 
meets $\sV$ transversely in at least one point.
Then 
$$
\#\{P\in T: H(P)\leq B\}-\# \{ P \in T_{\text{trans}} : H(P) \le B\}
=O_{\sV} \left(\frac{B^{n+1}}{\log B}\right). 
$$
\end{proposition}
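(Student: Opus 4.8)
The plan is to reformulate the transversality condition as a collection of local conditions modulo primes $\fp$, and then apply the large sieve inequality to bound the number of $P \in T$ that fail \emph{every} such local condition. First I would fix notation: write $\sD \subset \PP^n_\fo$ for the Zariski closure of the codimension-$1$ point $d$, and $\sV \subset \sD$ for the dense open subscheme furnished by Proposition~\ref{prop:H2}; note $\sD$ is horizontal, i.e.\ dominates $\Spec\fo$, and meets the generic fibre $\PP^n_k$ in the prime divisor $D$. For a prime $\fp$ of $\fo$ of sufficiently large norm (avoiding the finitely many primes of bad behaviour), the reduction $\sV_\fp \subset \PP^n_{\FF_\fp}$ is a dense open subscheme of a hypersurface, and by standard estimates (Lang--Weil, say) we have $\#\sV(\FF_\fp) = \Norm\fp^{n-1} + O(\Norm\fp^{n-3/2})$. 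A point $P \in \PP^n(k)$, with Zariski closure $\bar P \cong \Spec\fo$ in $\PP^n_\fo$, meets $\sV$ transversely at a point over $\fp$ precisely when the reduction $\bar P \bmod \fp$ lands in $\sV(\FF_\fp)$ \emph{and} $\bar P$ is not tangent to $\sD$ at that point; the tangency is itself a closed condition cutting out a subscheme of $\sD$ of codimension $\ge 1$ in $\sD$, hence codimension $\ge 2$ in $\PP^n_\fo$, so it can be absorbed by shrinking $\sV$. Thus, after shrinking, "$\bar P$ meets $\sV$ transversely over $\fp$'' becomes simply "$\bar P \bmod \fp \in \sV(\FF_\fp)$''.

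Next I would set up the sieve. Let $\mathcal{P}$ be a set of primes $\fp$ of $\fo$ with $\Norm\fp \in (Q, 2Q]$ for a parameter $Q = Q(B)$ to be chosen, all of norm large enough that the above applies. For each such $\fp$, the "bad'' residues — those for which transversality fails — are $\PP^n(\FF_\fp) \setminus \sV(\FF_\fp)$, a set of size $\Norm\fp^n - \Norm\fp^{n-1} + O(\Norm\fp^{n-1/2})$, so a proportion $1 - \Norm\fp^{-1} + O(\Norm\fp^{-3/2})$ of all residues. A point $P \in T$ lies in $T \setminus T_{\mathrm{trans}}$ only if, for \emph{every} $\fp \in \mathcal{P}$, the reduction $\bar P \bmod \fp$ avoids $\sV(\FF_\fp)$. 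The large sieve inequality (in the number-field / projective-space form, e.g.\ counting $\fo^{n+1}$-points in a box and projecting, as in the work of Serre or the treatments used for such counting problems) then bounds
\[
\#\{P \in \PP^n(k) : H(P) \le B,\ \bar P \bmod \fp \notin \sV(\FF_\fp)\ \forall \fp \in \mathcal{P}\}
\ll \frac{B^{n+1} + Q^{2(n+1)}}{L},
\]
where $L = \sum_{\fp \in \mathcal{P}} \prod_{\fp}\bigl(\text{density of the omitted set}\bigr)^{-1} - 1 \gg \sum_{\fp \in \mathcal{P}} \Norm\fp^{-1} \gg \#\mathcal{P}/Q \gg 1/\log Q$, using that the number of primes of norm in $(Q,2Q]$ is $\gg Q/\log Q$ by the prime ideal theorem, so that each omitted set has relative density $\asymp \Norm\fp^{-1}$, giving $L \gg 1$ — in fact one must be a little more careful and sum over a dyadic range of $Q$'s, or take $Q$ itself as the main parameter, to extract the $\log B$ saving. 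Choosing $Q = B^{1/2}$ (so that $Q^{2(n+1)} = B^{n+1}$ is of the same order as the main term) and summing the density contributions over all primes of norm up to $B^{1/2}$ gives $L \gg \log B$, whence the count is $O(B^{n+1}/\log B)$.

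The output of the sieve bounds $\#\{P \in \PP^n(k) : H(P) \le B\} - \#\{P : H(P)\le B,\ \bar P\ \text{meets}\ \sV\ \text{transversely somewhere}\}$ by $O(B^{n+1}/\log B)$; since $T \subset \PP^n(k)$ and $T_{\mathrm{trans}}$ is exactly the intersection of $T$ with the set of transversally-meeting points, subtracting gives the claimed estimate, using also Schanuel's theorem $\#\{P : H(P)\le B\} \asymp B^{n+1}$ to know the main term is of the right order. I expect the main obstacle to be the bookkeeping in the large sieve step: getting the inequality in precisely the right shape over a general number field (handling the class group and units, as already confronted in Proposition~\ref{prop:sieve_of_Ekedahl_proj}), verifying that the omitted sets genuinely have density bounded below by $c/\Norm\fp$ uniformly (which rests on the Lang--Weil bound for $\sV_\fp$ and the transversality reformulation), and organizing the choice of $Q$ and the summation so that the $1/\log B$ decay — rather than merely $o(1)$ — actually comes out. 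The reformulation of transversality as a mod-$\fp$ condition, and the absorption of the tangency locus into a codimension-$\ge 2$ bad set, should be routine given Proposition~\ref{prop:H2}, but deserve to be spelled out carefully since the whole argument hinges on the omitted set being a dense open in a hypersurface.
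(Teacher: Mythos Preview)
Your overall strategy—encode the condition via local residue conditions and apply the large sieve—is exactly what the paper does, but there is a genuine gap in your reformulation of transversality.

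You claim that ``the tangency is itself a closed condition cutting out a subscheme of $\sD$ of codimension $\ge 1$ in $\sD$, hence codimension $\ge 2$ in $\PP^n_\fo$, so it can be absorbed by shrinking $\sV$,'' and that therefore transversality becomes the purely mod-$\fp$ condition ``$\bar P \bmod \fp \in \sV(\FF_\fp)$.'' This is false. Whether $\bar P$ meets $\sD$ transversely at a smooth point $s$ over $\fp$ is \emph{not} a property of $s$ alone: it depends on the mod-$\fp^2$ lift of $P$. Concretely, if $\sD$ is cut out by a homogeneous $f$ and $\x$ is a primitive representative of $P$, then $\bar P$ meets $\sD$ transversely over $\fp$ (at a smooth point) precisely when $\fp \,\|\, f(\x)$, i.e.\ $\fp \mid f(\x)$ but $\fp^2 \nmid f(\x)$. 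For example, with $n=1$ and $\sD = \{x_0 = 0\}$, the points $(p:1)$ and $(p^2:1)$ have identical reduction mod $p$, but only the first meets $\sD$ transversely. So there is no way to absorb tangency into a smaller $\sV$; the tangent locus is not a subscheme of $\sD$ at all.

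The fix is to run the large sieve modulo $\fp^2$ rather than modulo $\fp$. This is what the paper does: it chooses $f$ defining $\sD$ (square-free) and $g$ vanishing on $\sD \setminus \sV$ but not on $\sD$, and shows that the complement of $T_{\mathrm{trans}}$ is contained in the union of two sets: points $\x$ with $\fp^2 \mid f(\x)$ whenever $\fp \mid f(\x)$ (the ``always tangent'' set, sieved mod $\fp^2$), and points with $\fp \mid g(\x)$ whenever $\fp \mid f(\x)$ (the ``always lands in $\sD\setminus\sV$'' set, sieved mod $\fp$). In each case the forbidden residues have density $\asymp (\Norm\fp)^{-1}$ by Lang--Weil, and the large sieve with $Q = B^{1/4}$ (for the mod-$\fp^2$ part) gives $L \gg \log B$. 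Your sieve bookkeeping is otherwise along the right lines, but without the mod-$\fp^2$ input the argument does not control the non-transverse intersections.
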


This result implies that the type of bad reduction encountered in Proposition~\ref{prop:H2} occurs $100\%$ of the time. This therefore  suffices to complete the proof of Theorem~\ref{thm:H1}.

\section{Elements of \texorpdfstring{$\H^1(K, \Pic X_{\bar{\eta}})$}{} and residues} \label{sec:Pic}

This section contains the algebraic part of the proof of Theorem~\ref{thm:H1}, an overview of which is given in~\S\ref{s:cow-herder}.

Let $\pi \colon X \to B$ be a map of schemes satisfying Condition~\ref{cond}.  Our first task is to define, for any positive integer $n$ invertible on $B$, a relative residue map
\[
\rho_d \colon \H^0(K, \H^2(X_{\bar{\eta}}, \mmu_n)) \to \H^0(\ff{d}, \H^1(X^\sm_{\bar{d}}, \Zn)).
\]
The group $\H^2(X_{\bar{\eta}}, \mmu_n)$ is the stalk at the generic point of the sheaf $\R^2 \pi_* \mmu_n$, so an element of $\H^0(K, \H^2(X_{\bar{\eta}}, \mmu_n))$ can be thought of as a generic section of $\R^2 \pi_* \mmu_n$.  In \S\ref{sec:res} we will investigate the sheaf $\R^2 \pi_* \mmu_n$ for an arbitrary morphism of regular schemes, and in particular look at the consequences for this sheaf of the absolute cohomological purity theorem.  In \S\ref{sec:generic} we return to the situation of Condition~\ref{cond}, define a residue map for each codimension-$1$ point of $B$, and prove the appropriate purity theorem.  In \S\ref{sec:resH1} we relate this back to the group $\H^1(K, \Pic X_{\bar{\eta}})$.

Having defined the residue maps, we turn in \S\ref{s:bee-keeper} to finding conditions on $X \to B$ which ensure that every non-trivial element of $\H^1(K, \Pic X_{\bar{\eta}})$ is ramified somewhere.  Finally, \S\ref{s:bull-rider} is devoted to the proof of Proposition~\ref{prop:H2}.

\subsection{Purity and relative residue maps}\label{sec:res}

In this section we prove some purity-type results on the sheaf $\R^2 \pi_* \mmu_n$ associated to a morphism $\pi \colon X \to B$ of regular schemes.  We will use the language of derived categories, which is particularly useful in the proof of Proposition~\ref{prop:relpur}(\ref{relpur1}).  All cohomology will be \'etale cohomology.

Let us first recall some tools relating the cohomology of a scheme to that of an open subscheme.  Let $X$ be a scheme, $i \colon Z \to X$ a closed immersion and $j \colon U \to X$ the inclusion of the complement of $Z$.  Let $i^!$ denote the functor taking a sheaf on $X_\et$ to its subsheaf of sections with support in $Z$, considered as a sheaf on $Z_\et$.  Let $n$ be a positive integer invertible on $X$, and let $\Lambda$ be the sheaf $\Zn$.  All the following general results will be stated for $\Lambda$ but will be equally applicable to any twist $\Lambda(r)$, because all the functors involved commute with twists.

As described in~\cite[Tag 0A45]{stacks-project}, there is a distinguished triangle
\begin{equation}\label{eq:triangle}
i_* \R i^! \Lambda \to \Lambda \to \R j_* \Lambda \to i_* \R i^! \Lambda[1]
\end{equation}
in $D^+(X_\et)$.
We will need to understand how the triangle~\eqref{eq:triangle} behaves functorially.  Let $f \colon X' \to X$ be a morphism, and denote by $U',Z',i',j'$ the base changes of $U,Z,i,j$ by $f$.  Applying $\R f_*$ to the triangle~\eqref{eq:triangle} for $X'$ gives the second row of the diagram
\begin{equation}\label{eq:reltriangle}
\begin{CD}
i_* \R i^! \Lambda @>>> \Lambda @>>> \R j_* \Lambda @>>> i_* \R i^! \Lambda[1] \\
@V{\alpha}VV @V{\beta}VV @V{\gamma}VV @V{\alpha[1]}VV \\
\R f_* i'_* \R (i')^! \Lambda @>>> \R f_* \Lambda @>>> \R f_* \R j'_* \Lambda @>>> \R f_* i'_* \R (i')^! \Lambda[1]
\end{CD}
\end{equation}
in $D^+(X_\et)$ in which both rows are distinguished triangles; let us define the vertical maps and show that the diagram commutes.
The map $\beta$ is the natural map $\Lambda \to \R f_* f^* \Lambda$ arising by adjunction (since the group scheme $\Zn$ is \'etale over $X$, we have $f^* \Lambda = \Lambda$).  The map $\gamma$ is the composition of the natural map 
$$\R j_* \Lambda \to \R j_* \R (f_U)_* f_U^*\Lambda
$$ 
given by adjunction with the natural isomorphism $\R j_* \R (f_U)_* \Lambda = \R f_* \R j'_* \Lambda$.  The middle square commutes because the composite morphism $\Lambda \to \R f_* \R j'_* \Lambda$ is given by the unit of the adjunction between $\phi_*$ and $\phi^*$, where $\phi = fj'=jf_U$; the two ways round the square correspond to viewing this adjunction as the composite of two adjunctions in two different ways.  By~\cite[Proposition~1.1.9]{BBD}, a map $\alpha$ exists making the diagram into a morphism of triangles; moreover, since we have $\Hom(i_* \mathcal{F}, j_* \mathcal{G})=0$ for any $\mathcal{F} \in D^+(Z_\et)$ and $\mathcal{G} \in D^+(U_\et)$, the criterion given there shows that $\alpha$ is uniquely determined.

The absolute cohomological purity theorem, proved by Gabber~\cite{riou}, states the following.
\begin{theorem}[Gabber]\label{thm:purity}
Let $X$ be a regular scheme and $Z$ a regular closed subscheme of codimension $c$. Let $n$ be a positive integer invertible on $X$.  Then there is a canonical isomorphism $\Cl_i \colon \Lambda \to i^! \Lambda(c)[2c]$ in $D^+(Z_\et)$.
\end{theorem}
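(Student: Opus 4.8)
The plan is to reformulate the assertion as a family of sheaf computations: the Gysin (cycle-class) map should identify $\R^q i^!\Lambda(c)$ with $\Lambda$ when $q=2c$ and with $0$ in all other degrees, the identification in degree $2c$ being given by the fundamental class $[Z]$ of $Z$ in $X$. First I would construct this class in the required generality (for instance via local cohomology and the standard formalism of cycle classes) and observe that both $\Lambda$ and $i^!\Lambda(c)[2c]$, together with the candidate map between them, are compatible with filtered colimits of rings and with \'etale localisation on $X$. A Noetherian approximation argument then reduces to $X$ Noetherian and regular, and, since the assertion is \'etale-local around $Z$, to the case $X=\Spec A$ with $A$ a strictly henselian regular local ring and $Z=\Spec A/(t_1,\dots,t_c)$, where $t_1,\dots,t_c$ is part of a regular system of parameters.

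In this local situation I would argue by induction on the codimension $c$. Writing $i$ as the composite $Z\hookrightarrow Z'\hookrightarrow X$, in which $Z'=\Spec A/(t_1,\dots,t_{c-1})$ is regular of codimension $c-1$ in $X$ and $Z$ is a regular divisor in the regular scheme $Z'$, and using the transitivity $\R i^!=\R i_1^!\circ\R i_2^!$ of the exceptional inverse image for closed immersions, one reduces the inductive step to the codimension-$(c-1)$ case (the inductive hypothesis) and the codimension-$1$ case. So everything comes down to $c=1$, say $Z=V(t)$ with $(t)$ a height-one prime of $A$.

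For $c=1$ I would analyse $\R i^!\Lambda$ via the localisation triangle $i_*\R i^!\Lambda\to\Lambda\to\R j_*\Lambda$, where $j\colon\Spec A[1/t]\hookrightarrow X$. Since $A$ is strictly henselian, $\H^q(\Spec A,\Lambda(r))=0$ for all $q>0$, so the associated long exact sequence gives $\R^0 i^!\Lambda=\R^1 i^!\Lambda=0$ and, for every $q\ge 2$ and every twist, an isomorphism $\R^q i^!\Lambda(r)\cong\H^{q-1}(\Spec A[1/t],\Lambda(r))$. The statement for $c=1$ is therefore equivalent to two facts about the ``punctured'' spectrum $\Spec A[1/t]$: first, $\H^1(\Spec A[1/t],\mu_n)\cong\ZZ/n$, generated by the class of $t$, which follows from the Kummer sequence together with $\Pic(\Spec A[1/t])=0$ (as $A$, being regular, is factorial) and the $n$-divisibility of $A^\times$ (as $A$, being strictly henselian, has $\H^1(\Spec A,\mu_n)=0$); and second, the vanishing $\H^q(\Spec A[1/t],\mu_n^{\otimes r})=0$ for all $q\ge 2$.

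This last vanishing is where I expect essentially all of the difficulty to lie. For $q=2$ it says that the punctured spectrum of a regular local ring carries no $n$-torsion Brauer classes, and for $q>2$ it is a higher analogue; none of these is elementary in the ``absolute'' setting, in which $A$ need not be of finite type over a field or a discrete valuation ring. Here one cannot avoid Gabber's machinery: one reduces by limit arguments and Cohen's structure theorem to the complete (hence excellent) case, and thence, via N\'eron--Popescu desingularisation, to the classical ``geometric'' purity theorem for schemes smooth over a field or over an excellent discrete valuation ring, running along the way the d\'evissage and finiteness arguments that control the \'etale cohomology of punctured regular local schemes. Once all the sheaves $\R^q i^!\Lambda(c)$ have been determined, canonicity of the isomorphism $\Lambda\xrightarrow{\sim}i^!\Lambda(c)[2c]$ follows from functoriality of the cycle class under \'etale localisation and its compatibility with the factorisations above, so that the locally constructed isomorphisms glue to a single isomorphism in $D^+(Z_\et)$ --- a routine matter compared with the cohomological input.
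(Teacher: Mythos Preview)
The paper does not prove this theorem at all: it is stated as a known result, attributed to Gabber and cited via Riou's write-up~\cite{riou}, and is then used as a black box throughout \S\ref{sec:res}. So there is nothing in the paper to compare your proposal against.

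That said, your sketch is a reasonable outline of the standard strategy for absolute purity: \'etale-local reduction to a strictly henselian regular local ring, induction on codimension via transitivity of $\R i^!$, and reduction to $c=1$ where the localisation triangle converts the statement into a cohomological computation on the punctured spectrum. You correctly identify that the genuine content lies in the vanishing $\H^q(\Spec A[1/t],\mmu_n^{\otimes r})=0$ for $q\ge 2$, and that this is where Gabber's input (N\'eron--Popescu approximation to reduce to the smooth case over a field or DVR, plus the classical SGA purity in that setting) is indispensable. Your proposal is honest about this being a sketch rather than a proof: the phrase ``one cannot avoid Gabber's machinery'' is exactly right, and filling in that step is the entire substance of the theorem. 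For the purposes of this paper, simply citing \cite{riou} is the appropriate course.
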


Shifting and twisting appropriately gives an isomorphism $\Lambda(-c)[-2c] \to i^! \Lambda$.
Combining this with the triangle~\eqref{eq:triangle} gives a distinguished triangle
\begin{equation}\label{eq:gystriangle}
i_* \Lambda(-c)[-2c] \to \Lambda \to \R j_* (\Lambda) \to i_* \Lambda(-c)[-2c+1].
\end{equation}
It follows from Proposition~2.3.4 of~\cite{riou} that, under the canonical identification
\[
\Hom_{D^+(X)}(i_* \Lambda(-c)[-2c], \Lambda) = \H^{2c}_Z(X,\Lambda(c)),
\]
the homomorphism $i_* \Lambda(-c)[-2c] \to \Lambda$ in~\eqref{eq:gystriangle} corresponds to the cycle class $\cl(Z) \in \H^{2c}_Z(X,\Lambda(c))$ as defined in~\cite{Cycle}.

\begin{corollary}\label{cor:purityseq}
Under the conditions of Theorem~\ref{thm:purity}, there are isomorphisms 
$$\H^p(X, \Lambda) \to \H^p(U, \Lambda)
$$ for $0 \le p \le 2c-1$, and a long exact sequence
\[
0 \to \H^p(X, \Lambda) \to \H^p(U, \Lambda) \to \H^{p-2c}(Z, \Lambda(-c)) \to \H^{p+1}(X, \Lambda) \to \dotsb.
\]
\end{corollary}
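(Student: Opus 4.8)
The plan is simply to apply the derived global-sections functor $\R\Gamma(X,-)$ to the distinguished triangle~\eqref{eq:gystriangle} furnished by Gabber's absolute cohomological purity theorem (Theorem~\ref{thm:purity}), and then to read off the resulting long exact cohomology sequence after identifying each of its three terms.

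For the left-hand term, since $i$ is a closed immersion the functor $i_*$ is exact and sends injective sheaves to injective sheaves, so $\R\Gamma(X,-)\circ i_* = \R\Gamma(Z,-)$; hence the cohomology of $\R\Gamma\bigl(X,\,i_*\Lambda(-c)[-2c]\bigr)$ in degree $p$ is $\H^{p-2c}(Z,\Lambda(-c))$. For the right-hand term, $j_*$ again preserves injectives, and $\Lambda=\Zn$ is (the pullback of) a constant sheaf, so the composite-functor identity gives $\R\Gamma(X,\R j_*\Lambda)=\R\Gamma(U,\Lambda)$, whose degree-$p$ cohomology is $\H^p(U,\Lambda)$; the middle term $\Lambda$ of course contributes $\H^p(X,\Lambda)$. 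These three identifications are the only step requiring any care, and they are entirely standard homological algebra; I expect no genuine obstacle in the whole argument, since once Theorem~\ref{thm:purity} is granted the statement is a purely formal bookkeeping consequence.

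It then remains to write down the long exact cohomology sequence of~\eqref{eq:gystriangle}, namely
\[
\cdots \to \H^{p-2c}(Z,\Lambda(-c)) \to \H^p(X,\Lambda) \to \H^p(U,\Lambda) \to \H^{p-2c+1}(Z,\Lambda(-c)) \to \H^{p+1}(X,\Lambda) \to \cdots,
\]
and to feed in the elementary vanishing $\H^q(Z,-)=0$ for $q<0$. This annihilates the $Z$-terms in the low-degree range, forcing the restriction maps $\H^p(X,\Lambda)\to\H^p(U,\Lambda)$ to be isomorphisms for $0\le p\le 2c-1$, and splicing the remaining portion of the sequence starting from the first degree in which a $Z$-cohomology group survives produces the asserted localization exact sequence. (The cycle-class interpretation of the first map in~\eqref{eq:gystriangle}, recorded after Theorem~\ref{thm:purity}, plays no role here — only the existence of the triangle is used.)
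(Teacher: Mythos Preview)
Your proposal is correct and is exactly the paper's approach: the paper's proof is the single sentence ``Apply the global sections functor $\R\Gamma_X$ to~\eqref{eq:gystriangle} and take homology,'' and you have simply spelled out the standard identifications of the three terms and the low-degree vanishing. There is nothing to add.
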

\begin{proof}
Apply the global sections functor $\R\Gamma_X$ to \eqref{eq:gystriangle} and take homology.
\end{proof}

The purity theorem has the following interpretation in terms of the higher direct images $\R^p j_* \Lambda$.
\begin{corollary}
Under the conditions of Theorem~\ref{thm:purity}, we have
$j_* \Lambda = \Lambda$; $\R^p j_* \Lambda=0$ for $0<p<2c-1$, and $\R^{2c-1} j_* \Lambda \cong i_* \Lambda(-c)$.
\end{corollary}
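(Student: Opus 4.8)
The plan is to deduce this directly from the distinguished triangle~\eqref{eq:gystriangle} by passing to cohomology sheaves, just as Corollary~\ref{cor:purityseq} was obtained by applying the global sections functor $\R\Gamma_X$. Since $i$ is a closed immersion, the pushforward $i_*$ is exact, so the complex $i_* \Lambda(-c)[-2c]$ has a single nonzero cohomology sheaf, namely $i_* \Lambda(-c)$ placed in degree $2c$; likewise $\Lambda$ is concentrated in degree $0$. Writing $\mathcal{H}^p$ for the $p$-th cohomology sheaf of a complex, so that $\mathcal{H}^p(\R j_* \Lambda) = \R^p j_* \Lambda$ and in particular $\R^0 j_* \Lambda = j_* \Lambda$, the long exact sequence of cohomology sheaves attached to~\eqref{eq:gystriangle} reads
\[
\mathcal{H}^p\bigl(i_* \Lambda(-c)[-2c]\bigr) \to \mathcal{H}^p(\Lambda) \to \R^p j_* \Lambda \to \mathcal{H}^{p+1}\bigl(i_* \Lambda(-c)[-2c]\bigr) \to \mathcal{H}^{p+1}(\Lambda).
\]

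First I would read off the case $p = 0$: since $c \ge 1$ we have $2c \ge 2$, so the outer two terms vanish and the sequence gives an isomorphism $\Lambda \cong j_* \Lambda$. For $0 < p < 2c - 1$ the $\mathcal{H}^{\bullet}(\Lambda)$-term is zero and both terms coming from $i_*\Lambda(-c)[-2c]$ vanish (their indices $p$ and $p+1$ are both strictly less than $2c$), whence $\R^p j_* \Lambda = 0$. Finally, for $p = 2c - 1$ the sequence becomes $0 \to \R^{2c-1} j_* \Lambda \to i_* \Lambda(-c) \to \mathcal{H}^{2c}(\Lambda) = 0$, which yields $\R^{2c-1} j_* \Lambda \cong i_* \Lambda(-c)$.

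There is no real obstacle here: the statement is a formal consequence of Gabber's purity theorem (Theorem~\ref{thm:purity}) once it has been packaged into the triangle~\eqref{eq:gystriangle}, and the argument amounts to chasing a long exact sequence whose terms are entirely forced by degree considerations. The only point deserving a word of care is the degenerate range: when $c = 1$ the intermediate range $0 < p < 2c - 1$ is empty, so the only nontrivial assertions are $j_* \Lambda = \Lambda$ and $\R^1 j_* \Lambda \cong i_* \Lambda(-1)$, which is precisely what the sequence above produces.
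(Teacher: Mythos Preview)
Your proof is correct and is exactly the approach the paper takes: the paper's proof is the single line ``Take the long exact sequence in homology of the triangle~\eqref{eq:gystriangle},'' and you have simply spelled out that computation in detail.
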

\begin{proof}
Take the long exact sequence in homology of the triangle~\eqref{eq:gystriangle}.
\end{proof}

An easy and well-known consequence is the following \emph{semi-purity} statement.
\begin{lemma}\label{lem:semipur}
Let $X$ be a regular, Noetherian, excellent scheme and $i \colon Z \to X$ any closed immersion of codimension $\ge c$.  Suppose that $n$ is invertible on $X$.  Then we have $\R^p i^! \Lambda = 0$ for $0 \le p < 2c$.
\end{lemma}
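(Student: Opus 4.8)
The statement is a semi-purity result: for $i\colon Z\to X$ a closed immersion of codimension $\geq c$ into a regular, Noetherian, excellent scheme $X$ on which $n$ is invertible, we want $\R^p i^! \Lambda = 0$ for $0 \le p < 2c$. The natural approach is to reduce to the absolute purity theorem (Theorem~\ref{thm:purity}), which handles the case of a \emph{regular} closed subscheme, via Noetherian induction on $Z$ and a stratification argument. The key observation is that the sheaf $\R^p i^! \Lambda$ on $Z_\et$ can be detected on stalks, and the question is local on $Z$, so it suffices to check vanishing of the stalks at all points $z \in Z$; equivalently, after further localising, one may work over the strictly henselian local ring of $X$ at a geometric point lying over $z$.

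First I would set up the dévissage. Since $X$ is excellent, $Z$ (with its reduced structure) contains a dense open regular subscheme $Z^{\mathrm{reg}}$; let $Z' = Z \setminus Z^{\mathrm{reg}}$, a closed subscheme of strictly smaller dimension, still of codimension $\geq c$ in $X$. Writing $i^{\mathrm{reg}}\colon Z^{\mathrm{reg}} \to X$ and $i'\colon Z' \to X$ for the (locally closed, resp.\ closed) immersions, one has a localization triangle relating $\R i^! \Lambda$, $\R i^{\mathrm{reg}}_! (i^{\mathrm{reg}})^! \Lambda$ (extended by zero) and $\R (i')^! \Lambda$. Concretely: the functor $i^!$ only depends on the reduced structure of $Z$ and is insensitive to the ambient regular structure beyond codimension bounds, and for the \emph{regular} locus Theorem~\ref{thm:purity} gives $\R^p (i^{\mathrm{reg}})^! \Lambda = 0$ for $p \neq 2c_{\mathrm{reg}}$ where $c_{\mathrm{reg}} \geq c$ is the codimension of $Z^{\mathrm{reg}}$ in $X$ — in particular it vanishes for all $p < 2c$. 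By Noetherian induction on $\dim Z$ (the base case $Z = \emptyset$ being trivial), the analogous vanishing holds for $\R (i')^! \Lambda$. Feeding both into the long exact sequence of local cohomology sheaves obtained from the triangle then forces $\R^p i^! \Lambda = 0$ for $p < 2c$.

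The one point requiring care — and the main obstacle — is justifying the localization triangle for $i^!$ along a \emph{closed} decomposition $Z = Z^{\mathrm{reg}} \sqcup Z'$ in which $Z^{\mathrm{reg}}$ is only locally closed in $X$, and checking that the codimension bookkeeping is correct (i.e.\ that every irreducible component of $Z'$ still has codimension $\geq c$ in $X$, which is immediate since $Z' \subset Z$, but one should also note $Z^{\mathrm{reg}}$ regular of codimension possibly strictly greater than $c$ along some components, so one genuinely needs the vanishing range $p < 2c$ rather than an isolated degree). One clean way to package this is to work stalk-wise: fix a geometric point $\bar z$ of $Z$, let $A = \OO_{X,\bar z}^{\mathrm{sh}}$ and let $Z_{\bar z} = Z \times_X \Spec A$; then $(\R^p i^! \Lambda)_{\bar z} = \H^p_{Z_{\bar z}}(\Spec A, \Lambda)$, and one applies Corollary~\ref{cor:purityseq} on $\Spec A$ together with induction on the dimension of the closed subset $Z_{\bar z}$, using that $A$ is regular, excellent and that a regular dense open of $Z_{\bar z}$ has codimension $\geq c$. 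Since the result is described in the paper as ``easy and well-known,'' I would keep this brief, citing that $i^!$ and local cohomology sheaves commute with the relevant limits and that excellence guarantees the generic regularity of closed subschemes, and present the dévissage as the one-line induction it essentially is.
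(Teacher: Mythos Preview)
Your approach is correct and essentially the same as the paper's: both reduce to absolute purity on the regular locus and handle the singular locus by Noetherian induction. The only cosmetic difference is that the paper first translates the vanishing of $\R^p i^! \Lambda$ into the equivalent statement $\R^p j_* \Lambda = 0$ for $0 < p < 2c-1$ (via the triangle~\eqref{eq:triangle}) and then runs the d\'evissage through the Leray spectral sequence for $j = j_2 \circ j_1$, whereas you work directly with the localisation triangle for $\R i^!$.
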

\begin{proof}
Let $j \colon U \to X$ be the inclusion of the complement of $Z$.  The triangle~\eqref{eq:triangle} shows that the conclusion is equivalent to the statement $j_* \Lambda = \Lambda$ and  $\R^p j_* \Lambda = 0$ for $0 < p < 2c-1$.
Since this statement does not depend on the scheme structure on $Z$, we may assume that $Z$ is reduced.  The purity theorem shows that the conclusion holds if $Z$ is regular, and so in particular if $\dim Z = 0$.  In the general case, we use Noetherian induction.  Let $S$ be the union of all irreducible components of $Z$ of codimension greater than $c$ in $X$, together with the non-regular locus of $Z$.  Then $S$ is a proper closed subset of $Z$.  Write $V = X \setminus S$ and consider the open immersions $j_1 \colon U \to V$ and $j_2 \colon V \to X$.  Then $Z \setminus S$ is a regular closed subscheme of codimension at least $c$ in $V$, and so by purity we have $\R^p (j_1)_* \Lambda=0$ for $0 < p < 2c-1$.  Since $S$ is of codimension $>c$ in $X$, we have $\R^q (j_2)_* \Lambda=0$ for $0 < q < 2c+1$ by induction.  The Leray spectral sequence for the composition $j = j_2 \circ j_1$ then gives the desired conclusion.
\end{proof}

Now let us understand how the triangle~\eqref{eq:gystriangle} behaves under base change.  Suppose that $f \colon X'  \to X$ is a morphism of regular schemes, that $Z \subset X$ is a regular closed subscheme of codimension $c$, and that the inverse image $Z' = f^{-1}(Z) \subset X'$ is also regular and of codimension $c$.  Combining Theorem~\ref{thm:purity} with the diagram~\eqref{eq:reltriangle} we obtain a morphism of triangles as follows, with notation as above.
\[
\begin{CD}
i_* \Lambda(-c)[-2c] @>>> \Lambda @>>> \R j_* \Lambda @>>> i_* \Lambda(-c)[-2c+1] \\
@V{\alpha'}VV @V{\beta}VV @V{\gamma}VV @V{\alpha'[1]}VV \\
\R f_* i'_* \Lambda(-c)[-2c] @>>> \R f_* \Lambda @>>> \R f_* \R j'_* \Lambda @>>> \R f_* i'_* \Lambda(-c)[-2c+1]
\end{CD}
\]

As before, the morphism $\alpha'$ is uniquely determined.  There is an obvious candidate for the morphism $\alpha'$, namely that obtained by applying $i_*$ to the natural morphism $\phi \colon \Lambda(-c)[-2c] \to \R (f_Z)_* \Lambda(-c)[-2c]$.  Let us show that this is indeed the same morphism; to do so, we just need to show that the diagram commutes if we put $i_* \phi$ in place of $\alpha'$.  But then the two routes round the left-hand square are the homomorphisms defined by the classes $f^* \cl(Z)$ and $\cl(Z')$ respectively in $\H^{2c}_{Z'}(X',\Lambda(c))$, which agree.

We now return to the situation of interest: $\pi \colon X \to B$ is a flat morphism, and we are interested in $\R^2 \pi_* \mmu_n$.  The first result is concerned with removing a closed subscheme of codimension at least $2$ in $X$.

\begin{lemma}\label{lem:R2pur}
Let $\pi \colon X \to B$ be a morphism of regular, excellent, Noetherian schemes, and let $Z \subset X$ be a closed subset of codimension at least $2$.  Denote by $\pi'$ the restriction of $\pi$ to $X \setminus Z$.  Then, for any $p \le 2$, the natural map $\R^p \pi_* \mmu_n \to \R^p \pi'_* \mmu_n$ is an isomorphism.
\end{lemma}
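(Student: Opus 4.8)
The plan is to compare the complexes $\R\pi_*\mmu_n$ and $\R\pi'_*\mmu_n$ directly in the derived category $D^+(B_\et)$, using the localisation triangle~\eqref{eq:triangle} for the closed immersion $i \colon Z \hookrightarrow X$ and its open complement $j \colon U = X\setminus Z \hookrightarrow X$, together with the semi-purity statement of Lemma~\ref{lem:semipur}. Here $\pi' = \pi\circ j$, and the canonical map $\R^p\pi_*\mmu_n \to \R^p\pi'_*\mmu_n$ in the statement is precisely the one obtained by applying $\R\pi_*$ to the adjunction morphism $\mmu_n \to \R j_* j^*\mmu_n = \R j_*\mmu_n$, which is the middle arrow of~\eqref{eq:triangle}.

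First I would apply $\R\pi_*$ to the triangle
\[
i_*\R i^!\mmu_n \to \mmu_n \to \R j_*\mmu_n \to i_*\R i^!\mmu_n[1],
\]
and use $\R\pi_* \circ \R j_* = \R(\pi\circ j)_* = \R\pi'_*$ together with $\R\pi_* \circ i_* = \R(\pi|_Z)_*$ (recall $\R i_* = i_*$ for a closed immersion) to obtain a distinguished triangle
\[
\R(\pi|_Z)_*\R i^!\mmu_n \to \R\pi_*\mmu_n \to \R\pi'_*\mmu_n \to \R(\pi|_Z)_*\R i^!\mmu_n[1]
\]
in $D^+(B_\et)$. Next I would invoke Lemma~\ref{lem:semipur}: since $Z$ has codimension at least $2$ in the regular, Noetherian, excellent scheme $X$ and $n$ is invertible on $X$, we have $\R^q i^!\mmu_n = 0$ for $0\le q<4$ (the lemma is stated for $\Lambda = \Zn$ but applies verbatim to the twist $\mmu_n = \Lambda(1)$, all functors in play commuting with twists). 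Hence $\R i^!\mmu_n$ is concentrated in cohomological degrees $\ge 4$; since $\R(\pi|_Z)_*$ is a right derived functor it cannot lower cohomological degree, so $\R(\pi|_Z)_*\R i^!\mmu_n$ is concentrated in degrees $\ge 4$ as well. Taking the long exact sequence of cohomology sheaves of the triangle above then shows that $\R^p\pi_*\mmu_n \to \R^p\pi'_*\mmu_n$ is an isomorphism for all $p\le 2$ (and injective for $p=3$), as desired.

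I do not anticipate a serious obstacle here: the essential content is packaged in Lemma~\ref{lem:semipur}, which has already been proved, and the rest is a formal manipulation of the localisation triangle and its functoriality. The one point deserving a moment's care is the identification of the arrow furnished by the triangle with the canonical restriction map $\R^p\pi_*\mmu_n \to \R^p\pi'_*\mmu_n$, which is immediate once one recalls that the middle arrow of~\eqref{eq:triangle} is the unit of the $(j^*,\R j_*)$-adjunction. Alternatively one could argue via the Leray spectral sequence $\R^s\pi_*\R^t j_*\mmu_n \Rightarrow \R^{s+t}\pi'_*\mmu_n$ and the vanishing $\R^1 j_*\mmu_n = \R^2 j_*\mmu_n = 0$ furnished by purity, but the triangle formulation keeps the functoriality transparent.
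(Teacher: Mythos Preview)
Your proof is correct. The paper's own proof is actually the alternative you mention at the end: it simply observes that Lemma~\ref{lem:semipur} (in its equivalent formulation $j_*\mmu_n=\mmu_n$ and $\R^p j_*\mmu_n=0$ for $p=1,2$) feeds directly into the Leray spectral sequence for $\pi'=\pi j$, giving the result in one line. Your triangle argument is equivalent and equally valid; the only difference is cosmetic.
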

\begin{proof}
Write $j$ for the inclusion of $X \setminus Z$ into $X$.  Lemma~\ref{lem:semipur} shows $j_* \mmu_n = \mmu_n$ and $\R^p j_* \mmu_n=0$ for $p=1,2$.  The Leray spectral sequence for the composition $\pi' = \pi j$ then gives the result.
\end{proof}

Next, we consider removing a closed subscheme of $B$.

\begin{proposition}\label{prop:relpur}
Let $\pi \colon X \to B$ be a flat morphism of regular, excellent, Noetherian schemes.  Let $n$ be an integer invertible on $B$.  
Let $S \subset B$ be a reduced closed subscheme, and let $Z$ denote the inverse image $\pi^{-1}(S)$.  Let $U$ be the complement of $Z$ in $X$.  Suppose that the restriction $\pi_U \colon U \to (B \setminus S)$ satisfies $(\pi_U)_* \mmu_n = \mmu_n$ and $\R^1 (\pi_U)_* \mmu_n = 0$.  Let $\pi_Z \colon Z \to S$ be the restriction of $\pi$ to $Z$.
\begin{enumerate}
\item\label{relpur3}
 Suppose that $S$ has codimension at least $3$ in $B$.  Then the natural map $\H^0(B, \R^2 \pi_* \mmu_n) \to \H^0(B \setminus S, \R^2 \pi_* \mmu_n)$ is an isomorphism.
\item\label{relpur2}
 Suppose that $S$ is regular of codimension $2$ in $B$, that $Z$ is also regular of codimension $2$, and that the natural map $(\Zn)_S \to (\pi_Z)_* (\Zn)_Z$ is injective.  Then the natural map $\H^0(B, \R^2 \pi_* \mmu_n) \to \H^0(B \setminus S, \R^2 \pi_* \mmu_n)$ is an isomorphism.
\item\label{relpur1}
 Suppose that $S$ is regular of codimension $1$ in $B$, that $Z$ is also regular of codimension $1$, and that the natural map $(\Zn)_S \to (\pi_Z)_* (\Zn)_Z$ is an isomorphism.
Then there is an exact sequence
\begin{equation}\label{eq:relres}
0 \to \H^0(B, \R^2 \pi_* \mmu_n) \to \H^0(B \setminus S, \R^2 \pi_* \mmu_n) \xrightarrow{\rho_S} \H^0(S, \R^1 (\pi_Z)_* \Zn)
\end{equation}
with the following properties:
\begin{itemize}
\item the following diagram commutes:
\begin{equation}\label{eq:comm1}
\begin{CD}
\H^2(X,\mmu_n) @>>> \H^2(X \setminus Z,\mmu_n) @>{\partial_Z}>> \H^1(Z, \Zn) \\
@VVV @VVV @VVV \\
\H^0(B, \R^2 \pi_* \mmu_n) @>>> \H^0(B \setminus S, \R^2 \pi_* \mmu_n) @>{\rho_S}>> \H^0(S, \R^1 (\pi_Z)_* \Zn)
\end{CD}
\end{equation}
where the top row is part of the long exact sequence of Corollary~\ref{cor:purityseq} and
the vertical maps are the edge maps from the Leray spectral sequence for $\pi$;
\item if $B' \to B$ is a morphism such that the hypotheses above still hold after base change to $B'$, then the exact sequence~\eqref{eq:relres} and the diagram~\eqref{eq:comm1} are functorial with respect to $B' \to B$.
\end{itemize}
\end{enumerate}
In all three cases, we additionally have $\pi_* \mmu_n = \mmu_n$ and $\R^1 \pi_* \mmu_n = 0$.
\end{proposition}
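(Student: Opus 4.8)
The plan is to derive all three statements, together with the supplementary assertion $\pi_*\mmu_n = \mmu_n$ and $\R^1\pi_*\mmu_n = 0$, from a single distinguished triangle. Write $i \colon Z \to X$ and $j \colon U \to X$ for the closed immersion and its open complement, and $\iota_S \colon S \to B$, $\iota \colon B \setminus S \to B$, $\pi_Z \colon Z \to S$, $\pi_U \colon U \to B\setminus S$ for the evident induced maps; since $Z = \pi^{-1}(S)$ we have $\pi \circ i = \iota_S \circ \pi_Z$ and $\pi \circ j = \iota \circ \pi_U$. Applying $\R\pi_*$ to the localization triangle~\eqref{eq:triangle} for $Z \subset X$ and using $\R\pi_* i_* = (\iota_S)_* \R(\pi_Z)_*$ and $\R\pi_* \R j_* = \R\iota_* \R(\pi_U)_*$ yields a distinguished triangle
\[
(\iota_S)_* \R(\pi_Z)_* \R i^! \mmu_n \longrightarrow \R\pi_* \mmu_n \longrightarrow \R\iota_* \R(\pi_U)_* \mmu_n \longrightarrow (\iota_S)_* \R(\pi_Z)_* \R i^! \mmu_n[1]
\]
in $D^+(B_\et)$. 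I would then read off $\pi_*\mmu_n$ and the cohomology sheaves $\R^p\pi_*\mmu_n$ for $p\le 2$ from the resulting long exact sequence, computing its outer terms as follows. For the right-hand term, the Leray spectral sequence for $\iota \circ \pi_U$ together with the standing hypotheses $(\pi_U)_*\mmu_n = \mmu_n$ and $\R^1(\pi_U)_*\mmu_n = 0$ reduces everything to knowledge of the sheaves $\R^p\iota_*\mmu_n$, which come from Corollary~\ref{cor:purityseq} when $S$ is regular of codimension $c$ (so $\R^p\iota_*\mmu_n = 0$ for $0<p<2c-1$ and $\R^{2c-1}\iota_*\mmu_n \cong (\iota_S)_*(\ZZ/n)(1-c)$) and from the semi-purity Lemma~\ref{lem:semipur} when $S$ merely has codimension $\ge c$. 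For the left-hand term, absolute purity (Theorem~\ref{thm:purity}) gives $\R i^!\mmu_n \cong \mmu_n(-c)[-2c]$ when $Z$ is regular of codimension $c$ in $X$ — a codimension equal to that of $S$ in $B$ since $\pi$ is flat — while for case~\ref{relpur3} it suffices that $\R i^!\mmu_n$ be concentrated in degrees $\ge 2c \ge 6$, which is again Lemma~\ref{lem:semipur}.

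With these inputs the three cases become a bookkeeping exercise in the long exact sequence. In case~\ref{relpur3} ($c \ge 3$) both outer terms vanish in degrees $1$ and $2$ apart from $\mathcal{H}^0 = \mmu_n$ and $\mathcal{H}^2 = \iota_*\R^2(\pi_U)_*\mmu_n$ on the right (there being no room for a Leray differential below total degree $6$), so $\R^2\pi_*\mmu_n = \iota_*\R^2(\pi_U)_*\mmu_n$, and applying $\H^0(B,-)$ with adjunction gives the isomorphism. In case~\ref{relpur1} ($c = 1$) the left-hand term is $(\iota_S)_*\R(\pi_Z)_*(\ZZ/n)[-2]$, and the relevant connecting homomorphism in low degrees is
\[
(\iota_S)_*(\ZZ/n) = \mathcal{H}^1\bigl(\R\iota_*\R(\pi_U)_*\mmu_n\bigr) \longrightarrow \mathcal{H}^2\bigl((\iota_S)_*\R(\pi_Z)_*(\ZZ/n)[-2]\bigr) = (\iota_S)_*(\pi_Z)_*(\ZZ/n),
\]
which is $(\iota_S)_*$ of the natural comparison map $(\ZZ/n)_S \to (\pi_Z)_*(\ZZ/n)_Z$; since it is assumed to be an isomorphism, the long exact sequence yields $\R^1\pi_*\mmu_n = 0$ and a left-exact sequence of sheaves $0 \to \R^2\pi_*\mmu_n \to \iota_*\R^2(\pi_U)_*\mmu_n \xrightarrow{\bar\rho} (\iota_S)_*\R^1(\pi_Z)_*(\ZZ/n)$, whereupon $\rho_S$ is defined as $\H^0(B,\bar\rho)$ after the standard identifications $\H^0(B,\iota_*\R^2(\pi_U)_*\mmu_n) = \H^0(B\setminus S,\R^2\pi_*\mmu_n)$ and $\H^0(B,(\iota_S)_*\R^1(\pi_Z)_*(\ZZ/n)) = \H^0(S,\R^1(\pi_Z)_*\ZZ/n)$. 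Case~\ref{relpur2} ($c=2$) runs the same way, except that a secondary differential $d_3 \colon \iota_*\R^2(\pi_U)_*\mmu_n \to (\iota_S)_*(\ZZ/n)(-1)$ now appears in the Leray spectral sequence, so that $\R^2\pi_*\mmu_n \cong \ker d_3$; the task here is to show the injectivity of $(\ZZ/n)_S \to (\pi_Z)_*(\ZZ/n)_Z$ forces $\H^0(B,d_3) = 0$, which I would do by comparing the Gysin triangle for $Z \subset X$ with that for $S \subset B$ under $\R\pi_*$ — the morphism of triangles whose uniqueness is the ``$\alpha'$'' argument recorded just before the statement — so as to express this global obstruction through the comparison map.

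Finally, in case~\ref{relpur1} the compatibility diagram~\eqref{eq:comm1} is obtained by observing that its bottom row is the exact sequence just produced, evaluated on global sections; its top row is the long exact sequence of Corollary~\ref{cor:purityseq} for $Z \subset X$; and its vertical maps are the Leray edge maps for $\pi$. The left square commutes by naturality of these edge maps, and the right square because both $\rho_S$ and Grothendieck's residue $\partial_Z$ are induced by the \emph{same} connecting morphism of the Gysin triangle for $Z\subset X$, one tracked through $\R\pi_*$ and the other through $\R\Gamma_X$. Functoriality of~\eqref{eq:relres} and~\eqref{eq:comm1} along a base change $B' \to B$ for which the hypotheses persist follows from the compatibility of the Gysin triangle with base change — concretely, from the fact that $f^*\cl(Z)$ agrees with $\cl(Z')$, which is again the uniqueness argument recorded before the proposition — so that the whole construction commutes with $\R f_*$. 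I expect the main obstacle to be case~\ref{relpur2}: isolating the $d_3$-differential and showing cleanly that it is governed by $(\ZZ/n)_S \to (\pi_Z)_*(\ZZ/n)_Z$, together with the derived-category bookkeeping required to make the compatibility and base-change assertions genuine rather than merely plausible.
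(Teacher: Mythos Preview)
Your approach is essentially correct and runs parallel to the paper's, but the organisation is different in a way that matters for case~\ref{relpur2}. The paper does not work with the single triangle $(\iota_S)_*\R(\pi_Z)_*\R i^!\mmu_n \to \R\pi_*\mmu_n \to \R\iota_*\R(\pi_U)_*\mmu_n$ alone; it takes the full morphism of Gysin triangles~\eqref{eq:reltriangle} (the row for $S\subset B$ mapping to the row for $Z\subset X$ pushed down by $\R\pi_*$) and extends it, via the triangle $\mmu_n \to \R(\pi_U)_*\mmu_n \to \tau_{\ge 2}\R(\pi_U)_*\mmu_n$, to a $3\times 3$ grid of distinguished triangles. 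This produces two auxiliary objects: $C$, the cone on the left-hand column, and $A$, the cone on $\mmu_n \to \R\pi_*\mmu_n$. One then checks in each case that $C$ has vanishing homology in low degrees (using purity and, in case~\ref{relpur2}, the injectivity hypothesis), whence $A = \tau_{\ge 2}\R\pi_*\mmu_n$, and finally applies $\R\Gamma_B$ to the third row $C \to A \to \R j_*\tau_{\ge 2}\R(\pi_U)_*\mmu_n$. The advantage is that this last step directly outputs $\H^0(B,\R^2\pi_*\mmu_n)$ and $\H^0(B\setminus S,\R^2(\pi_U)_*\mmu_n)$ as $\H^2$ of the truncated complexes, with no spectral-sequence differentials to chase.

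Your route gives the same answers in cases~\ref{relpur3} and~\ref{relpur1}, but in case~\ref{relpur2} you are left showing $\H^0(B,d_3)=0$ for a Leray $d_3$, and your sketch does not actually do this: saying you will ``compare Gysin triangles'' is the right instinct, but once you use that comparison to build the cone $C$ of $(\ZZ/n)_S \to (\pi_Z)_*(\ZZ/n)_Z$ you have reconstructed the paper's argument anyway. A second soft spot is in case~\ref{relpur1}, where you assert that the connecting map $\mathcal{H}^1(\text{right}) \to \mathcal{H}^2(\text{left})$ is $(\iota_S)_*$ of the natural comparison map; this is exactly the content of the $\alpha'$ identification before the proposition and again requires the morphism of triangles, which you only invoke explicitly later. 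None of this is wrong, but the paper's cone-based packaging treats the three cases uniformly and sidesteps both the $d_3$ computation and the connecting-map identification.
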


The map $\rho_S$ of Proposition~\ref{prop:relpur}(\ref{relpur1}) is called the \emph{relative residue map} associated to the divisor $S \subset B$.

\begin{proof}
We give names to all the morphisms involved as follows
\[
\begin{CD}
Z @>>{i'}> X @<<{j'}< U \\
@V{\pi_Z}VV @V{\pi}VV @VV{\pi_U}V \\
S @>>{i}> B @<<{j}< B \setminus S \,.
\end{CD}
\]
The diagram~\eqref{eq:reltriangle} gives a commutative diagram
\[
\begin{CD}
i_* \R i^! \mmu_n @>>> \mmu_n @>>> \R j_* \mmu_n @>>> i_* \R i^! \mmu_n[1] \\
@VVV @VVV @VVV @VVV \\
i_* \R (\pi_Z)_* \R (i')^! \mmu_n @>>> \R \pi_* \mmu_n @>>> \R j_* \R (\pi_U)_* \mmu_n @>>> i_* \R (\pi_Z)_* \R (i')^! \mmu_n[1] \\
\end{CD}
\]
in $D^+(B_\et)$, where we have used the identities $\pi i' = i \pi_Z$ and $\pi j' = j \pi_U$.
By hypothesis, the vertical map $\mmu_n \to \R(\pi_U)_* \mmu_n$ on $B \setminus S$ induces isomorphisms on homology in degrees $<2$ and so fits into a distinguished triangle
\[
\mmu_n \to \R (\pi_U)_* \mmu_n \to \tau_{\ge 2} \R (\pi_U)_* \mmu_n \to \mmu_n[1]
\]
where $\tau_{\ge 2}$ is the canonical ``wise'' truncation functor.  We can extend the above diagram as follows.
\[
\begin{CD}
i_* \R i^! \mmu_n @>>> \mmu_n @>>> \R j_* \mmu_n @>>> i_* \R i^! \mmu_n[1] \\
@VVV @VVV @VVV @VVV \\
i_* \R (\pi_Z)_* \R (i')^! \mmu_n @>>> \R \pi_* \mmu_n @>>> \R j_* \R (\pi_U)_* \mmu_n @>>> i_* \R (\pi_Z)_* \R (i')^! \mmu_n[1] \\
@VVV @VVV @VVV @VVV \\
C @>>> A @>>> \R j_* \tau_{\ge 2} \R(\pi_U)_* \mmu_n @>>> C[1] \\
@VVV @VVV @VVV @VVV \\
i_* \R i^! \mmu_n[1] @>>> \mmu_n[1] @>>> \R j_* \mmu_n[1] @>>> i_* \R i^! \mmu_n[2]
\end{CD}
\]
Here all the rows and columns are distinguished triangles, and all squares commute apart from the bottom-right one, which anti-commutes.

In case~\ref{relpur3}, 
flatness of $\pi$ implies that $Z$ has codimension at least $3$ in $X$.
Lemma~\ref{lem:semipur} shows that $\R i^! \mmu_n$ and $\R (i')^! \mmu_n$ both have zero homology in degrees $<6$.  
Looking at the left-hand column shows that
$C$ has no homology in degrees $<5$.  Looking at the third row then shows that $A$ has no homology in degrees $<2$, so that $\mmu_n \to \R \pi_* \mmu_n$ induces isomorphisms on homology in degrees $<2$.  We deduce $\pi_* \mmu_n=\mmu_n$, that $\R^1 \pi_* \mmu_n$ vanishes and that $A$ is isomorphic to the truncation $\tau_{\ge 2} \R \pi_* \mmu_n$.  Applying the global sections functor $\R \Gamma_B$ to the third row of the diagram now gives an exact triangle
\[
\R \Gamma_B(C) \to \R \Gamma_B(\tau_{\ge 2} \R \pi_* \mmu_n) \to \R \Gamma_{B \setminus S} (\tau_{\ge 2} \R(\pi_U)_* \mmu_n) \to \R \Gamma_B(C)[1].
\]
Because $\Gamma_B = \H^0(B, -)$ is left exact, we have
\[
\H^2 (  \R \Gamma_B(\tau_{\ge 2} \R \pi_* \mmu_n) ) = \H^0(B,\H^2(\tau_{\ge 2} \R \pi_* \mmu_n)) = \H^0(B,( \R^2 \pi_* \mmu_n))
\]
and, compatibly, $\H^2(\R \Gamma_{B \setminus S}(\tau_{\ge 2} \R(\pi_U)_* \mmu_n)) = \H^0(B \setminus S, \R^2 (\pi_U)_* \mmu_n)$.  Also, $\R \Gamma_B(C)$ has no homology in degrees $<5$, so applying $\H^2$ to the above sequence shows that the natural map $\H^0(B, \R^2 \pi_* \mmu_n) \to \H^0(B \setminus S, \R^2 (\pi_U)_* \mmu_n)$ is an isomorphism.  Because $j$ is \'etale, the sheaf $\R^2(\pi_U)_* \mmu_n$ is simply the restriction of $\R^2 \pi_* \mmu_n$ to $B \setminus S$, so this completes the proof of case~\ref{relpur3}.

In case~\ref{relpur2}, Theorem~\ref{thm:purity} allows us to replace the left-hand column with
\[
i_* \Zn(-1)[-4] \to i_* \R(\pi_Z)_* \Zn(-1)[-4] \to C \to i_* \Zn(-1)[-3].
\]
The hypothesis shows that $C$ has no homology in degrees $<4$. Thus the  argument used  in case~\ref{relpur3} gives $\pi_* \mmu_n=\mmu_n$, together with the fact that $\R^1 \pi_* \mmu_n$ vanishes, and that $\H^0(B, \R^2 \pi_* \mmu_n) \to \H^0(B \setminus S, \R^2 \pi_* \mmu_n)$ is an isomorphism.

In case~\ref{relpur1}, Theorem~\ref{thm:purity} shows that the left-hand column is isomorphic to
\[
i_* \Zn[-2] \to i_* \R(\pi_Z)_* \Zn[-2] \to C \to i_* \Zn[-1].
\]
The hypothesis is equivalent to saying that $\Zn \to \R(\pi_Z)_* \Zn$ is a quasi-isomorphism in degree $0$, and so  $C$ is isomorphic to $i_* (\tau_{\ge 1} \R(\pi_Z)_* \Zn)[-2]$.  In particular, $C$ has no homology in degrees $<3$, so once again we have $\pi_* \mmu_n=\mmu_n$ and $\R^1 \pi_* \mmu_n=0$.
Now applying $\R\Gamma_B$ to the middle two rows of the diagram and taking homology gives a commutative diagram
\[
\begin{CD}
@. \H^2(X, \mmu_n) @>>> \H^2(U, \mmu_n) @>>> \H^1(Z, \Zn) \\
@. @VVV @VVV @VVV \\
0 @>>> \H^0(B, \R^2 \pi_* \mmu_n) @>>> \H^0(B \setminus S, \R^2 (\pi_U)_* \mmu_n) @>>> \H^0(S, \R^1 (\pi_Z)_* \Zn)
\end{CD}
\]
as sought.  The vertical maps are, by construction of the Leray spectral sequence, the claimed edge maps.
The functoriality statement follows from the functoriality of the triangle~\eqref{eq:gystriangle} as discussed above.
\end{proof}

\begin{remark}
For $(\Zn)_S \to (\pi_Z)_* (\Zn)_Z$ to be injective, it is sufficient that $\pi_Z$ be dominant (that is, the fibre of $\pi$ above each generic point of $S$ is non-empty).  For $(\Zn)_S \to (\pi_Z)_* (\Zn)_Z$ to be bijective, it is sufficient that the fibre of $Z$ above each generic point of $S$ be geometrically connected.
\end{remark}

\begin{remark}
Combining it with Lemma~\ref{lem:R2pur}, we see that~(\ref{relpur2}) of Proposition~\ref{prop:relpur} holds under the weaker condition that $Z$ contain an open subscheme satisfying the conditions of~(\ref{relpur2}).  For example, it is enough to assume that $Z$ is reduced and dominates $S$.
\end{remark}

\subsection{Application to generic elements}\label{sec:generic}

In order to use Proposition~\ref{prop:relpur} to study generic elements on families of schemes, we now suppose that $\pi: X\to B$ satisfies the properties recorded in Condition \ref{cond}.
Let $X^\sm$ denote the smooth locus of $\pi$, and $\pi^\sm$ the restriction of $\pi$ to $X^\sm$.  The morphism $\pi^\sm$ is flat, but not necessarily surjective.  Condition~\ref{cond} has the following consequences: the fibre of $\pi^\sm$ at every codimension-$1$ point of $B$ is of codimension $1$ in $X$ and geometrically integral; and the fibre of $\pi^\sm$ at every codimension-$2$ point of $B$ is of codimension $2$ in $X$ and non-empty.  Thus it will allow us to apply Proposition~\ref{prop:relpur} to $\pi^\sm$.  Also, $X^\sm$ contains all points of codimension $1$ in $X$: for each such point either is in the generic fibre or lies above a codimension-1 point of $B$.  Thus the complement of $X^\sm$ is of codimension at least $2$ in $X$.  We will repeatedly use this fact with Lemma~\ref{lem:R2pur} to deduce $\R^i \pi_* \mmu_n = \R^i \pi^\sm_* \mmu_n$ for $i \le 2$.

Because $\pi$ is quasi-compact and quasi-separated, we have $(\R^i \pi_* \mmu_n)_{\bar{\eta}} = \H^i(X_{\bar{\eta}}, \mmu_n)$ for all $i$.  The conditions on $X_\eta$ ensure $\H^0(X_{\bar{\eta}}, \mmu_n)=\mmu_n$ and $\H^1(X_{\bar{\eta}}, \mmu_n)=0$ whenever $n$ is invertible in $K$.

To begin with, we use Proposition~\ref{prop:relpur} inductively to show that Condition~\ref{cond} implies $\R^1 \pi_* \mmu_n=0$.

\begin{lemma}\label{lem:R1zero}
Let $\pi \colon X \to B$ satisfy Condition~\ref{cond}.  Then we have $\pi_* \mmu_n = \mmu_n$ and $\R^1 \pi_* \mmu_n = 0$ for every $n$ invertible on $B$.
\end{lemma}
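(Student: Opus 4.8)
The plan is to deduce the lemma from the relative purity result of Proposition~\ref{prop:relpur}, applied inductively along the stratification of $B$ by the degeneration locus of $\pi$. First I would reduce to the smooth locus $\pi^\sm \colon X^\sm \to B$. As recalled at the start of \S\ref{sec:generic}, the complement of $X^\sm$ has codimension at least $2$ in $X$: any codimension-$1$ point of $X$ either lies in the smooth generic fibre, or is the generic point of the fibre of $\pi$ above a codimension-$1$ point of $B$, which is geometrically integral, hence generically smooth, by Condition~\ref{cond}. By Lemma~\ref{lem:R2pur} the natural maps $\R^i \pi_* \mmu_n \to \R^i \pi^\sm_* \mmu_n$ are then isomorphisms for $i \le 2$, so it suffices to prove $\pi^\sm_* \mmu_n = \mmu_n$ and $\R^1 \pi^\sm_* \mmu_n = 0$ as sheaves on $B$; recall that $\pi^\sm$ is flat and $X^\sm$ regular, so Proposition~\ref{prop:relpur} is applicable to $\pi^\sm$.

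Next I would establish the conclusion over a dense open subset. Since $X_\eta$ is smooth, proper and geometrically connected over $K$, spreading out from the generic fibre produces a dense open $B_0 \subseteq B$ over which $\pi = \pi^\sm$ is smooth and proper with geometrically connected fibres. The smooth and proper base-change theorems then show that each $\R^i \pi^\sm_* \mmu_n$ restricts to a lisse sheaf on $B_0$, and since $B_0$ is irreducible such a sheaf is determined by its stalk $\H^i(X_{\bar{\eta}}, \mmu_n)$ at the generic point. Using that $\Pic X_{\bar{\eta}}$ is torsion-free, the Kummer sequence together with $\bar{K}^* / n = 0$ yields $\H^0(X_{\bar{\eta}}, \mmu_n) = \mmu_n$ and $\H^1(X_{\bar{\eta}}, \mmu_n) = 0$. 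Hence $\pi^\sm_* \mmu_n = \mmu_n$ and $\R^1 \pi^\sm_* \mmu_n = 0$ over $B_0$.

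It then remains to propagate the conclusion across $S = B \setminus B_0$, which I would do by Noetherian induction on $\dim S$. Suppose the conclusion is known over an open $V \supseteq B_0$, and let $c \ge 1$ be the codimension in $B$ of $B \setminus V$. Write $S_{>c}$ for the union of the irreducible components of $B \setminus V$ of codimension $> c$, and $S_{\mathrm{sing}}$ for the singular locus of the union of its codimension-$c$ components; on the open $B' = B \setminus (S_{>c} \cup S_{\mathrm{sing}})$, which still contains $V$, the complement of $V$ is a regular closed subscheme $S'$ of pure codimension $c$, and the conclusion is already known over $B' \setminus S'$. Now I would apply Proposition~\ref{prop:relpur} to $\pi^\sm$ over $B'$ with this $S'$: part~(\ref{relpur1}) when $c = 1$, part~(\ref{relpur2}) (in the form of the remark following the proposition) when $c = 2$, and part~(\ref{relpur3}) when $c \ge 3$. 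Each application yields the conclusion over all of $B'$, and since $\dim(B \setminus B') < \dim(B \setminus V)$, finitely many iterations reach $B' = B$; that the conclusions over successive opens glue is immediate, since the assertion is that canonical maps of sheaves are isomorphisms, which can be checked locally.

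The main obstacle is to verify the hypotheses of Proposition~\ref{prop:relpur} for $\pi^\sm$ along $S'$. Since $\pi^\sm$ is smooth and $S'$ is regular, $Z := (\pi^\sm)^{-1}(S') = X^\sm \times_B S'$ is smooth over $S'$, hence regular of codimension $c$ in $X^\sm$ by flatness, and in particular reduced. The fibre of $\pi^\sm$ over a generic point of $S'$ is a dense open subscheme of the fibre of $\pi$ there, which by Condition~\ref{cond} is geometrically integral when $c = 1$ and has a geometrically reduced component when $c = 2$; so the map $(\ZZ/n)_{S'} \to (\pi^\sm_Z)_* (\ZZ/n)_Z$ is an isomorphism when $c = 1$ (by the remark after Proposition~\ref{prop:relpur}), while for $c = 2$ the scheme $Z$ is non-empty over each generic point of $S'$ and hence dominates it, so part~(\ref{relpur2}) applies via that same remark. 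The standing hypothesis $(\pi^\sm_U)_* \mmu_n = \mmu_n$, $\R^1 (\pi^\sm_U)_* \mmu_n = 0$ required in each case is exactly the inductive hypothesis over $B' \setminus S'$. Checking these geometric conditions carefully — especially the interplay between passing to $\pi^\sm$, the codimension bookkeeping, and the precise fibrewise hypotheses in Condition~\ref{cond} — is the only delicate part of the argument.
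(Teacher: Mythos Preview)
Your proposal is correct and follows essentially the same strategy as the paper: reduce to $\pi^\sm$ via Lemma~\ref{lem:R2pur}, establish the vanishing over a smooth--proper locus using base change and the torsion-freeness of $\Pic X_{\bar\eta}$, and then propagate across the complement by inductively applying Proposition~\ref{prop:relpur}. If anything, your inductive step is more carefully organised than the paper's (you separate the codimension-$c$ components from the higher-codimension ones and treat $c=1,2,\ge 3$ explicitly, whereas the paper bundles these together), and your verification of the fibrewise hypotheses of Proposition~\ref{prop:relpur} is exactly what is needed.
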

\begin{proof}
Since  $\pi$ is flat with geometrically irreducible generic fibre, the natural map $\mmu_n \to \pi_* \mmu_n$
is locally an isomorphism, and therefore an isomorphism. This proves the first part of the lemma.

Now we prove the second statement.  As pointed out above, the non-smooth locus of $\pi$ has codimension at least $2$ in $X$; by Lemma~\ref{lem:R2pur} it suffices to prove the claim for the smooth locus of $\pi$.
There is a non-empty open subset $V \subset B$ such that $X_V \to V$ is smooth and proper with geometrically connected fibres.  Because we have $(\R^1 \pi_* \mmu_n)_{\bar{\eta}} =0$, proper-smooth base change shows that $\R^1 \pi_* \mmu_n = 0$ holds on $V$.  Let $S$ denote the complement of $V$.  
We now use induction on the codimension of $S$.  If $S$ has codimension $\ge 3$ in $B$, then the claim follows immediately from Proposition~\ref{prop:relpur}(\ref{relpur3}).  If $S$ has codimension $\le 2$, then let $S'$ be the singular locus of $S$.  Considering the regular scheme $S \setminus S'$ as a closed subscheme of $B \setminus S'$ and applying Proposition~\ref{prop:relpur} shows that $\R^1 \pi_* \mmu_n = 0$ holds on $B \setminus S'$, and by induction on the whole of $B$.
\end{proof}

Now we turn to $\R^2 \pi_* \mmu_n$.  As in the case of elements of the Brauer group, it is useful to consider the residue map of Proposition~\ref{prop:relpur}(\ref{relpur1}) when $D$ is any prime divisor (not necessarily regular), by applying the construction above to the local ring $\OO_{B,D}$. 

Let $\pi \colon X \to B$ satisfy Condition~\ref{cond}, and let $D \subset B$ be a prime divisor with generic point $d$.
Let $\sX \to \Spec \OO_{B,d}$ denote the base change of $X$ to $\Spec \OO_{B,d}$, and $\sX^\sm$ its smooth locus.  Suppose that $n$ is invertible on $\Spec \OO_{B,d}$.  Applying Proposition~\ref{prop:relpur}(\ref{relpur1}) to $\sX^\sm \to \Spec \OO_{B,d}$ (with $S=d$) gives a relative residue map $\rho_d$ fitting into an exact sequence
\[
0 \to \H^0(\Spec \OO_{B,d}, \R^2 \pi^\sm_* \mmu_n) \to \H^0(K, \H^2(X_{\bar{\eta}}, \mmu_n)) \xrightarrow{\rho_d} \H^0(\ff{d}, \H^1(X_{\bar{d}}^\sm, \Zn)).
\]
Since the complement of $\sX^\sm$ is of codimension at least $2$ in $\sX$, Lemma~\ref{lem:R2pur} gives
\begin{equation}\label{eq:relresdiv}
0 \to \H^0(\Spec \OO_{B,d}, \R^2 \pi_* \mmu_n) \to \H^0(K, \H^2(X_{\bar{\eta}}, \mmu_n)) \xrightarrow{\rho_d} \H^0(\ff{d}, \H^1(X_{\bar{d}}^\sm, \Zn)).
\end{equation}

If both $D$ and $Z=\pi^{-1}(D)$ happen to be regular, it might appear that there is potential for confusion between the homomorphism $\rho_d$ described here and the $\rho_D$ obtained by applying Proposition~\ref{prop:relpur}(\ref{relpur1}) directly.  However, the two are related by the commutative diagram
\begin{equation}\label{eq:dD}
\begin{CD}
\H^0(B \setminus D, \R^2 \pi_* \mmu_n) @>{\rho_D}>> \H^0(D, \R^1 (\pi_Z)_* \Zn) \\
@VVV @VVV \\
\H^0(K, \H^2(X_{\bar{\eta}}, \mmu_n)) @>{\rho_d}>> \H^0(\ff{d}, \H^1(X^\sm_{\bar{d}}, \Zn)).
\end{CD}
\end{equation}

\begin{proposition}\label{prop:R2seq}
Let $\pi \colon X \to B$ satisfy Condition~\ref{cond}.  
Then, for every integer $n$ invertible on $B$, there is an exact sequence
\[
0 \to \H^0(B, \R^2 \pi_* \mmu_n) \to \H^0(K, \H^2(X_{\bar{\eta}}, \mmu_n)) \xrightarrow{\oplus \rho_{d}} \bigoplus_{d \in B^{(1)}} \H^0(\ff{d}, \H^1(X_{\bar{d}}^\sm, \Zn))
\]
where the sum is over all points of codimension $1$ in $B$.
\end{proposition}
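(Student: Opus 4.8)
The plan is to build the exact sequence of Proposition~\ref{prop:R2seq} by splicing the local exact sequences~\eqref{eq:relresdiv} at the codimension-$1$ points of $B$ with the higher-codimension purity isomorphisms of Proposition~\ref{prop:relpur}, running a Noetherian induction along a stratification of the bad-reduction locus.

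First I would reduce to the smooth locus $\pi^\sm\colon X^\sm\to B$. Since $X\setminus X^\sm$ has codimension $\geq 2$ in $X$, Lemma~\ref{lem:R2pur} gives $\R^i\pi_*\mmu_n=\R^i\pi^\sm_*\mmu_n$ for $i\leq 2$; write $\mathcal F=\R^2\pi^\sm_*\mmu_n$. As $\pi^\sm$ is of finite type, $\mathcal F$ is constructible, so $\H^0(-,\mathcal F)$ commutes with the cofiltered limits $\Spec\OO_{B,d}=\varprojlim_{W\ni d}W$, and by Lemma~\ref{lem:R1zero} we have $\pi^\sm_*\mmu_n=\mmu_n$ and $\R^1\pi^\sm_*\mmu_n=0$. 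Condition~\ref{cond} ensures that $\pi^\sm$ has nonempty fibre, of codimension equal to that of the base point, over every point of $B$ of codimension $\leq 2$, with this fibre geometrically integral in codimension $1$ and possessing a geometrically reduced component in codimension $2$. Fix now a dense open $V\subseteq B$ over which $\pi^\sm$ is smooth and proper with geometrically connected fibres. By proper--smooth base change $\mathcal F|_V$ is locally constant constructible, and since $\Gal(\bar K/K)\twoheadrightarrow\pi_1(V,\bar\eta)$ (as $B$, hence $V$, is normal) we obtain $\H^0(V,\mathcal F)=\H^2(X_{\bar\eta},\mmu_n)^{\Gal(\bar K/K)}=\H^0(K,\H^2(X_{\bar\eta},\mmu_n))$; thus the first map in the asserted sequence is restriction to $V$. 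Moreover, for $d\in V^{(1)}$ the fibre $X^\sm_{\bar d}$ is smooth and proper, so $\H^1(X^\sm_{\bar d},\Zn)=0$ and the corresponding summand on the right vanishes: all the content of the residue condition sits at the finitely many codimension-$1$ points of $B\setminus V$.

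It then remains to show: (i) for every open $U\subseteq B$, restriction $\H^0(U,\mathcal F)\to\H^0(K,\H^2(X_{\bar\eta},\mmu_n))$ is injective; and (ii) a class $\alpha\in\H^0(V,\mathcal F)$ extends to a section over $B$ precisely when $\rho_d(\alpha)=0$ for every $d\in B^{(1)}$. For (i): if $s\in\H^0(U,\mathcal F)$ restricts to $0$ at the generic point, then $s$ vanishes on the irreducible open $U\cap V$ because $\mathcal F|_V$ is locally constant, so $\operatorname{Supp}(s)\subseteq B\setminus V$; restricting $s$ to $\Spec\OO_{B,t}$ at a generic point $t$ of a component of $\operatorname{Supp}(s)$, vanishing is forced by~\eqref{eq:relresdiv} when $t$ has codimension $1$, by Proposition~\ref{prop:relpur}(\ref{relpur2}) applied with $S$ the closed point when $t$ has codimension $2$ (the closed point and its $\pi^\sm$-preimage being regular, and Condition~\ref{cond} supplying the injectivity of $(\Zn)_S\to(\pi_Z)_*(\Zn)_Z$ through dominance), and by Proposition~\ref{prop:relpur}(\ref{relpur3}) when $t$ has codimension $\geq 3$; hence $\operatorname{Supp}(s)=\emptyset$. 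For (ii): the implication "extends $\Rightarrow$ residues vanish" is immediate from~\eqref{eq:relresdiv}, a global section restricting over each $\Spec\OO_{B,d}=\ker\rho_d$. Conversely, if all residues vanish, then by~\eqref{eq:relresdiv} and constructibility $\alpha$ extends over a Zariski neighbourhood of each codimension-$1$ point; these extensions and $\alpha$ on $V$ agree on overlaps by~(i), so glue to a section over an open $W_0\supseteq V$ with $B\setminus W_0$ closed of codimension $\geq 2$ and contained in $B\setminus V$. I would then extend over all of $B$ in two steps: first across the open subscheme $T$ of the codimension-$2$ part of $B\setminus W_0$ on which that part is regular and disjoint from the other components --- here $(\pi^\sm)^{-1}(T)$ is regular of codimension $2$ since $\pi^\sm$ is smooth, it dominates $T$, and Condition~\ref{cond} gives the injectivity hypothesis, so Proposition~\ref{prop:relpur}(\ref{relpur2}) extends the section uniquely across $T$; then across $B\setminus(W_0\cup T)$, which is closed of codimension $\geq 3$, by Proposition~\ref{prop:relpur}(\ref{relpur3}). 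Passing to $\R\Gamma_B$ and using the compatibilities recorded in~\eqref{eq:comm1} and~\eqref{eq:dD} to identify the $\rho_d$ with the relevant edge maps then yields the exact sequence.

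The hard part is the codimension-$2$ passage: one must stratify the codimension-$2$ part of the bad locus into regular locally closed pieces whose $\pi^\sm$-preimage is again regular of the same codimension and over which $(\Zn)_S\to(\pi_Z)_*(\Zn)_Z$ is injective, so that the sharp Proposition~\ref{prop:relpur}(\ref{relpur2}) applies rather than only the coarser semi-purity of Lemma~\ref{lem:semipur}. This is exactly where Condition~\ref{cond}'s requirement that codimension-$2$ fibres have a geometrically reduced component (equivalently, flatness of $\pi^\sm$ with nonempty fibres there) enters. By contrast the codimension-$1$ contribution is already packaged in~\eqref{eq:relresdiv}, while the whole codimension-$\geq 3$ part of the bad locus is cleared in a single step by Proposition~\ref{prop:relpur}(\ref{relpur3}), which needs no regularity hypothesis.
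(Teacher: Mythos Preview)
Your proposal is correct and takes essentially the same approach as the paper: glue local extensions over codimension-$1$ points via~\eqref{eq:relresdiv}, then extend across the remaining codimension~$\geq 2$ locus by combining Proposition~\ref{prop:relpur}(\ref{relpur2}) on the regular codimension-$2$ stratum with Proposition~\ref{prop:relpur}(\ref{relpur3}) on the codimension~$\geq 3$ part. The organisational differences---fixing the smooth-proper locus $V$ up front and identifying $\H^0(V,\mathcal F)$ with the Galois invariants via $\pi_1$-surjectivity, and arguing injectivity via the support of a section rather than by repeating the gluing argument---are cosmetic.
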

\begin{proof}
That the sequence is a complex follows immediately from the fact that~\eqref{eq:relresdiv} is a complex for each $d$.

Let $\alpha$ be a class in $\H^0(K, \H^2(X_{\bar{\eta}}), \mmu_n)$.  We must show that $\rho_d(\alpha)=0$ for all but finitely many divisors $D$, and that if $\rho_d(\alpha)=0$ for all $d$ then $\alpha$ is the image of a unique element of $\H^0(B, \R^2 \pi_* \mmu_n)$.  Using the fact that all the cohomology groups involved commute with inverse limits of schemes, we have
\[
\H^0(K, \H^2(X_{\bar{\eta}}, \mmu_n)) = \H^0(\Spec K, \eta^* \R^2 \pi_* \mmu_n) = \lim_{\longrightarrow} \H^0(V, \R^2 \pi_* \mmu_n)
\]
where the limit is over all non-empty open subsets $V \subset B$.  So there is some non-empty open $V \subset B$ such that $\alpha$ lies in $\H^0(V, \R^2 \pi_* \mmu_n)$.  For any point $d \in V$, the morphism $\eta \to V$ factors through $\Spec \OO_{B,d}$, showing that $\alpha$ extends to $\H^0(\Spec \OO_{B,d}, \R^2 \pi_* \mmu_n)$ and so, by~\eqref{eq:relresdiv}, $\rho_d(\alpha)=0$.  This is true for all but finitely many $d \in B^{(1)}$.

Next we prove exactness in the middle.  Suppose that we have $\rho_d(\alpha)=0$ for all $d \in B^{(1)}$.  For any such $d$, \eqref{eq:relresdiv} shows that $\alpha$ lifts to $\H^0(\Spec \OO_{B,d}, \R^2 \pi_* \mmu_n)$, and so there is an open neighbourhood $V_d $ of $d$ such that $\alpha$ lifts to $\H^0(V_d, \R^2 \pi_* \mmu_n)$.  Let $V'$ be the union of all the $V_d$; by compactness, $V'$ is actually the union of finitely many $V_d$, so the sheaf property for $\R^2 \pi_* \mmu_n$ shows that $\alpha$ lifts to a class in $\H^0(V', \R^2 \pi_* \mmu_n)$.  
To complete the proof, we will show that $\H^0(B, \R^2 \pi_* \mmu_n) \to \H^0(V', \R^2 \pi_* \mmu_n)$ is an isomorphism.
Since $V'$ contains every point of codimension $1$ in $B$, the complement $S = B \setminus V'$ has codimension at least $2$.  If $S$ has codimension greater than $2$, then Proposition~\ref{prop:relpur}(\ref{relpur3}) completes the proof.
Otherwise, the singular locus of $S$ is of codimension at least $3$ in $B$, and so by Proposition~\ref{prop:relpur}(\ref{relpur3}) we may remove it and assume that $S$ is regular and of pure codimension $2$.
By Proposition~\ref{prop:relpur}(\ref{relpur2}) applied to $\pi^\sm$, we deduce that $\H^0(B, \R^2 \pi^\sm_* \mmu_n) \to \H^0(V', \R^2 \pi^\sm_* \mmu_n)$ is an isomorphism.
But the complement of $X^\sm$ has codimension at least $2$, so Lemma~\ref{lem:R2pur} completes the proof.

Finally, we prove injectivity of $\H^0(B, \R^2 \pi_* \mmu_n) \to \H^0(K, \H^2(X_{\bar{\eta}}, \mmu_n))$.  Let $\alpha$ lie in the kernel of this map.  Let $d$ be point of codimension $1$ in $B$; by~\eqref{eq:relresdiv}, $\alpha$ restricts to $0$ in $\H^0(\Spec \OO_{B,d}, \R^2 \pi_* \mmu_n)$.  Therefore there is an open neighbourhood $V_d$ of $d$ such that $\alpha$ restricts to $0$ in $\H^0(V_D, \R^2 \pi_* \mmu_n)$.  As before, let $V'$ be the union of the $V_d$ for all $d \in B^{(1)}$; the sheaf property shows that $\alpha$ restricts to $0$ in $\H^0(V', \R^2 \pi_* \mmu_n)$.  But $\H^0(B, \R^2 \pi_* \mmu_n) \to \H^0(V', \R^2 \pi_* \mmu_n)$ is an isomorphism, so $\alpha$ is zero.
\end{proof}

\subsection{Application to $\H^1(K, \Pic X_{\bar{\eta}})$}\label{sec:resH1}

We now relate our results which we have proved on $\H^0(K, \H^2(X_{\bar{\eta}}, \mmu_n))$ back to the original group of interest, namely $\H^1(K, \Pic X_{\bar{\eta}})$.  

We begin by recalling some facts about the sheaf $\R^1 \pi_* \Gm$, all of which can be found in~\cite[\S 9.2]{kleiman}.  The sheaf $\R^1 \pi_* \Gm$ is the sheaf associated to the absolute Picard functor $V \mapsto \Pic(X \times_B V)$.  It is also the sheaf associated to the relative Picard functor $V \mapsto \Pic(X \times_B V) / \Pic V$.  The inverse image $\eta^* \R^1 \pi_* \Gm$ is simply the sheaf on $\Spec K$ associated to the Galois module $\Pic X_{\bar{\eta}}$.

\begin{lemma}\label{lem:picbr}
Let $\pi \colon X \to B$ satisfy Condition~\ref{cond}.  Then the natural map $$\R^1 \pi_* \Gm \to \eta_* \eta^* \R^1 \pi_* \Gm$$ is an isomorphism, and the natural map $\R^2 \pi_* \Gm \to \eta_* \eta^* \R^2 \pi_* \Gm$ is injective.
\end{lemma}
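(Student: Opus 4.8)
The plan is to prove both assertions stalk by stalk. I would fix a geometric point $\bar{b}$ of $B$, set $R = \OO_{B,\bar{b}}^{\mathrm{sh}}$, $F = \mathrm{Frac}(R)$ (a separable extension of $K$, hence a subfield of $\bar{K}$), and $X_R = X \times_B \Spec R$. Since $\pi$ is quasi-compact and quasi-separated, $(\R^i \pi_* \Gm)_{\bar{b}} = \H^i(X_R, \Gm)$, which is $\Pic X_R$ for $i = 1$ and $\H^2(X_R, \Gm)$ for $i = 2$; likewise $(\eta_* \eta^* \R^i \pi_* \Gm)_{\bar{b}} = \H^i(X_{\bar{\eta}}, \Gm)^{\Gal(\bar{K}/F)}$, and the map on stalks is restriction, factoring through $\H^i(X_F, \Gm)$, the cohomology of the generic fibre of $X_R \to \Spec R$. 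So I must show that $\H^1(X_R,\Gm) \to \H^1(X_{\bar{\eta}},\Gm)$ is an isomorphism onto the $\Gal(\bar{K}/F)$-invariants, and that $\H^2(X_R,\Gm) \to \H^2(X_{\bar{\eta}},\Gm)$ is injective, for every $\bar{b}$.

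For the first assertion I would proceed as follows. The complement of $X_F$ in $X_R$ is the union of the vertical prime divisors $\pi^{-1}(\overline{\{\mathfrak{p}\}})$ over height-one primes $\mathfrak{p}$ of $R$ (the closed fibre has codimension $\geq 2$ by flatness, so contributes no divisor), and each such $\mathfrak{p}$ maps to a codimension-one point of $B$; by Condition~\ref{cond} its fibre is geometrically integral, so $\pi^{-1}(\overline{\{\mathfrak{p}\}})$ is integral, and since $R$ is regular $\mathfrak{p}$ is principal and this divisor is principal of multiplicity one. As $X_R$ is regular, the excision sequence for $\Pic$ then gives $\Pic X_R \xrightarrow{\sim} \Pic X_F$ (surjectivity is automatic, injectivity because the kernel is generated by the vertical divisor classes, which vanish), while Hilbert's Theorem~90 makes $\Pic X_F \to (\Pic X_{\bar{\eta}})^{\Gal(\bar{K}/F)}$ injective with cokernel injecting into $\Br F$. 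Hence the composite is always injective, and an isomorphism exactly when this image in $\Br F$ vanishes, which happens as soon as $X_F$ has an $F$-point (the Hochschild--Serre sequence then splits). When $\bar{b}$ lies over a point of codimension $\leq 2$, Condition~\ref{cond} gives the special fibre a geometrically reduced component, hence -- as $\kappa(\bar{b})$ is algebraically closed of characteristic $0$ -- a point at which $\pi$ is smooth, and Hensel's lemma lifts it to a point of $X_R(R)$, whence an $F$-point. To reach the higher-codimension points I would use a codimension-shaving bootstrap: if the unit morphism $\phi$ failed to be an isomorphism, its cokernel -- already known to be supported in codimension $\geq 3$ -- would have a point $\bar{b}$ of minimal codimension; base-changing to $\Spec R$, $\phi$ becomes an isomorphism on the punctured spectrum $W = \Spec R \setminus \{\mathfrak{m}\}$; but both $\R^1\pi_*\Gm$ and $\eta_*\eta^*\R^1\pi_*\Gm$ are the pushforward from $W$ of their restrictions -- for the former, exactly as in Lemma~\ref{lem:R2pur}, using that $\pi^{-1}(\mathfrak{m})$ has codimension $\geq 2$ in the regular scheme $X_R$ and that $\Pic$ and $\Br$ of $W$ coincide (by absolute purity) with those of $\Spec R$, which vanish as $\Spec R$ is strictly henselian, so $\R^p j_*\Gm$ and $\R^p \nu_*\Gm$ vanish for $0 < p \leq 2$ with $j,\nu$ the relevant open immersions; for the latter, because the generic point factors through $W$ -- so applying $\nu_*\nu^*$ to $\phi|_W$ forces $\phi$ to be an isomorphism over $\Spec R$, a contradiction.

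For the second assertion I need only injectivity of $\H^2(X_R,\Gm) \to \H^2(X_{\bar{\eta}},\Gm)$. Grothendieck's purity theorem for the Brauer group embeds $\H^2(X_R,\Gm)$ into $\H^2(X_F,\Gm)$ (as $X_R$ is regular), and the kernel of $\H^2(X_F,\Gm) \to \H^2(X_{\bar{\eta}},\Gm)$ is $\Br_1 X_F$, which fits in the Hochschild--Serre sequence $\Br F \to \Br_1 X_F \to \H^1(F, \Pic X_{\bar{\eta}})$; so it suffices to show a class $\beta$ lying in $\H^2(X_R,\Gm) \cap \Br_1 X_F$ is zero. At codimension $\leq 2$ the $R$-point found above kills the image of $\beta$ in $\Br F$, so $\beta_F$ comes from $\H^1(F,\Pic X_{\bar{\eta}})$; I would then force this $\H^1$-class to vanish from the facts that $\beta$ is defined over all of $X_R$, that $\H^{>0}(\Spec R,-)=0$, and that the relative Picard sheaf equals $\eta_*\eta^*$ of itself (the first assertion, already proved), invoking also the Kummer sequence and $\R^1\pi_*\mmu_n = 0$ from Lemma~\ref{lem:R1zero}; the same codimension-shaving bootstrap, now with Brauer purity throughout, then propagates injectivity to all of $B$. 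The hard part of the whole argument is precisely this last $i=2$ step at low codimension -- pinning down the ``algebraic'' part $\Br_1 X_F$ -- and, more generally, dealing with points of $B$ of codimension $\geq 3$, about whose fibres Condition~\ref{cond} is silent; the codimension-shaving device is exactly what lets one bypass them.
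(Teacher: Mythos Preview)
Your route differs substantially from the paper's and is considerably more involved. The paper argues entirely at the presheaf level: for each \'etale $V\to B$ it proves that $\Pic V \to \Pic X_V \to \Pic(X_V)_\eta \to 0$ is exact by an elementary divisor argument on the regular scheme $X_V$ (surjectivity by taking Zariski closures of Weil divisors; exactness in the middle because every vertical prime divisor on $X_V$ is the pullback of a divisor on $V$, since the codimension-$1$ fibres are integral --- this is the \emph{only} part of Condition~\ref{cond} invoked). Sheafifying the resulting isomorphism of presheaves $[V\mapsto\Pic X_V/\Pic V]\cong[V\mapsto\Pic(X_V)_\eta]$ gives the $\R^1$ statement in one line; for $\R^2$ the paper simply uses $\H^2(X_V,\Gm)\hookrightarrow\H^2((X_V)_\eta,\Gm)$ for regular $X_V$ and sheafifies. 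By working stalkwise you instead compare $\Pic X_R$ directly with the Galois invariants $(\Pic X_{\bar{\eta}})^{\Gal(\bar{K}/F)}$, and the gap between $\Pic X_F$ and these invariants --- governed by $\ker(\Br F\to\Br X_F)$ --- is precisely what forces you to manufacture $F$-points (hence the codimension-$2$ hypothesis and Hensel) and then to bootstrap at codimension $\geq 3$. The paper's presheaf-level comparison stops at $\Pic(X_V)_\eta$, \emph{before} that obstruction appears, so it never needs sections, never uses the codimension-$2$ condition, and never confronts high-codimension points at all.

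Your bootstrap also has a loose end. To deduce $\R^1\pi_*\Gm = \nu_*\nu^*\R^1\pi_*\Gm$ over $\Spec R$ you need control of the Leray edge map into $\H^2(W,(\pi_W)_*\Gm)$; you assert $\R^2\nu_*\Gm=0$ via ``$\Br W=\Br R=0$ by absolute purity'', but purity for the Brauer group only gives $\Br W=0$, not $\H^2(W,\Gm)=0$, and you have not justified that $\H^2(W,\Gm)$ is torsion for the punctured spectrum $W$ of a strictly henselian regular local ring of dimension $\geq 3$. Your $i=2$ argument is only a sketch: the crucial step ``force this $\H^1$-class to vanish from the facts that $\beta$ is defined over all of $X_R$\ldots'' is not spelled out, and the same bootstrap difficulty would recur there.
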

\begin{proof}
Let $V \to B$ be an \'etale morphism, and write $X_V$ for $X \times_B V$.  We claim that the sequence
\[
\Pic V \to \Pic X_V \to \Pic (X_V)_\eta \to 0
\]
is exact.  Indeed, the second arrow is surjective because $X_V$ is regular, so Weil and Cartier divisors coincide; and any Weil divisor on $(X_V)_\eta$ can be extended to $X_V$ by taking its Zariski closure.  Now let us show exactness at $\Pic X_V$.  If $D$ is a divisor on $X_V$ which becomes principal on restriction to $(X_V)_\eta$, say $D = (f)$ for some rational function on $(X_V)_\eta$, then we can consider $f$ as a rational function on $X_V$ and see that the divisor $D - (f)$ does not meet the generic fibre of $X_V \to V$, and therefore is vertical (that is, supported on fibres of $X_V \to V$).  But, by Condition~\ref{cond}, the fibres of $X_V$ above codimension-$1$ points of $V$ are integral, so every vertical divisor on $X_V$ is the pull-back of a divisor on $V$, showing the required exactness.
Thus we obtain an isomorphism between the presheaves
\[
V \mapsto \Pic X_V / \Pic V \quad \text{and} \quad V \mapsto \Pic (X_V)_\eta
\]
which, on sheafifying, gives an isomorphism $\R^1 \pi_* \Gm \to \eta_* \eta^* \R^1 \pi_* \Gm$ as claimed.

Since $X_V$ is regular, the restriction map $\Br X_V \to \Br (X_V)_\eta$ is injective (looking separately at each connected component, it suffices to prove this for $X_V$ integral, when it follows from~\cite[IV, Cor.~2.6]{milne}).  Sheafifying, we see that $\R^2 \pi_* \Gm \to \eta_* \eta^* \R^2 \pi_* \Gm$ is injective, as required.
\end{proof}

Recall from \S \ref{s:cow-herder} that to pass from elements of $\H^1(K, \Pic X_{\bar{\eta}})[n]$ to  $\H^0(K, \H^2(X_{\bar{\eta}}, \mmu_n))$, we used the exact sequence
\begin{equation}\label{eq:picnseq}
\H^0(K, \Pic X_{\bar{\eta}}) \to \H^0(K, \Pic X_{\bar{\eta}}/n) \to \H^1(K, \Pic X_{\bar{\eta}})[n] \to 0
\end{equation}
arising from the multiplication-by-$n$ homomorphism on $\Pic X_{\bar{\eta}}$, together with the injection $\Pic X_{\bar{\eta}}/n \to \H^2(X_{\bar{\eta}}, \mmu_n)$ coming from the Kummer sequence.  Whilst the resulting class depends on the initial choice of lift; the following lemma says that its residues do not.

\begin{lemma}\label{lem:kernel1}
Let $\pi \colon X \to B$ satisfy Condition~\ref{cond} and let $n$ be invertible on $B$.  Then the kernel of the composite homomorphism
\[
\H^0(K, \Pic X_{\bar{\eta}}/n) \to \H^0(K, \H^2(X_{\bar{\eta}}, \mmu_n)) \xrightarrow{\oplus \rho_{d}} \bigoplus_{d \in B^{(1)}} \H^0(\ff{d}, \H^1(X_{\bar{d}}^\sm, \Zn))
\]
is the image of the natural map $\H^0(B, (\R^1 \pi_* \Gm)/n) \to \H^0(K, \Pic X_{\bar{\eta}}/n)$.  It contains the image of $\H^0(K, \Pic X_{\bar{\eta}}) \to  \H^0(K, \Pic X_{\bar{\eta}}/n)$.
\end{lemma}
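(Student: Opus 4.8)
The plan is to deduce this from Proposition~\ref{prop:R2seq} and Lemma~\ref{lem:picbr} by a diagram chase through the Kummer sequence. Write $A=\H^0(K,\Pic X_{\bar\eta}/n)$ and $H=\H^0(K,\H^2(X_{\bar\eta},\mmu_n))$, and let $\iota\colon A\to H$ be the homomorphism used to pass from the first group to the second. Applying the left-exact functor $\H^0(K,-)$ to the Kummer sequence $0\to\Pic X_{\bar\eta}/n\to\H^2(X_{\bar\eta},\mmu_n)\to\Br X_{\bar\eta}[n]\to0$ shows that $\iota$ is injective with image $\ker\bigl(H\to\H^0(K,\Br X_{\bar\eta}[n])\bigr)$. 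By Proposition~\ref{prop:R2seq} the kernel of $\oplus_d\rho_d\colon H\to\bigoplus_{d\in B^{(1)}}\H^0(\ff d,\H^1(X^\sm_{\bar d},\Zn))$ is the image of the injection $\H^0(B,\R^2\pi_*\mmu_n)\hookrightarrow H$. Hence the kernel in the statement equals $\iota^{-1}\H^0(B,\R^2\pi_*\mmu_n)$, and the task is to identify this preimage with the image of $\H^0(B,(\R^1\pi_*\Gm)/n)$ in $A$.

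The sheaf-level input I would assemble first is the Kummer sequence for the morphism $\pi$ itself: since $n$ is invertible on $B$, the long exact sequence of higher direct images of $1\to\mmu_n\to\Gm\xrightarrow{n}\Gm\to1$ yields an exact sequence of sheaves on $B$
\[
0\to(\R^1\pi_*\Gm)/n\to\R^2\pi_*\mmu_n\xrightarrow{q}(\R^2\pi_*\Gm)[n]\to0,
\]
whose pullback along $\eta$ is the Kummer sequence of $X_{\bar\eta}$ above. By Lemma~\ref{lem:picbr} the natural maps $\R^1\pi_*\Gm\to\eta_*\eta^*\R^1\pi_*\Gm$ and $\R^2\pi_*\Gm\to\eta_*\eta^*\R^2\pi_*\Gm$ are an isomorphism and an injection respectively; passing to cokernels by $n$ and to $n$-torsion, and using left-exactness of $\eta_*$, I obtain that $(\R^1\pi_*\Gm)/n\to\eta_*(\Pic X_{\bar\eta}/n)$ and $(\R^2\pi_*\Gm)[n]\to\eta_*(\Br X_{\bar\eta}[n])$ are injective. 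Applying $\H^0(B,-)$ together with the identity $\H^0(B,\eta_*(-))=\H^0(K,-)$, this gives an injection $\H^0(B,(\R^2\pi_*\Gm)[n])\hookrightarrow\H^0(K,\Br X_{\bar\eta}[n])$, compatible with $q$ and with the third arrow of the Kummer sequence of $X_{\bar\eta}$ --- checking this compatibility is the one genuinely fussy point, and requires a small commutative square expressing naturality of the Kummer connecting maps under restriction to the generic point.

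Granting this, the two inclusions go as follows. For ``$\supseteq$'': applying $\H^0(B,-)$ to the displayed sheaf sequence gives $\H^0(B,(\R^1\pi_*\Gm)/n)\hookrightarrow\H^0(B,\R^2\pi_*\mmu_n)$, and the square relating the maps $\H^0(B,(\R^1\pi_*\Gm)/n)\to A$ and $\H^0(B,\R^2\pi_*\mmu_n)\to H$ to $\iota$ commutes, so the image of $\H^0(B,(\R^1\pi_*\Gm)/n)$ in $A$ is carried by $\iota$ into $\H^0(B,\R^2\pi_*\mmu_n)$, i.e.\ lies in the kernel. For ``$\subseteq$'': let $x\in A$ be such that $\iota(x)$ extends to some $\tilde x\in\H^0(B,\R^2\pi_*\mmu_n)$; then $q(\tilde x)\in\H^0(B,(\R^2\pi_*\Gm)[n])$ has image in $\H^0(K,\Br X_{\bar\eta}[n])$ equal to the image of $\iota(x)$, which is $0$ because $\iota(x)$ lies in the image of $\iota$. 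By the injectivity established in the previous paragraph, $q(\tilde x)=0$, so $\tilde x$ comes from some $y\in\H^0(B,(\R^1\pi_*\Gm)/n)$ via the $\H^0(B,-)$-sequence; restricting to the generic point and using that $\iota$ is injective and that $\iota$ of the image of $y$ in $A$ equals $\iota(x)$, we conclude that $x$ is the image of $y$. Thus the kernel is exactly the image of $\H^0(B,(\R^1\pi_*\Gm)/n)$ in $A$.

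For the final assertion, Lemma~\ref{lem:picbr} gives $\H^0(K,\Pic X_{\bar\eta})=\H^0(B,\eta_*\eta^*\R^1\pi_*\Gm)=\H^0(B,\R^1\pi_*\Gm)$, so every class in $\H^0(K,\Pic X_{\bar\eta})$ is the restriction of a global section of $\R^1\pi_*\Gm$, and its image in $A$ therefore factors through $\H^0(B,(\R^1\pi_*\Gm)/n)$; hence it lies in the kernel by the identification just proved. I expect the only real obstacle to be the bookkeeping in the second paragraph --- keeping the three ``restrict to the generic point'' maps straight and verifying their compatibility with the two Kummer sequences --- after which everything is a direct consequence of Proposition~\ref{prop:R2seq} and Lemma~\ref{lem:picbr}.
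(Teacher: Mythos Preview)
Your proposal is correct and follows essentially the same route as the paper: both use the Kummer short exact sequence of sheaves $0\to(\R^1\pi_*\Gm)/n\to\R^2\pi_*\mmu_n\to(\R^2\pi_*\Gm)[n]\to0$ on $B$, compare its global sections with the corresponding sequence over $K$, invoke Proposition~\ref{prop:R2seq} to identify the kernel as $\iota^{-1}\bigl(\im(\H^0(B,\R^2\pi_*\mmu_n)\to H)\bigr)$, and use the injectivity of $\H^0(B,(\R^2\pi_*\Gm)[n])\to\H^0(K,\Br X_{\bar\eta}[n])$ from Lemma~\ref{lem:picbr} to complete the diagram chase. The paper packages this into a single commutative square and a one-line chase, whereas you spell out the compatibility checks more explicitly, but the argument is the same.
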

\begin{proof}
The Kummer sequence gives a short exact sequence
\[
0 \to (\R^1 \pi_* \Gm)/n \to \R^2 \pi_* \mmu_n \to (\R^2 \pi_* \Gm)[n] \to 0
\]
of sheaves on $B$.  Taking global sections on $B$ and on $\eta$ gives a commutative diagram with exact rows as follows.
\[
\begin{CD}
0 @>>> \H^0(B, (\R^1 \pi_* \Gm)/n) @>>> \H^0(B, \R^2 \pi_* \mmu_n) @>>> \H^0(B, (\R^2 \pi_* \Gm)[n]) \\
@. @V{\alpha}VV @V{\beta}VV @V{\gamma}VV \\
0 @>>> \H^0(K, \Pic X_{\bar{\eta}}/n) @>{i}>> \H^0(K, \H^2(X_{\bar{\eta}}, \mmu_n)) @>>> \H^0(K, \Br X_{\bar{\eta}} [n]).
\end{CD}
\]
By Proposition~\ref{prop:R2seq}, the kernel in question is $i^{-1}(\im \beta)$.  This clearly contains $\im \alpha$.  But $\gamma$ is injective by Lemma~\ref{lem:picbr}, and a diagram-chase then shows $i^{-1}(\im \beta)$ is contained in $\im \alpha$, proving the first claim.

For the second claim, we have a commutative diagram
\[
\begin{CD}
\H^0(B, \R^1 \pi_* \Gm) @>>> \H^0(B, (\R^1 \pi_* \Gm)/n) \\
@VVV @VVV \\
\H^0(K, \Pic X_{\bar{\eta}}) @>>> \H^0(K, \Pic X_{\bar{\eta}}/n).
\end{CD} 
\]
The left-hand vertical arrow is an isomorphism, by Lemma~\ref{lem:picbr}, and the claim follows easily.
\end{proof}

It follows from Lemma \ref{lem:kernel1} that there is a well-defined induced homomorphism
\begin{equation}\label{eq:H1picres}
\H^1(K, \Pic X_{\bar{\eta}})[n] \xrightarrow{\oplus \rho_{d}} \bigoplus_{d \in B^{(1)}} \H^0(\ff{d}, \H^1(X_{\bar{d}}^\sm, \Zn)).
\end{equation}

\begin{lemma}\label{lem:kernel}
The kernel of the homomorphism~\eqref{eq:H1picres} is the image of the natural map $\H^1(B, \R^1 \pi_* \Gm)[n] \to \H^1(K, \Pic X_{\bar{\eta}})[n]$.
\end{lemma}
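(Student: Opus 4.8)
The plan is to deduce this from Lemma~\ref{lem:kernel1} together with the compatibility of connecting (Bockstein) homomorphisms for the multiplication-by-$n$ sequences on $B$ and on the generic point $\eta = \Spec K$. Recall that \eqref{eq:picnseq} exhibits the surjection $\delta' \colon \H^0(K, \Pic X_{\bar{\eta}}/n) \twoheadrightarrow \H^1(K, \Pic X_{\bar{\eta}})[n]$ as the connecting map of $0 \to \Pic X_{\bar{\eta}} \xrightarrow{n} \Pic X_{\bar{\eta}} \to \Pic X_{\bar{\eta}}/n \to 0$, with kernel the image of $\H^0(K, \Pic X_{\bar{\eta}})$; and that \eqref{eq:H1picres} is, by construction, the homomorphism induced on this quotient by the composite $g$ of the Kummer inclusion $\H^0(K, \Pic X_{\bar{\eta}}/n) \hookrightarrow \H^0(K, \H^2(X_{\bar{\eta}}, \mmu_n))$ with $\oplus \rho_d$. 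Since $\delta'$ is surjective and $g$ factors through it, one has $\ker\bigl(\eqref{eq:H1picres}\bigr) = \delta'(\ker g)$.

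First I would invoke Lemma~\ref{lem:kernel1}, which identifies $\ker g$ with the image of the natural restriction-to-$\eta$ map $v \colon \H^0(B, (\R^1 \pi_* \Gm)/n) \to \H^0(K, \Pic X_{\bar{\eta}}/n)$. Hence $\ker\bigl(\eqref{eq:H1picres}\bigr) = \delta'(\im v) = \im(\delta' \circ v)$, and it remains to show this equals the image of the natural map $\H^1(B, \R^1 \pi_* \Gm)[n] \to \H^1(K, \Pic X_{\bar{\eta}})[n]$.

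Next I would record that $\R^1 \pi_* \Gm$ is torsion-free: by Lemma~\ref{lem:picbr} it is isomorphic to $\eta_* \eta^* \R^1 \pi_* \Gm$, and $\eta^* \R^1 \pi_* \Gm$ is the sheaf attached to the torsion-free module $\Pic X_{\bar{\eta}}$, so torsion-freeness is preserved by the left-exact functor $\eta_*$ (alternatively, $(\R^1 \pi_* \Gm)[n]=0$ follows from $\R^1 \pi_* \mmu_n = 0$, Lemma~\ref{lem:R1zero}, via the Kummer sequence). Therefore $0 \to \R^1 \pi_* \Gm \xrightarrow{n} \R^1 \pi_* \Gm \to (\R^1 \pi_* \Gm)/n \to 0$ is a short exact sequence of sheaves on $B$, and its long exact cohomology sequence shows that its connecting map $\delta \colon \H^0(B, (\R^1 \pi_* \Gm)/n) \to \H^1(B, \R^1 \pi_* \Gm)$ is surjective onto $\H^1(B, \R^1 \pi_* \Gm)[n]$. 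Applying the exact functor $\eta^*$ carries this short exact sequence to the corresponding one on $\Spec K$, and the natural transformation $\H^i(B, -) \to \H^i(\Spec K, \eta^*(-))$ is compatible with connecting homomorphisms; this yields the identity $\delta' \circ v = w \circ \delta$, where $w \colon \H^1(B, \R^1 \pi_* \Gm) \to \H^1(K, \Pic X_{\bar{\eta}})$ is the natural map (which sends the $n$-torsion subgroup into the $n$-torsion subgroup). Combining, $\ker\bigl(\eqref{eq:H1picres}\bigr) = \im(\delta' \circ v) = \im(w \circ \delta) = \im(w)$, the last equality because $\delta$ surjects onto $\H^1(B, \R^1 \pi_* \Gm)[n]$; this is exactly the claim.

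I do not expect a serious obstacle here: the argument is a diagram chase resting on Lemma~\ref{lem:kernel1} and on naturality of the Bockstein. The only points needing care are (i) checking that the "natural maps" named in the statement and in \eqref{eq:picnseq} are indeed the $\eta^*$-restriction and connecting maps used above, so that the compatibility square genuinely applies, and (ii) the elementary verification that $\R^1 \pi_* \Gm$ has no $n$-torsion, which is what makes the multiplication-by-$n$ sequence on $B$ short exact; both are immediate from results already established in this section.
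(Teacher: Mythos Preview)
Your proof is correct and follows essentially the same route as the paper: both use Lemma~\ref{lem:kernel1} to identify the kernel upstairs, establish the short exact sequence $0 \to \R^1 \pi_* \Gm \xrightarrow{n} \R^1 \pi_* \Gm \to (\R^1 \pi_* \Gm)/n \to 0$ via $\R^1 \pi_* \mmu_n = 0$, and then conclude by the compatibility of the connecting maps on $B$ and on $\Spec K$. The only cosmetic point is that your final ``$\im(w)$'' should be read as the image of $w$ restricted to the $n$-torsion subgroup, which is clearly what you intend given the surrounding justification.
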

\begin{proof}
By Lemma~\ref{lem:R1zero} we have $\R^1 \pi_* \mmu_n = 0$ and so the Kummer sequence shows that the multiplication-by-$n$ map on $\R^1 \pi_* \Gm$ is injective.  We therefore have a short exact sequence
\[
0 \to \R^1 \pi_* \Gm \xrightarrow{n} \R^1 \pi_* \Gm \to (\R^1 \pi_* \Gm)/n \to 0
\]
of sheaves on $B$, giving rise to an exact sequence in cohomology
\[
\H^0(B, \R^1 \pi_* \Gm) \to \H^0(B, (\R^1 \pi_* \Gm)/n) \to \H^1(B, \R^1 \pi_* \Gm)[n] \to 0
\]
compatible with~\eqref{eq:picnseq}.
Together, these give a commutative square
\[
\begin{CD}
\H^0(B, (\R^1 \pi_* \Gm)/n) @>>> \H^0(K, \Pic X_{\bar{\eta}}/n) \\
@VVV @VVV \\
\H^1(B, \R^1 \pi_* \Gm)[n] @>>> \H^1(K, \Pic X_{\bar{\eta}})[n]
\end{CD}
\]
in which both vertical maps are surjective.
By Lemma~\ref{lem:kernel1}, the kernel of the homomorphism~\eqref{eq:H1picres} is the image of the composite map in this square, which is equal to the image of the bottom arrow.
\end{proof}

\subsection{Existence of ramified elements}\label{s:bee-keeper}

We now return to the situation in which $X$ and $B$ are varieties over a number field $k$.  The purpose of this section is to give conditions under which every non-trivial element of $\H^1(K, \Pic X_{\bar \eta})$ is ramified along some divisor in $B$.  According to Lemma~\ref{lem:kernel}, this means understanding the group $\H^1(B, \R^1 \pi_* \Gm)$.  In what follows, we write $\bar{B}$ and $\bar{X}$ for the base changes of $B$ and $X$, respectively, to a fixed algebraic closure $\bar{k}$ of $k$.

\begin{lemma}\label{lem:H1R1}
Let $\pi \colon X \to B$ be a morphism of smooth proper varieties over a field $k$ satisfying Condition~\ref{cond}.  Suppose that we have $\Br X=\Br k$ and that the natural map $\H^3(B,\Gm) \to \H^3(X,\Gm)$ is injective.  Then $\H^1(B, \R^1 \pi_* \Gm)=0$.
\end{lemma}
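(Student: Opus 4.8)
The plan is to derive the vanishing of $\H^1(B,\R^1\pi_*\Gm)$ from the Leray spectral sequence
\[
E_2^{p,q}=\H^p(B,\R^q\pi_*\Gm)\ \Longrightarrow\ \H^{p+q}(X,\Gm)
\]
by showing that the differential $d_2\colon E_2^{1,1}\to E_2^{3,0}$ is at the same time injective and zero. As input, one records that $\pi$ is proper with geometrically connected generic fibre, and $X$, $B$ are normal, so all fibres of $\pi$ are geometrically connected and $\pi_*\Gm=\Gm$; hence $E_2^{p,0}=\H^p(B,\Gm)$ and the edge homomorphism $E_2^{p,0}\to\H^p(X,\Gm)$ is the pullback $\pi^*$. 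In particular $E_2^{2,0}=\Br B$ and $E_2^{3,0}=\H^3(B,\Gm)$, while $E_2^{1,1}=\H^1(B,\R^1\pi_*\Gm)$ carries no incoming $d_2$, so that $E_\infty^{1,1}=\ker\bigl(d_2\colon E_2^{1,1}\to E_2^{3,0}\bigr)$.

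First I would show that $d_2$ is injective, that is, that $E_\infty^{1,1}=0$, using the hypothesis $\Br X=\Br k$. Writing $F^\bullet$ for the Leray filtration on $\H^2(X,\Gm)=\Br X$, one has $F^2\Br X=E_\infty^{2,0}=\operatorname{im}\bigl(\pi^*\colon\Br B\to\Br X\bigr)$ and $F^1\Br X/F^2\Br X=E_\infty^{1,1}$. Now the structure morphism $X\to\Spec k$ factors through $B$, so the isomorphism $\Br k\xrightarrow{\ \sim\ }\Br X$ factors through $\Br B$; hence $\pi^*\colon\Br B\to\Br X$ is surjective. Therefore $\operatorname{im}(\pi^*)=\Br X\supseteq F^1\Br X$, which forces $E_\infty^{1,1}=F^1\Br X/F^2\Br X=0$, as wanted.

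Next I would show that the same $d_2$ is the zero map, which is where the hypothesis on $\H^3$ enters. The composite
\[
E_2^{1,1}\xrightarrow{\ d_2\ }E_2^{3,0}=\H^3(B,\Gm)\xrightarrow{\ \pi^*\ }\H^3(X,\Gm)
\]
vanishes: indeed $\pi^*$ on $\H^3$ is the edge homomorphism $E_2^{3,0}\twoheadrightarrow E_\infty^{3,0}\hookrightarrow\H^3(X,\Gm)$, and $E_\infty^{3,0}$ is a quotient of $E_2^{3,0}$ by, among other things, the image of $d_2$. Since $\pi^*\colon\H^3(B,\Gm)\to\H^3(X,\Gm)$ is injective by hypothesis, it follows that $d_2\colon E_2^{1,1}\to E_2^{3,0}$ is zero. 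Combined with the previous paragraph, $E_2^{1,1}=\ker(d_2)$ has zero image under an injective homomorphism, whence $\H^1(B,\R^1\pi_*\Gm)=E_2^{1,1}=0$.

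The main obstacle here is bookkeeping with the spectral sequence rather than any substantial difficulty: one must make sure that $\pi_*\Gm=\Gm$ (so that the bottom row genuinely is $\H^\bullet(B,\Gm)$ and the relevant edge maps are pullbacks), and one must correctly identify the graded pieces of the Leray filtration on $\H^2(X,\Gm)$ — in particular that $F^2\Br X=\operatorname{im}\bigl(\pi^*\colon\Br B\to\Br X\bigr)$ and that $E_\infty^{1,1}$ is the corresponding subquotient of $\Br X$. Once both hypotheses are fed into the two ends of $d_2$, the vanishing is immediate.
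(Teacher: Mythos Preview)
Your proof is correct and follows essentially the same approach as the paper: analyse the differential $d_2\colon E_2^{1,1}\to E_2^{3,0}$ in the Leray spectral sequence, use $\Br X=\Br k$ to force the kernel of $d_2$ to vanish, and use injectivity of $\H^3(B,\Gm)\to\H^3(X,\Gm)$ to force the image of $d_2$ to vanish. The paper's proof is considerably more terse --- it states these two facts in a single sentence each --- whereas you have carefully justified $\pi_*\Gm=\Gm$, identified $E_\infty^{2,0}$ with $\operatorname{im}(\pi^*\colon\Br B\to\Br X)$, and verified that $E_\infty^{1,1}=\ker d_2$; but the underlying argument is identical.
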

\begin{proof}
Consider the Leray spectral sequence $E_2^{p,q}=\H^p(B, \R^q \pi_* \Gm) \Rightarrow \H^{p+q}(X, \Gm)$.  There is a homomorphism $d \colon \H^1(B, \R^1 \pi_* \Gm) \to \H^3(B, \Gm)$.  The hypothesis $\Br X = \Br k$ implies that the kernel of $d$ is trivial.  The image of $d$ lies in the kernel of $\H^3(B,\Gm) \to \H^3(X,\Gm)$, which is also trivial by hypothesis.  This gives the claimed vanishing $\H^1(B, \R^1 \pi_* \Gm)=0$.
\end{proof}

\begin{lemma}\label{lem:H3}
Let $\pi \colon X \to B$ be a morphism of smooth proper varieties over a number field $k$ satisfying Condition~\ref{cond}.  Suppose further that $B$ is such that the groups $\Br \bar{B}$ and $\H^3(\bar{B}, \Gm)$ both vanish.
\begin{enumerate}
\item If $\H^2(k, \Pic \bar{B}) \to \H^2(k, \Pic \bar{X})$ is not injective, then neither is $\H^3(B,\Gm) \to \H^3(X,\Gm)$.
\item If $\H^2(k, \Pic \bar{B}) \to \H^2(k, \Pic \bar{X})$ is injective and $\Br X \to \H^0(k, \Br \bar{X})$ is surjective, then $\H^3(B,\Gm) \to \H^3(X,\Gm)$ is injective.
\end{enumerate}
\end{lemma}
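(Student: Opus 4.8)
The plan is to compare, via $\pi$, the Hochschild--Serre spectral sequences
\[
E_2^{p,q}(Y) = \H^p(k, \H^q(\bar Y, \Gm)) \Longrightarrow \H^{p+q}(Y, \Gm), \qquad Y \in \{B, X\},
\]
which are functorial in $Y$; I write $\pi^*$ for all maps induced by $\pi$. I use freely that $\H^3(k, \Gm) = 0$ since $k$ is a number field, that $\H^0(\bar Y, \Gm) = \bar k^*$, $\H^1(\bar Y, \Gm) = \Pic \bar Y$, $\H^2(\bar Y, \Gm) = \Br \bar Y$, together with the hypotheses $\Br \bar B = 0$ and $\H^3(\bar B, \Gm) = 0$.

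First I would describe $\H^3(B, \Gm)$. The rows $q = 2, 3$ of $E_2^{\bullet, \bullet}(B)$ vanish, so the only graded pieces of $\H^3(B, \Gm)$ are $E_\infty^{3,0}(B)$, a subquotient of $\H^3(k, \Gm) = 0$, and $E_\infty^{2,1}(B)$. No differential enters $E^{2,1}$ (the would-be source $E_2^{0,2}(B) = \H^0(k, \Br \bar B)$ vanishes) and the only one leaving is $d_2^{2,1}(B)\colon \H^2(k, \Pic \bar B) \to \H^4(k, \Gm)$, giving a canonical identification
\[
\H^3(B, \Gm) = E_\infty^{2,1}(B) = \ker\big(d_2^{2,1}(B)\colon \H^2(k, \Pic \bar B) \to \H^4(k, \Gm)\big) \subseteq \H^2(k, \Pic \bar B).
\]
For $X$, the vanishing $\H^3(k, \Gm) = 0$ forces $E_\infty^{3,0}(X) = 0$, hence $F^2 \H^3(X, \Gm) = E_\infty^{2,1}(X) = \ker d_2^{2,1}(X) / \operatorname{im} d_2^{0,2}(X)$, where $d_2^{0,2}(X)\colon \H^0(k, \Br \bar X) \to \H^2(k, \Pic \bar X)$. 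Functoriality of the spectral sequences then exhibits $\pi^* \colon \H^3(B, \Gm) \to \H^3(X, \Gm)$ as the composite
\[
\H^3(B, \Gm) = E_\infty^{2,1}(B) \xrightarrow{\ \bar\pi^*\ } E_\infty^{2,1}(X) = F^2 \H^3(X, \Gm) \hookrightarrow \H^3(X, \Gm),
\]
where $\bar\pi^*$ is induced by $\pi^* \colon \H^2(k, \Pic \bar B) \to \H^2(k, \Pic \bar X)$; moreover $d_2^{2,1}(B) = d_2^{2,1}(X) \circ \pi^*$ since $\pi^*$ is the identity on $E_2^{\bullet, 0} = \H^\bullet(k, \Gm)$. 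Hence $\ker\big(\pi^* \colon \H^3(B, \Gm) \to \H^3(X, \Gm)\big) = \ker \bar\pi^*$.

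For part (1), choose $0 \neq y \in \ker\big(\pi^* \colon \H^2(k, \Pic \bar B) \to \H^2(k, \Pic \bar X)\big)$. Then $d_2^{2,1}(B)(y) = d_2^{2,1}(X)(\pi^* y) = 0$, so $y$ lies in $\H^3(B, \Gm)$ under the identification above, and $\bar\pi^*(y)$ is the class of $\pi^*(y) = 0$ in $E_\infty^{2,1}(X)$, hence $0$; this produces a nonzero element of $\ker \bar\pi^*$. For part (2), the surjectivity of $\Br X \to \H^0(k, \Br \bar X)$ is equivalent to $d_2^{0,2}(X) = 0$: the image of that map equals $E_\infty^{0,2}(X) = \ker d_2^{0,2}(X)$, using once more $\H^3(k, \Gm) = 0$ to kill the remaining differential out of $E^{0,2}$. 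Therefore $E_\infty^{2,1}(X) = \ker d_2^{2,1}(X) \subseteq \H^2(k, \Pic \bar X)$, and $\bar\pi^*$ is merely the restriction to $\H^3(B, \Gm)$ of the injective (by hypothesis) map $\pi^* \colon \H^2(k, \Pic \bar B) \to \H^2(k, \Pic \bar X)$; so $\pi^*$ is injective on $\H^3$.

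The only delicate points are the bookkeeping with the spectral-sequence filtration and the identification of $\operatorname{im} d_2^{0,2}(X)$ with the obstruction to surjectivity of $\Br X \to \H^0(k, \Br \bar X)$; there is no serious difficulty once $\H^3(k, \Gm) = 0$ has trimmed both spectral sequences, and — by design — no hypothesis on $\Br \bar X$ is used.
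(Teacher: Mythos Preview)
Your proof is correct and follows the same route as the paper: both compare the Hochschild--Serre spectral sequences for $\bar B \to B$ and $\bar X \to X$ via functoriality under $\pi$, using $\H^3(k,\Gm)=0$ and the vanishing hypotheses on $\bar B$. The paper packages the comparison as a commutative diagram
\[
\begin{CD}
@. 0 @>>> \H^2(k,\Pic\bar B) @>>> \H^3(B,\Gm) @>>> 0\\
@. @VVV @VVV @VVV\\
\Br X @>>> \H^0(k,\Br\bar X) @>>> \H^2(k,\Pic\bar X) @>>> \H^3(X,\Gm)
\end{CD}
\]
and invokes the snake lemma; your version instead tracks the filtration and the differentials $d_2^{0,2}$, $d_2^{2,1}$ explicitly. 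In fact your write-up is slightly more careful: you keep track of the differential $d_2^{2,1}\colon \H^2(k,\Pic\bar{-})\to \H^4(k,\Gm)$, so that $\H^3(B,\Gm)$ is identified with $\ker d_2^{2,1}(B)$ rather than with all of $\H^2(k,\Pic\bar B)$, and you check (via $d_2^{2,1}(B)=d_2^{2,1}(X)\circ\pi^*$) that a class killed by $\pi^*$ on Picard cohomology automatically lies in this kernel. The paper's terse diagram elides this point, but the argument is the same in substance.
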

\begin{proof}
Because $k$ is a number field, we have $\H^3(k, \Gm)=0$.
Taking into account the assumptions on $B$,
the Hochschild--Serre spectral sequences for $\bar{X} \to X$ and $\bar{B} \to B$ give a commutative diagram as follows.
\[
\begin{CD}
@. 0 @>>> \H^2(k, \Pic \bar{B}) @>>> \H^3(B, \Gm) @>>> 0\\
@. @VVV @VVV @VVV \\
\Br X @>>> \H^0(k, \Br \bar{X}) @>>> \H^2(k, \Pic \bar{X}) @>>> \H^3(X, \Gm).
\end{CD}
\]
The result is now an easy application of the snake lemma.
\end{proof}

\begin{corollary}
Under the conditions of Theorem~\ref{thm:H1}, for every non-zero element $\alpha \in \H^1(K, \Pic X_{\bar{\eta}})$, there exists some $d \in B^{(1)}$ such that $\rho_d(\alpha)$ is non-zero.
\end{corollary}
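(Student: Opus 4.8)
The plan is to deduce this from Lemma~\ref{lem:kernel}, which identifies the kernel of the residue homomorphism \eqref{eq:H1picres} on $\H^1(K,\Pic X_{\bar\eta})[n]$ with the image of $\H^1(B,\R^1\pi_*\Gm)[n]$. Thus it suffices to prove that $\H^1(B,\R^1\pi_*\Gm)$ vanishes, and then to note that $\H^1(K,\Pic X_{\bar\eta})$ is a torsion group (being a direct limit of cohomology groups of finite Galois quotients), so any non-zero $\alpha$ lies in the $n$-torsion for some $n$, automatically invertible on $B$ since $k$ has characteristic zero.

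To obtain the vanishing of $\H^1(B,\R^1\pi_*\Gm)$ I would apply Lemma~\ref{lem:H1R1}, verifying its hypotheses as follows. First, $\pi\colon X\to\PP^n$ satisfies Condition~\ref{cond}: reducing to the case of torsion-free geometric Picard group as explained in \S\ref{s:cow-herder}, the generic fibre $X_\eta$ is smooth over $K$ by generic smoothness in characteristic zero, proper since $X$ is projective over $k$, and geometrically connected by hypothesis, while Conditions~\ref{hyp1} and~\ref{hyp3} of Theorem~\ref{thm:H1} supply the required properties of the codimension-$1$ and codimension-$2$ fibres. Second, $\Br X=\Br k$: this is precisely the equivalence recorded just after Theorem~\ref{thm:H1}, which follows from $\Br\bar X=0$ (Condition~\ref{hyp5}) and $\H^1(k,\Pic\bar X)=0$ (Condition~\ref{hyp7}) via the Hochschild--Serre spectral sequence for $\bar X\to X$ and Hilbert~$90$, together with the injectivity of $\Br k\to\Br X$, which uses $X(\A_k)\neq\emptyset$ (Condition~\ref{hyp2}) and the injectivity of $\bigoplus_v\inv_v$ on $\Br k$. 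Third, $\H^3(B,\Gm)\to\H^3(X,\Gm)$ is injective: since $B=\PP^n$ one has the standard vanishing $\Br\bar B=0$ and $\H^3(\bar B,\Gm)=0$, so Lemma~\ref{lem:H3}(2) applies, its two hypotheses being Condition~\ref{hyp6} (injectivity of $\H^2(k,\Pic\bar B)\to\H^2(k,\Pic\bar X)$) and the surjectivity of $\Br X\to\H^0(k,\Br\bar X)$, which is automatic because $\Br\bar X=0$.

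Combining these, Lemma~\ref{lem:H1R1} gives $\H^1(B,\R^1\pi_*\Gm)=0$. For a non-zero $\alpha\in\H^1(K,\Pic X_{\bar\eta})$, choose $n$ with $\alpha\in\H^1(K,\Pic X_{\bar\eta})[n]$; by Lemma~\ref{lem:kernel} the homomorphism \eqref{eq:H1picres} is injective on this subgroup, so $\bigoplus_d\rho_d(\alpha)\neq 0$ and hence $\rho_d(\alpha)\neq 0$ for some $d\in B^{(1)}$, as required. I expect no serious obstacle, since all the substantive content has already been isolated in Lemmas~\ref{lem:kernel}, \ref{lem:H1R1} and~\ref{lem:H3}; the only points demanding care are that the identity $\Br X=\Br k$ genuinely uses the everywhere-local-solubility hypothesis~\ref{hyp2} (without which $\Br k\to\Br X$ need not be injective), and that one correctly invokes the standard vanishing $\Br\PP^n_{\bar k}=\H^3(\PP^n_{\bar k},\Gm)=0$ needed to bring Lemma~\ref{lem:H3} into play.
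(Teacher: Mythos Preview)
Your proposal is correct and follows exactly the paper's route: deduce injectivity of~\eqref{eq:H1picres} from Lemma~\ref{lem:kernel} by showing $\H^1(B,\R^1\pi_*\Gm)=0$ via Lemmas~\ref{lem:H1R1} and~\ref{lem:H3}. One small remark: the hypothesis ``$\Br X=\Br k$'' in Lemma~\ref{lem:H1R1} only requires \emph{surjectivity} of $\Br k\to\Br X$ (equivalently $\Br X/\Br k=0$), and this already follows from Conditions~\ref{hyp7} and~\ref{hyp5} alone via Hochschild--Serre, as the paper notes after Theorem~\ref{thm:H1}; your appeal to Condition~\ref{hyp2} for injectivity is therefore correct but superfluous at this point.
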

\begin{proof}
The conditions include that $\H^2(k, \Pic \bar{B}) \to \H^2(k, \Pic \bar{X})$ is injective and that $\Br \bar{X}$ vanishes, so Lemma~\ref{lem:H3} shows that $\H^3(B,\Gm) \to \H^3(X,\Gm)$ is injective.  Then the conditions $\H^1(k, \Pic\bar{X})=0$ and $\Br \bar{X}=0$ together imply $\Br X = \Br k$, and Lemma~\ref{lem:H1R1} shows that $\H^1(B, \R^1 \pi_* \Gm)$ is zero.  By Lemma~\ref{lem:kernel}, this implies that the homomorphism~\eqref{eq:H1picres} is injective, giving the conclusion.
\end{proof}

Note that $\Br \bar{X}$ and $\H^1(k, \Pic \bar{X})$ both vanish when  $X$ is a $k$-rational variety.  On the other hand, the condition on $\ker \big(\H^2(k, \Pic \bar{B}) \to \H^2(k, \Pic \bar{X}) \big)$ appears less natural, and we briefly discuss it further.

As in the proof of Lemma~\ref{lem:picbr}, we have an exact sequence of $\Gal(\kbar/k)$-modules
\[
0 \to \Pic \bar{B} \to \Pic \bar{X} \to \Pic \bar{X}_\eta \to 0,
\]
where now $\Pic \bar{B} \to \Pic \bar{X}$ is injective because $X \to B$ is proper.  Here $\bar{X}_\eta$ is the generic fibre of the base change of $X$ to $\kbar$, and is not to be confused with $X_{\bar{\eta}}$.  This exact sequence can be used to give some criteria for the homomorphism $\H^2(k, \Pic \bar{B}) \to \H^2(k, \Pic \bar{X})$ to be injective. For example, the associated long exact sequence in Galois cohomology shows that this morphism is injective if $\H^1(k, \Pic \bar{X}_\eta) = 0.$
This injectivity also holds when the natural map $\Pic \bar{B} \to \Pic \bar{X}$ has a Galois-equivariant left inverse.  This is certainly true if $X \to B$ has a section, but can hold more generally, as we will see in the following proposition.

For the application to Theorem~\ref{thm:H1}, we take $B=\PP^m$.  Recall that, if $Y$ is a smooth projective variety, then a closed subvariety $X \subset Y$ of codimension $c$ is a \emph{complete intersection} if $X$ is the scheme-theoretic intersection of $c$ very ample divisors in $Y$.

\begin{proposition}\label{prop:ci}
Let $k$ be a number field.  Let $X \subset \PP^r \times \PP^m$ be a complete intersection of dimension $\ge 3$, and suppose that the projection $\pi \colon X \to  \PP^m$ satisfies Condition~\ref{cond}.   Then 
$\H^2(k, \Pic \bar{B}) \to \H^2(k, \Pic \bar{X})$ is injective and, furthermore, 
the residue map
\[
\H^1(K, \Pic X_{\bar{\eta}})[n] \xrightarrow{\oplus \rho_{d}} \bigoplus_{d \in B^{(1)}} \H^0(\ff{d}, \H^1(X_{\bar{d}}^\sm, \Zn)).
\]
of~\eqref{eq:H1picres} is also injective.
\end{proposition}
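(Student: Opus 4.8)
The plan is to reduce both assertions to a single geometric input: a complete intersection of dimension $\ge 3$ in $\PP^r\times\PP^m$ has the smallest possible geometric Picard group. First I would apply the Grothendieck--Lefschetz theorem for Picard groups. Writing $Y=\PP^r\times\PP^m$, the variety $\bar X$ is the scheme-theoretic intersection in $\bar Y$ of finitely many very ample divisors and has dimension $\ge 3$, so restriction of line bundles induces an isomorphism $\Pic\bar Y\to\Pic\bar X$. This map is $\Gal(\bar k/k)$-equivariant, and $\Pic\bar Y$ is the free $\ZZ$-module on the classes $\OO(1,0)$ and $\OO(0,1)$, which are defined over the prime field. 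Hence $\Pic\bar X$ is a free $\ZZ$-module of finite rank with trivial Galois action, and under the above isomorphism the pullback $\pi^*\colon\Pic\bar B=\Pic\PP^m_{\bar k}\to\Pic\bar X$ is identified with the inclusion sending $\OO_{\PP^m}(1)$ to $\OO(0,1)$; in particular it is a split injection of trivial Galois modules.

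Applying $\H^2(k,-)$ to this split injection immediately gives the first assertion, that $\H^2(k,\Pic\bar B)\to\H^2(k,\Pic\bar X)$ is injective. The same description of $\Pic\bar X$ yields two further facts. Since $\Pic\bar X$ is a trivial finitely generated free $\ZZ$-module and $\Gal(\bar k/k)$ is profinite, there are no non-zero continuous homomorphisms to $\ZZ$, so $\H^1(k,\Pic\bar X)=0$. Moreover $\Br\bar X=0$: for a prime $\ell$ the Kummer sequence identifies $\Br\bar X[\ell]$ with $\coker\big(\Pic\bar X/\ell\to\H^2(\bar X,\mmu_\ell)\big)$, and since $\dim\bar X\ge 3$ the Lefschetz hyperplane theorem for the constructible sheaf $\mmu_\ell$ gives an isomorphism $\H^2(\bar Y,\mmu_\ell)\to\H^2(\bar X,\mmu_\ell)$ compatible with cycle class maps; as $\Pic\bar Y/\ell\to\H^2(\bar Y,\mmu_\ell)$ is an isomorphism, so is $\Pic\bar X/\ell\to\H^2(\bar X,\mmu_\ell)$, whence $\Br\bar X[\ell]=0$. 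Since $\Br\bar X$ is torsion, $\Br\bar X=0$; combined with $\H^1(k,\Pic\bar X)=0$ this gives $\Br X=\Br k$, as recalled in the introduction.

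It remains to deduce injectivity of the residue map~\eqref{eq:H1picres}. Here $B=\PP^m$, whose base change $\bar B$ satisfies $\Br\bar B=0$ and $\H^3(\bar B,\Gm)=0$. Together with the injectivity of $\H^2(k,\Pic\bar B)\to\H^2(k,\Pic\bar X)$ just established and the (automatic) surjectivity of $\Br X\to\H^0(k,\Br\bar X)=0$, Lemma~\ref{lem:H3}(2) shows that $\H^3(B,\Gm)\to\H^3(X,\Gm)$ is injective. Combining this with $\Br X=\Br k$, Lemma~\ref{lem:H1R1} gives $\H^1(B,\R^1\pi_*\Gm)=0$. Finally, Lemma~\ref{lem:kernel} identifies the kernel of~\eqref{eq:H1picres} with the image of $\H^1(B,\R^1\pi_*\Gm)[n]=0$, so the residue map is injective, completing the proof.

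The main obstacle is the correct invocation of the Grothendieck--Lefschetz comparison results -- both for Picard groups and, for the vanishing $\Br\bar X=0$, for $\ell$-torsion \'etale cohomology -- in the setting of complete intersections of very ample divisors in a \emph{product} of projective spaces rather than in a single projective space, together with the compatibility of the cohomological comparison with cycle class maps; some care with the dimension bound $\dim\bar X\ge 3$ is also needed, as this is precisely what forces the comparison in degree $2$. Once $\Pic\bar X$ has been pinned down and $\Br\bar X=0$ is known, everything else is a formal passage through the spectral-sequence lemmas already established.
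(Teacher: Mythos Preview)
Your proof is correct and follows essentially the same route as the paper: Lefschetz for Picard groups gives $\Pic\bar X\cong\ZZ^2$ with trivial Galois action and $\pi^*$ the inclusion of a direct summand, whence injectivity on $\H^2$; then $\Br\bar X=0$ and $\Br X=\Br k$ feed into Lemmas~\ref{lem:H3}, \ref{lem:H1R1} and~\ref{lem:kernel} exactly as you describe. The only difference is that the paper obtains $\Br\bar X=0$ and $\Br X=\Br k$ by citing \cite[Prop.~2.6]{damaris}, whereas you give a self-contained argument via the Lefschetz comparison for $\H^2(-,\mmu_\ell)$ and the vanishing $\H^1(k,\Pic\bar X)=0$; your caution about invoking Lefschetz for complete intersections in $\PP^r\times\PP^m$ is well placed, and this is precisely what the cited reference handles.
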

\begin{proof}
The Lefschetz hyperplane theorem for Picard groups gives $\Pic \bar{X} \cong \ZZ \times \ZZ$ with trivial Galois action, with $\pi^* \colon \Pic \bar{\PP}^m \to \Pic \bar{X}$ being the inclusion of one factor; it follows that $\H^2(k, \Pic \bar{\PP}^m) \to \H^2(k, \Pic \bar{X})$ is injective.  
 By \cite[Prop.~2.6]{damaris} and its proof it follows that 
 the natural map $\Br \PP^m \to \Br X$ is an isomorphism and 
$\Br \bar{X}=0$.
Thus Lemmas~\ref{lem:H1R1} and~\ref{lem:H3} show that $\H^1(B, \R^1 \pi_* \Gm)$ vanishes,
and Lemma~\ref{lem:kernel} gives the claimed result.
\end{proof}

\subsection{Application to elements of $\Br X_\eta$}\label{sec:Br}

Until now we have been concentrating on the case of Theorem~\ref{thm:H1} in which there is a non-trivial element of $\H^1(K, \Pic X_{\bar{\eta}})$.  The other case, in which there is a non-trivial element of $\Br X_\eta / \Br K$, is significantly easier, but fits into the same framework.  

To begin with, fix a positive integer $n$, and let $A$ be an $n$-torsion element of $\Br X_\eta$.  The Kummer sequence gives an exact sequence
\[
0 \to \Pic X_\eta / n \to \H^2(X_\eta, \mmu_n) \to \Br X_\eta[n] \to 0
\]
showing that $A$ may be lifted to $\H^2(X_\eta, \mmu_n)$.  Applying the natural map $\H^2(X_\eta, \mmu_n) \to \H^0(K, \H^2(X_{\bar{\eta}}, \mmu_n))$ gives a class $\alpha \in \H^0(K, \H^2(X_{\bar{\eta}}, \mmu_n))$.  
Let $d$ be any point of codimension $1$ in $B$.
It follows easily from the commutative diagram~\eqref{eq:comm1} in Proposition~\ref{prop:relpur}
that the relative residue $\rho_d(\alpha)$ coincides with the image of $A$ under the composition
\[
\Br X \xrightarrow{\partial} \H^1(\ff{X_d}, \QQ/\ZZ) \xrightarrow{\textrm{res}} \H^0(\ff{d},\H^1(\ff{X_{\bar{d}}}, \QQ/\ZZ))
\]
where $\partial$ is the usual residue map for the Brauer group associated to the codimension-1 point $X_d \in X^{(1)}$, as defined by Grothendieck, and $\textrm{res}$ is the restriction map in Galois cohomology.
(Here we consider $\H^1(X_{\bar{d}}^\ns,\Zn)$ as a subgroup of $\H^1(\ff{X_{\bar{d}}},\QQ/\ZZ)$ in the natural way.)  In particular, $\rho_d(\alpha)$ does not depend on how we lift $A$ to $\H^2(X,\mmu_n)$.

\begin{proposition}\label{prop:Br}
Suppose the conditions of Theorem~\ref{thm:H1} hold and that $\Pic X_{\bar{\eta}}$ is torsion-free.  Let $A$ lie in $\Br X_\eta[n]$ and   let $\alpha$ be constructed as above.  Then we have $\rho_d(\alpha)=0$ for all $d \in B^{(1)}$ if and only if $A$ lies in the image of $\Br K \to \Br X_\eta$.
\end{proposition}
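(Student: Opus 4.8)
The plan is to treat the two implications separately. I will use throughout that, under the hypotheses of Theorem~\ref{thm:H1} together with torsion-freeness of $\Pic X_{\bar{\eta}}$, the morphism $\pi\colon X\to B=\PP^n_k$ satisfies Condition~\ref{cond}, so the results of this section apply; in particular $\R^1\pi_*\mmu_n=0$ and $\R^1(\pi_\eta)_*\mmu_n=0$ by Lemma~\ref{lem:R1zero}. For the implication $(\Leftarrow)$, write $A=\pi_\eta^*A_0$ with $A_0\in\Br K$. For each $d\in B^{(1)}$ the fibre $X_d$ is geometrically integral, hence reduced, so the image in $\OO_{X,X_d}$ of a uniformiser of the discrete valuation ring $\OO_{B,d}$ is again a uniformiser; functoriality of the residue map for the Brauer group then gives $\partial_{X_d}(A)=\mathrm{inf}(\partial_d(A_0))$, the inflation along $\ff{d}\hookrightarrow\ff{X_d}$ of $\partial_d(A_0)\in\H^1(\ff{d},\QZ)$. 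Since $\rho_d(\alpha)$ is obtained from $\partial_{X_d}(A)$ by restriction to the geometric fibre (diagram~\eqref{eq:comm1}, as recalled before the statement), and since the restriction to the geometric fibre of a class inflated from $\ff{d}$ is inflated from $\H^1(\ff{\bar d},\QZ)=0$ (as $\ff{\bar d}$ is separably closed), we get $\rho_d(\alpha)=0$ for all $d$.

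For the implication $(\Rightarrow)$, suppose $\rho_d(\alpha)=0$ for all $d\in B^{(1)}$. By Proposition~\ref{prop:R2seq} there is a unique $\beta\in\H^0(B,\R^2\pi_*\mmu_n)$ whose restriction to $\eta$ is $\alpha$. The first — and main — task is to lift $\beta$ to a class $\gamma\in\H^2(X,\mmu_n)$. Because $\R^1\pi_*\mmu_n=0$, the only obstruction to surjectivity of the edge map $\H^2(X,\mmu_n)\to\H^0(B,\R^2\pi_*\mmu_n)$ in the Leray spectral sequence for $\pi$ is the differential $d_3\colon\H^0(B,\R^2\pi_*\mmu_n)\to\H^3(B,\mmu_n)$, whose image is exactly $\ker\big(\pi^*\colon\H^3(B,\mmu_n)\to\H^3(X,\mmu_n)\big)$. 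Now the hypotheses of Theorem~\ref{thm:H1} give $\Br X=\Br k=\Br\PP^n_k$ (Conditions~\ref{hyp5}, \ref{hyp7}) and injectivity of $\H^3(B,\Gm)\to\H^3(X,\Gm)$ (Lemma~\ref{lem:H3}, via Conditions~\ref{hyp6}, \ref{hyp5}); comparing the Kummer sequences for $\Gm$ on $B$ and on $X$ then forces $\pi^*\colon\H^3(B,\mmu_n)\to\H^3(X,\mmu_n)$ to be injective. Hence $d_3=0$, the edge map is surjective, and $\beta$ is the edge image of some $\gamma\in\H^2(X,\mmu_n)$.

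To conclude $(\Rightarrow)$, observe that $\gamma|_{X_\eta}$ and the chosen lift of $A$ to $\H^2(X_\eta,\mmu_n)$ both map to $\alpha$ in $\H^0(K,\H^2(X_{\bar{\eta}},\mmu_n))$; as $\R^1(\pi_\eta)_*\mmu_n=0$, the Leray spectral sequence for $X_\eta\to\Spec K$ shows their difference lies in the image of $\H^2(K,\mmu_n)=\Br K[n]$. Passing to the $n$-torsion of the Brauer groups via the Kummer sequences, and using that — since $\Br X=\Br k$ — the image of $\gamma$ in $\Br X$ restricts to $X_\eta$ through the pullback $\Br K\to\Br X_\eta$, one obtains that $A$ lies in the image of $\pi_\eta^*\colon\Br K\to\Br X_\eta$, as required.

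The main obstacle is the lifting step in $(\Rightarrow)$: a global section of $\R^2\pi_*\mmu_n$ need not extend to a class in $\H^2(X,\mmu_n)$, because the differential $d_3$ can be nonzero — its image can meet the summand of $\H^3(\PP^n_k,\mmu_n)$ built from $\H^1(k,\ZZ/n)$ and the hyperplane class, which vanishes on restriction to $K$ and so is invisible generically. It is precisely Conditions~\ref{hyp5} and~\ref{hyp6} — the same ones needed to run the $\H^1(K,\Pic X_{\bar{\eta}})$ argument — that exclude this, via Lemma~\ref{lem:H3}.
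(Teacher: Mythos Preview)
Your proof is correct and follows essentially the same approach as the paper. Both arguments hinge on the same key step: using the hypotheses (via Lemma~\ref{lem:H3} and $\Br X=\Br k$) to prove that $\H^3(B,\mmu_n)\to\H^3(X,\mmu_n)$ is injective, hence that the Leray edge map $\H^2(X,\mmu_n)\to\H^0(B,\R^2\pi_*\mmu_n)$ is surjective, and then comparing classes in $\H^2(X_\eta,\mmu_n)$ modulo $\H^2(K,\mmu_n)$. The only cosmetic difference is that you treat the two implications separately, handling $(\Leftarrow)$ by a direct residue computation (inflation--restriction kills classes coming from the base), whereas the paper obtains both directions at once from a single diagram chase; the substance is the same.
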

\begin{proof}
Let us first prove that $\H^3(B,\mmu_n) \to \H^3(X,\mmu_n)$ is injective.  The Kummer sequence gives a commutative diagram as follows.
\[
\begin{CD}
0 @>>> \Br B/n @>>> \H^3(B,\mmu_n) @>>> \H^3(B,\Gm)[n] @>>> 0 \\
@. @VVV @VVV @VVV @. \\
0 @>>> \Br X/n @>>> \H^3(X,\mmu_n) @>>> \H^3(X,\Gm)[n] @>>> 0. \\
\end{CD}
\]
The assumptions $\Br \bar{X}=0$ and $\H^1(k,\Pic\bar{X})=0$ together imply
$\Br k = \Br B = \Br X$, so the left-hand vertical map is an isomorphism.
By Lemma~\ref{lem:H3}, the right-hand vertical map is injective, and therefore the middle one is also injective, as claimed

Now consider the commutative diagram
\[
\begin{CD}
@. \H^2(K,\mmu_n) \\
@. @VVV \\
\H^2(X,\mmu_n) @>>> \H^2(X_\eta, \mmu_n) \\
@V{f}VV @V{g}VV \\
\H^0(B,\R^2 \pi_* \mmu_n) @>>> \H^0(K, \H^2(X_{\bar{\eta}}, \mmu_n)) @>{\oplus \rho_d}>>
\bigoplus_{d \in B^{(1)}} \H^0(\ff{d}, \H^1(X_{\bar{d}}^\sm, \Zn))
\end{CD}
\]
in which the bottom row is the exact sequence of Proposition~\ref{prop:R2seq}, and the middle column is exact by the Leray spectral sequence and $\Pic X_{\bar{\eta}}[n]=0$.
Again using the Leray spectral sequence, the homomorphism $f$ fits into an exact sequence
\[
\H^2(X,\mmu_n) \xrightarrow{f} \H^0(B, \R^2 \pi_* \mmu_n) \to \H^3(B,\mmu_n) \to \H^3(X,\mmu_n)
\]
showing that $f$ is surjective.

Let $A$ be an $n$-torsion element in $\Br X_\eta$ and lift $A$ to a class $\beta \in \H^2(X_\eta,\mmu_n)$, so that $\alpha = g(\beta)$.  An easy diagram-chase now shows that $\rho_d(\alpha)=0$ holds for all $d$ if and only if $\beta$ lies in the subgroup of $\H^2(X_\eta,\mmu_n)$ generated by $\H^2(X,\mmu_n)$ and $\H^2(K,\mmu_n)$.  Using the Kummer sequence and $\Br X = \Br k$, we see that this holds if and only if $A$ lies in the image of $\Br K[n] \to \Br X_\eta[n]$, and in particular in the image of $\Br K$. 
\end{proof}

\subsection{Ramified elements usually obstruct weak approximation}\label{s:bull-rider}

The main aim of this section is to establish Proposition \ref{prop:H2}.
Throughout this section, we fix the following notation.  Let $k$ be a number field, and let $\pi \colon X \to B$ be a morphism of smooth varieties over $k$ satisfying Condition~\ref{cond}.  Let $U \subset B$ be a non-empty open subset over which $\pi$ is smooth and proper.  Fix an integer $n>1$.  Our aim is to show that a ramified element $\alpha$ of $\H^0(K, \H^2(X_{\bar{\eta}}, \mmu_n))$ obstructs weak approximation on ``most'' fibres $X_P$ for $P \in U(k)$.  Note that $\alpha$ could come from an $n$-torsion element in $\H^1(K, \Pic X_{\bar{\eta}})$, as described above, or equally well from an element of $\Br X_\eta[n]$.

We begin by showing how to specialise an element of $\H^0(K, \H^2(X_{\bar{\eta}}, \mmu_n))$ at a point $P \in U(k)$.

\begin{lemma}
The natural map $\H^0(U, \R^2 \pi_* \mmu_n) \to \H^0(K, \H^2(X_{\bar{\eta}}, \mmu_n))$ is an isomorphism.
\end{lemma}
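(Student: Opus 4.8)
The plan is to combine smooth and proper base change over $U$ with the standard fact that a locally constant sheaf on a normal integral scheme is recovered from its generic stalk, viewed as a module over the étale fundamental group. So first I would reduce to a statement about the restricted sheaf. Write $j \colon U \to B$ for the open immersion, $\pi_U \colon X_U := X \times_B U \to U$ for the restriction of $\pi$; étale (flat) base change gives a canonical isomorphism $j^* \R^2 \pi_* \mmu_n \cong \R^2 (\pi_U)_* \mmu_n$, so $\H^0(U, \R^2 \pi_* \mmu_n) = \H^0\bigl(U, \R^2 (\pi_U)_* \mmu_n\bigr)$. Since $k$ has characteristic zero, $\mmu_n$ is a locally constant constructible sheaf, and $\pi_U$ is smooth and proper; hence the smooth and proper base change theorem shows that $\mathcal F := \R^2 (\pi_U)_* \mmu_n$ is locally constant constructible on $U$. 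Because $B$ is regular and integral, $U$ is regular and integral, hence normal and connected; let $\eta$ be its generic point, which is also the generic point of $B$, so $K = \kappa(\eta)$, and fix the geometric point $\bar{\eta}$ above $\eta$.

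Next I would identify the target group. As $\pi$ is quasi-compact and quasi-separated, one has $(\R^2 \pi_* \mmu_n)_{\bar{\eta}} = \H^2(X_{\bar{\eta}}, \mmu_n)$, so $\eta^* \mathcal F$ is the sheaf on $\Spec K$ attached to the $\Gal(\bar{K}/K)$-module $\H^2(X_{\bar{\eta}}, \mmu_n) = \mathcal F_{\bar{\eta}}$, whence $\H^0(K, \H^2(X_{\bar{\eta}}, \mmu_n)) = \H^0(\Spec K, \eta^* \mathcal F) = \mathcal F_{\bar{\eta}}^{\Gal(\bar{K}/K)}$. Under these identifications the map in the statement is exactly the restriction map $\H^0(U, \mathcal F) \to \H^0(\Spec K, \eta^* \mathcal F)$ induced by $\eta \to U$.

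Finally comes the heart of the argument. For the locally constant constructible sheaf $\mathcal F$ on the connected scheme $U$ one has $\H^0(U, \mathcal F) = \mathcal F_{\bar{\eta}}^{\pi_1(U, \bar{\eta})}$, where $\mathcal F_{\bar{\eta}}$ carries its canonical continuous action of the étale fundamental group. Normality of $U$ ensures that the canonical homomorphism $\Gal(\bar{K}/K) = \pi_1(\eta, \bar{\eta}) \to \pi_1(U, \bar{\eta})$ is surjective (a connected finite étale cover of a normal integral scheme has irreducible generic fibre), and the $\Gal(\bar{K}/K)$-action on $\mathcal F_{\bar{\eta}}$ is the inflation of the $\pi_1(U, \bar{\eta})$-action along this map; therefore $\mathcal F_{\bar{\eta}}^{\Gal(\bar{K}/K)} = \mathcal F_{\bar{\eta}}^{\pi_1(U, \bar{\eta})}$. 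Chaining the three steps, the restriction map is the identity on $\mathcal F_{\bar{\eta}}^{\pi_1(U,\bar{\eta})} = \mathcal F_{\bar{\eta}}^{\Gal(\bar{K}/K)}$, hence an isomorphism. The one non-formal input is the local constancy of $\R^2 (\pi_U)_* \mmu_n$, i.e. the appeal to smooth and proper base change, and so the point to be careful about is that $\pi$ is genuinely smooth \emph{and} proper over $U$; everything else is formal manipulation of fundamental groups of normal schemes.
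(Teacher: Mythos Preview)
Your proof is correct, but it follows a genuinely different route from the paper's own argument. The paper applies its purity exact sequence (Proposition~\ref{prop:R2seq}) with $U$ in place of $B$: for each codimension-$1$ point $d \in U^{(1)}$ the fibre $X_d$ is smooth (since $\pi$ is smooth over $U$), and Lemma~\ref{lem:R1zero} combined with proper base change gives $\H^1(X_{\bar d},\mmu_n)=0$, hence $\H^1(X^\sm_{\bar d},\Zn)=0$. Thus every residue term in the exact sequence of Proposition~\ref{prop:R2seq} vanishes and the map in question is an isomorphism.

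Your argument instead bypasses the residue machinery entirely: smooth--proper base change makes $\R^2(\pi_U)_*\mmu_n$ locally constant, and then the standard equivalence between locally constant sheaves on a connected normal scheme and $\pi_1$-representations, together with the surjection $\Gal(\bar K/K)\twoheadrightarrow \pi_1(U,\bar\eta)$, immediately identifies global sections with Galois invariants at the generic point. This is more self-contained and arguably more conceptual, since it isolates exactly why the result holds (local constancy over $U$). The paper's version is economical in context, since Proposition~\ref{prop:R2seq} is already available and the argument keeps everything within the residue framework developed in \S\ref{sec:generic}. Both inputs---vanishing of $\H^1$ on the geometric fibres and smooth--proper base change---are ultimately the same; you package them as ``$\mathcal F$ is lcc'', whereas the paper packages them as ``all $\rho_d$ vanish''.
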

\begin{proof}
Let $D$ be any prime divisor on $U$, with generic point $d$.  Then the fibre $X_d$ is smooth, and we have $\H^1(X_{\bar{d}},\mmu_n)=0$ by Lemma~\ref{lem:R1zero} and proper base change.  The sheaves $\Zn$ and $\mmu_n$ are isomorphic on $X_{\bar{d}}$, giving $\H^1(X_{\bar{d}},\Zn)=0$.  Thus Proposition~\ref{prop:R2seq} gives the claimed result.
\end{proof}

Thus any element $\alpha \in \H^0(K, \H^2(X_{\bar{\eta}}, \mmu_n))$ extends to $U$.  If $P \in U(k)$ is any rational point, then $\alpha$ can be specialised to an element of $\H^0(k, \H^2(\bar{X}_P, \mmu_n))$.  The following lemma shows that, under suitable hypotheses, we can then lift to $\H^2(X_P, \mmu_n)$.

\begin{lemma}\label{lem:XPsurj}
Let $P$ be a point of $U(k)$, and let $\pi_P \colon X_P \to P$ be the base change of $\pi$.  If $n$ is even, assume that the fibre $X_P$ has points in every real completion of $k$.  Let $\bar{X}_P$ denote the base change of $X_P$ to $\kbar$.  Then the homomorphism $\H^2(X_P, \mmu_n) \to \H^0(k, \H^2(\bar{X}_P,\mmu_n))$ is surjective, with kernel $\Br k[n]$.
\end{lemma}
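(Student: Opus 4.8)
The plan is to run the Hochschild--Serre spectral sequence $E_2^{p,q} = \H^p(k, \H^q(\bar{X}_P, \mmu_n)) \Rightarrow \H^{p+q}(X_P, \mmu_n)$ for the variety $X_P$ — which is smooth, proper and geometrically connected over $k$, since $P \in U(k)$ and $\pi$ is smooth and proper over $U$ — and to read off both assertions from its low-degree terms. The essential input is that the row $q=1$ vanishes: by Lemma~\ref{lem:R1zero} we have $\R^1 \pi_* \mmu_n = 0$, so proper--smooth base change over $U$ gives $\H^1(\bar{X}_P, \mmu_n) = (\R^1 \pi_* \mmu_n)_{\bar{P}} = 0$ (equivalently, $\Pic \bar{X}_P$ is torsion-free), whence $E_2^{p,1} = 0$ for all $p$.

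Granting this, I would track the filtration on $\H^2(X_P, \mmu_n)$. The vanishing of the row $q=1$ kills every $d_2$ differential with source or target there, so $E_\infty^{2,0} = E_2^{2,0} = \H^2(k, \mmu_n)$, $E_\infty^{1,1} = 0$, $E_3^{3,0} = E_2^{3,0} = \H^3(k, \mmu_n)$, and $E_\infty^{0,2} = \ker\big(d_3 \colon \H^0(k, \H^2(\bar{X}_P, \mmu_n)) \to \H^3(k, \mmu_n)\big)$. The resulting filtration of $\H^2(X_P, \mmu_n)$ therefore yields an exact sequence
\[
0 \to \H^2(k, \mmu_n) \to \H^2(X_P, \mmu_n) \to \H^0(k, \H^2(\bar{X}_P, \mmu_n)) \xrightarrow{d_3} \H^3(k, \mmu_n),
\]
in which the middle map is the one in the statement. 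To identify its kernel with $\Br k[n]$ I would invoke the Kummer sequence on $\Spec k$: since $\Pic(\Spec k) = 0$, it collapses to the isomorphism $\H^2(k, \mmu_n) \cong \Br(k)[n]$.

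It remains to prove that $d_3 = 0$, which is the only place the hypothesis on real points enters and is the crux. If $n$ is odd this is immediate, since $\H^3(k, \mmu_n) = 0$: the restriction map $\H^3(k, \mmu_n) \to \bigoplus_{v \text{ real}} \H^3(k_v, \mmu_n)$ is injective, and each summand vanishes because the absolute Galois group of $k_v$ has order prime to $n$. If $n$ is even, I would base change along $\Spec k_v \to \Spec k$ for each real place $v$: a point of $X_P(k_v)$ splits the structure morphism $X_{P,k_v} \to \Spec k_v$, hence splits the spectral sequence over $k_v$, forcing the local $d_3$ to vanish. Since the restriction map to $\bigoplus_{v \text{ real}} \H^3(k_v, \mmu_n)$ is injective and compatible with $d_3$, this gives $d_3 = 0$ globally, hence the surjectivity. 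The main obstacle is precisely this last step — organising the local splittings and quoting the standard class-field-theoretic fact that $\H^3$ of a number field injects into the product of the $\H^3$'s of its real completions; everything else is routine spectral-sequence bookkeeping together with the vanishing $\H^1(\bar{X}_P, \mmu_n) = 0$ supplied by Lemma~\ref{lem:R1zero}.
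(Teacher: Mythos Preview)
Your argument is correct and is essentially the paper's own proof: both run Hochschild--Serre with the vanishing $\H^1(\bar{X}_P,\mmu_n)=0$ from Lemma~\ref{lem:R1zero}, identify the kernel via Kummer, and kill the obstruction in $\H^3(k,\mmu_n)$ by restricting to the real places and using a $k_v$-point to split the map there. The only cosmetic difference is that the paper extends the exact sequence one term further to $\H^3(X_P,\mmu_n)$ and phrases the last step as injectivity of $\H^3(k,\mmu_n)\to\H^3(X_P,\mmu_n)$, whereas you phrase it as $d_3=0$; these are equivalent, and your separate handling of the odd-$n$ case is a harmless extra observation.
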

\begin{proof}
By Lemma~\ref{lem:R1zero} and proper base change, we have $\H^1(\bar{X}_P, \mmu_n)=0$.  The Hochschild--Serre spectral sequence gives an exact sequence 
\[
0 \to \H^2(k, \mmu_n) \to \H^2(X_P, \mmu_n) \to \H^0(k, \H^2(\bar{X}_P, \mmu_n)) \to \H^3(k, \mmu_n) \to \H^3(X_P, \mmu_n).
\]
The Kummer sequence gives $\H^2(k,\mmu_n) \cong \Br k[n]$ (since $\H^1(k,\Gm)$ vanishes) and so the result will be established if we can show that $\H^3(k,\mmu_n) \to \H^3(X_P,\mmu_n)$ is injective.
But the natural map $\H^3(k,\mmu_n) \to \prod_{v\text{ real}} \H^3(k_v, \mmu_n)$ is an isomorphism (see, for example, \cite[Theorem~4.10(c)]{milneadt}).  So evaluating at a point in $X(k_v)$ for all real places $v$ gives a left inverse to $\H^3(k,\mmu_n) \to \H^3(X,\mmu_n)$, which is therefore injective.
\end{proof}

Under the conditions of Lemma~\ref{lem:XPsurj}, we can specialise a class $\alpha$ belonging to $\H^0(U, \R^2 \pi_* \mmu_n)$ to an element of $\Br X_P[n] $ using the sequence of homomorphisms
\[
\H^0(U, \R^2 \pi_* \mmu_n) \to \H^0(k, \H^2(\bar{X}_P, \mmu_n)) 
\leftarrow 
\H^2(X_P, \mmu_n) \to \Br X_P[n].
\]
The resulting class, which we will denote $\spec_P(\alpha)$, is determined only up to $\Br k[n]$.  (In fact the condition that $X_P$ be soluble at the real places is only for convenience:  in general, we can make the construction work but at the expense of possibly increasing $n$.  Since we are only interested in everywhere locally soluble fibres, we do not mind imposing this condition.)  It is straightforward to check that, if $\alpha$ comes from a class in $\H^1(K, \Pic X_{\bar{\eta}})$ as described in \S\ref{s:cow-herder}, then $\spec_P(\alpha)$ is the same as the class obtained by first specialising to $\H^1(k, \Pic \bar{X}_P)$ and then lifting to $\Br X_P / \Br k$.

The final ingredient needed in the proof of Proposition \ref{prop:H2} is a lemma concerning the existence of rational points on a family of torsors defined over $\fo$.

\begin{lemma}\label{lem:points}
Let $k$ be a number field, $\fo$ the ring of integers of $k$ and let $\pi \colon Z \to S$ be a dominant morphism of normal, integral, flat, separated $\fo$-schemes of finite type.  Denote by $\eta$ the generic point of $S$ and $\bar{\eta}$ a geometric point lying over $\eta$.  Suppose that $Z_{\bar{\eta}}$ is integral.  Let $\gamma \in \H^0(S, \R^1 \pi_* \Zn)$ be a class, and suppose that the restriction of $\gamma$ to $\H^1(Z_{\bar{\eta}}, \Zn)$ has order $n$.  For a geometric point $\bar{s} \in S$, let $Y(\bar{s})$ denote the torsor over $Z_{\bar{s}}$ defined by the restriction of $\gamma$ to $\H^1(Z_{\bar{s}}, \Zn)$.
Then there is are dense open subsets $S' \subset S$ and $U \subset \Spec \fo$ such that, 
for every $\fp \in U$ and any $\bar{s} \in S'(\overline{\FF}_\fp)$,
any $\FF_\fp$-variety geometrically isomorphic to $Y(\bar{s})$ has a $\FF_\fp$-rational point.
\end{lemma}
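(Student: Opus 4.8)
The plan is to exhibit the torsors $Y(\bar s)$ as the geometric fibres of a single morphism defined over an \'etale cover of a dense open of $S$, to which Lemma~\ref{lem:lw} can be applied. First I would make some harmless reductions. Since the conclusion only concerns $\fp\in U$, we may replace $\Spec\fo$ by $\Spec\fo[1/n]$ and so assume $n$ is invertible on $S$. As $S$ is integral and flat over $\fo$, its generic point lies over the generic point of $\Spec\fo$, so $K:=K(S)$ has characteristic $0$. Shrinking $S$, we may assume that $\pi$ is flat with geometrically integral fibres, so that in particular $\pi_*\Zn=\Zn$; moreover each $\pi^{-1}(V)\subset Z$ is normal for $V\subset S$ open, so $Z_\eta=\varprojlim_V \pi^{-1}(V)$ is normal, and since $K$ is perfect it is geometrically normal. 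Hence $Z_{\bar\eta}$ is a normal integral scheme.

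The key step is to lift $\gamma$ to an honest torsor after a finite base change. The Leray spectral sequence for $Z_\eta\to\eta$ gives an exact sequence
\[
\H^1(Z_\eta,\Zn)\to \H^0(K,\H^1(Z_{\bar\eta},\Zn))\xrightarrow{d_2}\H^2(K,\Zn)
\]
(using $\pi_*\Zn=\Zn$), so the obstruction to lifting $\gamma|_\eta$ to $\H^1(Z_\eta,\Zn)$ is the class $d_2(\gamma|_\eta)\in\H^2(K,\Zn)$. Enlarging $K$ to a finite extension $L$ containing $\mu_n$ identifies $\Zn$ with $\mu_n$ and $\H^2(L,\Zn)$ with $\Br(L)[n]$; a further finite extension then kills this class, since any Brauer class is split by a finite extension. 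Fixing such an $L$, the class $\gamma|_{\Spec L}$ lifts to a $\Zn$-torsor $Y_0\to Z_{\Spec L}$ whose image in $\H^1(Z_{\bar\eta},\Zn)$ is the given class of order $n$. Because $Z_{\bar\eta}$ is normal and integral and this class has order $n$, the geometric fibre of $Y_0$ over $\bar\eta$ is a connected finite \'etale cover of $Z_{\bar\eta}$, hence normal and connected, hence integral; so the generic fibre of $Y_0\to\Spec L$ is geometrically integral.

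Now I would spread $Y_0$ out. Let $S_1$ be a dense open of the normalisation of $S$ in $L$; after shrinking $S_1$ we may assume that $S_1\to S$ is finite \'etale onto a dense open of $S$ and, by the usual limit arguments for finite \'etale morphisms, that $Y_0$ extends to a $\Zn$-torsor $Y_1\to Z_{S_1}$ over the full fibre product $Z_{S_1}=Z\times_S S_1$. After further shrinking $S_1$ we may assume in addition that $Z_{S_1}$ and $Y_1$ are normal and integral with geometrically integral fibres over $S_1$, and that the image of $[Y_1]$ in $\H^0(S_1,\R^1\pi'_*\Zn)$ (where $\pi'\colon Z_{S_1}\to S_1$ is the base change of $\pi$) coincides with the pullback of $\gamma$ — two sections of a sheaf agreeing at the generic point agree on a dense open. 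Then the composite $Y_1\to S_1$ is a dominant morphism of integral, separated, finite-type $\fo$-schemes whose generic fibre is geometrically integral, hence split, so Lemma~\ref{lem:lw} produces a non-empty open $U\subset\Spec\fo$ and a dense open $\sV\subset S_1$ such that, for all $\fp\in U$ and all $\bar s_1\in\sV(\overline{\FF}_\fp)$, every $\FF_\fp$-variety geometrically isomorphic to the fibre $(Y_1)_{\bar s_1}$ has an $\FF_\fp$-point.

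To conclude, put $S'=$ the image of $\sV$ in $S$, which is dense open because $S_1\to S$ is finite flat, hence open. Given $\fp\in U$ and $\bar s\in S'(\overline{\FF}_\fp)$, lift $\bar s$ to $\bar s_1\in\sV(\overline{\FF}_\fp)$, using that $\overline{\FF}_\fp$ is algebraically closed and $\sV\to S'$ is finite \'etale surjective. Then $Z_{\bar s_1}=Z_{\bar s}$, and the fibre $(Y_1)_{\bar s_1}$ is the $\Zn$-torsor over $Z_{\bar s}$ classified by the restriction of $\gamma$, which is precisely $Y(\bar s)$; hence every $\FF_\fp$-variety geometrically isomorphic to $Y(\bar s)$ has an $\FF_\fp$-point, as required. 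The main obstacle is the second step: one must descend the torsor from $Z_{\bar\eta}$ to $Z_{\Spec L}$ for a suitable finite $L/K$ while keeping its class on the geometric generic fibre equal to that determined by $\gamma$ — this is exactly where the hypothesis that this class has order $n$ enters, as it guarantees that the cover is connected and therefore (thanks to the characteristic-zero normality of $Z_{\bar\eta}$) geometrically integral, supplying the ``split generic fibre'' needed to invoke Lemma~\ref{lem:lw}.
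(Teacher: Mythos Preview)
Your proof is correct and follows essentially the same strategy as the paper: pass to an \'etale cover of $S$ on which $\gamma$ lifts to an actual $\Zn$-torsor, show that this torsor has geometrically integral generic fibre over $S$ using normality of $Z_{\bar\eta}$ together with the order-$n$ hypothesis, and then apply Lemma~\ref{lem:lw}. The only difference is in how the \'etale cover is produced: the paper obtains it directly from the definition of $\R^1\pi_*\Zn$ as a sheafification (so any section of $\R^1\pi_*\Zn$ lifts to $\H^1(Z_T,\Zn)$ for some \'etale $T\to S$), whereas you identify the obstruction to lifting at the generic point via the Leray spectral sequence, kill it by a finite extension $L/K$, and then spread out---a slightly longer but equally valid route.
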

\begin{proof}
By the standard description of the stalks of $\R^1 \pi_* \Zn$, there is an \'etale morphism $T \to S$ such that the restriction of $\gamma$ to $\H^0(T, \R^1 \pi_* \Zn)$ lies in the image of $\H^1(Z_T, \Zn)$.  Replacing $T$ by a connected component, we may assume that $T$ is integral.  The image of $T \to S$ is a dense open subset, and any geometric point $\bar{s}$ lying in the image of $T \to S$ factors through $T$ (and this applies in particular to $\bar{\eta}$), so we may replace $S$ by $T$ and assume that $\gamma$ lifts to $\gamma' \in \H^1(Z, \Zn)$.

Let $f \colon Y \to Z$ be a torsor representing $\gamma'$.  For any geometric point $\bar{s} \in S$, the fibre $Y_{\bar{s}}$ is isomorphic to the torsor $Y(\bar{s})$ in the statement of the lemma.  We claim that the geometric generic fibre $Y_{\bar{\eta}}$ is integral.  The torsor $Y_{\bar{\eta}} \to Z_{\bar{\eta}}$ represents the image of $\gamma$ in $\H^1(Z_{\bar{\eta}}, \Zn)$, which by assumption has order $n$.
Because $Z$ is normal, it follows from Proposition~10.1 of~\cite[Expos\'e~1]{SGA1} that the natural map $\H^1(Z_{\bar{\eta}},\Zn) \to \H^1(\ff{Z_{\bar{\eta}}}, \Zn)$ is injective, and so the \'etale algebra $\ff{Y_{\bar{\eta}}}$ is a torsor over $\ff{Z_{\bar{\eta}}}$ represented by a cohomology class of order $n$, which by Kummer theory is a field.  Again by Proposition~10.1 of~\cite[Expos\'e~1]{SGA1} we see that $Y_{\bar{\eta}}$ is integral.  Now applying Lemma~\ref{lem:lw} to $Y \to S$ completes the proof.
\end{proof}

We are now in a position to prove the main proposition recorded in \S \ref{s:cow-herder}.

\begin{proof}[Proof of Proposition \ref{prop:H2}]
We continue to  denote the morphism $\sX \to \sB$ also by $\pi$.
To prove the proposition, we may replace $\sB$ with any open subset containing both $U$ and $d$.  In particular, we may (and do) assume that $n$ is invertible on $\sB$; that $\sB$ is flat over $\fo$ and regular; that $\sD$ is regular; and that $\pi$ satisfies Condition~\ref{cond}.  Applying Proposition~\ref{prop:R2seq}, we further shrink $\sB$ to ensure that $\alpha$ lies in $\H^0(\sB \setminus \sD, \R^2 \pi_* \mmu_n)$.  Finally, we replace $\sX$ by the open subscheme on which $\pi$ is smooth, so that (by diagram~\eqref{eq:dD}) the relative residue $\rho_d(\alpha)$ extends to $\rho_\sD(\alpha) \in \H^0(\sD, \R^1 (\pi_\sD)_* \Zn)$.  

Since $X_d$ is geometrically integral, there is a dense open subset $\sV \subset \sD$ such that the geometric fibres of $\pi$ over $\sV$ are integral.
If $s \in \sV$ is a closed point, and $\bar{s}$ a geometric point lying over $s$, then $\rho_\sD(\alpha)$ restricts to an element of $\H^0(s, \H^1(\sX_{\bar{s}},\Zn))$.  Because $\sX_{\bar{s}}$ is geometrically integral, we have $\H^0(\sX_{\bar{s}},\Zn) = \Zn$.  The Hochschild--Serre spectral sequence gives an exact sequence
\[
0 \to \H^1(s, \Zn) \to \H^1(\sX_s, \Zn) \to \H^0(s, \H^1(\sX_{\bar{s}}, \Zn)) \to \H^2(s, \Zn).
\]
The residue field $\ff{s}$ is finite, so we have $\H^2(s, \Zn)=0$.  Hence the restriction of $\rho_\sD(\alpha)$ is represented by a torsor $\sY(s) \to \sX_s$, where the choice of $\sY(s)$ is unique only up to twisting by an element of $\H^1(s,\Zn)$.  
It follows from Lemma~\ref{lem:points} that we can shrink $\sV$ to ensure that, for every closed point $s$ of $\sV$, every $\ff{s}$-twist of the variety $\sY(s)$ has a $\ff{s}$-rational point.

Now let us show that $\sV$ has the property claimed in the proposition.  Let $P$ be a point of $U(k)$, and suppose that its Zariski closure $\sP$ meets $\sV$ transversely at a closed point $s$.  Let $\p$ be the prime ideal of $\fo$ over which $s$ lies.  The argument that follows is local at $s$, so we replace $\sB$ by the local scheme $\Spec \OO_{\sB,s}$, on which $\sV=\sD$ is a regular divisor.
Let $k_\p$ be the completion of $k$ at $\p$ and $\fo_\p$ the completion of $\fo$.  Let $\hat{X}_P$ denote the base change of $X_P$ to $k_\p$ and let $\hat{\sX}_\sP$ denote the base change of $\sX_\sP$ to $\fo_\p$.
We have a commutative diagram with exact rows as follows.
\[
\begin{CD}
\H^0(\sB, \R^2 \pi_* \mmu_n) @>>> \H^0(\sB \setminus \sD, \R^2 \pi_* \mmu_n) @>{\rho_\sD}>> \H^0(\sD, \R^1 (\pi_\sD)_* \Zn) \\
@VVV @VVV @VVV \\
\H^0(\sP, \R^2 (\pi_\sP)_* \mmu_n) @>>> \H^0(k, \H^2(\bar{X}_P, \mmu_n)) @>{\rho_s}>> \H^0(\ff{s}, \H^1(\sX_{\bar{s}}, \Zn)) \\
@AAA @AAA @AAA \\
\H^2(\sX_\sP, \mmu_n) @>>> \H^2(X_P, \mmu_n) @>>> \H^1(\sX_s, \Zn) \\
@VVV @VVV @| \\
\Br(\sX_\sP)[n] @>>> \Br (X_P)[n] @>>> \H^1(\sX_s, \Zn) \\
@VVV @VVV @| \\
\Br(\hat{\sX}_\sP)[n] @>>> \Br (\hat{X}_P)[n] @>{\partial_{\sX_s}}>> \H^1(\sX_s, \Zn).
\end{CD}
\]
In this diagram, the top row comes from Proposition~\ref{prop:relpur}(\ref{relpur1}) applied to $\sD \subset \sB$; the next two rows come from Proposition~\ref{prop:relpur}(\ref{relpur1}) applied to $s \subset \sP$; and the bottom two rows come from the purity theorem from the Brauer group on $\sX_\sP$ and $\hat{\sX}_\sP$ respectively (see, for example, \cite[Corollary~2.5]{bright'}).
The class $\alpha$ lies in the middle top group, and the class $\spec_P(\alpha)$ (defined modulo $\Br k[n]$) lies in $\Br(X_P)[n]$.  Denote by $\hat{\alpha}$ the image of $\spec_P(\alpha)$ in $\Br(\hat{X}_P)[n]$.  Because the diagram commutes, the class $\partial(\hat{\alpha})$ is the class of our torsor $\sY(s)$, again only defined modulo twists.

Lemma~5.12 of~\cite{bright'}, in our notation, states the following.  To every class $r \in \Br k_\p[n]$ we may associate a certain twist of $\sY(s)$, which we will denote $\sY(s)^{[r]}$.  Then $r$ lies in the image of the evaluation map $X_P(k_\p) \to \Br k_\p[n]$ coming from $\hat{\alpha}$ if and only if $\sY(s)^{[r]}$ has a $\ff{s}$-rational point.  By construction of $\sV$, this is true for all $r \in \Br k_\p[n]$, so the evaluation map is surjective.
\end{proof}

\section{An application of the large sieve}\label{s:sieve}

This section contains the analytic part of the proof of Theorem \ref{thm:H1}, an overview of which is given in \S \ref{s:cow-herder}. Our main goal is to establish Proposition 
\ref{cor:sieve}.

Let $k$ be a number field of degree $d$ over $\QQ$, with associated 
ring of integers $\fo$, and   let $H: \PP^n(k)\to \RR_{\geq 1}$ be the  height function that was constructed in \S \ref{s:goat-herder}. For given $M\in \NN$ and non-constant homogeneous 
polynomials  $f,g\in \fo[X_0,\dots,X_n]$, we shall need to  study the counting function
\begin{equation}\label{eq:sheep-herder}
N(T;B,M)=\#\left\{x\in T : 
\begin{array}{l}
H(x)\leq B\\
\text{$\exists$  $\Norm\fp >M$ s.t. $\fp\|f (\x)$ and $\fp\nmid g(\x)$}
\end{array}{}
\right\},
\end{equation}
for any non-empty subset $T\subset \PP^n(k).$
In the definition 
of $N(T;B,M)$ the main constraint is to be understood as there exists a prime 
ideal $\fp$ such that $\n\fp>M$ and a primitive representative 
$\x=(x_0,\dots,x_n)\in (\fo/\fp^2)^{n+1}$ of $x \bmod \fp^2$ such that 
$\fp\| f(\x)$  and $\fp\nmid g(\x)$. Here we write  $\fp\| f(\x)$ to mean that $\fp\mid f(\x)$ but $\fp^2\nmid f(\x)$.

The following is the main result of this section.

\begin{proposition}\label{prop:fly-catcher}
Assume that $f \nmid ag$ for all $a \in \fo$ and that $f\neq f_1f_2^2$ 
for polynomials $f_1,f_2\in \fo[X_0,\dots, X_n].$
Then for fixed $M\in \NN$ and any non-empty  subset $T\subset \PP^n(k)$ we have 
$$
\#\{x\in T: H(x)\leq B\}-N(T;B,M)\ll_{M,f,g}   \frac{B^{n+1}}{\log B}.
$$
The implied constant is allowed to depend on $\ve,M,f$ and $g$.
\end{proposition}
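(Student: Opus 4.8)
\emph{Plan.} The whole thing is an application of the large sieve to the set of ``bad'' points, those for which no admissible prime exists. Call $x\in\PP^n(k)$ \emph{bad} if there is no prime ideal $\fp$ with $\Norm\fp>M$, $\fp\|f(\x)$ and $\fp\nmid g(\x)$; this depends only on $x$, not on $T$, so
\[
\#\{x\in T:H(x)\leq B\}-N(T;B,M)=\#\{x\in T:H(x)\leq B,\ x\text{ bad}\}\leq\#\{x\in\PP^n(k):H(x)\leq B,\ x\text{ bad}\},
\]
and it suffices to treat $T=\PP^n(k)$. As in the proof of Proposition~\ref{prop:sieve_of_Ekedahl_proj} I would partition $\PP^n(k)$ by the ideal class of $\langle x_0,\dots,x_n\rangle$; for each class the points of height $\leq B$ correspond to the primitive lattice points of $\fo^{n+1}$ in a bounded region of $k_\infty^{n+1}$ scaled by $B^{1/d}$, a set of size $\asymp B^{n+1}$. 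On this set, being bad means that for every $\fp$ with $\Norm\fp>M$ the reduction of $\x$ modulo $\fp^2$ avoids the set $\Omega_\fp\subset\fo_\fp^{n+1}$ of primitive $\x$ with $\fp\|f(\x)$ and $\fp\nmid g(\x)$, a union of cosets of $(\fp^2\fo_\fp)^{n+1}$.

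The crux is a uniform lower bound $\mu_\fp(\Omega_\fp)\gg(\Norm\fp)^{-1}$ (relative to the primitive residues modulo $\fp^2$), valid for all $\fp$ outside a finite set. Discarding the finitely many $\fp$ dividing the contents of $f$ or $g$: the hypothesis $f\neq f_1f_2^2$ forces $f$ to be squarefree, hence $f\bmod\fp$ is squarefree for almost all $\fp$, so the hypersurface $\{f=0\}\subset\PP^n_{\FF_\fp}$ is generically smooth; the hypothesis $f\nmid ag$ forces $g$ not to vanish identically on $\{f=0\}$, which persists modulo almost all $\fp$. By the Lang--Weil estimates, for $\Norm\fp$ large the number of smooth $\FF_\fp$-points of $\{f=0\}$ at which $g\neq0$ is $\gg(\Norm\fp)^{n-1}$, so there are $\gg(\Norm\fp)^{n}$ such points on the affine cone; at each such point Hensel's lemma applied to a non-vanishing partial derivative of $f$ shows that, among the $(\Norm\fp)^{n+1}$ lifts to $(\fo/\fp^2)^{n+1}$, a proportion $(\Norm\fp-1)/\Norm\fp$ satisfy $v_\fp(f(\x))=1$ while all satisfy $\fp\nmid g(\x)$. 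Hence $\#\Omega_\fp\gg(\Norm\fp)^{2n+1}$, and dividing by the number $\asymp(\Norm\fp)^{2(n+1)}$ of primitive residues gives the claim. (If $f$ is geometrically reducible over $k$, the component of $\{f=0\}$ on which $g$ is non-zero need not be $\FF_\fp$-rational for every $\fp$; by Chebotarev it is so for a set of primes of positive density, which is what the sieve below uses.)

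Finally I would apply the large sieve over $\fo$, with squarefree moduli the products $\mathfrak q=\prod_i\fp_i^2$ of squares of distinct admissible primes $\fp_i$ with $M<\Norm\fp_i\leq Q^{1/2}$, for $Q$ a sufficiently small fixed power of $B$. The large sieve then bounds the number of bad lattice points in the box by
\[
\ll\ \frac{B^{n+1}}{\displaystyle\sum_{\Norm\mathfrak q\leq Q}\ \prod_{\fp^2\|\mathfrak q}\frac{\mu_\fp(\Omega_\fp)}{1-\mu_\fp(\Omega_\fp)}}\,,
\]
and since $\mu_\fp(\Omega_\fp)\asymp(\Norm\fp)^{-1}$ on the admissible primes, the denominator is $\gg\sum_{\mathfrak m}(\Norm\mathfrak m)^{-1}\gg\log B$, the sum being over squarefree ideals $\mathfrak m$ composed of admissible primes with $\Norm\mathfrak m\leq Q^{1/2}$ (Mertens's theorem for $k$, with a supplementary elementary bound on the bad set when only a positive-density set of admissible primes is available). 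This yields $\ll B^{n+1}/\log B$ for each ideal class, and summing over the finitely many classes gives the result.

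The main obstacle is the density estimate of the second paragraph: one must pass from the purely geometric input ($f\bmod\fp$ squarefree, $g$ not identically zero on $\{f=0\}$) to an $\FF_\fp$-point count that is uniform in $\fp$ via Lang--Weil, and control the $\fp^2$-lifts at smooth points; the large sieve inequality itself and the attendant bookkeeping are routine by comparison.
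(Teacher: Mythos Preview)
Your proposal is correct and follows essentially the same strategy as the paper: bound the complementary set of bad points by lifting to integral representatives in $\fo^{n+1}$ and applying the large sieve at level $\fp^2$, with sieve density $\omega(\fp)\asymp(\Norm\fp)^{-1}$ coming from a Lang--Weil count.

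The paper organises the argument slightly differently: it splits the bad set as $N_1+N_2$, where $N_1$ counts those $\x$ for which every large $\fp\mid f(\x)$ in fact has $\fp^2\mid f(\x)$, and $N_2$ those for which every large $\fp\mid f(\x)$ also has $\fp\mid g(\x)$; it then sieves each piece separately (with $m=2$ and $m=1$ respectively in the large sieve), and obtains the density by directly counting residues mod $\fp^2$ (Lemma~\ref{lem:cave}) rather than via smooth points and Hensel. Your unified treatment of the two conditions is a little cleaner. Your parenthetical remark about Chebotarev, handling the case where no irreducible factor of $f$ over $k$ is geometrically irreducible, addresses a point the paper leaves implicit in its invocation of Lang--Weil at \eqref{eq:swallet}; as you observe, a positive-density set of admissible primes already suffices for $L(z)\gg\log z$, so this causes no difficulty either way.
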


If $f\mid ag$ for some $a \in \fo$, or if $f=f_1f_2^2$ for polynomials
$f_1,f_2\in \fo[X_0,\dots,X_n]$, then it is clear that $N(T;B,M)=0$. Thus the hypotheses of the proposition are both necessary and sufficient to draw the conclusion.
For comparison we note that when $T=U(k)$ for some non-empty open subset $U\subset \PP^n$, the counting function
 $\#\{x\in T: H(x)\leq B\}$ has exact order $B^{n+1}$, 
 which exceeds the upper bound recorded in Proposition \ref{prop:fly-catcher}.
 Before proving this result, let us see how it can be used to 
establish Proposition \ref{cor:sieve}.

\begin{proof}[Proof of Proposition \ref{cor:sieve}]
The generic fibre $\sD \otimes_\fo k$ is a reduced hypersurface in $\PP^n$, so is defined by a single square-free homogeneous polynomial $f \in k[X_0, \dotsc, X_n]$.  Clearing denominators, we may ensure that $f$ lies in $\fo[X_0, \dotsc, X_n]$.  Then the subscheme of $\PP^n_\fo$ defined by $f$ consists of $\sD$ together with the whole of $\PP^n_{\FF_\fp}$ for each prime $\p$ dividing $f$.  (If $\fo$ is a PID, then we can take $f$ to be primitive, so that $\sD$ is defined by the single polynomial $f$, but in general this may not be possible.)  Set $M = \max_{\p \mid f} \Norm \p$.
Let $S$ be the complement of $\sV$ in $\sD$, and let $g \in \fo[X_0, \dotsc, X_n]$ be any homogeneous polynomial vanishing on $S$ but not identically zero on $\sD$.  (In the case $\sV=\sD$, take $g$ to be any non-constant homogeneous polynomial not zero on $\sD$.)

Let  $P \in U(k)$. 
Suppose that there exists a prime  $\p$ with $\Norm \p > M$ 
and a  representative $\x = (x_0, \dotsc, x_n) \in (\fo/\fp^2)^{n+1}$ of $P$ satisfying $\p \| f(\x)$ and $\p \nmid g(\x)$.  Note that the latter condition implies that the $x_i$ are not all divisible by $\p$.  We claim that $\bar{P}$ meets $\sV$ transversely in the fibre above $\p$.  Indeed, let $s$ be the closed point where $\bar{P}$ meets the fibre above $\p$; explicitly, $s$ is defined by the ideal $I = (\p,X_0-x_0, \dotsc, X_n-x_n)$.  The condition $\p \mid f(\x)$ shows that $f$ lies in $I$; together with the condition $\Norm\p>M$, this shows that $s$ lies in $\sD$.  The condition $\p \nmid g(\x)$ shows that $s$ lies in $\sV$.  The ideal generated by $f$ and the defining equations for $P$ is $J=(f(\x), X_0-x_0, \dotsc, X_n-x_n)$, which is contained in $I$.  The condition $\p \| f(\x)$ shows that, on the open neighbourhood of $s$ obtained by inverting all other primes dividing $f(\x)$, $I$ coincides with the ideal defining $s$. That is, $\bar{P}$ and $f$ meet transversely at $s$. This establishes the claim.

It therefore  follows that 
\begin{align*}
0&\leq \#\{P\in T:H(P)\leq B\}-
\#\{P\in T_{\text{trans}}:H(P)\leq B)\}\\
&\leq \#\{P\in T:H(P)\leq B\}-
N(T;B,M),
\end{align*}
in the notation of 
\eqref{eq:sheep-herder}. 
An application of Proposition 
\ref{prop:fly-catcher} therefore completes the proof.
\end{proof}

In order to establish Proposition  \ref{prop:fly-catcher} we will produce an upper bound for the number of $x\in T$ with $H(x)\leq B$ for which there is no prime ideal $\fp$ of norm $\n\fp>M$ such that 
$\fp\| f(\x)$ and $\fp\nmid g(\x)$.
Let us write $N^\circ(T;B,M)$ for this  complementary cardinality.
Since we are only interested in an upper bound for $N^\circ(T;B,M)$ we may drop any of the defining conditions that we care to. In particular we henceforth ignore the constraint that the points counted  by 
$N^\circ(T;B,M)$ must lie in $T$.
Define the  distance function   $\|\x\|_{\star}:k^{n+1}\to \RR_{\geq 0}$ via
$$
\|\textbf{x} \|_{\star} = 
 \sup_{\substack{
 0\leq i\leq n\\
 \nu\mid\infty}} \| x_i \|_\nu,
$$
where $\|\cdot\|_\nu$ is the normalised absolute value associated to the place $\nu$. For any $H\geq 1$ we introduce the set
$$
Z_{n+1}(H)=\{\x\in \fo^{n+1}: \|\x\|_\star\leq H\}.
$$
As explained in  \cite[Prop.~3]{Broberg}, a  consequence of Dirichlet's unit theorem 
is the existence of a constant $c_1>0$, depending only on $n$, such that 
any  $x\in \mathbb{P}^n(k)$ with $H(x)\leq B$ has a representative 
$\x\in Z_{n+1}(c_1B)$. (Note, however, that elements of the latter set do not uniquely determine elements of the former.)

Our work so far shows that 
$
N^\circ(T;B,M)\leq \sum_{i=1,2}N_i(B,M),
$
where 
$$
N_1(B,M)=
\#\{\x\in Z_{n+1}(c_1B): \text{$\fp^2\mid f(\x)$ for all $\Norm\fp >M$ s.t. $\fp\mid f (\x)$}\}
$$
and 
$$
N_2(B,M)=
\#\{\x\in Z_{n+1}(c_1B): \text{$\fp\mid g(\x)$ for all $\Norm\fp >M$ s.t. $\fp\mid f (\x)$}\},
$$
with $c_1$ as above. For fixed $M$, we need  upper bounds for 
$N_1(B,M)$ and $N_2(B,M)$ which agree with the bound recorded in Proposition \ref{prop:fly-catcher}.
Our primary tool in this endeavour 
is the following multi-dimensional version of the arithmetic large sieve inequality.

\begin{lemma}\label{lem:large_sieve}
Let $m,n\in \NN$,  let $B\geq 1$ and 
let $\Sigma\subset \fo^{n+1}$ be any subset.
For each prime ideal $\fp$ suppose there exists  $\omega(\fp)\in [0,1]$ such that    
the image of $\Sigma$ in $(\fo/\fp^m)^{n+1}$ has at most $(\Norm\fp)^{m(n+1)}(1-\omega(\fp))$ elements. Then 
there is a constant $c_k>0$ such that 
$$
\#\{\x\in \Sigma: \|\x\|_\star\leq B\} \leq c_k\frac{B^{n+1}}{L(B^{1/(2m)})},
$$
with
$$
L(z)=\sum_{\fa }\prod_{\fp\mid \fa} \frac{\omega(\fp)}{1-\omega(\fp)},
$$
where the sum is over square-free integral ideals $\fa\subset \fo$ such that $\Norm\fa\leq z$.
\end{lemma}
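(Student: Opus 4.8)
The plan is to derive Lemma~\ref{lem:large_sieve} from the multidimensional analytic large sieve inequality over $k$, combined with the duality argument of Montgomery. Write $\mathcal{S}=\{\x\in\Sigma:\|\x\|_{\star}\le B\}$, $N=\#\mathcal{S}$, and $z=B^{1/(2m)}$. For a squarefree integral ideal $\fa$ with $\Norm\fa\le z$ put $\fa^m=\prod_{\fp\mid\fa}\fp^m$, so that $(\fo/\fa^m)^{n+1}\cong\prod_{\fp\mid\fa}(\fo/\fp^m)^{n+1}$; an additive character $\psi$ of $(\fo/\fa^m)^{n+1}$ is called \emph{primitive} modulo $\fa^m$ if its restriction to every local factor $(\fo/\fp^m)^{n+1}$ is non-trivial. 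The analytic large sieve inequality for the lattice $\fo^{n+1}$, obtained from the one-variable large sieve over number fields by taking an $(n+1)$-fold tensor product, supplies a constant $c_k>0$ for which
\[
\sum_{\substack{\fa \text{ squarefree}\\ \Norm\fa\le z}}\ \sum_{\psi \text{ primitive mod }\fa^m}\ \Bigl|\sum_{\x\in\mathcal{S}}\psi(\x)\Bigr|^2\ \le\ c_k\,B^{n+1}\,N ,
\]
the choice $z=B^{1/(2m)}$ being made precisely to balance the size of the box $Z_{n+1}(B)$ against the contribution of the moduli, which are $m$-th powers of ideals of norm at most $z$; this is what produces the main term of order $B^{n+1}$.

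For the matching lower bound, fix a prime $\fp$, let $\mathcal{S}_\fp\subset(\fo/\fp^m)^{n+1}$ denote the image of $\mathcal{S}$, and set $r(\mathbf{c})=\#\{\x\in\mathcal{S}:\x\equiv\mathbf{c}\bmod\fp^m\}$. Orthogonality of the characters of $(\fo/\fp^m)^{n+1}$, followed by the Cauchy--Schwarz inequality and the hypothesis $\#\mathcal{S}_\fp\le(\Norm\fp)^{m(n+1)}(1-\omega(\fp))$, gives
\[
\sum_{\psi_\fp\ne 1}\Bigl|\sum_{\x\in\mathcal{S}}\psi_\fp(\x)\Bigr|^2=(\Norm\fp)^{m(n+1)}\sum_{\mathbf{c}\in\mathcal{S}_\fp}r(\mathbf{c})^2-N^2\ \ge\ \frac{(\Norm\fp)^{m(n+1)}}{\#\mathcal{S}_\fp}\,N^2-N^2\ \ge\ \frac{\omega(\fp)}{1-\omega(\fp)}\,N^2 ,
\]
the sum being over the non-trivial characters $\psi_\fp$ of $(\fo/\fp^m)^{n+1}$. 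The standard multiplicativity argument of Montgomery, by induction on the number of prime divisors of $\fa$, upgrades this one-prime estimate to
\[
\sum_{\psi \text{ primitive mod }\fa^m}\Bigl|\sum_{\x\in\mathcal{S}}\psi(\x)\Bigr|^2\ \ge\ N^2\prod_{\fp\mid\fa}\frac{\omega(\fp)}{1-\omega(\fp)}
\]
for every squarefree $\fa$; summing over $\Norm\fa\le z$ bounds the left-hand side of the displayed large sieve inequality below by $N^2 L(z)$.

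Combining the two inequalities yields $N^2 L(z)\le c_k B^{n+1} N$, so $N\le c_k B^{n+1}/L(z)=c_k B^{n+1}/L(B^{1/(2m)})$, which is the claim. The main obstacle is to have the analytic large sieve over $\fo^{n+1}$ available in exactly this form: with the archimedean box $Z_{n+1}(B)$, with moduli that are $m$-th powers of squarefree ideals, and with main term of order $B^{n+1}$ once $z=B^{1/(2m)}$. This can be assembled from the existing literature on the large sieve over number fields; everything else is the classical Montgomery duality argument, the sieve hypothesis being fed in one prime at a time via the decomposition $(\fo/\fa^m)^{n+1}\cong\prod_{\fp\mid\fa}(\fo/\fp^m)^{n+1}$.
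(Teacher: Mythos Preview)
Your outline is correct and is precisely the argument one obtains by following the paper's references: the paper's own proof consists only of the remark that the case $k=\QQ$ is in Serre~\cite[\S 6]{serre} and that the extension to number fields follows the method of~\cite[Chap.~12]{serre-book}, which is exactly the Montgomery duality argument (analytic large sieve for additive characters combined with the Cauchy--Schwarz lower bound one prime at a time) that you have written out. Your acknowledgement that the analytic large sieve over $\fo^{n+1}$ with moduli $\fa^m$ and the box $Z_{n+1}(B)$ must be assembled from the literature is in the same spirit as the paper's citation, so there is nothing to add.
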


\begin{proof}
When $k=\QQ$ this is recorded in 
\cite[\S 6]{serre}. The extension to general number fields is  standard,
and follows the method given in \cite[Chap.~12]{serre-book}.
\end{proof}

Before beginning our estimation of 
$N_1(B,M)$ and $N_2(B,M)$, we may clearly assume without loss of generality that $f$ is square-free and that $f$ and $g$ share no common factors.


\subsection{Estimating $N_1(B,M)$} 
Let $\Sigma$ denote the set 
of elements $\x\in Z_{n+1}(c_1B)$ such that $\fp^2\mid f(\x)$ for all $\Norm\fp >M$ with $\fp\mid f (\x)$.
For any prime ideal $\fp$, let $\Sigma_\fp$ denote the image of $\Sigma$ in $R_\fp=(\fo/\fp^2)^{n+1}$.
Put $\Sigma_\fp^\circ=R_\fp\setminus \Sigma_\fp$. 
If $\Norm \fp\leq M$ then $\Sigma_\fp=R_\fp$. If $\Norm \fp>M$ then $\Sigma_\fp^\circ$ is the set of $\y\in R_\fp$ for which $\fp\| f(\y)$.
Thus we have 
$$
\#\Sigma_\fp^\circ=\#\{\y\in R_\fp: \fp\mid f(\y)\} - \#\{\y\in R_\fp: \fp^2\mid f(\y)\},
$$
for $\Norm \fp> M$.
Possibly after
enlarging $M$,
for any prime ideal $\fp$ with  
$\Norm\fp>M$  
there exists a factorisation   $f=f_1\dots f_r$  over $\fo/\fp^2$ such that the following holds:
\begin{itemize}
\item
for  each $i\neq j$   the variety $f_i=f_j=0$ in $\PP_{\FF_\fp}^n$ has codimension 2 when viewed over $\FF_\fp=\fo/\fp$; and
\item
for each $i$ 
the reduction modulo $\fp$ of the variety 
cut out by the equations
$f_i=0$ and $\nabla f_i=\mathbf{0}$  has codimension at least $2$
in  $\PP_{\FF_\fp}^n$.
\end{itemize}
We proceed by establishing the following result.

\begin{lemma}\label{lem:cave}
We have $\#\{\y\in R_\fp: \fp^2\mid f(\y)\}\ll_{f} (\Norm \fp)^{2n}$
when $\n\fp>M$.
\end{lemma}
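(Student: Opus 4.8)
The statement asserts that, for a prime ideal $\fp$ with $\Norm\fp > M$, the number of $\y \in R_\fp = (\fo/\fp^2)^{n+1}$ with $\fp^2 \mid f(\y)$ is $\ll_f (\Norm\fp)^{2n}$. The point is that the ``trivial'' count is $(\Norm\fp)^{2(n+1)}$, and we must save a full factor of $(\Norm\fp)^2$; equivalently, the density of such $\y$ is $\ll_f (\Norm\fp)^{-2}$. The plan is to stratify $R_\fp$ according to the reduction $\bar\y \in \FF_\fp^{n+1}$ and analyse the two cases separately using the factorisation $f = f_1\cdots f_r$ over $\fo/\fp^2$ furnished just before the lemma, together with its two listed geometric properties.

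First I would decompose according to whether $\bar\y$ is a smooth point of the hypersurface $\{\bar f = 0\}$ in $\PP^n_{\FF_\fp}$. Write $q = \Norm\fp$. \textbf{Case 1: $\bar\y$ lies on $\{\bar f = 0\}$ and is a smooth point of it.} Fix such a $\bar\y$; there are $O_f(q^n)$ choices for the projective point, hence $O_f(q^{n+1})$ choices of $\bar\y \in \FF_\fp^{n+1}$ after accounting for scaling (more precisely, $O_f(q^n)$ choices up to scalars, and then $q$ lifts of scalar, giving $O_f(q^{n+1})$ affine points, but one only needs the cruder bound $O_f(q^{n+1})$). For each such $\bar\y$, the lifts $\y \in R_\fp$ reducing to $\bar\y$ form a coset of $(\fp/\fp^2)^{n+1} \cong \FF_\fp^{n+1}$, a set of size $q^{n+1}$. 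Since $\bar\y$ is a smooth point of $\{\bar f=0\}$, some partial derivative $\partial f/\partial x_i$ is a unit at $\bar\y$, and Taylor expansion $f(\y) \equiv f(\y_0) + \sum_i (y_i - y_{0,i}) \partial_i f(\y_0) \pmod{\fp^2}$ (for any fixed lift $\y_0$ of $\bar\y$ with $\fp \mid f(\y_0)$) shows that $f(\y) \equiv 0 \pmod{\fp^2}$ cuts out an affine hyperplane in the $\FF_\fp^{n+1}$ of lifts; hence exactly $q^n$ of the $q^{n+1}$ lifts satisfy $\fp^2 \mid f(\y)$ (and if no lift of $\bar\y$ satisfies $\fp \mid f(\y_0)$ the count is zero — but smoothness means we are on $\{\bar f = 0\}$ so such a lift exists). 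This contributes $O_f(q^{n+1}) \cdot q^n = O_f(q^{2n+1})$, which is $\ll_f q^{2n}$ once $q > M$...

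— wait, $q^{2n+1}$ is too big. Let me correct the count: the projective variety $\{\bar f = 0\} \subset \PP^n_{\FF_\fp}$ has $O_f(q^{n-1})$ points, so $O_f(q^n)$ affine representatives in $\FF_\fp^{n+1} \setminus \{0\}$, and for each the hyperplane condition on lifts keeps $q^n$ of them, giving $O_f(q^n)\cdot q^n = O_f(q^{2n})$. \textbf{Case 2: $\bar\y$ lies in the singular locus $\Sigma_{\mathrm{sing}}$ of $\{\bar f=0\}$.} Using the factorisation, a point $\bar\y$ with $\bar f(\bar\y)=0$ and $\nabla\bar f(\bar\y)=\mathbf 0$ either lies on some $\{\bar f_i = \bar f_j = 0\}$ with $i\ne j$ (codimension $2$, so $O_f(q^{n-2})$ projective points, $O_f(q^{n-1})$ affine representatives) or lies on the locus $\{\bar f_i = 0, \nabla \bar f_i = \mathbf 0\}$ which by hypothesis has codimension $\ge 2$ (again $O_f(q^{n-1})$ affine representatives). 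For each such $\bar\y$ the lifts satisfying $\fp^2 \mid f(\y)$ number at most $q^{n+1}$ (the trivial bound on the coset). This contributes $O_f(q^{n-1}) \cdot q^{n+1} = O_f(q^{2n})$. \textbf{Case 3: $\bar f(\bar\y) \ne 0$.} Then $\fp \nmid f(\y)$ for every lift $\y$, so there is no contribution.

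Summing the three cases gives the bound $\ll_f q^{2n} = (\Norm\fp)^{2n}$, as claimed. The main obstacle is bookkeeping the affine-versus-projective point counts correctly — one must be careful that ``$O(q^n)$ affine representatives of a hypersurface'' (not $O(q^{n+1})$) is what feeds into Case 1, and that the codimension-$2$ hypotheses in the setup genuinely cut the singular locus down to $O(q^{n-1})$ affine representatives in Case 2; this is where the assumption $f$ square-free (equivalently $f \ne f_1 f_2^2$) is used, since it guarantees the generic smoothness needed for the factorisation and the codimension bound on $\{\bar f_i = \nabla\bar f_i = 0\}$. Everything else is a routine Taylor-expansion/Hensel argument on each coset of $\fp/\fp^2$.
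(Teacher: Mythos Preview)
Your proof is correct and follows essentially the same approach as the paper: both arguments stratify $R_\fp$ by the reduction $\bar\y\in\FF_\fp^{n+1}$, use a Taylor expansion (Hensel) to count lifts when $\bar\y$ is a smooth point of $\{\bar f=0\}$, and use the codimension-$2$ hypotheses to control the contribution from singular points. The only cosmetic difference is that the paper organises the case split via the factorisation $f=f_1\cdots f_r$ first (distinguishing $\fp^2\mid f_i(\y)$ from $\fp\mid f_i(\y),\ \fp\mid f_j(\y)$ with $i\neq j$), whereas you split directly by smoothness of $\bar\y$ on $\{\bar f=0\}$; these are equivalent once one notes that the singular locus of $\{\bar f=0\}$ is precisely the union of the pairwise intersections $\{\bar f_i=\bar f_j=0\}$ and the individual singular loci $\{\bar f_i=0,\ \nabla\bar f_i=\mathbf 0\}$.
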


\begin{proof}
Assume that $\n\fp>M$.
Now $\fp^2\mid f(\y)$ if and only if either 
$\fp^2\mid f_i(\y)$ for some index $i$, or else 
$\fp\mid f_i(\y)$ and $\fp\mid f_j(\y)$ for a pair of indices $i\neq j$.
Choose an element $\alpha\in \fo$ such that $\mathrm{ord}_\fp(\alpha)=1$. 
Then elements of $R_\fp$ are in bijection with elements
$\u+\alpha\v$ for $\u,\v\in (\fo/\fp)^{n+1}=\FF_\fp^{n+1}$. 
Since $f_i=f_j=0$ 
cuts out a codimension 2 variety in $\PP_{\FF_\fp}^{n}$, the Lang--Weil \cite{LW54} estimate shows that 
$$
\#\{\y\in R_\fp: 
\text{$\fp\mid f_i(\y)$ and $\fp\mid f_j(\y)$}\}=
\quad
\sum_{\mathclap{\substack{\u\in \FF_\fp^{n+1}\\
\text{$\fp\mid f_i(\u)$ and $\fp\mid f_j(\u)$}}}}
\# \FF_\fp^{n+1} \,\, \ll_f (\n\fp)^{2n},
$$
since 
\begin{equation}\label{eq:pigeon-fancier}
\#\{\u\in \FF_\fp^{n+1}:
f_i(\u)\equiv f_j(\u)\equiv 0\bmod{\fp}\}\ll_f (\n\fp)^{n-1}
\end{equation}
if $i\neq j$. This is satisfactory for the lemma.
On the other hand, we use the 
  Taylor expansion to  deduce that 
\begin{align*}
\#\{\y\in R_\fp: \fp^2\mid f_i(\y)\}= \,
\sum_{\mathclap{\substack{\u\in \FF_\fp^{n+1}\\
f_i(\u)\equiv 0\bmod{\fp}}}
} D(\u),
\end{align*}
where
$D(\u)$ is the number of 
$\v\in \FF_\fp^{n+1}$ such that 
$$f_i(\u)+\alpha \v.\nabla f_i(\u) \equiv 0\bmod{\fp^2}.
$$
If $\fp\nmid \nabla f_i(\u)$ then there are $(\n\fp)^{n}$ choices for $\v$, and the 
Lang--Weil estimate yields the required bound. If $\fp\mid \nabla f_i(\u)$ however,
then the $\u$ are constrained to lie on a
variety of codimension at least $2$ in $\PP_{\FF_\fp}^n$. Hence a further application 
of the Lang--Weil estimate combined with the trivial bound $D(\u)\leq (\n\fp)^{n+1}$
concludes the proof.
\end{proof}

Now it follows from the Lang--Weil estimate that 
\begin{equation}\label{eq:swallet}
\#\{\y\in R_\fp: \fp\mid f(\y)\}\geq (\n\fp)^{n+1}\left((\n\fp)^n+O_f((\n\fp)^{n-\frac{1}{2}})\right).
\end{equation}
Hence it follows from 
this and Lemma \ref{lem:cave} that 
$\#\Sigma_\fp^\circ\geq (\n\fp)^{2n+1}+O_f((\n\fp)^{2n+\frac{1}{2}})$.
But this implies that  $\#\Sigma_\fp\leq (\n\fp)^{2(n+1)}(1-\omega(\fp))$, with 
\begin{equation}\label{eq:omega}
\omega(\fp)=
\begin{cases}
0, &\mbox{$\Norm\fp\leq M$,}\\
(\n\fp)^{-1}+O_f((\n\fp)^{-\frac{3}{2}}),
&\mbox{$\Norm\fp> M$.}
\end{cases}
\end{equation}
With this choice we clearly have 
\begin{equation}\label{eq:L}
\begin{split}
L(z)
&\geq 
\sum_{\substack{\n\fa\leq z\\ \text{$\fa$ square-free}\\
\fp\mid \fa \Rightarrow \n\fp>M
}}
\frac{1}{\n\fa}
\prod_{\substack{\fp\mid \fa}} \left(1
+O_f\left(\frac{1}{(\n\fp)^{\frac{1}{2}}}\right)\right)
\gg_{M,f} \log z ,
\end{split}
\end{equation}
where we emphasise that the implied constant is allowed to depend on $M$.
Invoking Lemma \ref{lem:large_sieve}, we have therefore established  the satisfactory bound
$$
N_1(B,M)\ll \frac{B^{n+1}}{L(B^{1/4})}\ll_{M,f} \frac{B^{n+1}}{\log B}.
$$

\subsection{Estimating $N_2(B,M)$}

This follows a similar pattern to before, but is simpler.  
By enlarging $M$, if necessary, we may assume that the variety $f=g=0$ 
in $\PP_{\FF_\fp}^n$ has codimension  $2$
for any prime ideal  $\fp$  with $\n\fp>M$.
In the present situation we only need to work with $m=1$ in the large sieve.
Let  $\Sigma$ be the set 
of $\x\in Z_{n+1}(c_1B) $ such that $\fp\mid g(\x)$ for all $\Norm\fp >M$ with $\fp\mid f (\x)$, and let $\Sigma_\fp$ be the image in $(\fo/\fp)^{n+1}$ for any prime ideal $\fp$.  Then  
$\Sigma_\fp^\circ=(\fo/\fp)^{n+1}\setminus \Sigma_\fp$ is the set of 
$\y\in (\fo/\fp)^{n+1}$ for which $\fp\mid f(\y)$ but $\fp\nmid g(\y)$
It follows from  \eqref{eq:pigeon-fancier} and \eqref{eq:swallet} 
that 
$\#\Sigma_\fp^\circ \geq 
 (\n\fp)^{n}+O_f((\n\fp)^{n-\frac{1}{2}})$.
Hence $\#\Sigma_\fp\leq (\n\fp)^{{n+1}}(1-\omega(\fp))$, with $\omega(\fp)$ as in 
\eqref{eq:omega}. In particular we still have the lower bound \eqref{eq:L}, whence  the large sieve 
yields
$
N_2(B,M) \ll_{M_f,g} B^{n+1}/(\log B).
$
This too is satisfactory and so completes the proof of Proposition \ref{prop:fly-catcher}. \qed

\end{document}